 \newcommand{\D}[2]{\ensuremath{ \frac{\partial{#1}}{\partial{#2}}}}
 \newcommand{\E}{\ensuremath{\mathbb{E}}}
 \newcommand{\N}{\ensuremath{\mathbb{N}}}
 \newcommand{\R}{\ensuremath{\mathbb{R}}}
 \newcommand{\KRf}{K\"ahler Ricci flow\;}
 \newcommand{\KRfc}{K\"ahler Ricci flow,\;}
 \newcommand{\KRfd}{K\"ahler Ricci flow.\;}
 \newcommand{\KRF}{K\"ahler Ricci Flow\;}
 \DeclareMathOperator{\Area}{Area}
 \DeclareMathOperator{\Length}{Length}
 \DeclareMathOperator{\Vol}{Vol}
 \DeclareMathOperator{\diam}{diam}
 \newcommand{\snorm}[2]{{ \ensuremath{\left |} #1 \ensuremath{\right |}}_{#2}}
 \newcommand{\sconv}{\ensuremath{ \stackrel{C^\infty}{\longrightarrow}}}
 \def\ExtendSymbol#1#2#3#4#5{\ext@arrow 0099{\arrowfill@#1#2#3}{#4}{#5}}
 \definecolor{hao}{rgb}{1,0.5,0}
 \definecolor{miao}{cmyk}{0.5,0,0.2,0.2}
 \definecolor{qiao}{gray}{0.96}
 \newtheorem{claim}{Claim}
 \newtheorem*{clm}{Claim}
 \newtheorem{corollary}{Corollary}[section]
 \newtheorem{proposition}{Proposition}[section]
 \newtheorem{lemma}{Lemma}[section]
 \newtheorem*{lm}{Lemma}
 \newtheorem{theorem}{Theorem}[section]
 \newtheorem{definition}{Definition}[section]
 \newtheorem{remark}{Remark}[section]
 \newtheorem{theoremin}{Theorem}
 \newtheorem{remarkin}{Remark}
 \title{Space of Ricci flows (I)}
 \author{Xiuxiong Chen\footnote{Partially supported by a NSF grant.}\;,  Bing Wang}
 \date{}
\begin{document}
 \maketitle
 \begin{abstract}
    In this paper, we study the moduli spaces of noncollapsed Ricci flow
    solutions with bounded energy and scalar curvature. We show a weak compactness theorem for
    such moduli spaces and apply it to study isoperimetric constant control,
   \KRf and moduli space of gradient shrinking solitons.
 \end{abstract}

 \tableofcontents

 \section{Introduction}

 In \cite{CWa}, we study the Calabi conjecture on Fano manifolds via the K\"ahler Ricci flow.
 This leads inevitably to the study of sequential limits of the K\"ahler Ricci flow solutions over time
 intervals $[t_{i}-1, t_{i}+1]$ as $t_{i}\rightarrow \infty.\;$ The renown Hamilton-Tian conjecture
 (\cite{Tian97}) states that any such sequence must converge in Cheeger-Gromov topology to some K\"ahler Ricci soliton
 with mild singularities and the codimensions of the singularities are at least four.
 In this paper, we begin a systematic study of the moduli space of Ricci flow solutions over time interval
 $[-1,1]$ with some natural geometric constraints such as the volume ratio lower bound,
 the scalar curvature bound as well as certain integral bound on the Riemannian curvature of the evolving metrics.
  We prove a weak compactness theorem  (c.f. Theorem~\ref{theoremin: centerwcpt})
  of this space of Ricci flows under these constraints.   Restricting to Fano surfaces,
   this weak compactness theorem already  verifies the Hamilton-Tian conjecture on Fano surfaces
  (c.f. Theorem~\ref{theoremin: krfcpt}) and  it leads to a Ricci-flow-based proof
  of the Calabi conjecture on Fano surfaces (\cite{CWa}).    In the proofs of this paper, we often use several
  layers of nested contradiction arguments---a technique we learn
  from G. Perelman's seminal work of \cite{Pe1} (c.f.~\cite{KL} for beautiful explanations of this technique). \\

 Let $X^m$ be a closed $m$-dimensional Riemannian manifold,
 $\{(X^m, g(t)), -1 \leq t \leq 1\}$  be a spacetime satisfying the following
  conditions.
  \begin{itemize}
   \item $\D{}{t}g(t) = -Ric_{g(t)} + c_0 g(t)$ where $c_0 $ is a
    constant satisfying $0 \leq c_0 \leq c$.
   \item $\displaystyle \sup_{X \times [-1, 1]}|R|_{g(t)} \leq \sigma$.
   \item  $\displaystyle \frac{\Vol_{g(t)}(B_{g(t)}(x, r))}{r^m} \geq \kappa$
           for all $x \in X, \; t\in [-1, 1], \; r \in (0, 1]$.
   \item  $\int_X |Rm|_{g(t)}^{\frac{m}{2}} d\mu_{g(t)} \leq E$
              for all $t \in [-1,1]$.
 \end{itemize}
 We denote the moduli space of such $\{(X^m, g(t)), -1 \leq t \leq 1\}$ as
 $\mathscr{M}(m, c, \sigma, \kappa, E)$. It is in fact very natural for us to
 consider  this moduli space.  By virtue of Perelman's fundamental estimates
 (c.f.~\cite{SeT}), for \KRf solutions on Fano manifolds, all the above constraints
 hold except the last one.
 If the underlying manifold is a Fano surface,  then
 all these conditions are satisfied a priori.\\

 \begin{theoremin}
    If $\{ (X_i, x_i, g_i(t)) , -1 \leq t \leq 1\} \in \mathscr{M}(m, c, \sigma, \kappa, E)$
 for every $i$, by passing to subsequence, we have
 \begin{align*}
    (X_i, x_i, g_i(0)) \sconv (\hat{X}, \hat{x}, \hat{g})
 \end{align*}
 for some $C^0$-orbifold $\hat{X}$ in Cheeger-Gromov sense.  If $m$
 is odd, then $\hat{X}$ is a smooth manifold.
 \label{theoremin: centerwcpt}
 \end{theoremin}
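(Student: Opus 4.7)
The strategy is to adapt the orbifold compactness framework of Anderson, Bando-Kasue-Nakajima, Tian and Cheeger-Tian from the Einstein setting to Ricci flow, using nested blow-up contradictions to handle the fact that only scalar curvature (not the full Ricci tensor) is controlled pointwise. I will organize the argument into four steps: a pointed Gromov-Hausdorff precompactness step, a local $\varepsilon$-regularity statement, extraction of a finite singular set with smooth convergence off of it, and a blow-up analysis at each singular point producing orbifold charts.

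For the first two steps, the volume noncollapsing, the scalar bound, and the flow equation give uniform control on ball packing numbers on any fixed scale, so Gromov's theorem produces a pointed subsequential limit $(\hat{X}, \hat{x}, d_\infty)$. To upgrade this convergence I plan to establish a parabolic $\varepsilon$-regularity: there exists $\varepsilon_0 = \varepsilon_0(m, c, \sigma, \kappa)$ such that whenever
\[
\int_{B_{g_i(0)}(y, r)} |Rm|^{m/2}\, d\mu_{g_i(0)} < \varepsilon_0 \qquad (0 < r \le 1),
\]
one has $|Rm|_{g_i(0)}(y) \le C/r^2$. The proof is by contradiction and parabolic rescaling: normalizing so that the rescaled peak curvature equals $1$, the rescaled flows retain the noncollapsing and have scalar curvature tending to zero; Perelman-style pseudolocality tames the short-time tail; the limit is a complete Ricci-flat non-flat space with $\int|Rm|^{m/2} < \varepsilon_0$ on every unit ball, contradicting the standard gap theorem for Ricci-flat ALE spaces once $\varepsilon_0$ is chosen small.

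Granting the $\varepsilon$-regularity, define
\[
\mathcal{S} := \Big\{ \hat{y} \in \hat{X} \;:\; \liminf_{r \to 0}\, \limsup_{i \to \infty} \int_{B_{g_i(0)}(y_i, r)} |Rm|^{m/2}\, d\mu_{g_i(0)} \ge \varepsilon_0 \Big\},
\]
with $y_i \to \hat{y}$; the global $L^{m/2}$ bound gives $\#\mathcal{S} \le E/\varepsilon_0$. On $\hat{X} \setminus \mathcal{S}$ the $\varepsilon$-regularity yields uniform $C^0$ curvature bounds at time $0$; Shi's parabolic derivative estimates upgrade these to $C^k$ bounds for each $k$, and a standard harmonic-radius / Cheeger-Gromov diagonal argument delivers a smooth Riemannian manifold $(\hat{X}_{\mathrm{reg}}, \hat{g})$ with $C^\infty$ convergence off $\mathcal{S}$. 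At $\hat{p} \in \mathcal{S}$ I rescale $g_i(0)$ so that the peak curvature on a definite ball becomes $1$; the sequence sub-converges to a complete Ricci-flat ``bubble'' with finite total $L^{m/2}$ energy and one end asymptotic to $\mathbb{R}^m / \Gamma$ for some finite $\Gamma \subset O(m)$ acting freely on $S^{m-1}$. A finite bubble-tree iteration, standard once $\varepsilon$-regularity is in hand, shows that a punctured neighborhood of $\hat{p}$ is homeomorphic to $(\mathbb{R}^m / \Gamma) \setminus \{0\}$ and that $\hat{g}$ extends continuously across $\hat{p}$ in orbifold charts, giving the $C^0$-orbifold structure. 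When $m$ is odd, $\chi(S^{m-1}) = 2$ forces $|\Gamma| \in \{1,2\}$; but the only $O(m)$-involution acting freely on $S^{m-1}$ is $-I$, which is orientation-reversing in odd dimension and so cannot arise as the holonomy of an oriented Ricci-flat bubble. Hence $\Gamma$ is trivial and $\hat{X}$ is a smooth manifold.

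The main obstacle is the parabolic $\varepsilon$-regularity. Since only scalar curvature is controlled pointwise, the elliptic-in-harmonic-coordinates route used in the Einstein case is unavailable, and one must orchestrate several nested contradiction arguments, tracking simultaneously the noncollapsing, the integral energy, the scalar bound, and the parabolic smoothing -- exactly the layered, Perelman-style blow-up technique flagged in the introduction as the principal methodological input of the paper.
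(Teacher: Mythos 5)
Your outline reproduces the paper's overall architecture (Gromov--Hausdorff precompactness, an $\varepsilon$-regularity obtained by blow-up contradiction against Ricci-flat ALE gap theorems, a finite singular set $\mathcal{S}$, bubbling at each point of $\mathcal{S}$, and the $SO(m)$-acts-freely-on-$S^{m-1}$ obstruction for odd $m$). But there is a genuine gap in the step that upgrades $C^0$-curvature bounds at $t=0$ to $C^k$-bounds, and it happens to be exactly the paper's main new ingredient. You write that ``Shi's parabolic derivative estimates upgrade these to $C^k$ bounds for each $k$.'' Shi's estimates propagate curvature bounds \emph{forward} in time: to control $\nabla^k Rm$ at $t=0$ one needs a curvature bound on a backward parabolic cylinder $B_{g(0)}(x,\rho)\times[-\tau,0]$. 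None of the given hypotheses (scalar bound, volume lower bound, $L^{m/2}$-energy bound) nor Perelman's pseudolocality --- which is also a forward statement --- supplies that backward control. A very non-flat region could in principle smooth out to almost-Euclidean in an arbitrarily short time, destroying the $C^k$-limit at $t=0$ even where $|Rm|_{g_i(0)}$ is bounded. Closing this hole is precisely the paper's \emph{backward pseudolocality} (Theorem~\ref{theorem: prermcontrol}, proved via the EV-refined sequence machinery, Lemmas~\ref{lemma: dcontinuity}--\ref{lemma: prermcontrolover}); it is not a standard Shi/pseudolocality corollary and must be established separately.

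A secondary gap: your bubble-tree step asserts that a punctured neighborhood of $\hat{p}$ is $(\R^m/\Gamma)\setminus\{0\}$, but this presupposes that $\hat{p}$ has a single end. Without a local volume-ratio \emph{upper} bound (Lemma~\ref{lemma: vuppercenter}) one cannot a priori bound the number of cone ends converging to $\hat{p}$; the paper first obtains a multifold limit (Lemma~\ref{lemma: wwcpt}) and only then rules out reducible singularities by the splitting theorem and the ``added point cannot sit in a smooth part'' argument (Lemmas~\ref{lemma: sitsmooth},~\ref{lemma: wcptev}). Your proposal neither proves the volume ratio upper bound (it does not follow from Bishop here, since Ricci is unbounded) nor addresses the multifold-to-orbifold reduction. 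Finally, on a smaller point: your $\varepsilon$-regularity blow-up must address the issue the paper flags explicitly --- that without a Harnack inequality for $|Rm|$, the base point of the contradiction sequence may be ``absorbed'' into a singularity of the blow-up limit, so that no contradiction results; this is the reason the paper works with carefully re-selected points and the EV-refined framework rather than a naive rescaling at the offending point.
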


  This is very reminiscent of the structure of the moduli space of Einstein metrics with similar constraints.
  It is appropriate
  for us to comment now on some historic background in this subject. The moduli space of Einstein
  metrics were well
  understood in early 1990s  through the work of~\cite{An89},~\cite{An90},~\cite{BKN}
  and~\cite{Tian90}.
  They showed that a sequence of
  noncollapsed Einstein manifolds $(X_i^m, x_i, g_i)$ with bounded
  scalar curvature and energy (Riemannian curvature's $L^{\frac{m}{2}}$-norm)
  will subconverge to an Einstein orbifold in
  the Cheeger-Gromov sense.   These works are fundamental and they
  were followed by many other papers in the following two decades. Of course, they
  are natural extension of earlier works of Cheeger~\cite{Che} and Gromov~\cite{Gro}.
    In~\cite{TV1},~\cite{TV2} and~\cite{TV3},  Tian and Viaclovsky studied
    the moduli space of a class of critical metrics, e.g.,
    Bach-flat metrics, constant scalar curvature K\"ahler metrics,
    etc.  One of the major difficulties there is to obtain the local volume ratio upper bound
    without Ricci curvature control.  Tian and Viaclovsky   obtained this
    estimate by blowup arguments. The work of Anderson's~\cite{An05},~\cite{An06}
    also elaborated on this theme.  Using similar idea but more complicated estimates,
    in~\cite{CWe}, the first named author
    and Brian Weber showed that the moduli spaces of extremal metrics
    (in the sense of Calabi, c.f.~\cite{Ca82})
    under similar conditions are also weakly compact.  In~\cite{Ban90}, Bando studied the ``bubble tree" structure of the
   moduli space of Einstein metrics, where every ``bubble" means a
   limit Einstein orbifold.   This study was generalized
   in~\cite{AC} and~\cite{CQY}, where the bubble tree structure of
   more general moduli spaces were studied.\\

 In order to obtain Theorem~\ref{theoremin: centerwcpt}, we need
 two essential estimates: local volume ratio upper bound and $\epsilon$-regularity,
 i.e.,
 \begin{align*}
      \sup_{B_{g(0)}(p, \frac{\rho}{2})} |\nabla^k Rm| \rho^{2+k}
      \leq C_k \{ \int_{B_{g(0)}(p,
      \rho)} |Rm|_{g(0)}^{\frac{m}{2}}\}^{\frac{2}{m}}
 \end{align*}
 whenever $\int_{B_{g(0)}(p, \rho)} |Rm|_{g(0)}^{\frac{m}{2}} < \epsilon$.
 In the case of Einstein metrics, local volume ratio upper bound is an
 application of Bishop's volume comparison theorem.
 $\epsilon$-regularity is implied by elliptic Moser iteration
 for curvature operator since it satisfies a second order elliptic equation.
 If we replace Einstein metrics by some other metrics whose curvature operators
 satisfy some second order elliptic equations, by similar method,
 $\epsilon$-regularity is still manageable.
 Now in our case, the curvature operator only satisfies a parabolic
 equation.  It's hard to use parabolic Moser iteration to prove $\epsilon$-regularity
 property here. The difficulties come from two aspects.
 First, in order to control $|Rm|$'s
 $L^{\infty}$-norm by parabolic Moser iteration,
 one requires more than $L^{\frac{m}{2}}$-norm of $|Rm|$ in a fixed geodesic ball.  One needs either
 $L^{p}$-norm ($p > \frac{m}{2}$) control of $|Rm|$ in a fixed geodesic ball, or $L^{\frac{m}{2}+1}$-norm
 control of $|Rm|$ in a ball of spacetime. Neither of them is a
 natural condition in our setting.   Second, even if we can apply parabolic Moser iteration,
 it's hard to control the local Sobolev constants. Note that the local Sobolev constant is
 determined by the geometry of each unit geodesic ball. However, the unit geodesic balls at different
 time slices are hard to compare since we don't have Ricci curvature bound.

    The way we prove $\epsilon$-regularity is to separate
 it into two properties:
 \begin{itemize}
 \item \textit{energy concentration:}   If $|Rm|_{g(0)}(p)=\rho^{-2} \geq 1$, then
            $\int_{B_{g(0)}(p, \frac{\rho}{2})} |Rm|^{\frac{m}{2}} d\mu >\epsilon$
    for some uniform constant $\epsilon$.
 \item \textit{backward pseudolocality:}   If $\sup_{B(p, \rho)}|Rm|_{g(0)} \leq \rho^{-2} $, then
  $|Rm|_{g(t)}(x)< \delta^{-2}\rho^{-2}$ for some uniform constant
  $\delta$ whenever $d_{g(t)}(x, p)< \delta,  -\delta^2 \rho^2 < t \leq 0$.
 \end{itemize}
 These two estimates together with Shi's estimates on Ricci flow
 solutions imply the $\epsilon$-regularity.      So our object is to
 show that volume ratio upper bound, energy concentration and
 backward pseudolocality hold simultaneously for the metric $g(0)$.
 We'll setup these estimates by bubble analysis  and contradiction
 arguments. Our contradiction arguments follow Perelman's proof of
  canonical neighborhood theorem (Theorem 12.1 of~\cite{Pe1}).
 This means that
  blowup and contradiction arguments will be used repeatedly even in
  one proof.  Since the blowup arguments appear so many times, we find it is convenient
  to study  a unique sequence of Ricci flow solutions blown up from the moduli space
  $\mathscr{M}(m, c, \sigma, \kappa, E)$ first.
  We call such a sequence as a refined sequence. Note that volume
  ratio, energy and backward pseudolocality are all rescaling
  invariant, so it will be sufficient to prove these three
  estimates for a refined sequence. However, a refined sequence is
  strongly related to a sequence of Ricci flat manifolds,  so the proof of such
  estimates is much easier.\\

      Actually, by this arrangement,  the proof of every rescaling
  invariant property of the moduli space  $\mathscr{M}(m, c, \sigma, \kappa, E)$
  can be reduced to the proof of this
  property on a single refined sequence. We believe this method
  is efficient and it works for every geometric flow.\\

   Using Theorem~\ref{theoremin: centerwcpt}, we can build up the
   bubble tree for every sequence in
   $\mathscr{M}(m, c, \sigma, \kappa, E)$. The only difficulty here
   is to understand the ``neck" structure. This difficulty can be
   overcome by an application of a gap lemma (Lemma~\ref{lemma: gap})
   and blowup arguments with delicate choice of blowup scales.
   Note that isoperimetric constant is a rescaling invariant,
   as an application of the bubble tree, we can obtain the following estimate.

 \begin{theoremin}
  If $\{(X, g(t)), -1 \leq t \leq 1\} \in \mathscr{M}(m, c, \sigma, \kappa, E)$ and
  $\diam_{g(0)}(X)<D$, then
  \begin{align*}
    \mathbf{I}((X, g(0)))> \iota
  \end{align*}
  for some positive constant $\iota$ depending on $m, c, \sigma, \kappa, E$ and $D$.
 \label{theoremin: centersob}
 \end{theoremin}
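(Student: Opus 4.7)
I would argue by contradiction, combining Theorem~\ref{theoremin: centerwcpt} with the bubble tree sketched just before this theorem. Suppose a sequence $\{(X_i, g_i(t))\} \in \mathscr{M}(m, c, \sigma, \kappa, E)$ satisfies $\diam_{g_i(0)}(X_i) < D$ yet $\mathbf{I}((X_i, g_i(0))) \to 0$. The diameter bound together with the $\kappa$-noncollapsing and the scalar curvature bound (which yields a two-sided volume bound via the flow equation) gives uniform two-sided bounds on $\Vol_{g_i(0)}(X_i)$. By Theorem~\ref{theoremin: centerwcpt}, along a subsequence $(X_i, x_i, g_i(0)) \sconv (\hat X, \hat x, \hat g)$ in the Cheeger-Gromov sense, where $\hat X$ is a compact $C^0$-orbifold with only finitely many singular points $p_1, \dots, p_\ell$.

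Next I would pick almost minimizing domains $\Omega_i \subset X_i$ for the isoperimetric quotient and analyse where $\partial \Omega_i$ concentrates. Since the convergence is smooth away from the singular points, on any fixed compact smooth piece $\hat X \setminus \bigcup_j B(p_j, \rho)$ one has a positive isoperimetric constant; hence after passing to a further subsequence $\partial \Omega_i$ must concentrate in arbitrarily small neighborhoods of the singular set. Then build the bubble tree at each $p_j$ as described in the paper, invoking Lemma~\ref{lemma: gap} for the neck analysis. Every vertex of the tree is a complete non-flat Ricci-flat ALE orbifold with a positive, rescaling invariant isoperimetric constant, and the depth and number of bubbles are bounded in terms of $E$ and $\kappa$ by an energy gap argument.

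Finally, I would transplant $\Omega_i$ through the blowup hierarchy: at each scale $\lambda_i^{(j,k)}$ realizing a vertex of the tree, either a definite portion of $\partial \Omega_i$ survives in the rescaled limit, producing a domain in the corresponding bubble that violates its positive isoperimetric constant; or $\partial \Omega_i$ lies in one of the annular neck regions connecting two successive vertices (or the deepest bubble with the orbifold $\hat X$). The main obstacle is this last case. Here Lemma~\ref{lemma: gap} shows that, under the natural rescaling, the neck is $C^\infty$-close to an annular region in a flat cone that is a finite quotient of Euclidean space, and a standard perturbation argument propagates the universal positive isoperimetric constant of the model to the neck for large $i$. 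Since the tree has only finitely many vertices and necks, exhausting all scales forces the isoperimetric quotient of $\Omega_i$ to remain bounded below, contradicting $\mathbf{I}((X_i, g_i(0))) \to 0$. The delicate point throughout is the simultaneous choice of blowup scales at all singular points and all tree vertices, so that at least one scale captures a definite fraction of the boundary mass of $\Omega_i$; this is the recurring nested-contradiction strategy emphasized in the introduction.
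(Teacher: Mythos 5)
Your overall plan --- argue by contradiction, invoke Theorem~\ref{theoremin: centerwcpt} to converge to a compact $C^0$-orbifold, observe that the limit has a positive isoperimetric constant so almost-extremal domains must degenerate near the singular set, build the bubble tree with Lemma~\ref{lemma: gap}, and terminate via the energy gap --- is exactly the paper's route: Theorem~\ref{theorem: centersob} is reduced to Theorem~\ref{theorem: refsob}, whose engine is Lemma~\ref{lemma: samebubblesob}. However, your ``transplantation'' dichotomy hides a genuine gap. You posit that $\partial\Omega_i$ either concentrates at a tree vertex or concentrates inside a neck, but an almost-extremal $\Omega_i$ will typically straddle a neck, with volume and boundary distributed across several scales at once. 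When you cut $\Omega_i$ along a separating sphere $\partial B(p_{\alpha,i},\delta)$ you create new boundary --- the interface --- and the isoperimetric constant of the neck annulus by itself says nothing about this interface area. What is actually needed, and what the paper provides in Lemma~\ref{lemma: c0neck}, are the quantitative area-ratio inequalities (\ref{eqn: neckar}) and (\ref{eqn: neckaro}), established via the projection-onto-cone-link argument of Lemma~\ref{lemma: esob}: they bound the interface area $A^{center}$ by the part of $\partial\Omega_i$ crossing the neck. It is precisely this control, combined with the isoperimetric bounds of the inner ball and the outer orbifold, that lets Lemma~\ref{lemma: samebubblesob} force the degeneracy down one level of the tree. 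Without such a quantitative mechanism, ``at least one scale captures a definite fraction of the boundary mass'' is an assertion, not an argument; flagging the scale-selection as the delicate point is accurate, but the paper's resolution of it is the area-ratio inequality, not a perturbation of the neck's isoperimetric constant.
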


 Actually, this estimate only depends on the bubble tree structure.
 So it can be generalized to estimates on moduli spaces of other
 critical metrics without too much modification.

 \vspace{0.2in}

 Perelman (c.f.~\cite{SeT}) proved that scalar curvature is uniformly bounded
 along \KRf $\{(M^n, g(t)), 0 \leq t < \infty\}$( $n$ is the complex dimension).
 If $\int_M |Rm|^n \omega_t^n$ is uniformly bounded, we can apply
 Theorem~\ref{theoremin: centerwcpt} to obtain weak compactness along \KRfd
 However, if $n \geq 3$ and $\int_M |Rm|^n \omega_t^n$ is uniformly bounded, it is proved in~\cite{RZZ} that Riemannian curvature
 is actually uniformly bounded on \KRfd  Therefore every sequence $(M, g(t_i))$ must subconverge to a smooth manifold $(\hat{M}, \hat{g})$.
 If $n=2$,  $\int_M |Rm|^2 \omega_t^2$ bounded is a natural condition.
 Actually, since $\int_M (|Rm|^2 -R^2) \omega_t^2$ is a topological invariant
 (c.f.~\cite{Ca82}) and scalar curvature is uniformly bounded along the flow,
 we know $\int_M |Rm|^2 \omega_t^2$ is uniformly bounded.
 Applying Theorem~\ref{theoremin: centerwcpt} and Theorem~\ref{theoremin: centersob}
 on 2-dimensional \KRfc we have

 \begin{theoremin}
    Suppose $\{(M, g(t)), 0 \leq t < \infty\}$ is a \KRf solution on Fano
    surface $M$.  Then the isoperimetric constant
    $\mathbf{I}(M, g(t))$ is uniformly bounded from below along this flow.
    Moreover, for every sequence $t_i \to \infty$, by passing to subsequence if necessary,
    we have
    \begin{align*}
       (M, g_i(t))   \sconv (\hat{M}, \hat{g})
    \end{align*}
    where $(\hat{M}, \hat{g})$ is a
    $C^{\infty}$-orbifold satisfying K\"ahler Ricci soliton equation.
 \label{theoremin: krfcpt}
 \end{theoremin}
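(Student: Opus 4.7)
The strategy is to reduce to the weak compactness Theorem~\ref{theoremin: centerwcpt} and the isoperimetric bound Theorem~\ref{theoremin: centersob}. For any sequence $t_i\to\infty$, set $g_i(t):=g(t_i+t)$ for $t\in[-1,1]$. The normalized \KRf on a Fano surface satisfies $\partial_t g_i = -Ric_{g_i}+g_i$ (so $c_0=1$); Perelman's fundamental estimates (\cite{SeT}) supply the uniform $|R|$-bound and the $\kappa$-noncollapsing. Since $\int_M(|Rm|^2-R^2)\omega_t^2$ is a Chern number and $|R|$ is bounded, $\int_M|Rm|^2\omega_t^2$ is uniformly controlled; hence $\{(M, g_i(t))\}\subset \mathscr{M}(4, 1, \sigma, \kappa, E)$. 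Theorem~\ref{theoremin: centerwcpt} then yields, after extracting a subsequence, $(M, g_i(0))\sconv (\hat M,\hat g)$ in Cheeger-Gromov sense for some $C^0$-orbifold $\hat M$. For the isoperimetric conclusion, I would invoke Perelman's uniform diameter estimate along the \KRf on a Fano manifold to place each $(M, g(t))$ into the scope of Theorem~\ref{theoremin: centersob}, giving $\mathbf{I}(M, g(t))\ge\iota>0$ uniformly in $t$.

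The heart of the argument is to identify $\hat g$ as a gradient shrinking \Kahler Ricci soliton. The plan is to exploit monotonicity of Perelman's $\mu$-functional along the normalized \KRf on a Fano manifold: $\mu(g(t),\tau(t))$ is nondecreasing and bounded from above, hence converges. Since $\frac{d}{dt}\mu = 2\int_M \bigl|Ric+\nabla^2 f_t - \tfrac{1}{2\tau_t}g(t)\bigr|^2 e^{-f_t}\,dV$, the space-time $L^2$-norm of the soliton defect over $[t_i,t_i+1]$ must tend to zero. Combining this with smooth Cheeger-Gromov convergence on the regular part of $\hat M$ and with convergence of the entropy minimizers $f_{t_i}\to \hat f$, one obtains the shrinking soliton equation for $\hat g$ on the regular set; \Kahler-ness survives the limit since the complex structures converge smoothly off the finite singular set.

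To upgrade $\hat M$ from $C^0$- to $C^\infty$-orbifold, the soliton equation is elliptic in a suitable gauge, so the classical removable-singularity arguments for Einstein-type systems (\cite{BKN},~\cite{Tian90}) apply at the orbifold points and make $\hat g$ real-analytic in harmonic coordinates. The main obstacle will be the soliton identification step: one must establish uniform control and convergence of the entropy minimizers $f_{t_i}$---in particular $L^\infty$-bounds and tightness of $e^{-f_{t_i}}$---and pass to the limit in the defect integral across the finite orbifold singularities without losing mass. This is precisely where the $\epsilon$-regularity highlighted in the introduction and the uniform Sobolev inequality from Theorem~\ref{theoremin: centersob} enter decisively.
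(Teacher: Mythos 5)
Your overall route matches the paper's: reduce to Theorems~\ref{theoremin: centerwcpt} and~\ref{theoremin: centersob} via Perelman's a priori scalar-curvature, diameter, and noncollapsing estimates together with the Chern--Gauss--Bonnet bound on energy, then use $\mu$-functional monotonicity to identify the limit as a shrinking K\"ahler Ricci soliton and remove the orbifold singularities. However, you flag the soliton-identification step---uniform control and convergence of the entropy minimizers $f_{t_i}$ and passage to the limit in the defect integral past the orbifold points---as the ``main obstacle'' and leave it open. That is a genuine gap, and the paper closes it by a concrete mechanism you do not supply.

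The resolution in the paper is two-fold. First, for the soliton equation on the regular set the paper does not re-run the defect-integral argument from scratch: it cites Sesum~\cite{Se1} and observes that her proof carries over verbatim once a uniform Sobolev constant is available (which Theorem~\ref{theoremin: centersob} now gives, replacing the Ricci-bound hypothesis Sesum needed). Second, and more importantly for the $C^\infty$-orbifold upgrade, the $\mu$-minimizer $f$ is identified with $-u$, where $u$ is the normalized Ricci potential; Perelman's estimates give a uniform $C^1$-bound on $u_i=u(t_i)$, hence on $f$, and in particular a uniform bound on $|\nabla f|$. It is this pointwise $C^1$-control---not $L^\infty$-bounds or tightness of $e^{-f_{t_i}}$ in the abstract---that lets Uhlenbeck's removable singularity technique (as in~\cite{CS}) be applied at the orbifold points. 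Your phrase ``classical removable-singularity arguments for Einstein-type systems'' glosses over exactly this: without the gradient bound on the potential the soliton system is not in a form where those arguments directly apply.
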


    In order to improve the limit $C^0$-orbifold  $\hat{M}$ to be a
    $C^{\infty}$-orbifold, we use the fact that $\hat{M}$ satisfies
    K\"ahler Ricci soliton equation (c.f.~\cite{Se1})
    and this improvement is a
    standard application of Ulenbeck's removing singularity technique.\\

  \begin{remarkin} Incidently,
  Theorem~\ref{theoremin: krfcpt} verifies Hamilton-Tian conjecture
  in the case of Fano surfaces.   This might be viewed as a first step towards
  the understanding of Hamilton-Tian conjecture in general dimension.
  In an unpublished work(c.f.~\cite{Se1},~\cite{FZ}),  Tian has pointed out earlier
  the sequential convergence of the 2-dimensional \KRf to K\"ahler
  Ricci soliton orbifolds under the Gromov-Hausdorff topology.
  Under the extra condition that Ricci curvature
  is uniformly bounded along the flow, same conclusion as Theorem~\ref{theoremin: krfcpt}
  was proved  by Natasa Sesum in~\cite{Se1}.
  However, for our purpose of
  proving Calabi Conjecture on Fano surface by flow method,
  these convergence theorems are not sufficient (We need Cheeger-Gromov
  convergence without Ricci curvature bound condition).
 \end{remarkin}

  As every gradient shrinking soliton can be looked as a central time
  slice of a Ricci flow solution, we can apply
  Theorem~\ref{theoremin: centerwcpt} and
  Theorem~\ref{theoremin: centersob} to obtain a compactness
  theorem of gradient shrinking solitons.
 \begin{theoremin}
  Suppose $(X_i^m, g_i)$ is a sequence of compact gradient shrinking solitons,
 every $(Y^m,g)$ in
 $\displaystyle  \{(X_i^m, g_i)\}_{i=1}^{\infty}$ satisfies
 \begin{itemize}
  \item $Ric + \nabla^2 f - g=0$ for $f \in C^{\infty}(Y)$.
  \item $R \leq \sigma$.
  \item $\frac{\Vol(B(y, r))}{r^m} \geq \kappa$ for every $y \in Y$ and
  $r \in (0,1)$.
  \item $\int_Y |Rm|^{\frac{m}{2}}d\mu \leq E$.
  \item $\Vol(Y) \leq V$.
  \end{itemize}
  Then by passing to subsequence if necessary, we have
  \begin{align*}
    (X_i^m, g_i) \sconv (\hat{X}^m, \hat{g})
  \end{align*}
  where $(\hat{X}^m, \hat{g})$ is a compact $C^{\infty}$-orbifold satisfying
  all the conditions listed above.  If $m$ is odd, then $\hat{X}^m$
  is a smooth manifold.
 \end{theoremin}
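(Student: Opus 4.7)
My plan is to realize each compact gradient shrinking soliton as the time-zero slice of a self-similar solution to a normalized Ricci flow lying in the moduli space $\mathscr{M}$, then invoke Theorem~\ref{theoremin: centerwcpt} to extract an orbifold limit, and finally upgrade the regularity using the soliton equation itself.

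For each $(X_i, g_i, f_i)$ with $Ric_{g_i} + \nabla^2 f_i - g_i = 0$, let $\phi_{i,t} : X_i \to X_i$ denote the one-parameter family of diffeomorphisms generated by the time-independent vector field $\tfrac{1}{2}\nabla_{g_i} f_i$, and set $g_i(t) := \phi_{i,t}^* g_i$ for $t \in [-1,1]$. Since $L_{\nabla f_i} g_i = 2\nabla^2 f_i$, the soliton equation gives
\begin{align*}
   \partial_t g_i(t)
   = \phi_{i,t}^* \nabla^2 f_i
   = \phi_{i,t}^*(-Ric_{g_i} + g_i)
   = -Ric_{g_i(t)} + g_i(t),
\end{align*}
so $\{(X_i, g_i(t))\}$ is a normalized Ricci flow with $c_0 = 1$. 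Because $g_i(t)$ is isometric to $g_i$ for every $t$, the bounds $|R| \leq \sigma$, the volume non-collapsing at all scales $r \in (0, 1]$, and $\int |Rm|^{m/2} d\mu \leq E$ transfer with identical constants, so the family lies in $\mathscr{M}(m, 1, \sigma, \kappa, E)$.

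Applying Theorem~\ref{theoremin: centerwcpt} to this family and passing to a subsequence, I obtain $(X_i, x_i, g_i) = (X_i, x_i, g_i(0)) \sconv (\hat{X}, \hat{x}, \hat{g})$ in the pointed Cheeger-Gromov sense to a $C^0$-orbifold, smooth when $m$ is odd. To upgrade pointed convergence to uniform convergence with a compact limit, I combine the ambient volume bound $\Vol(X_i) \leq V$ with the non-collapsing at scale one: any maximal $1$-separated subset of $X_i$ contains at most $V/\kappa$ points, so $\diam(X_i) \leq C(V, \kappa)$, forcing $\hat{X}$ to be compact.

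It remains to promote the $C^0$-orbifold to $C^\infty$ and confirm that the limit is a shrinker. After normalizing $f_i$ by Perelman's condition $\int_{X_i} e^{-f_i} d\mu_{g_i} = (2\pi)^{m/2}$, Hamilton's identity that $R_{g_i} + |\nabla f_i|^2 - 2 f_i$ is constant on $X_i$ together with the scalar curvature bound yields uniform $C^0$ estimates on $f_i$; elliptic bootstrapping through the soliton equation then produces $C^k_{\text{loc}}$ convergence $f_i \to \hat{f}$ on the smooth locus of $\hat{X}$, where $\hat{Ric} + \nabla^2 \hat{f} - \hat{g} = 0$. Near each orbifold singularity, the pair $(g, f)$ satisfies a second-order elliptic system in harmonic gauge with $L^{m/2}$ curvature control in each chart, so Uhlenbeck's removable singularity theorem promotes the regularity to $C^\infty$ in orbifold coordinates; the four geometric constraints pass to the limit by continuity and semicontinuity. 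The main obstacle is this last step: arranging the Cheeger-Gromov diffeomorphisms produced by Theorem~\ref{theoremin: centerwcpt} so that $f_i$ converges in a gauge compatible with the metric convergence, and verifying that the harmonic-gauge soliton system extends across the conic orbifold points with enough regularity for Uhlenbeck's technique to apply.
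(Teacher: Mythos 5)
Your proposal follows essentially the same route as the paper's proof of its Theorem on weak compactness of gradient shrinking solitons: realize each soliton as the $t=0$ slice of a self-similar normalized flow (so the $R$, volume-ratio, and energy bounds hold at every time slice), invoke Theorem~\ref{theoremin: centerwcpt} to get a $C^0$-orbifold limit, bound the diameter using $\Vol \leq V$ and noncollapsing so the limit is compact, establish uniform $C^1$-bounds on the potentials $f_i$, pass them to the limit on the smooth locus, and finish with Uhlenbeck's removable-singularity argument near the orbifold points. One remark: the paper also notes that $R \geq 0$ is automatic for a compact shrinker, which is needed to place the flows in $\mathscr{M}$ since the moduli space requires $|R| \leq \sigma$ while the hypothesis only gives $R \leq \sigma$; you should make this observation explicit.

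The one place where your sketch is too quick is the claim that the normalization $\int e^{-f_i} d\mu = (2\pi)^{m/2}$, Hamilton's identity $R + |\nabla f|^2 - 2f = \text{const}$, and the $R$-bound already yield uniform $C^0$ bounds on $f_i$. This is not immediate: the constant itself must be controlled. In the paper that constant is $-2\mu(g_i, \tfrac12)$, and its boundedness is the content of Steps 1--2 of the paper's proof: diameter is first bounded from $\Vol \leq V$ and noncollapsing, then Theorem~\ref{theorem: centersob} gives a uniform isoperimetric (hence Sobolev) constant, which via Ecker's observation (Lemma 8 of~\cite{CS}) gives a lower bound on $\mu$; an upper bound comes from a constant test function. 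Only then do (\ref{eqn: muexpress}) and (\ref{eqn: infsup}) deliver the $C^0$ and $C^1$ bounds on $f_i$ in Steps 3--4. (Alternatively, one can bound the constant directly by evaluating Hamilton's identity at the extrema of $f_i$, where $\nabla f_i = 0$, and using $0 \leq R \leq \sigma$ together with $\kappa \leq \Vol(X_i) \leq V$ to pin down $\inf f_i$ and $\sup f_i$ via the normalization; this avoids the detour through Sobolev constants, but either way the step cannot be skipped.) Once the $C^1$-bound on $f_i$ is in hand, your remaining outline--local smooth convergence of $f_i$ on the regular part and Uhlenbeck's trick at the singularities--matches the paper's conclusion.
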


  \begin{remarkin}
  This theorem can be regarded as a reorganization of the main theorem
  of~\cite{We}, which is the first weak compactness theorem of
  solitons without Ricci curvature bounded condition. With this Ricci bounded condition,
  weak compactness theorems have been studied in ~\cite{CS} and~\cite{Zhx}.
  \end{remarkin}
 \vspace{0.2in}

  There are several new ingredients in this paper.   First, we
  prove the energy concentration without the direct use of Moser
  iteration on the flow.  Instead, we find the  relation
  between the energy concentration of Ricci flow
  and the energy concentration of Einstein metrics, which are the critical metrics
  of the Ricci flow. As the energy concentration of Einstein metrics is well known,
  we can use this relation to prove energy concentration property of the Ricci flow.
  This method should apply on general geometric flows.
  Second, we find the ``backward pseudolocality"
  under special conditions.   It means that a very nonflat part cannot become almost Euclidean in
  a short time under the flow.  It will be interesting to see exactly what conditions can guarantee
  the happening of this ``backward pseudolocality".
  Third, we can control the ``neck" structure in bubbles without knowing precisely the order
  of curvature decaying around singularities. This gives us more freedom to build up the bubble tree.
    Fourth, we're able to control the isoperimetric constants by bubble tree analysis. Actually, after
  the bubble tree is established, the geometry at different levels of the bubble tree
  can be estimated by each other. Therefore every rescaling invariant geometry of the moduli space
  can be controlled by it's deepest bubble and ``neck" structure.
  In particular, the isoperimetric constants can be controlled.\\

  The organization of this paper is as follows.  In section 2, we
  describe some known results important to us and set up some
  notations.  In section 3, we define refined sequence and
  prove energy concentration, backward pseudolocality,
  volume ratio upper bound and weak compactness for refined sequence.
   Then in section 4, we apply these basic properties of refined sequence to show
  the main theorems in this paper.\\

 \begin{remarkin}
  We will study space of Ricci flows with weaker constraints in subsequent papers.
 \end{remarkin}

 \noindent {\bf Acknowledgment}
 Both authors would like to thank S.K.Donaldson and G.Tian for many
 insightful discussions and for their support.
 The second author would like to thank J. Cheeger,
 B.  Chow, K. Grove, J.P. Bourguignon for their interests in this work.

 \section{Preliminaries}

 \subsection{Setup of Notations}

  In this subsection, we fix our terminology to avoid confusion.

  \begin{definition}
   A $C^{\infty} (C^0)$-orbifold $(\hat{X}^m, \hat{g})$ is a topological
   space which is a smooth manifold with a smooth Riemannian metric
   away from finitely many singular points. At every singular point,
   $\hat{X}$ is locally diffeomorphic to a cone over $S^{m-1} / \Gamma$
   for some finite subgroup $\Gamma \subset SO(m)$. Furthermore, at
   such a singular point, the metric is locally the quotient of a
   smooth (continuous) $\Gamma$-invariant metric on $B^{m}$ under the orbifold
   group $\Gamma$.

     A $C^{\infty}(C^0)$-multifold $(\tilde{X}, \tilde{g})$ is a finite union
     of $C^{\infty}(C^0)$-orbifolds after identifying finite points. In other words,
     $\displaystyle \tilde{X}= \coprod_{i=1}^{N} \hat{X}_i / \sim$ where every
     $\displaystyle \hat{X}_i$ is an orbifold, the relation $\sim$ identifies
     finite points of $\displaystyle \coprod_{i=1}^{N} \hat{X}_i$.

     For simplicity, we say a space is an orbifold (multifold) if it
     is a $C^{\infty}$-orbifold ($C^{\infty}$-multifold).
 \end{definition}

 \begin{definition}
  Suppose $(X_i, x_i, g_i)$ is a sequence of pointed complete Riemannian
  manifold,  $(\tilde{X}, \tilde{x}, \tilde{g})$ is a complete
  multifold.  We denote
  \begin{align*}
     (X_i, x_i, g_i) \sconv (\tilde{X}, \tilde{x}, \tilde{g})
  \end{align*}
  if $(X_i, x_i, g_i)$ converges to $(\tilde{X}, \tilde{x}, \tilde{g})$
  in Gromov-Hausdorff topology and the convergence is smooth away
  from singularities of $\tilde{X}$.  In other words, for every
  compact set $K$ contained in the smooth part of $\tilde{X}$, there are
  diffeomorphisms $\varphi_i: K \to \varphi_i(K) \subset X_i$ such that
  $ \varphi_i^* g_i  \sconv g $ on $K$.  We also call this
  convergence as convergence in Cheeger-Gromov sense.

   Similarly, we can define $C^{1, \gamma}$-convergence.
 \end{definition}

 \begin{definition}
 For a compact Riemannian manifold $X^m$ without boundary, we define its isoperimetric
 constant as
     \begin{align*}
    \mathbf{I}(X)  \triangleq
    \inf_{\Omega} \frac{\Area(\partial \Omega)}{\min\{\Vol(\Omega), \Vol(X \backslash \Omega)\}^{\frac{m-1}{m}}}
  \end{align*}
    where $\Omega$ runs over all domains with rectifiable boundaries in $X$,
    $\Area$ means the $(m-1)$-dimensional volume.

    For a complete Riemannian manifold $X^m$ with boundary, we define its isoperimetric
 constant as
  \begin{align*}
    \mathbf{I}(X)  \triangleq
    \inf_{\Omega} \frac{\Area(\partial \Omega)}{\Vol(\Omega)^{\frac{m-1}{m}}}
  \end{align*}
    where $\Omega$ runs over all domains with rectifiable boundaries
    in the interior of $X$.

    Similarly, we can define isoperimetric constant for an orbifold.
 \end{definition}

 \begin{definition}
   A geodesic ball $B(p, \rho)$ is called $\kappa$-noncollapsed if
  $\displaystyle  \frac{\Vol(B(q, s))}{s^m} > \kappa $
  whenever $B(q, s) \subset B(p, \rho)$.

  A Riemannian manifold $X^m$ is called $\kappa$-noncollapsed on
  scale $r$ if every geodesic ball $B(p, \rho) \subset X$ is
  $\kappa$-noncollapsed whenever $\rho \leq r$.

   A Riemannian manifold $X^m$ is called $\kappa$-noncollapsed if it
   is $\kappa$-noncollapsed on every scale $r \leq \diam (X^m)$.

 \end{definition}

 \begin{definition}
   $\int_X |Rm|^{\frac{m}{2}} d\mu$ is called the energy of the
   Riemannian manifold $(X^m, g)$.
 \end{definition}

 \begin{definition}
  We denote $\omega(m)$ as the volume of the standard ball in
  $\R^m$.  $m \omega(m)$ is the ``area" of the standard sphere in
  $\R^m$.
 \end{definition}

   \subsection{Ricci Flow}

  Perelman's improved peudolocality theorem (Theorem 10.3 in \cite{Pe1})
  is very important to our arguments. We list it below.

 \begin{theorem}[\textbf{Perelman's Improved Pseudolocality Theorem}]
  There exist $\eta, \delta>0$ with the following property.
  Suppose $g_{ij}(t)$ is a smooth solution to the Ricci flow on $[0, (\eta
  r_0)^2]$, and assume that at $t=0$ we have $|Rm|(x) \leq r_0^{-2}$
  in $B(x_0, r_0)$, and $\Vol B(x_0, r_0) \geq (1-\delta) \omega(m) r_0^m$, where $\omega(m)$
  is the volume of the unit ball in $\R^m$. Then the estimate
  $|Rm|_{g(t)}(x) \leq (\eta r_0)^{-2}$ holds whenever $0 \leq t \leq (\eta
  r_0)^2$,  $d_{g(t)}(x, x_0) < \eta r_0$.
 \label{theorem: pseudo}
 \end{theorem}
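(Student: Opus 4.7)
The plan is to follow Perelman's original contradiction strategy and reduce the statement to the standard pseudolocality theorem (Theorem 10.1 of~\cite{Pe1}), which hypothesizes a nearly Euclidean isoperimetric constant on the initial ball rather than curvature plus near-maximal volume. The whole point of the ``improved'' version is that the curvature and volume hypotheses are easier to verify in practice (e.g.\ along Ricci flow blowups), so the real content is showing that they force the hypothesis of Theorem 10.1.

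First I would normalize so that $r_0 = 1$ by parabolic rescaling; this is legal because both assumptions and conclusion are scale invariant. Next, from $|Rm|_{g(0)} \leq 1$ on $B(x_0,1)$ one gets two-sided Bishop--Gromov--type volume comparison for geodesic balls centered anywhere in $B(x_0, 1/2)$ and of radius $\leq 1/2$: namely $\Vol B(x,s)$ is sandwiched between the space-form comparison values of curvature $\pm 1$. Combined with the near-maximal hypothesis $\Vol B(x_0,1) \geq (1-\delta)\omega(m)$, a volume packing / annulus subtraction argument forces, for any $\delta' > 0$, every sub-ball $B(x,s) \subset B(x_0, 1/2)$ with $s \leq 1/2$ to satisfy $\Vol B(x,s) \geq (1-\delta')\omega(m) s^m$ provided $\delta$ is chosen sufficiently small depending on $\delta'$. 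This is the volume near-rigidity step, and it is the step that crucially uses the curvature upper bound rather than just a lower Ricci bound.

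Second, I would upgrade volume near-rigidity to an almost Euclidean isoperimetric constant on sub-balls of $B(x_0,1/2)$. The cleanest way is by contradiction and compactness: a sequence of counterexamples $(B(x_k, s_k), g_k)$ with $|Rm| \leq 1$ and $\Vol \geq (1-1/k)\omega(m) s_k^m$, after rescaling $s_k$ to unit, subconverges in $C^{1,\gamma}$ (by Cheeger--Gromov compactness under bounded curvature and noncollapsing) to the unit Euclidean ball; continuity of the isoperimetric constant under such convergence then gives the desired estimate. This provides exactly the hypothesis needed to invoke Perelman's Theorem 10.1 at each sub-ball $B(x, s) \subset B(x_0, 1/2)$.

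Third, I would apply Theorem 10.1 of~\cite{Pe1} on a suitably chosen sub-ball (say $B(x_0, 1/2)$): it yields constants $\eta_0, \delta_0$ such that $|Rm|_{g(t)}(x) \leq (\eta_0 \cdot \tfrac12)^{-2}$ for $t \in [0, (\eta_0/2)^2]$ and $d_{g(t)}(x,x_0) < \eta_0/2$. Setting $\eta = \eta_0/2$ and choosing $\delta$ small enough to trigger the near-isoperimetric conclusion of the previous paragraph gives the claim. The main obstacle, and the step where one must be careful, is the first reduction: translating a single volume lower bound at the ambient scale into a uniform near-maximal volume bound at all interior scales without losing constants — this is where the sharp Bishop--Gromov comparison (not merely the Ricci-lower-bound version) under the full $|Rm|$ bound is essential, and where the compactness step silently uses the noncollapsing together with curvature bounds to extract a Euclidean limit.
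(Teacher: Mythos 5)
The paper does not prove this statement: it is quoted verbatim as Theorem 10.3 of Perelman's paper \cite{Pe1} and used as a black box, the only added remark being that it extends to the normalized flow $\D{g_{ij}}{t} = -R_{ij} + c g_{ij}$ with $0 \leq c \leq 1$. So there is no argument in the paper to compare yours against; what follows is feedback on your sketch on its own terms.

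Your overall strategy --- reducing the curvature-plus-volume hypotheses to the almost-Euclidean isoperimetric hypothesis of Theorem 10.1 of \cite{Pe1} --- is the standard route and is sound in outline. The gap is in Step 1. A Bishop--Gromov annulus-subtraction argument produces only a \emph{fixed} multiplicative loss, not one that tends to zero with $\delta$: from $\mathrm{Ric} \geq -(m-1)$ together with near-maximal ambient volume, the best such an argument yields for a sub-ball is $\Vol B(x,s) \geq c(m)\,\omega(m)\,s^m$ with $c(m) < 1$ a fixed dimensional constant (roughly $\omega(m)/V_{-1}(\cdot)$, which is strictly below $1$ at any fixed scale). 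There is no ``sharp Bishop--Gromov under the full $|Rm|$ bound'' that upgrades this; the lower volume comparison you implicitly need (comparison against curvature $+1$, i.e.\ G\"unther) is valid only within the injectivity radius, which your hypotheses do not directly control. The missing ingredient is Cheeger's injectivity radius estimate: from $|Rm| \leq 1$ and $\Vol B(x_0,1) \geq (1-\delta)\omega(m)$ one first extracts a uniform injectivity radius lower bound on $B(x_0, 1/2)$, after which one can run G\"unther, or --- more efficiently --- do Steps 1 and 2 in a single compactness argument. Note the injectivity radius bound is also what makes your compactness step in Step 2 work correctly: without it, the $C^{1,\gamma}$ limit need not be the flat Euclidean ball, and bounded curvature plus near-maximal volume at scale $1$ is \emph{not} a Bishop--Gromov rigidity situation, since $\omega(m)$ sits strictly below the hyperbolic comparison volume $V_{-1}(1)$. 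Once the almost-Euclidean isoperimetric bound on a definite sub-ball is established, your final application of Theorem 10.1 is fine.
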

  It is not hard to see that this theorem holds for a normalized Ricci flow solution
  $\D{g_{ij}}{t}=-R_{ij}+cg_{ij}$ when $0 \leq c \leq 1$.\\

 Using the same method as in Theorem 10.1 of~\cite{Pe1}, we can pick
 up ``good" points under normalized Ricci flow.

 \begin{lemma}[\textbf{Perelman's Point-selecting Method}]
 $\{( X^m,p, g(t)), t \in I \subset \R \}$  is a parabolic normalized
 Ricci flow solution:
 \begin{align*}
    \D{g_{ij}}{t} = -R_{ij} + cg_{ij}, c \geq 0.
 \end{align*}
 It satisfies
 \begin{align*}
      |Rm|_{g(t_0)}(x_0) > \frac{\xi}{t_0}+ \eta^{-2},  \quad  d_{g(t_0)}(p,x_0) <
      \eta, \quad 0 < t_0 < \eta^2 <1
 \end{align*}
 at some point $(x_0, t_0)$, then there is a point $(q,s)$ such that
 the following properties hold.
 \begin{enumerate}
 \item{distance control:}  \quad $d_{g(s)}(p,q) < 2m \xi, \quad 0< s <t_0$.
 \item{parabolic curvature control:}  \quad
 $ \displaystyle \sup_{B_{g(s)}(q, \frac{1}{10}A Q^{-\frac12}) \times
 (s-\frac12 \xi Q^{-1}, s]} |Rm| < 4|Rm|_{g(s)}(q)$.
 \end{enumerate}
 Here $A, \eta, \xi$ are constants satisfying $A>1$,
 $\xi<\frac{1}{200m}$ and $\eta=\frac{\xi}{A}$.
  \label{lemma: choosepoints}
 \end{lemma}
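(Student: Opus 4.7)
The plan is to follow Perelman's point-picking iteration scheme. I would construct a sequence of spacetime points $\{(x_k,t_k)\}_{k\geq 0}$ inductively: given $(x_k,t_k)$ with $Q_k:=|Rm|_{g(t_k)}(x_k)$, if property~2 already holds at $(x_k,t_k)$ with $Q=Q_k$, the iteration terminates and one sets $(q,s)=(x_k,t_k)$. Otherwise the supremum of $|Rm|$ on the parabolic neighborhood $B_{g(t_k)}(x_k,\tfrac{1}{10}AQ_k^{-1/2})\times(t_k-\tfrac{1}{2}\xi Q_k^{-1},t_k]$ is at least $4Q_k$, so one can select $(x_{k+1},t_{k+1})$ in the closure of this set with $|Rm|_{g(t_{k+1})}(x_{k+1})\geq 4Q_k$. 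In particular $Q_{k+1}\geq 4Q_k$ and by induction $Q_k\geq 4^k Q_0$.

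Three consequences flow directly from this geometric growth. First, the time decrement satisfies $t_k-t_{k+1}\leq \tfrac{1}{2}\xi Q_k^{-1}\leq \tfrac{1}{2}\xi\cdot 4^{-k}Q_0^{-1}$, so telescoping gives $t_0-t_k\leq \tfrac{2}{3}\xi Q_0^{-1}$; since $Q_0>\xi/t_0$ by hypothesis, this is strictly less than $\tfrac{2}{3}t_0$, hence $t_k>t_0/3>0$ uniformly. Second, the spatial step at time $t_k$ obeys $d_{g(t_k)}(x_k,x_{k+1})\leq \tfrac{1}{10}A Q_k^{-1/2}\leq \tfrac{1}{10}A\cdot 2^{-k}Q_0^{-1/2}\leq \tfrac{1}{10}A\eta\cdot 2^{-k}=\tfrac{1}{10}\xi\cdot 2^{-k}$, using $Q_0>\eta^{-2}$ and $A\eta=\xi$. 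Third, termination is forced: since the flow is smooth and remains inside a compact spacetime region containing $p$ with time slices in $[t_0/3,t_0]$, curvature is uniformly bounded there, contradicting $Q_k\to\infty$ unless the process halts at some finite index $K$. Setting $(q,s):=(x_K,t_K)$ then secures property~2 by the stopping criterion.

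For property~1, the natural approach is to prove inductively that $d_{g(t_k)}(p,x_k)<2m\xi$. The triangle inequality gives $d_{g(t_{k+1})}(p,x_{k+1})\leq d_{g(t_{k+1})}(p,x_k)+d_{g(t_{k+1})}(x_k,x_{k+1})$, but the spatial step was bounded at time $t_k$ rather than $t_{k+1}$, and the base distance $d_{g(t)}(p,x_k)$ itself depends on $t$. Both issues are handled by Hamilton's distance distortion estimate for the Ricci flow, which bounds the rate of change of a distance along the flow in terms of the dimension $m$, a local Ricci bound, and a length scale $r_0$. Choosing $r_0\sim Q_k^{-1/2}$ at each step and exploiting the $O(Q_k)$ curvature bound available on the parabolic neighborhoods already traversed, the distortion over an interval of length $\tfrac{1}{2}\xi Q_k^{-1}$ contributes a term of order $\xi\cdot 2^{-k}$ with a dimensional constant. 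Summing the geometric series $\sum 2^{-k}\leq 2$ and combining with the base estimate $d_{g(t_0)}(p,x_0)<\eta<\xi$ yields the required bound $2m\xi$.

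The principal obstacle is the distance control in the last step. Ricci curvature is not assumed to be bounded globally, so each application of Hamilton's distortion inequality must draw its effective curvature bound from the parabolic neighborhoods selected by the iteration itself rather than from the hypotheses. The dimension factor $2m$ in the conclusion is exactly what Hamilton's distortion formula produces, and carefully tracking this dimensional constant through the inductive comparison of distances at the times $t_k$ and $t_{k+1}$ is the most delicate part of the argument.
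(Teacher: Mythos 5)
The paper supplies no proof of this lemma, only a reference to Perelman's Theorem~10.1, so your proposal is a natural attempt to fill that in. The iteration-and-termination skeleton is the right one, and the arithmetic for the time telescoping ($t_0-t_K\leq\frac{2}{3}\xi Q_0^{-1}<\frac{2}{3}t_0$) and for the spatial step sizes ($\frac{1}{10}AQ_k^{-1/2}\leq\frac{1}{10}\xi\cdot 2^{-k}$) is correct.

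The distance control, however, has a genuine gap. You propose to invoke Hamilton's distance-distortion estimate ``exploiting the $O(Q_k)$ curvature bound available on the parabolic neighborhoods already traversed,'' but during the iteration there is no such upper bound: the process moves from $(x_k,t_k)$ to $(x_{k+1},t_{k+1})$ precisely \emph{because} $\sup|Rm|\geq 4Q_k$ on that parabolic neighborhood, so curvature there is only known to be large at one point and may be arbitrarily large elsewhere. Moreover, a Hamilton-type distortion estimate (Perelman's Lemma~8.3(a)) requires a Ricci bound near both endpoints of the segment whose length is being tracked, and nothing in the hypotheses controls curvature near the base point $p$. So the inequalities you would need to compare $d_{g(t_{k+1})}(p,x_k)$ with $d_{g(t_k)}(p,x_k)$, or $d_{g(t_{k+1})}(x_k,x_{k+1})$ with $d_{g(t_k)}(x_k,x_{k+1})$, are simply not available at that stage.

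Perelman's scheme avoids this problem entirely by building the distance constraint into the selection criterion, measured at the \emph{evolving} time: one chooses $(x_{k+1},t_{k+1})$ subject to $d_{g(t_{k+1})}(x_{k+1},p)\leq d_{g(t_k)}(x_k,p)+AQ_k^{-1/2}$. Then the distance bound telescopes with no distortion estimate at all during the iteration, giving
\begin{align*}
   d_{g(t_K)}(x_K,p) < \eta + \sum_{j\geq 0} A Q_j^{-1/2}
     \leq \eta + 2AQ_0^{-1/2} < \eta + 2A\eta = \eta + 2\xi < 3\xi \leq 2m\xi .
\end{align*}
A distortion estimate is needed only once, at the very end, to pass from the constraint phrased in terms of $d_{g(t')}(\cdot,p)$ at the moving time $t'$ to the stated constraint $d_{g(s)}(\cdot,q)<\frac{1}{10}AQ^{-1/2}$ at the fixed time $s$; at that stage the desired curvature bound $|Rm|\leq 4Q$ on the relevant parabolic region is finally available, which is exactly what makes the conversion possible (and presumably accounts for the generous factors $\frac{1}{10}$ and $2m$). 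Recasting your iteration in Perelman's form removes the gap.
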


  \subsection{Ricci Flat Spaces}
  As collections of previous works about Einstein metrics
  in~\cite{BKN},~\cite{An89},~\cite{An90},~\cite{Tian90},~\cite{TV1},
  ~\cite{TV2} and~\cite{TV3},
  we list the following results about Einstein metrics.

  \begin{lemma}[Bando,~\cite{Ban90}]
    There exists a constant $\epsilon_a=\epsilon_a(m, \kappa)$ such that
    the following property holds.

   If $X$ is a $\kappa$-noncollapsed, Ricci-flat ALE orbifold,
         it has unique singularity and small energy, i.e.,
         $\int_X |Rm|^{\frac{m}{2}}d\mu<\epsilon_a$,
        then $X$ is a flat cone.
  \label{lemma: gap}
  \end{lemma}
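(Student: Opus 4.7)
The plan is to argue by contradiction, following the concentration-compactness scheme that underlies the Anderson-Bando-Kasue-Nakajima-Tian theory of noncollapsed Ricci-flat orbifolds. Suppose the conclusion fails. Then one can find a sequence of $\kappa$-non-collapsed Ricci-flat ALE orbifolds $(X_i, g_i)$, each with a single singular point and with $\int_{X_i}|Rm|_{g_i}^{m/2}\, d\mu_i < 1/i$, yet none of which is a flat cone. In particular $M_i := \sup_{X_i}|Rm|_{g_i}$ is strictly positive; it is finite since $X_i$ is ALE and the metric extends as a smooth $\Gamma$-invariant metric in the orbifold chart.

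Next, rescale: set $\tilde g_i := M_i\, g_i$, so $\sup_{X_i}|Rm|_{\tilde g_i} = 1$, and pick $p_i$ in the regular part with $|Rm|_{\tilde g_i}(p_i) \geq \tfrac12$. In dimension $m$ both the $L^{m/2}$-norm of $Rm$ and the volume ratio $\Vol(B(\cdot,r))/r^m$ are scale-invariant, so $(X_i, \tilde g_i)$ remains $\kappa$-non-collapsed and Ricci-flat, and $\int|Rm|_{\tilde g_i}^{m/2}\,d\tilde\mu_i < 1/i \to 0$. If the base points $p_i$ threaten to approach the singular locus at scale comparable to the reciprocal curvature, perform one further rescaling by $d_{\tilde g_i}(p_i, S_i)$, after which the singular point escapes to infinity in the pointed limit; every hypothesis is preserved by the scale invariance just noted.

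Now extract a pointed Cheeger-Gromov limit of $(X_i, p_i, \tilde g_i)$. The uniform bound $|Rm|\leq 1$ together with $\kappa$-non-collapsing gives a uniform lower injectivity radius estimate on the regular part, and hence the standard orbifold compactness for bounded-curvature, $\kappa$-non-collapsed Ricci-flat spaces (Anderson, Bando-Kasue-Nakajima, Tian): after passing to a subsequence, $(X_i, p_i, \tilde g_i) \sconv (X_\infty, p_\infty, g_\infty)$, a complete Ricci-flat $C^\infty$-orbifold. By lower semicontinuity of $\int|Rm|^{m/2}d\mu$ under this convergence,
\[
   \int_{X_\infty}|Rm|_{g_\infty}^{m/2}\, d\mu_\infty \;\leq\; \liminf_{i\to\infty}\int_{X_i}|Rm|_{\tilde g_i}^{m/2}\, d\tilde\mu_i \;=\; 0,
\]
so $Rm_{g_\infty}\equiv 0$ on the regular part, and by smooth extension on each orbifold chart, $g_\infty$ is flat on the entire orbifold. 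On the other hand, the smooth convergence at the regular base point forces $|Rm|_{g_\infty}(p_\infty) = \lim_{i\to\infty}|Rm|_{\tilde g_i}(p_i)\geq \tfrac12$, a contradiction.

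The main obstacle is not the contradiction mechanism itself but making sure the extracted limit is a genuine Ricci-flat orbifold with its base point in the regular part. This rests on two ingredients that the paper invokes as well-known in the Einstein setting: first, the $\epsilon$-regularity for Ricci-flat metrics, obtained by Moser iteration of the elliptic inequality $\Lap|Rm| \geq -c|Rm|^2$ with Sobolev constant controlled by $\kappa$-non-collapsing; and second, the orbifold compactness for bounded-curvature, $\kappa$-non-collapsed Ricci-flat spaces. The delicate bookkeeping is the scale-invariant choice of $p_i$ in the second step, which may require the auxiliary rescaling indicated above to guarantee that $p_\infty$ is regular in the limit.
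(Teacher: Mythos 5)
The paper does not prove Lemma~\ref{lemma: gap}; it is cited to Bando \cite{Ban90}. Bando's original argument is direct: starting from the $\epsilon$-regularity bound $|Rm|(x)\leq C\epsilon^{2/m}r(x)^{-2}$ one bootstraps the curvature decay, and then (in dimension four) combines the Gauss--Bonnet and signature formulas for ALE spaces with the small-energy hypothesis to force $Rm\equiv0$. Your proposal replaces this with a sequential-compactness and contradiction scheme: rescale so that $\sup|Rm|=1$, extract a pointed Cheeger--Gromov limit, use lower semicontinuity of energy to conclude the limit is flat, and contradict $|Rm|(p_\infty)\geq\tfrac12$. This is a legitimate and genuinely different route; what it buys is dimension independence and freedom from topological index formulas, at the cost of relying on the orbifold-valued version of the Anderson--Bando--Kasue--Nakajima--Tian compactness theorem (which you should state explicitly, since the cited versions are for sequences of manifolds; the extension to orbifolds with uniformly bounded curvature and bounded orbifold-group order, enforced here by the $\kappa$-noncollapsing, is standard but not automatic).

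There is, however, one step that as written is wrong and should be excised: the ``auxiliary rescaling by $d_{\tilde g_i}(p_i,S_i)$.'' If $d_{\tilde g_i}(p_i,S_i)\to 0$ and you rescale by this distance, the singular point does not escape to infinity --- its distance to $p_i$ is normalized to $1$ --- and worse, the curvature at $p_i$ in the new metric scales as $d_{\tilde g_i}(p_i,S_i)^2\cdot|Rm|_{\tilde g_i}(p_i)\to 0$, destroying precisely the lower bound you need for the contradiction. The fix is that no auxiliary rescaling is needed: near an orbifold point the convergence is smooth in the orbifold sense after lifting to the local finite cover (the orbifold groups have order bounded by $\omega(m)/\kappa$ by noncollapsing, so they stabilize along a subsequence). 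Hence even if $p_\infty$ is singular, $|Rm|_{g_\infty}(p_\infty)\geq\tfrac12$ still holds in the orbifold chart, and the contradiction with flatness goes through in every case $d(p_i,S_i)\to 0$, bounded, or $\to\infty$.
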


 \begin{lemma}
 Suppose $B(p, \rho)$ is a smooth, Ricci-flat, $\kappa$-noncollapsed geodesic
  ball and $\partial B(p, \rho) \neq \emptyset$. Then there is a small constant
  $\epsilon_b=\epsilon_b(m, \kappa)$ such that
  \begin{align}
   \sup_{B(p, \frac{\rho}{2})} |\nabla^k Rm| \leq
    \frac{C_k}{\rho^{2+k}}  \{\int_{B(p, \rho)} |Rm|^{\frac{m}{2}} d\mu\}^{\frac{2}{m}}
   \label{eqn: econ}
 \end{align}
 whenever $\int_{B(p, \rho)} |Rm|^{\frac{m}{2}} d\mu < \epsilon_b$.
 In particular, $B(p, \rho)$ satisfies energy concentration
 property. In other words, if $|Rm|(p) \geq \frac{1}{\rho^2}$,  then we have
     \begin{align*}
        \int_{B(p, \frac{\rho}{2})} |Rm|^{\frac{m}{2}} d\mu > \epsilon_b.
     \end{align*}
 \label{lemma: econ}
 \end{lemma}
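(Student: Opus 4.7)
The plan is to prove this by an elliptic $\epsilon$-regularity argument in the spirit of Bando-Kasue-Nakajima and Anderson. The starting point is that on a Ricci-flat manifold the second Bianchi identity yields the elliptic equation $\Lap Rm = Rm \ast Rm$, so by the Weitzenbock formula together with Kato's inequality one obtains the scalar subelliptic inequality
$$\Lap |Rm| \geq -C(m)\,|Rm|^{2}$$
in the weak sense throughout $B(p, \rho)$. The role of the condition $\partial B(p,\rho) \neq \emptyset$ is to guarantee that $B(p,\rho)$ is a proper subset of the ambient manifold, so that standard cutoff functions $\eta$ supported in $B(p,\rho)$ with $|\nabla \eta| \lesssim 1/\rho$ are available.

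Next I would set up a uniform local Sobolev inequality on $B(p, \rho)$. Since $Ric \equiv 0$ and the ball is $\kappa$-noncollapsed, Croke's isoperimetric inequality together with Bishop's volume comparison produce a Sobolev inequality of the form
$$\Big(\int |\varphi|^{\frac{2m}{m-2}}\, d\mu\Big)^{\frac{m-2}{m}} \leq C_{S}(m,\kappa) \int |\nabla \varphi|^{2}\, d\mu$$
for every $\varphi \in C^{\infty}_{c}(B(p,\rho))$, with $C_{S}$ depending only on $m$ and $\kappa$ after rescaling to unit radius. With this in hand, I would run Moser iteration on $|Rm|$: testing the subelliptic inequality against $\eta^{2}|Rm|^{q-1}$ and integrating by parts leads, after the usual manipulations, to an estimate of the schematic form
$$\|\eta\,|Rm|^{q/2}\|_{L^{\frac{2m}{m-2}}}^{2} \leq C(m)\,q^{2}\,\|(\nabla\eta)\,|Rm|^{q/2}\|_{L^{2}}^{2} + C(m)\,q\,\|Rm\|_{L^{m/2}(B(p,\rho))}\,\|\eta\,|Rm|^{q/2}\|_{L^{\frac{2m}{m-2}}}^{2}.$$
Provided $\int_{B(p,\rho)}|Rm|^{m/2} < \epsilon_{b}$ with $\epsilon_{b}$ chosen small depending on $C_{S}(m,\kappa)$, the last term is absorbed into the left. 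Iterating across a geometric sequence of radii shrinking from $\rho$ to $\rho/2$, starting at $q = m/2$, gives
$$\sup_{B(p,\rho/2)} |Rm| \leq \frac{C(m,\kappa)}{\rho^{2}} \Big(\int_{B(p,\rho)} |Rm|^{m/2}\,d\mu\Big)^{2/m},$$
which is the $k=0$ case. Higher-order estimates $\sup |\nabla^{k} Rm| \leq C_{k}\rho^{-2-k}(\cdots)^{2/m}$ then follow by a Schauder bootstrap applied to $\Lap Rm = Rm\ast Rm$ on slightly smaller balls, using that $|Rm|$ is already bounded.

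The energy-concentration statement is then a direct contrapositive: if $\int_{B(p,\rho)}|Rm|^{m/2}\,d\mu \leq \epsilon_{b}$, the main estimate yields $|Rm|(p) \leq C(m,\kappa)\,\rho^{-2}\epsilon_{b}^{2/m}$, and shrinking $\epsilon_{b}$ further so that $C\epsilon_{b}^{2/m} < 1$ contradicts $|Rm|(p) \geq \rho^{-2}$. The main technical obstacle is the absorption step in the Moser iteration: one has to verify that the constants stay bounded as $q \to \infty$ and that the smallness threshold $\epsilon_{b}$ can be chosen once and for all in terms of $(m,\kappa)$. Both points are handled by scaling invariance together with the fact that the Sobolev constant $C_{S}(m,\kappa)$ is genuinely uniform across all balls in the moduli under consideration.
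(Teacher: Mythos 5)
Your overall strategy (elliptic $\epsilon$-regularity via Moser iteration on $\Lap|Rm| \geq -C|Rm|^2$, then Schauder bootstrap for derivatives, then energy concentration as the contrapositive) agrees with what the paper has in mind; the paper gives no detailed proof here, but it explicitly identifies the local Sobolev constant as the only nontrivial input and asserts that once that is in hand the lemma is essentially routine. Where you diverge is exactly on that point. The paper obtains the Sobolev constant control ``by blowup arguments as in \cite{TV1,TV2,TV3},'' and it stresses that $B(p,\rho)$ need not sit inside a smooth Ricci-flat manifold (the lemma gets applied later to balls in orbifold ends). You instead invoke Croke's isoperimetric inequality together with Bishop's volume comparison, and you implicitly treat the ball as sitting inside a smooth ambient Ricci-flat manifold, which is precisely the case the paper calls ``trivial.'' Your route is actually more elementary and, I believe, does go through in the paper's intended generality, but you should say why: Croke's local isoperimetric estimate requires only a Ricci lower bound and a volume lower bound on the ball together with a slight enlargement to which geodesics between nearby points are confined by the triangle inequality, and all of that is furnished by the hypotheses on $B(p,\rho)$ alone, without reference to the ambient space. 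As written, you do not address the case the paper is actually worried about, and the remark that ``$\partial B(p,\rho)\neq\emptyset$'' merely supplies cutoffs undersells its role (it also rules out $B(p,\rho)$ being the whole compact manifold, where the scale $\rho$ would be meaningless).

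Two smaller points. First, the absorption step as you have written it contains a factor of $q$ in front of $\|Rm\|_{L^{m/2}}\|\eta\,|Rm|^{q/2}\|^2_{L^{2m/(m-2)}}$, so a fixed smallness threshold $\epsilon_b$ does \emph{not} let you absorb at every level $q$; you acknowledge this as ``the main technical obstacle'' but your resolution (scale invariance and uniformity of $C_S$) is not what fixes it. The standard repair is a two-stage iteration: use the smallness of $\|Rm\|_{L^{m/2}}$ \emph{once} at the bottom level $q=m/2$ to upgrade to an $L^{p}$ bound with $p>m/2$, and then re-estimate the dangerous term via H\"older against $\|Rm\|_{L^p}$ so that interpolation absorbs it without any further smallness condition, after which the constants stay bounded as $q\to\infty$. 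Second, in the contrapositive for energy concentration you bound $|Rm|(p)$ by the energy on $B(p,\rho)$, but the conclusion you want concerns the energy on $B(p,\rho/2)$; apply (\ref{eqn: econ}) with radius $\rho/2$ in place of $\rho$ (at the cost of shrinking $\epsilon_b$ and adjusting the constant by a factor of $4$) and the contrapositive closes correctly.
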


 Note that in this lemma, $B(p, \rho)$ can be a smooth
  Ricci-flat geodesic ball in any space, it needn't to be a geodesic ball
  in a smooth manifold. The hard part is to obtain a local
  Sobolev constant control when energy is small. This control can be obtained by blowup
  arguments as in~\cite{TV1},~\cite{TV2} and~\cite{TV3}.  If $B(p, \rho)$ is a smooth
  geodesic ball in a Ricci-flat manifold, then it's local Sobolev
  constant is easy to obtain and this lemma becomes trivial.

 \begin{theorem}[\cite{An89},~\cite{BKN},~\cite{Tian90}]
  $(X_i^m, x_i, g_i)$ is a sequence of pointed Ricci-flat Riemannian manifolds
  with bounded energy. For every $r>0$,
  $(X_i, g_i)$ is $\kappa$-noncollapsed on this scale for large $i$.
  Moreover, the Einstein constants is tending to zero. Then by passing to subsequence if necessary,
  we have
 \begin{align*}
    (X_i^m, x_i, g_i) \sconv (X^m, x, g)
 \end{align*}
 where $(X, g)$ is a $\kappa$-noncollapsed, Ricci-flat ALE orbifold.
 If $m$ is odd, then $(X, g)$ is Euclidean space.
 \label{theorem: ricciflat}
 \end{theorem}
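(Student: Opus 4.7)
The plan is to follow the classical Anderson--Bando--Kasue--Nakajima--Tian scheme, with the gap lemma (Lemma~\ref{lemma: gap}) doing the work of identifying the tangent cones at singularities. The Einstein condition (with Einstein constants tending to $0$) is what distinguishes this from the Ricci flow case: the curvature operator satisfies a second-order elliptic equation $\Delta Rm = Rm \ast Rm + (\text{lower order})$, so an $\varepsilon$-regularity statement in the flavor of Lemma~\ref{lemma: econ} holds after accounting for the small Einstein constant. Concretely, there is an $\varepsilon_0=\varepsilon_0(m,\kappa)$ such that whenever $\int_{B(p,r)}|Rm|^{m/2}\,d\mu<\varepsilon_0$ and $|\mathrm{Ric}_{g_i}|$ is small enough, one has the $C^k$-estimate $\sup_{B(p,r/2)}|\nabla^k Rm|\,r^{2+k}\le C_k\,\{\int_{B(p,r)}|Rm|^{m/2}\}^{2/m}$ by elliptic Moser iteration; this is available in the cited references. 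I would begin by stating and invoking this $\varepsilon$-regularity.

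Next, define the bad sets
\begin{equation*}
 \mathcal{B}_i(r) \;=\; \Bigl\{\, x\in X_i \;:\; \int_{B_{g_i}(x,r)}|Rm_{g_i}|^{m/2}\,d\mu_{g_i}\ge \varepsilon_0\,\Bigr\}.
\end{equation*}
A standard Vitali covering argument bounds the cardinality of any $r$-separated subset of $\mathcal{B}_i(r)$ by $CE/\varepsilon_0$. On the complement, $\varepsilon$-regularity together with the $\kappa$-noncollapsing (on every scale eventually) gives uniform $C^k$-bounds on $|Rm|$ and, via Shi-type or elliptic interior estimates, a local $C^k$-bound on the metric in harmonic coordinates. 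I would then invoke the Cheeger--Gromov compactness theorem to extract a pointed subsequential limit $(X,x,g)$, with smooth convergence on compact subsets of $X\setminus S$, where $S$ is the (at most countable, a priori) set of singular points accumulated from the $\mathcal{B}_i$. Upper and lower volume comparison (the upper bound coming from the almost-Ricci-flat condition, the lower from $\kappa$-noncollapsing) shows $S$ is discrete, and $(X,g)$ extends to a complete length metric with $S$ isolated.

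At each $q\in S$, I would carry out the tangent-cone analysis: rescale $g$ by factors tending to infinity centered at $q$ to obtain a pointed complete Ricci-flat $\kappa$-noncollapsed space $(Y,y,h)$ with $\int_Y|Rm_h|^{m/2}<\infty$ and a unique singular end at $y$. Iterating the blow-up and applying Theorem~\ref{theorem: ricciflat}'s smooth-part conclusion to itself, one reduces to the case where $Y$ has a single end and small energy; then Lemma~\ref{lemma: gap} forces $Y$ to be a flat cone $\mathbb{R}^m/\Gamma$. Standard regularity at the apex (an application of Uhlenbeck's removable singularities theorem together with harmonic-coordinate patches lifted to the $\Gamma$-cover) upgrades $q$ to an honest orbifold singularity, and makes the convergence orbifold-Cheeger--Gromov. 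The same blow-down procedure at infinity, using that $\int_{X\setminus B(x,R)}|Rm|^{m/2}\to 0$ as $R\to\infty$, identifies the tangent cone at infinity as a flat $\mathbb{R}^m/\Gamma_\infty$ and yields the ALE structure.

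The main obstacle, as in the Einstein setting, is obtaining the volume ratio upper bound and the orbifold regularity at singular points without Ricci control on the approximating sequence; here one uses the almost-Ricci-flat hypothesis and blow-up comparison with Ricci-flat limits, exactly as in~\cite{TV1}--\cite{TV3}. For the odd-dimensional addendum, one observes that any finite subgroup $\Gamma\subset SO(m)$ acting freely on $S^{m-1}$ with $m$ odd must be trivial (by the classification of space forms, $S^{m-1}/\Gamma$ orientable for $m-1$ even forces $\Gamma=\{1\}$), so the only possible orbifold singularities and the tangent cone at infinity are Euclidean; combined with Bishop--Gromov this forces $(X,g)=(\mathbb{R}^m,g_{\mathrm{Euc}})$.
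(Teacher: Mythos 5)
The paper states Theorem~\ref{theorem: ricciflat} as a known result of Anderson, Bando--Kasue--Nakajima, and Tian and does not give a proof, so there is nothing in-paper to compare against; I am assessing your reconstruction on its own. The overall plan does match the classical scheme from those references: $\epsilon$-regularity via elliptic Moser iteration for the Einstein curvature equation, a Vitali covering bound on the number of concentration points, Cheeger--Gromov convergence on the regular part, and tangent-cone analysis at the singular points and at infinity. The odd-dimensional reduction (no nontrivial finite $\Gamma \subset SO(m)$ acts freely on $S^{m-1}$ when $m$ is odd, hence no singularities, and the flat Euclidean cone at infinity plus Bishop--Gromov forces $(X,g)=\R^m$) is also correct.

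Two points need fixing. First, you invoke the gap lemma (Lemma~\ref{lemma: gap}) to identify the tangent cones at singular points, but that is not what the cited references do and it is not the right tool here: the direct argument is that $\int_{B(q,r)}|Rm|^{m/2}\to 0$ as $r\to 0$, so $\epsilon$-regularity applied on annuli around $q$ gives $|Rm|(p)=o\bigl(d(p,q)^{-2}\bigr)$, i.e.\ quadratic curvature decay, and hence the tangent cone at $q$ is a flat cone $\R^m/\Gamma$ with $|\Gamma|$ bounded by $\kappa$-noncollapsing --- no recursion or bubble-tree induction is needed for the compactness statement itself. (The gap lemma enters this paper later, in the neck analysis of Section~3.4.) Second, you conflate rescaling the singular limit $g$ at $q$ (which produces the metric tangent cone) with rescaling the approximating sequence $(X_i,g_i)$ at points converging to $q$ (which produces the nontrivial Ricci-flat ALE bubbles); the object you describe as ``a complete Ricci-flat $\kappa$-noncollapsed space with a singular end at $y$'' is a bubble, not what rescaling $g$ at $q$ yields, and these constructions live at different scales. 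A smaller misattribution: you say the difficulty is the volume upper bound ``without Ricci control on the approximating sequence,'' as in Tian--Viaclovsky, but here the Einstein constants tend to $0$, so Bishop--Gromov supplies the upper volume ratio directly; the no-Ricci-control difficulty is exactly what distinguishes the Bach-flat setting and the Ricci-flow setting of this paper from the Einstein case you are proving.
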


   This theorem is the starting point of this whole paper.
  Every such sequence can be looked as a blown up sequence
  from Einstein metrics with bounded Einstein constants.
  As Einstein manifolds can be looked as static solutions of normalized
  Ricci flows, a natural generalization of Theorem~\ref{theorem: ricciflat}
  is the weak compactness of ``almost Ricci-flat" Ricci
  flow solutions.   But the delicate thing is how to define
  precisely a sequence of flows as an ``almost Ricci-flat" flow sequence. If
  we require the  Ricci curvature norm  tends to zero.
  Then there will be no essential new difficulty coming out.
  The proof follows almost directly from~\cite{AC}.
  Moreover, the Ricci bound condition seems to be too strong. It
  restricts the application of such a theorem.
  So an interesting theorem should be a theorem dealing with
  weaker curvature conditions.
  Note that every scalar flat Ricci flow solution
   must be a Ricci-flat Ricci flow solution. It's natural to expect
   that scalar curvature  tends zero is a good candidate
   for ``almost Ricci-flat" condition.
  Actually, it is the case.   Under similar technical conditions,
  we can prove the convergence of such Ricci flow solution
    sequence. We call such a sequence as a refined sequence. Its
  precise definition will be given in
  Definition~\ref{definition: refined}.

\section{Refined Sequences}

   In order to prove the weak compactness of $\kappa$-noncollapsed
   Ricci flows with bounded scalar curvature and bounded energy,
   we use blowup arguments.  In
   every blowup sequence, scalar curvature must tend to zero.
   We find that it is convenient to study the properties of
   such sequences first.  We're able to show that
   such blowup sequence has weak compactness.
   The idea of the proof originates from the proof of Theorem~\ref{theorem: ricciflat}.

\subsection{Refined, E-refined and EV-refined Sequences}
  A refined sequence is a sequence of Ricci flow solutions
  blown up from noncollapsed Ricci flow solutions with bounded energy
  and bounded scalar curvature.
 \begin{definition}
    Let $\{ (X_i^m, g_i(t)), -1 \leq t \leq 1 \}$ be a sequence of Ricci flows on
    closed manifolds $X_i^m$.  It is called a
    refined sequence if the following properties  are  satisfied for every $i$.
 \begin{enumerate}
 \item $\displaystyle \D{}{t} g_{i} = -Ric_{g_i} + c_i g_i$ and
      $\displaystyle \lim_{i \to \infty} c_i =0$.
 \item  Scalar curvature norm tends to zero:
 \begin{align*}
 \displaystyle \lim_{i \to \infty} \sup_{(x,t) \in X_i \times [-1, 1] }
 |R|_{g_i(t)}(x) =0.
 \end{align*}

 \item  For every $r$, there exists $N(r)$ such that
  $(X_i, g_i(t))$ is $\kappa$-noncollapsed on scale $r$ for every $t \in [-1, 1]$
  whenever $i>N(r)$.

 \item Energy uniformly bounded by $E$:
 \begin{align*}
 \int_{X_i} |Rm|_{g_i(t)}^{\frac{m}{2}} d\mu_{g_i(t)} \leq E, \qquad \forall \; t
 \in [-1,1].
 \end{align*}

 \end{enumerate}

 We also call a pointed spacetime sequence $\{ (X_i, x_i, g_i(t)),
 -1 \leq t \leq  1  \}$ a refined sequence if $\{ (X_i, g_i(t)), -1
 \leq t \leq  1  \}$ is a refined sequence.
    \label{definition: refined}
 \end{definition}

 Note that in a refined sequence, $Vol_{g_i(0)}(X_i)$ is tending to infinity.\\

 If $m$ is odd, then the structure of refined sequence is very
 simple.\\

 \begin{theorem}[\textbf{Odd-dimensional refined sequence is trivial}]
  Suppose $m$ is odd and
  $\{(X_i^m, g_i(t)), -1 \leq t \leq 1\}$ is a refined
  sequence, then we have
 \begin{align*}
    \lim_{i \to \infty} \sup_{X_i \times [-\frac18, 0]}
    |Rm|_{g_i(t)}(x) =0.
 \end{align*}
 \label{theorem: oddrefined}
 \end{theorem}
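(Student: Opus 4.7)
The proof I propose is by contradiction. Suppose the conclusion fails; then, after passing to a subsequence, there exist $\delta > 0$ and points $(x_i, t_i) \in X_i \times [-\tfrac18, 0]$ with $|Rm|_{g_i(t_i)}(x_i) \geq \delta$. The guiding philosophy is that a refined sequence closely resembles a sequence of almost Ricci-flat Ricci flows, since both $c_i$ and $\sup|R|$ tend to zero. A blow-up at a point where the curvature is bounded below should therefore produce, via parabolic propagation of the scalar curvature control, a Ricci-flat limit, and in odd dimension Theorem~\ref{theorem: ricciflat} forces such a limit to be flat Euclidean space, contradicting the normalization of curvature at the blow-up basepoint.

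To carry this out, I apply Perelman's point-selecting procedure (Lemma~\ref{lemma: choosepoints}), after a suitable time shift, with parameters $A_i \to \infty$ and $\xi_i \to 0$ chosen slowly, to obtain new base points $(q_i, s_i)$ at which the curvature $Q_i := |Rm|_{g_i(s_i)}(q_i) \geq \delta$ and for which the parabolic bound $|Rm| \leq 4 Q_i$ holds on $B_{g_i(s_i)}(q_i, \tfrac{A_i}{10} Q_i^{-1/2}) \times (s_i - \tfrac12 \xi_i Q_i^{-1}, s_i]$. Rescaling $\tilde g_i(t) := Q_i \, g_i(s_i + Q_i^{-1} t)$ yields a sequence of Ricci flows with $|\widetilde{Rm}|_{\tilde g_i(0)}(q_i) = 1$, with $|\widetilde{Rm}| \leq 4$ on an expanding parabolic neighborhood, with modified constant $\tilde c_i = c_i/Q_i \to 0$, with scalar curvature $|\widetilde R| \leq \sigma/Q_i \to 0$, with the $\kappa$-noncollapsing property preserved at every fixed scale, and with the scale-invariant energy bound $\int |\widetilde{Rm}|^{m/2} \, d\tilde\mu \leq E$. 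Hamilton's Cheeger-Gromov compactness for Ricci flows with locally bounded curvature now yields, along a subsequence, a smooth pointed parabolic limit $(\tilde X_\infty, \tilde q_\infty, \tilde g_\infty(t))$ satisfying $\partial_t \tilde g_\infty = -\widetilde{Ric}_\infty$, $|\widetilde{Rm}_{\tilde g_\infty}|(\tilde q_\infty, 0) = 1$, $\widetilde R_\infty \equiv 0$ on the whole spacetime neighborhood, together with $\kappa$-noncollapsing on every scale and finite energy.

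The final step is to upgrade $\widetilde R_\infty \equiv 0$ to $\widetilde{Ric}_\infty \equiv 0$ and then to contradict odd-dimensional rigidity. Because $\widetilde R_\infty$ vanishes identically on an open spacetime region, the scalar-curvature evolution equation, which schematically reads $\partial_t \widetilde R_\infty = \Delta \widetilde R_\infty + 2|\widetilde{Ric}_\infty|^2$ (up to terms involving $\tilde c_\infty=0$), forces $\widetilde{Ric}_\infty \equiv 0$. The time-$0$ slice is therefore a complete, $\kappa$-noncollapsed, Ricci-flat Riemannian manifold of bounded $L^{m/2}$ energy; applying Theorem~\ref{theorem: ricciflat} to the sequence $\tilde g_i(0)$ (whose Einstein constants tend to zero while being uniformly noncollapsed with bounded energy) identifies the limit with a Ricci-flat ALE orbifold, and odd dimensionality forces this orbifold to be flat $\R^m$. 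This contradicts $|\widetilde{Rm}|_{\tilde g_\infty}(\tilde q_\infty) = 1$ and completes the argument. The principal obstacle I anticipate is arranging the smooth spacetime (not merely single-slice) convergence needed to apply the scalar curvature evolution equation in the limit; this is precisely where the parabolic curvature control from Lemma~\ref{lemma: choosepoints} is indispensable, since without it one only obtains a Riemannian limit at $t=0$ for which Ricci-flatness is not automatic.
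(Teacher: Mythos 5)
Your high-level strategy matches the paper's: blow up at a sequence of curvature points, use the parabolic curvature control to obtain a smooth spacetime limit, upgrade $\tilde R_\infty \equiv 0$ to $\widetilde{Ric}_\infty \equiv 0$ via the scalar curvature evolution equation, and then invoke Theorem~\ref{theorem: ricciflat} plus odd-dimensional rigidity to force the limit to be $\R^m$, contradicting the unit curvature normalization. However, there is a genuine gap in how you invoke Lemma~\ref{lemma: choosepoints}, and it is not a merely expository one.

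The trouble is twofold. First, Lemma~\ref{lemma: choosepoints} requires $|Rm|_{g(t_0)}(x_0) > \xi/t_0 + \eta^{-2}$ with $\eta = \xi/A$; to obtain a \emph{complete} limit you need $A_i \to \infty$, which forces $\eta_i^{-2} = A_i^2/\xi_i^2 \to \infty$, so the hypothesis can only be met if $Q_i \to \infty$. But your contradiction hypothesis only produces $|Rm|_{g_i(t_i)}(x_i) \geq \delta$, with no a priori escape of the curvature to infinity; if the global curvature stays bounded the lemma simply does not apply. Second, and independently, you send $\xi_i \to 0$; after rescaling by $Q_i$ the parabolic neighborhood is $B(q_i, A_i/10) \times (-\tfrac{1}{2}\xi_i, 0]$, so the time interval degenerates to the single slice $\{0\}$. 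This destroys precisely the spacetime structure you (correctly) identify as indispensable for running the evolution equation of $R$ and concluding $\widetilde{Ric}_\infty \equiv 0$: Hamilton's compactness on a shrinking time interval gives only a single-slice Riemannian limit, for which Ricci-flatness is not automatic. To repair this you would need $\xi_i$ bounded away from zero and a case split: in the bounded-curvature case Shi's estimates and direct parabolic compactness already give the spacetime limit without any point-selecting, and only in the unbounded case is point-selecting (with $Q_i \to \infty$ making $\eta_i^{-2}$ absorbable) needed.

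The paper's proof sidesteps both issues by structuring the argument in two stages. Stage one assumes $\sup_{X_i \times [-1/4,0]}|Rm| \to \infty$ and runs an ad hoc ``double-or-stop'' point-selecting procedure (not Lemma~\ref{lemma: choosepoints}) whose validity does not hinge on the $\xi/t_0 + \eta^{-2}$ threshold; the selected points automatically have $Q_i \to \infty$, and after rescaling one obtains global curvature bound $\leq 2$ on $X_i \times [-1,0]$. Stage two then uses the curvature bound just established to take a direct spacetime limit at the supremum-attaining points, with no point-selecting at all. Both stages finish identically to your final paragraph. If you replace your use of Lemma~\ref{lemma: choosepoints} with the paper's two-stage structure (or add the explicit case analysis and fix $\xi$), the argument goes through.
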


 \begin{proof}
   We first show that there is a constant $C$ depending on this
   sequence such that
   \begin{align*}
    \sup_{X_i \times [-\frac14, 0]}
    |Rm|_{g_i(t)}(x) \leq C, \quad \forall \; i.
   \end{align*}
   Otherwise, by passing to subsequence if necessary, we have
   sequence $(x_i, t_i) \in X_i \times [-\frac14, 0]$ such that
   $\displaystyle  \lim_{i \to \infty} |Rm|_{g_i(t_i)}(x_i) =
   \infty$.

   \begin{clm}
     There are points $(y_i, s_i) \in X_i \times [-\frac12, 0]$ such that
   $\displaystyle \lim_{i \to \infty} |Rm|_{g_i(s_i)}(y_i) =\infty$.
   Moreover, $(y_i, s_i)$ satisfies curvature control, i.e.,
   \begin{align*}
      |Rm|_{g_i(t)}(x) \leq 2 |Rm|_{g_i(s_i)}(y_i), \quad
   \forall \;  x \in X_i, \; t \in [s_i- |Rm|_{g_i(s_i)}^{-\frac12}(y_i),
   s_i].
   \end{align*}
   \end{clm}

   Check if $|Rm|_{g_i(t_i)}(x_i)$ satisfies curvature control.
   If so, we let $(y_i, s_i) = (x_i, t_i)$ and stop. Otherwise,
   we can find $(x_i^{(1)}, t_i^{(1)}) \in [t_i-|Rm|_{g_i(t_i)}^{-\frac12}(x_i), t_i]$
   whose curvature norm is bigger than $2|Rm|_{g_i(t_i)}(x_i)$. Then
   check if $(x_i^{(1)}, t_i^{(1)})$ satisfies curvature control. If
   so, we let $(y_i, s_i)=(x_i^{(1)}, t_i^{(1)})$. Otherwise, we
   continue our point selecting process.

   After each step, the Riemannian curvature norm of the base point doubles. So
   all the steps are processed in the compact region
   $X_i \times [t_i - 2|Rm|_{g_i(t_i)}^{-\frac12}(x_i), t_i] \subset X_i \times [-\frac12, 0]$
   which has bounded geometry.
   Clearly, this process must stop in finite times. We define $(y_i, s_i)$
   to be the base point of the last step. This finishes the proof of
   Claim.\\

   Let $\tilde{g}_i(t)= Q_ig_i(Q_i^{-1}t + s_i)$, where
   $Q_i=|Rm|_{g_i(s_i)}(y_i)$.  So $\{(X_i, y_i, \tilde{g}_i(t)), -1 \leq t \leq 1\}$
   is a refined sequence satisfying
   \begin{align*}
       |Rm|_{\tilde{g}_i(t)}(x) \leq 2, \quad \forall \; x \in X_i,
       \; -1 \leq t \leq 0.
   \end{align*}
   By the compactness of Ricci flow solutions, $\{(X_i, y_i, g_i(t)), -1 < t \leq 0\}$
   will smoothly converge to a Ricci flow solution $\{(\tilde{X}, \tilde{y}, \tilde{g}(t)), -1 < t \leq
   0\}$.  Clearly, it is a scalar-flat Ricci flow solution.  Therefore it is
   Ricci-flat by maximal principal.
   On the other hand, the energy of $\tilde{X}$ comes from the
   energy of $X_i$. So Fatou's lemma tells us that
   \begin{align*}
      \int_{\tilde{X}} |Rm|_{\tilde{g}(0)}^{\frac{m}{2}} d\mu_{\tilde{g}(0)} \leq
      \lim_{i \to \infty} \int_{X_i} |Rm|_{\tilde{g}_i(0)}^{\frac{m}{2}} d\mu_{\tilde{g}_i(0)}
      \leq
      \lim_{i \to \infty} \int_{X_i} |Rm|_{g_i(s_i)}^{\frac{m}{2}} d\mu_{g_i(s_i)}
       \leq E.
   \end{align*}
   Therefore, $(\tilde{X}, \tilde{g}(0))$ is a Ricci-flat manifold with
   bounded energy. Moreover, it is $\kappa$-noncollapsed on all
   scales. It follows from~\cite{An89} or~\cite{BKN}
   that $(\tilde{X}, \tilde{g}(0))$ is a Ricci flat ALE manifold.  However,
   such a manifold must be Euclidean space if $m$ is odd.   Therefore, we have
   \begin{align*}
    1= \lim_{i \to \infty} |Rm|_{\tilde{g}_i(0)}(y_i) = |Rm|_{\tilde{g}(0)}(\tilde{y}) = 0.
   \end{align*}
   Contradiction! Therefore, there must be a constant $C$ such that
   \begin{align*}
    \sup_{X_i \times [-\frac14, 0]}
    |Rm|_{g_i(t)}(x) \leq C, \quad \forall \; i.
   \end{align*}

   Let $(z_i, \theta_i)$ be the point in $X_i \times [-\frac18, 0]$
 with largest curvature norm.  As Riemannian curvature is uniformly bounded on
 $X_i \times [-\frac14, 0]$, we can assume that $(X_i, z_i, g_i(\theta_i))$ smoothly converges  to a complete manifold $(X_{\infty}, z_{\infty}, g_{\infty})$.
 As argued before, $X_{\infty}$ is an odd dimensional Ricci flat ALE space. Therefore,
 $X_{\infty}$ is a Euclidean space. Then it follows that
 \begin{align*}
 \lim_{i \to \infty}  \sup_{X_i \times [-\frac14, 0]} |Rm|_{g_i(t)}(x)
 =\lim_{i \to \infty}  |Rm|_{g_i(\theta_i)}(z_i)=|Rm|_{g_{\infty}}(z_{\infty})=0.
 \end{align*}

 \end{proof}

  Because of this simplicity, we're only interested in refined
  sequences of even dimension.
  When $m$ is even, the
  phenomena are much more complicated. As the in
  the proof of Theorem~\ref{theorem: oddrefined}, we can use blowup
  arguments to see what happens at the points with global maximal
  Riemannian curvature norm.     The blowup limit will be a
  $\kappa$-noncollapsed, Ricci-flat ALE manifold. Some nontrivial
  examples do exist when $m$ is even. So no contradiction can be
  obtained if Riemannian curvature is not uniformly bounded in the
  central time periods. Actually, if we construct a refined sequence
  by letting every solution be a static Ricci flow soluiton, i.e.,
  Einstein manifold, we see that it is really possible that
  Riemannian curvature is not uniformly bounded.          However,
  about these global maximal points, we can still draw some conclusion.
  They must satisfy the energy concentration property,
  the volume ratio of every geodesic ball centered at these points
  is bounded from above whenever the radius is comparable to $|Rm|^{-\frac12}$ of
  these base points.     It's natural to hope that both these two
  properties hold for all high curvature points in a refined
  sequence, not only for the points with global maximal Riemannian
  curvature. But there is an obvious difficulty to prove this directly:
  we don't have Harnack inequality for Riemannian curvature.  It is possible that
  Riemannian curvature tends to infinitely large during infinitely
  small distance. When this happens, the base points will be
  absorbed by singularities even we can take limit. So no contradiction
  can be obtained then.  Needless to say that we don't know whether
  we can take limit now.    In order to overcome this difficulty,
  we first study $EV$-refined sequences, which is a ``special"
  refined sequence where weak limit can be taken.  For $EV$-refined sequences, we find the two
  properties (volume ratio bound and energy concentration) can be
  proved even if the limit space contains singularities.
  After we obtain these two properties, we return to study how
  ``special" the $EV$-refined sequences are.  It turns out that they
  are not special at all, every refined sequence is an $EV$-refined
  sequence.  Therefore, every refined sequence satisfies energy
  concentration and volume ratio bound condition, and weak limit
  exists for every refined sequence.\\

  In order to define $EV$-refined sequence, we first need to fix
  some universal constants.

  \textbf{From now on, we fix the constants $\kappa$, $E$.  Moreover, we
 define
 \begin{align*}
    \epsilon= \min\{\epsilon_a, \epsilon_b\}
 \end{align*}
 where $\epsilon_a$ and $\epsilon_b$ are constants
 in Lemma~\ref{lemma: gap} and Lemma~\ref{lemma: econ}.
 We also fix constant $N_0 = \lfloor  \frac{E}{\epsilon}
 \rfloor$.}\\

  \begin{definition}
    A refined sequence $\{(X_i, g_i(t)), -1 \leq t \leq 1\}$
  is called an E-refined sequence if there exists a constant $H$ such that
  \begin{align*}
 \int_{B(x,|Rm|_{g_i(t)}^{-\frac12}(x))} |Rm|_{g_i(t)}^{\frac{m}{2}} d\mu_{g_i(t)}
\epsilon
 \end{align*}
 whenever $(x,t) \in X_i \times [-\frac12,0]$ and $|Rm|_{g_i(t)}(x) > H$.

 We also call a pointed normalized Ricci flow sequence
    $\{ (X_i, x_i, g_i(t)), -1 \leq t \leq  1  \}$ an E-refined sequence if
    $\{ (X_i, g_i(t)), -1 \leq t \leq  1  \}$ is an E-refined sequence.
  \end{definition}

  In short, an E-refined sequence is a refined sequence whose
 center-part-solutions  satisfy energy concentration property.

  \begin{definition}
  An  E-refined sequence $\{(X_i, g_i(t)), -1 \leq t \leq 1\}$
  is called an EV-refined sequence if  there is a constant $K$ such
  that
     \begin{align*}
       \frac{\Vol_{g_i(t)}{B_{g_i(t)}(x, r)}}{r^m} <K
    \end{align*}
    for every $i$ and $(x, t) \in X_i \times [-\frac14, 0]$, $r \in (0, 1]$.

    We also call a pointed normalized Ricci flow sequence
    $\{ (X_i, x_i, g_i(t)), -1 \leq t \leq  1  \}$ an EV-refined sequence if
    $\{ (X_i, g_i(t)), -1 \leq t \leq  1  \}$ is an EV-refined sequence.
  \end{definition}

   In short, an EV-refined sequence is an E-refined sequence whose
 center-part-solutions  have bounded volume ratios (from both sides).

   Since volume ratio, energy are scaling invariants, an easy
  observation implies the following property.

 \begin{proposition}
  If $\{(X_i, g_i(t)), -1 \leq t \leq 1\}$ is a refined sequence,
  $\lambda_i \geq 2$, $t_i \in [-\frac12, 0]$,  then
  $\{(X_i, \lambda_i g_i(\lambda_i^{-1}t + t_i)), -1 \leq t \leq
  1\}$ is a new refined sequence.

  If $\{(X_i, g_i(t)), -1 \leq t \leq 1\}$ is an E-refined sequence,
  $\lambda_i \geq 2$, $t_i \in [-\frac14, 0]$,  then
  $\{(X_i, \lambda_i g_i(\lambda_i^{-1}t + t_i)), -1 \leq t \leq
  1\}$ is a new E-refined sequence.

  If $\{(X_i, g_i(t)), -1 \leq t \leq 1\}$ is an EV-refined sequence,
  $\lambda_i \geq 2$, $t_i \in [-\frac18, 0]$,  then
  $\{(X_i, \lambda_i g_i(\lambda_i^{-1}t + t_i)), -1 \leq t \leq
  1\}$ is a new EV-refined sequence.
 \label{proposition: blowup}
 \end{proposition}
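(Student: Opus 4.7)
The plan is to verify the defining clauses of \emph{refined}, \emph{E-refined}, and \emph{EV-refined} sequences in turn under the rescaling $\tilde{g}_i(t) = \lambda_i g_i(s)$ with $s = \lambda_i^{-1}t + t_i$; each clause will follow either from scaling invariance or from a favorable power of $\lambda_i \geq 2$. The only real content is the bookkeeping matching time windows: the rescaled intervals $[-1,1]$, $[-\tfrac12,0]$, $[-\tfrac14,0]$ pull back under $s = \lambda_i^{-1}t+t_i$ to subintervals of the original $[-1,1]$, $[-\tfrac12,0]$, $[-\tfrac14,0]$ precisely when $t_i \in [-\tfrac12,0],[-\tfrac14,0],[-\tfrac18,0]$ respectively, because the worst case $\lambda_i=2$ shifts the left endpoint by exactly half the window. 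This explains the three different ranges for $t_i$ in the statement.

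First I will dispatch the refined clauses. The chain rule yields $\D{}{t}\tilde{g}_i = -Ric_{\tilde{g}_i} + \frac{c_i}{\lambda_i}\tilde{g}_i$, and since $0 \leq c_i/\lambda_i \leq c_i \to 0$, the new parabolic constant still tends to zero. The scalar curvature transforms as $R_{\tilde{g}_i} = \lambda_i^{-1} R_{g_i(s)}$, so its supremum is bounded by the original supremum and hence tends to zero. The energy $\int |Rm|^{m/2}\,d\mu$ is scaling invariant, so the bound $E$ is inherited. Finally, because the volume ratio $\Vol(B(x,r))/r^m$ is scaling invariant, $\kappa$-noncollapsing on scale $r$ for $g_i$ upgrades to $\kappa$-noncollapsing on the larger scale $\lambda_i^{1/2}r \geq r$ for $\tilde{g}_i$; combined with the time interval remark above, clause (3) for the rescaled sequence follows by taking $i$ large enough.

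Next I will verify the E-refined clause. Suppose $(x,t) \in X_i \times [-\tfrac12,0]$ satisfies $|Rm|_{\tilde{g}_i(t)}(x) > H$. Then $s = \lambda_i^{-1}t+t_i \in [-\tfrac12,0]$, and $|Rm|_{g_i(s)}(x) = \lambda_i |Rm|_{\tilde{g}_i(t)}(x) > \lambda_i H \geq H$, which triggers the E-refined hypothesis for $g_i$ at $(x,s)$. The crucial cancellation is that the distance rescaling factor $\lambda_i^{1/2}$ exactly offsets the curvature rescaling factor $\lambda_i^{-1/2}$ inside the radius $|Rm|^{-1/2}$: the geodesic ball $B_{\tilde{g}_i(t)}(x, |Rm|_{\tilde{g}_i(t)}^{-1/2}(x))$ coincides as a subset of $X_i$ with $B_{g_i(s)}(x, |Rm|_{g_i(s)}^{-1/2}(x))$. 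Since $\int_B |Rm|^{m/2}\,d\mu$ is scaling invariant on a fixed set, the lower bound $\epsilon$ transfers verbatim.

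Finally, the EV-refined clause. For $(x,t) \in X_i \times [-\tfrac14,0]$ the corresponding $s$ lies in $[-\tfrac14,0]$, and any radius $r \in (0,1]$ in $\tilde{g}_i$ corresponds to the radius $\lambda_i^{-1/2} r \in (0,1]$ in $g_i$. The scaling-invariant identity $\Vol_{\tilde{g}_i(t)}(B_{\tilde{g}_i(t)}(x,r))/r^m = \Vol_{g_i(s)}(B_{g_i(s)}(x,\lambda_i^{-1/2}r))/(\lambda_i^{-1/2}r)^m$ then gives the upper bound $K$ by the EV-refined hypothesis on $g_i$. There is no analytical obstacle anywhere in the argument; the main (mild) obstacle is simply to verify that the three sharp choices of time ranges for $t_i$ are the correct ones that make every invocation of the original hypotheses legal.
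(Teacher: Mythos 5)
Your proof is correct, and it fills in precisely the routine scaling verification that the paper leaves implicit (the paper gives no proof beyond the one-line remark ``blowing up a refined sequence generates a new refined sequence''). The key points — scale invariance of the volume ratio, the energy integrand $|Rm|^{m/2}\,d\mu$, and the curvature-scale ball $B(x,|Rm|^{-1/2}(x))$, together with the time-window arithmetic that forces $t_i$ into $[-\tfrac12,0]$, $[-\tfrac14,0]$, $[-\tfrac18,0]$ in the three cases — are exactly the content of the proposition.
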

  In short, blowing up a (E-, EV-)refined sequence generates a new (E-, EV-)refined
  sequence.\\

\subsection{Deepest Bubble Structure}
  If we blow up a refined sequence at maximal curvature points, we
 can obtain some Ricci-flat manifold with bounded energy. Such
 manifolds are well understood.

  \begin{lemma}
    There exists a large constant $\rho_b= \rho_b(m, E, \kappa)$ satisfying the
    following property.

   If $\{(X^m, x, g(t)),  -\frac{1}{1000m} \leq t \leq 0 \}$ is a
   compact spacetime  satisfying
  \begin{enumerate}
   \item $\displaystyle \D{}{t}g(t)=-Ric_{g(t)} + c g(t)$ where $c$ is a
   constant,
   \item $\displaystyle |c| \leq \frac{1}{\rho_b^2}$ and
    $\displaystyle \sup_{X \times [-\frac{1}{1000m}, 0]} |R| \leq
    \frac{1}{\rho_b^2}$,
   \item $\int_{X} |Rm|_{g(t)}^{\frac{m}{2}} d\mu_{g(t)} \leq E, \quad \forall t \in [-\frac{1}{1000m}, 0]$.
   \item $\frac{\Vol_{g(t)}(B_{g(t)}(y,r))}{r^m} \geq \kappa$ for all $y \in X, t \in [-\frac{1}{1000m}, 0], r \in
   (0,1]$.
   \item $\displaystyle      |Rm|_{g(0)}(x) =1,    \quad   |Rm|_g  \leq 4 \quad
      \textrm{in} \; B_{g(0)}(x, \rho_b) \times [-\frac{1}{1000m}, 0]$,
   \end{enumerate}
  then there exists an $r \in (0, \frac12 \rho_b)$ such that property
  $\clubsuit$(defined below) is satisfied.
  \begin{itemize}
  \item $B_{g(0)}(x, r)$ is diffeomorphic to a nontrivial Ricci-flat ALE space.
  \item $B_{g(0)}(x, 2r) \backslash B_{g(0)}(x, r)$ is
  diffeomorphic to $S^{m-1} / \Gamma \times  [1,2)$ for some $SO(m)$'s nontrivial finite subgroup $\Gamma$.
  \item $\displaystyle \frac{\Vol_{g(0)}B_{g(0)}(x, r)}{r^m} < \frac23 \omega(m)$.
  \end{itemize}
  \label{lemma: topology}
  \end{lemma}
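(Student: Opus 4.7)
I would argue by contradiction. Suppose no such $\rho_b$ works; choose $\rho_i \to \infty$ and Ricci flows $\{(X_i, x_i, g_i(t)), -\tfrac{1}{1000m} \leq t \leq 0\}$ satisfying all five hypotheses with $\rho_b$ replaced by $\rho_i$, but admitting no $r \in (0, \rho_i/2)$ verifying $\clubsuit$. The strategy is to extract a Cheeger-Gromov limit, identify it as a non-trivial Ricci-flat ALE orbifold, pick $r_0$ in the limit satisfying $\clubsuit$, and carry this $r_0$ back to the sequence for large $i$.

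\textbf{Step 1: Extract and identify the limit.} Since $|Rm|_{g_i} \leq 4$ on the parabolic neighborhood $B_{g_i(0)}(x_i, \rho_i) \times [-\tfrac{1}{1000m}, 0]$ and $\rho_i \to \infty$, Shi's derivative estimates give uniform $C^k$ curvature bounds on compact subsets. Combined with $\kappa$-noncollapsing, Hamilton's compactness theorem yields (after a subsequence) smooth pointed convergence $(X_i, x_i, g_i(0)) \sconv (X_\infty, x_\infty, g_\infty)$ to a complete smooth manifold with $|Rm|_\infty(x_\infty)=1$, $|Rm|_\infty \leq 4$, energy $\leq E$, and $\kappa$-noncollapsed on every scale. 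Because $|R|_{g_i} \leq \rho_i^{-2} \to 0$ and $|c_i| \to 0$, the limit has $R \equiv 0$ on its time slice; feeding this into the evolution equation $\partial_t R = \Lap R + 2|Ric|^2 + \text{(lower order in }c)$ forces $Ric_\infty \equiv 0$. Hence $(X_\infty, g_\infty)$ is a complete, Ricci-flat, $\kappa$-noncollapsed manifold with bounded energy, so by the BKN/Anderson/Tian classification (Theorem~\ref{theorem: ricciflat} and its non-flat variant) it is a nontrivial ALE space, with end asymptotic to $\R^m/\Gamma$ for some finite $\Gamma \subset SO(m)$. Since $|Rm|_\infty(x_\infty)=1 \neq 0$ rules out flat Euclidean space, Bishop-Gromov rigidity forces $|\Gamma| \geq 2$, and the asymptotic volume ratio equals $\omega(m)/|\Gamma| \leq \tfrac{1}{2}\omega(m)$.

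\textbf{Step 2: Choose $r_0$ in the limit and transfer back.} The ALE structure provides a compact core $K \subset X_\infty$ and a diffeomorphism $X_\infty \setminus K \cong (\R^m/\Gamma) \setminus \bar B$ under which $g_\infty$ is $C^0$-close to the Euclidean quotient. Pick $r_0$ large enough that (i) $K \subset B_{g_\infty}(x_\infty, r_0)$, so $B_{g_\infty}(x_\infty, r_0)$ is diffeomorphic to the entire ALE space; (ii) the annulus $B_{g_\infty}(x_\infty, 2r_0) \setminus B_{g_\infty}(x_\infty, r_0)$ lies in the end and is diffeomorphic to $S^{m-1}/\Gamma \times [1,2)$; and (iii) $\Vol_{g_\infty}(B_{g_\infty}(x_\infty, r_0))/r_0^m < \tfrac{2}{3}\omega(m)$ — possible since this ratio tends to $\omega(m)/|\Gamma| \leq \tfrac12 \omega(m)$. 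For $i$ large, $r_0 < \rho_i/2$, and Cheeger-Gromov convergence gives diffeomorphisms $\varphi_i : B_{g_\infty}(x_\infty, 3r_0) \to U_i \subset X_i$ with $\varphi_i^* g_i(0) \to g_\infty$ in $C^\infty$. Standard stability then shows $B_{g_i(0)}(x_i, r_0)$ and the annulus $B_{g_i(0)}(x_i, 2r_0)\setminus B_{g_i(0)}(x_i, r_0)$ have the same diffeomorphism type as in the limit and satisfy the strict volume-ratio bound, so $r=r_0$ verifies $\clubsuit$ in $(X_i, g_i(0))$, contradicting our choice of sequence.

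\textbf{Main obstacle.} The delicate point is upgrading the smooth Ricci-flat limit to a nontrivial ALE manifold with $|\Gamma| \geq 2$: one must invoke the full ALE classification (bounded energy plus noncollapsed plus Ricci-flat), and then rule out $\Gamma = \{e\}$ via the Bishop-Gromov equality case together with the nonflatness $|Rm|_\infty(x_\infty) = 1$. The rest — Shi estimates to force smooth convergence, the scalar-curvature argument producing Ricci-flatness in the limit, and persistence of topology and volume ratio under smooth convergence — is essentially mechanical.
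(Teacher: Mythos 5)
Your proposal is correct and follows essentially the same route as the paper's proof: contradiction, smooth parabolic limit via Shi plus noncollapsing, Ricci-flatness of the limit from the scalar-curvature evolution, the BKN/Anderson/Tian ALE classification, Bishop--Gromov to rule out $\Gamma = \{e\}$ using $|Rm|(x_\infty)=1$, choice of a large $r_0$ in the limit realizing $\clubsuit$ (with the volume-ratio bound coming from $\omega(m)/|\Gamma|\leq\tfrac12\omega(m)$), and transfer back to the sequence by smooth convergence. The only cosmetic difference is that the paper passes to a limit flow on $[-\tfrac{1}{2000m},0]$ before specializing to the time-zero slice, while you work directly with the $t=0$ slices; this is immaterial.
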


  \begin{proof}
   Suppose this result is wrong, then there is a sequence of $\rho_i \to \infty$
  and spacetime $\{ (X_i, x_i, g_i(t)), -\frac{1}{1000m} \leq t \leq 0\}$
  satisfying the five conditions. However, for every $r \in (0, \frac12
  \rho_i)$,  property $\clubsuit$ fails.

   As $\rho_i \to \infty$, using $\kappa$-noncollapsing condition and Shi's estimates, we
   can take the smooth limit
   \begin{align*}
   \{(X_i^m, x_i, g_i(t)),  -\frac{1}{2000m} \leq t \leq 0 \}
   \sconv
   \{(X, x, g(t)), -\frac{1}{2000m} \leq t \leq
   0\}.
 \end{align*}
 The limit solution satisfies
 \begin{align*}
   \D{}{t} g(t) = -Ric_{g(t)}=0;
   \qquad R_{g(t)} \equiv 0,   \; \forall \; t \in [-\frac{1}{2000m}, 0].
 \end{align*}
  The evolution equation of  scalar curvature is
 \begin{align*}
    \D{}{t} R_{g} = \frac12 \triangle R_{g} +
    |Ric|_{g}^{\frac{m}{2}}.
 \end{align*}
 This indicates that $g(t)$ is actually a Ricci-flat solution.  Moreover, Fatou's
 lemma tells us
 \begin{align*}
  \int_{X} |Rm|_{g(0)}^{\frac{m}{2}} d\mu_{g(0)} \leq E.
 \end{align*}
 Therefore $(X, g(0))$ is a nonflat Ricci-flat manifold with bounded energy.
 As argued in ~\cite{An89}, ~\cite{BKN} and
 ~\cite{Tian90}, we see $(X, g(0))$  is an  Asymptotically Locally Euclidean space (ALE)
 with one end at infinity.
 Blowing down this ALE, we obtain a flat cone over $S^{m-1} /
 \Gamma$ where $\Gamma$ is a finite subgroup of $SO(m)$.
    $\Gamma$ must be nontrivial. Otherwise we have
 \begin{align*}
     \lim_{\rho \to \infty} \frac{\Vol(B(x, \rho))}{\rho^m}
     = \frac{\omega(m)}{|\Gamma|}=\omega(m).
 \end{align*}
 Then Bishop volume comparison theorem implies that every geodesic
 ball in $X$ has Euclidean volume growth rate. This means
 $X$ is actually flat and $|Rm|(x)=0$.  This is
 impossible since
 $\displaystyle |Rm|(x) = \lim_{i \to \infty} |Rm|_{g_i(0)}(x_i)=1$.

  As $(X, g(0))$ is a nontrivial ALE, we can choose
  $r_0$ large such that $B_{g(0)}(x, r_0)$ is diffeomorphic to
  $(X, g(0))$ and  $B(x, 2r) \backslash B(x, r)$ is
  diffeomorphic to $S^{m-1} / \Gamma \times  [1,2)$ for a nontrivial $\Gamma$.
  Furthermore, since $|\Gamma| \geq 2$, we can make $r_0$ big enough such that
  \begin{align*}
     \frac{\Vol(B(x, r_0))}{r_0^m} < \frac23 \omega(m).
  \end{align*}
 As $(X_i, x_i, g_i(0))$ converges to  $(X, x,  g(0))$
   in smooth topology, for large $i$,  we know property $\clubsuit$
   hold for $r_0$. Notice that $r_0 \in (0, \frac12 \rho_i)$ for
   large $i$. This contradicts to our assumption!
  \end{proof}

  \begin{definition}
 If the refined sequence $\{(X_i, g_i(t)), -1 \leq t \leq 1\}$
 has energy bound $E$ and volume ratio lower bound $\kappa$, we call
  $\rho_b=\rho_b(m, E, \kappa)$ as the base radius of this refined sequence .
 \label{definition: base}
\end{definition}

 Lemma~\ref{lemma: topology} can be used to control the geometric
 property of local maximal curvature points in a refined sequence.

 \begin{corollary}
  Let $\{(X_i^m, x_i, g_i(t)),  -1 \leq t \leq 1 \}$ be a refined
   sequence satisfying
   \begin{align*}
      |Rm|_{g_i(0)}(x_i) =1,    \quad   |Rm|_{g_i} \leq 4 \quad
      \textrm{in} \; B_{g_i(0)}(x_i, \rho_b) \times [-\frac{1}{1000m}, 0],
   \end{align*}
 where $\rho_b$ is the base radius of this sequence. Then for
 large $i$, we have
 \begin{align*}
        \frac{\Vol_{g_i(0)} B_{g_i(0)}(x_i,
        \frac{\rho_b}{2})}{(\frac{\rho_b}{2})^m}< \frac34 \omega(m).
 \end{align*}
 \label{corollary: vr}
 \end{corollary}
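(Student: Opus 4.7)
My plan is to argue by contradiction. Suppose, after passing to a subsequence, that
\[
V_i := \frac{\Vol_{g_i(0)} B_{g_i(0)}(x_i, \rho_b/2)}{(\rho_b/2)^m} \geq \tfrac34 \omega(m)
\]
for every $i$. For large $i$ the hypotheses of Lemma~\ref{lemma: topology} are satisfied for the spacetime $(X_i, x_i, g_i(t))$ on $[-\tfrac{1}{1000m}, 0]$: conditions (1)--(2) follow from $c_i \to 0$ and $\sup|R|_{g_i(t)} \to 0$ (both eventually $\leq \rho_b^{-2}$), (3)--(4) come from the refined-sequence definition, and (5) is the standing hypothesis of the corollary. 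An application of Lemma~\ref{lemma: topology} then produces scales $r_i \in (0, \rho_b/2)$ satisfying property $\clubsuit$; in particular $\Vol_{g_i(0)} B_{g_i(0)}(x_i, r_i)/r_i^m < \tfrac23 \omega(m)$, and $B_{g_i(0)}(x_i, r_i)$ is diffeomorphic to a nontrivial Ricci-flat ALE space.

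Before passing to a limit I would first rule out the degenerate case $r_i \to 0$. Since $|Rm|_{g_i(0)} \leq 4$ on $B_{g_i(0)}(x_i, \rho_b)$ and the flow is uniformly $\kappa$-noncollapsed, Cheeger's lemma supplies $\iota_0 = \iota_0(m, \kappa) > 0$ below which $\exp_{x_i}$ is a diffeomorphism on the corresponding tangent ball. Any $r_i < \iota_0$ would force $B_{g_i(0)}(x_i, r_i)$ to be diffeomorphic to a Euclidean ball, contradicting its diffeomorphism with a nontrivial Ricci-flat ALE (which has nontrivial topology at infinity). Hence $r_i \geq \iota_0$, and after a further subsequence we may assume $r_i \to r_\infty \in [\iota_0, \rho_b/2]$.

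The main step is then to pass to a limit on $B_{g_i(0)}(x_i, \rho_b)$. Using the bound $|Rm|_{g_i} \leq 4$ on $B_{g_i(0)}(x_i, \rho_b) \times [-\tfrac{1}{1000m}, 0]$, Shi's derivative estimates, and $\kappa$-noncollapsing, extract a smooth Cheeger-Gromov limit of the pointed spacetimes on every compactly contained subdomain of the open ball of radius $\rho_b$. The same mechanism as in the proof of Lemma~\ref{lemma: topology}---the conditions $c_i \to 0$, $\sup|R|_{g_i(t)} \to 0$, together with the evolution equation of scalar curvature---forces the limit $\{(B_{g_\infty}(x_\infty, \rho_b), g_\infty(t))\}$ to be static and Ricci-flat. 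Smooth convergence of metrics and of $r_i \to r_\infty$ then gives
\[
\frac{\Vol_{g_\infty} B(x_\infty, \rho_b/2)}{(\rho_b/2)^m} \geq \tfrac34 \omega(m), \qquad \frac{\Vol_{g_\infty} B(x_\infty, r_\infty)}{r_\infty^m} \leq \tfrac23 \omega(m).
\]
Since $Ric_{g_\infty} \equiv 0$, Bishop-Gromov volume comparison on $B_{g_\infty}(x_\infty, \rho_b)$ shows that $r \mapsto \Vol_{g_\infty}(B(x_\infty, r))/r^m$ is nonincreasing on $(0, \rho_b)$; as $r_\infty \leq \rho_b/2$ this yields $\tfrac23 \omega(m) \geq \tfrac34 \omega(m)$, the desired contradiction.

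The hardest part, I expect, will be the rigorous extraction of a genuinely Ricci-flat limit from a Ricci-flow sequence on a fixed bounded ball (rather than on a complete manifold); but this device is already present in the proof of Lemma~\ref{lemma: topology} and can be invoked here almost verbatim.
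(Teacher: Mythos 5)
Your proposal is correct and follows essentially the same path as the paper's proof: contradiction via Lemma~\ref{lemma: topology}, smooth Cheeger–Gromov limit of the balls, and Bishop–Gromov volume monotonicity for the Ricci-flat limit. The paper dispatches the potential degeneracy $r_i\to 0$ with a one-line appeal to the smoothness of the limit ball, while you flesh this step out via Cheeger's injectivity-radius bound and the nontrivial topology supplied by property $\clubsuit$ — a fuller justification but not a different approach.
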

 \begin{proof}
  Otherwise, we can extract a subsequence as a refined sequence
  violating the volume ratio control. Still denoting this refined
  sequence as $\{(X_i^m, x_i, g_i(t)),  -1 \leq t \leq 1 \}$, we
  have
 \begin{align*}
     \frac{\Vol_{g_i(0)} B_{g_i(0)}(x_i, \frac{\rho_b}{2})}{(\frac{\rho_b}{2})^m} \geq \frac34 \omega(m)
 \end{align*}
 By Lemma~\ref{lemma: topology}, we can choose $r_i \in (0,
 \frac{\rho_b}{2})$ such that
 \begin{align*}
    \frac{\Vol_{g_i(0)} B_{g_i(0)}(x_i, r_i)}{(r_i)^m} < \frac23 \omega(m)
 \end{align*}
 Take limit, we have
 \begin{align*}
  \{ B_{g_i(0)}(x_i, \frac{\rho_b}{2}), x_i,  g_i(0)\}
   \sconv \{B_{g}(x,  \frac{\rho_b}{2}), x, g\},
 \end{align*}
 where $B_{g}(x,  \frac{\rho_b}{2})$ is a smooth
 Ricci-flat geodesic ball.  Let $r_i \to r_{\infty}$. By the
 smoothness of $B_{g}(x,  \frac{\rho_b}{2})$, we
 have $0<r_{\infty} \leq \frac{\rho_b}{2}$.  Bishop volume
 comparison theorem implies that
 \begin{align*}
  \frac34 \omega(m) \leq
  \frac{\Vol(B(x, \frac{\rho_b}{2}))}{(\frac{\rho_b}{2})^m} \leq
     \frac{\Vol(B(x,r_{\infty}))}{(r_{\infty})}^m  \leq  \frac23 \omega(m).
 \end{align*}
 Contradiction!
 \end{proof}

\subsection{Basic Properties of EV-refined Sequences}
 In order to study refined sequence, we start from EV-refined
 sequence. Roughly speaking, it is refined sequence with volume ratio upper
 bound and energy concentration conditions.  These extra conditions are
 important to obtain the weak compactness theorems.

\begin{lemma}
 [\textbf{Weak Compactness of an EV-refined Sequence in $C^{1, \gamma}$-topology}]
  Suppose $\{ (X_i, x_i, g_i(t)), -1 \leq t \leq 1
 \}$ is an EV-refined sequence, $t_i \in [-\frac14,0]$.
 Then  $(X_i, x_i, g_i(t_i))$ converges to a Ricci-flat multifold
 $(X, x, g)$ in Gromov-Hausdorff topology.
  Furthermore, there are $L (\leq N_0)$ points $p^1, \cdots, p^{L} \in X$
 such that $\displaystyle X \backslash \{ \bigcup_{s=1}^L p^s\}$ is smooth and
 the convergence is in $C^{1,\gamma}$-topology away from $\{p^{s}\}_{s=1}^{L}$ for any $\gamma \in (0,1)$.
   For brevity, we denote this convergence as
  $ (X_i, x_i, g_i(t_i)) \stackrel{C^{1,\gamma}}{\to}
   (X, x, g).$

   The limit multifold $(X, g)$ satisfies the following estimates
   \begin{itemize}
  \item For every point $p^s (1 \leq s \leq L)$,  the number of cone-like ends
  at $p^s$ is bounded by $\frac{2^mK}{\kappa}$, i.e.,
  $rank(H^0(X, X \backslash \{p^s\})) \leq \frac{2^mK}{\kappa}$.
  \item $\int_{X} |Rm|_{g}^{\frac{m}{2}} d\mu_{g} \leq E$.
   \end{itemize}
  \label{lemma: wwcpt}
  In particular, $(X, g)$ is a $\kappa$-noncollapsed, Ricci-flat ALE multifold.
 \end{lemma}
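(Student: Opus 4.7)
The plan is to identify a finite singular set in the limit through the energy concentration property, upgrade Gromov-Hausdorff convergence to $C^{1,\gamma}$-convergence off that set via bounded Ricci curvature and two-sided volume bounds, verify that the limit is Ricci-flat by passing to a short parabolic slab, and count ends at each singular point through a volume-sandwich argument. I expect the main subtlety to lie in ruling out curvature blow-up in a full spacetime neighborhood of the smooth region, which is what allows us to upgrade the time-slice limit to a limit Ricci flow and read off Ricci-flatness; the multifold (rather than orbifold) structure near singular points also has to be accommodated carefully.

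I begin by defining the singular set $\mathcal{S}$ of the eventual Gromov-Hausdorff limit as those $p$ admitting $p_i \in X_i$ with $p_i \to p$ and $|Rm|_{g_i(t_i)}(p_i) \to \infty$. The E-refined hypothesis guarantees that around each high-curvature point $p_i$ the geodesic ball of radius $|Rm|_{g_i(t_i)}^{-1/2}(p_i)$ carries energy exceeding $\epsilon$. Because such balls around points converging to distinct elements of $\mathcal{S}$ are disjoint once $i$ is large, the uniform energy bound $E$ forces $|\mathcal{S}| \leq \lfloor E/\epsilon \rfloor = N_0$. On any compact $K \subset X \setminus \mathcal{S}$, the Riemannian curvature of $g_i(t_i)$ is uniformly bounded for large $i$; together with the EV volume upper bound and $\kappa$-noncollapsing, the harmonic radius estimate of Anderson provides a uniform lower bound on the harmonic radius and hence, after passing to a subsequence, $C^{1,\gamma}$-convergence on $K$. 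A diagonal argument over an exhaustion of $X \setminus \mathcal{S}$ produces the global pointed limit $(X, x, g)$, while Gromov-Hausdorff precompactness of the whole sequence follows from the two-sided volume bounds on unit balls.

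To see that the limit is Ricci-flat, I work on a short parabolic slab of times near $t_i$. On any relatively compact subset of $X \setminus \mathcal{S}$, curvature is bounded at time $t_i$; a local forward maximum principle for $|Rm|^2$ along the Ricci flow extends the bound to a short spacetime neighborhood, and then Shi's derivative estimates upgrade the $C^{1,\gamma}$-convergence to smooth convergence of flows on that neighborhood. The limit solves $\partial_t g = -Ric$ on the smooth locus (since $c_i \to 0$) with $R \equiv 0$ (because scalar curvature vanishes uniformly in the refined sequence). The scalar evolution $\partial_t R = \Delta R + 2|Ric|^2$ then forces $|Ric|^2 \equiv 0$, so $g$ is Ricci-flat on $X \setminus \mathcal{S}$. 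Fatou's lemma applied to $|Rm|^{m/2}$ gives $\int_X |Rm|^{m/2}\, d\mu \leq E$.

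Finally, for the multifold structure and end count at each singular point $p^s$, I note that distinct cone-like ends at $p^s$ correspond to disjoint regions of $X \setminus \mathcal{S}$ approaching $p^s$. In a small annulus $B(p^s, 2r) \setminus B(p^s, r)$, each end contains a geodesic ball of radius comparable to $r$ disjoint from the others; $\kappa$-noncollapsing gives each such ball volume at least $c\,\kappa\, r^m$, while the EV upper bound gives $\Vol(B(p^s, 2r)) \leq K (2r)^m$. Dividing, the number of ends is at most $2^m K/\kappa$, and each individual end is, by Theorem~\ref{theorem: ricciflat} applied to rescalings around $p^s$, a Ricci-flat ALE orbifold. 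This exhibits $X$ as a finite union of $\kappa$-noncollapsed Ricci-flat ALE orbifolds identified at the points of $\mathcal{S}$, the desired multifold.
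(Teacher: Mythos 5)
Your overall structure mirrors the paper's (identify a finite concentration set, obtain $C^{1,\gamma}$-convergence off it via harmonic-radius bounds, pass to a short forward parabolic slab to read off Ricci-flatness, and count ends by a volume-sandwich argument), but two of your steps contain genuine gaps.

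The first and most serious is the step where you "extend the bound to a short spacetime neighborhood" via "a local forward maximum principle for $|Rm|^2$." No such local maximum principle exists. The evolution inequality $\partial_t |Rm| \leq \triangle |Rm| + c\,|Rm|^2$ has a quadratic reaction term, so even globally one only gets a doubling-time estimate, not preservation of a bound; and locally the argument fails outright because under Ricci flow high curvature can be swept into a fixed ball from outside by shrinking distances, so a curvature bound on $B_{g_i(t_i)}(x,r)$ at the single time $t_i$ does not control $|Rm|$ at slightly later times without an additional geometric input. The paper's proof supplies exactly this input by invoking Perelman's forward pseudolocality (Theorem~\ref{theorem: pseudo}), which converts the time-slice curvature bound plus $\kappa$-noncollapsing (giving a near-Euclidean volume ratio at a scale depending on $\kappa$) into a forward-in-time curvature bound $\digamma(r,\kappa)$ on a slab of length $\daleth(r,\kappa)$. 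Your argument needs pseudolocality here; a bare maximum-principle claim does not do the job, and without it the smooth forward limit flow, and hence the Ricci-flatness of the $t=0$ slice, is not established.

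The second gap is the last step, where you assert that each end is a Ricci-flat ALE orbifold "by Theorem~\ref{theorem: ricciflat} applied to rescalings around $p^s$." Theorem~\ref{theorem: ricciflat} takes as input a sequence of Ricci-flat manifolds; the approximants $g_i(t_i)$ are not Ricci-flat, and rescalings of the limit around $p^s$ constitute blow-ups of a single fixed metric, which is not the setting of that theorem. What is actually needed here, and what the paper does, is to show directly that each end compactifies to a $C^\infty$-orbifold point: first deduce that the tangent cone at $p^s$ is flat using the $\epsilon$-regularity of Lemma~\ref{lemma: econ} and the vanishing of energy near $p^s$; then obtain the $C^0$-orbifold structure and a local Sobolev constant bound (following Tian); and finally upgrade to $C^\infty$ via Moser iteration (with the improved Kato inequality in dimension four) and Uhlenbeck's removal-of-singularities argument. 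As written, your end-count is correct, but the $C^\infty$-multifold structure claimed in the lemma is not established by your argument.
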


 \begin{proof}
  Without loss of generality, we can assume $t_i \equiv 0$.

  Since total energy is bounded by $E$ and at least $\epsilon$ energy concentrated around any point
  with $|Rm|_{g_i(0)}>H$, we can find $L(\leq N_0)$ small geodesic balls $\{B_{g_i(0)}(y_i^{(s)}, 2r)\}_{s=1}^L$
  such that $|Rm|_{g_i(0)} < r^{-2}$ outside these geodesic balls.
This means that for every point $x \in \displaystyle X_i \backslash
(\bigcup_{v=1}^{L}
 B_{g_i(0)}(y_i^{(v)}, 3r)) $, the geodesic ball $B_{g_i(0)}(x, r)$ has $|Rm|_{g_i(0)}$ upper bound $r^{-2}$.
 Using Perelman's  pseudolocality theorem (Theorem~\ref{theorem: pseudo}),
 we are able to get a rough Riemannian
  curvature control $\digamma(r, \kappa)$ at point $x$ for a short time period
     $\daleth(r, \kappa)$.  For brevity,  we denote
 \begin{align*}
   X_{i,r} = X_i  \backslash (\bigcup_{v=1}^{L}
 B_{g_i(0)}(y_i^{(v)},  3r)).
 \end{align*}
 So we have
 \begin{align*}
   |Rm|_{g_i(t)}(x) \leq \digamma(r, \kappa), \quad \forall \;
   (x,t)
   \in X_{i,r} \times [0, \daleth(r, \kappa)].
 \end{align*}
 Note that  $\displaystyle \Vol_{g_i(0)}(X_i) \geq \kappa \rho^m$
 for every $\rho$ and large $i$. So the local volume ratio upper bound
 forces that $\displaystyle \lim_{i \to \infty} \diam_{g_i(0)}(X_i)=\infty$.
 Therefore $X_{i,r}$ is nonempty when
 $r$ very small. By Shi's local estimate (\cite{Shi1}, \cite{Shi2}), we are able
 to take smooth limit
 \begin{align*}
     (X_{i,r}, x_i, g_i(t)) \stackrel{C^\infty}{\to}
     (X_{\infty,r}, x_{\infty, r}, g_{\infty, r}(t)), \quad \forall t \in (0, \daleth(r, \kappa)].
 \end{align*}
 However, at time $t=0$, we only have weaker convergence
 \begin{align*}
    (X_{i,r}, x_i, g_i(0)) \stackrel{C^{1,\gamma}}{\to}
    (X_{\infty,r}, x_{\infty, r}, g_{\infty, r}(0)).
 \end{align*}
 Although this weak convergence cannot conclude too much directly about the
 limit manifold $(X_{\infty, r}, g_{\infty, r}(0))$, the Ricci flatness
 of the limit solution in time period $(0,\daleth(r, \kappa)]$ comes to rescue us. The
 Ricci flatness tells us that metric tensor does not change in time
 period $(0,\daleth(r, \kappa)]$. The apriori Riemannian curvature norm bound
 $\digamma(r, \kappa)$ assures the limit metric tensor changes continuously
 at time $t=0$. Therefore, $(X_{\infty,r}, x_{\infty, r}, g_{\infty,r}(0))$ are the same as $(X_{\infty,r},
 x_{\infty, r}, g_{\infty, r}(t))$ for every $t \in
 (0,\daleth(r, \kappa)]$. In particular, $(X_{\infty,r}, x_{\infty, r}, g_{\infty, r}(0))$ is Ricci-flat.

 Now for each $r$ we obtain a Ricci-flat manifold $(X_{\infty, r},
 g_{\infty, r}(0))$. Further more, if $r_1 > r_2$, $(X_{\infty, r_1},g_{\infty, r_1}(0) )$ can be naturally
 looked as a sub-metric-space of   $(X_{\infty, r_2},
 g_{\infty, r_2}(0))$. Diagonal sequence argument gives us a limit space
 $\displaystyle X_{\infty, 0}=\bigcup_{l=1}^{\infty} X_{\infty,2^{-l}}$.   Local volume ratio
 upper boundedness of $(X_i, g_i(0))$  assures that $X_{\infty, 0}$ can be compactified by adding
  $L(\leq N_0)$ points $p^1, \cdots, p^L$. We denote the pointed compactification space as
 $(X, x, g)$. Therefore, diagonal sequence argument implies
   \begin{align*}
    (X_i, x_i, g_i(0)) \stackrel{C^{1,\gamma}}{\to}
   (X, x, g),
   \end{align*}
 where $(X, g)$ is smooth and has flat Ricci curvature away from
 $\{p^1, \cdots, p^L\}$.

 \begin{clm}
  $(X, g)$ is a multifold.
 \end{clm}

 First we prove every singular point has only finite ends.
 Let $p$ be a singular point on $X$, $\Lambda$ be the number of ends at point $p$, i.e.,
 $\Lambda=rank(H^0(X, X \backslash \{p\}))$.
 Choose $\delta$ very
 small and denote $E_1(\delta), \cdots, E_{\Lambda}(\delta)$ be all the
 components of $B(p, \delta) \backslash \{p\}$. Choose $q_l ( 1 \leq l \leq \Lambda)$ in
 $E_l(\delta)$ such that $d(p, q_l)= \frac{\delta}{2}$. Clearly,
 $E_l(\delta) \supset B(q_l, \frac{\delta}{2})$.   Therefore, volume ratio
 control implies
 \begin{align*}
  K\delta^m \geq  \Vol(B(p, \delta)) = \sum_{l=1}^{\Lambda} \Vol(E_l(\delta)) \geq \sum_{l=1}^{\Lambda}
   \Vol(B(q_l, \frac{\delta}{2})) \geq \Lambda \kappa
   (\frac{\delta}{2})^m
 \end{align*}
 It follows that $\Lambda \leq \frac{2^mK}{\kappa}$.

 Then we show every end around a singular point is an orbifold end.
 Since $\Vol(E_l(\delta)) < K\delta^4$ and $\int_{X} |Rm|^{\frac{m}{2}} d\mu \leq E$, we have
 $\displaystyle \lim_{\delta \to 0} \int_{E_{l}(\delta)} |Rm|^{\frac{m}{2}}
 d\mu=0$.
 Suppose $q \in E_l(\delta)$ and $d(q,p)=s$.
  Lemma~\ref{lemma: econ} implies
 \begin{align*}
      |\nabla^k Rm|(q)(\frac{s}{2})^{2+k} \leq C_k \int_{B(q, s)} |Rm|^{\frac{m}{2}} d\mu \to
      0,
      \quad \textrm{as} \; s \to 0.
 \end{align*}
 This means the tangent space of $E_l(\delta) \cup \{p\}$ at the
 point $p$ is a flat cone.  As argued in~\cite{Tian90},  $E_l(\delta) \cup
 \{p\}$ is a $C^0$-orbifold. Therefore the local Sobolev
 constant of $E_l(\delta) \cup \{p\}$ is very close to the Euclidean
 Sobolev constant when $\delta$ very small.   In particular,
 the local Sobolev constant of $E_l(\delta) \cup \{p\}$ is bounded.
 Recall that $E_l(\delta)$ is Ricci-flat, improved Kato's inequality (if $m=4$)
 together with Moser iteration gives us
 better curvature control such that we can construct a global smooth
 coordinate on $E_l(\delta)$.  Then Uhlenbeck's removing singularity technique implies
 that $p$ is a $C^{\infty}$-orbifold singularity for every $l$.  It follows that $p$ is a $C^{\infty}$-multifold
 singularity of $X$. So we finish the proof of the Claim.


  As every singular point must
 be one of $p^s$, it is clear that the number of singular points
 is controlled by $N_0=\lfloor \frac{E}{\epsilon} \rfloor$.  By  the
 construction of $X$, we have $\int_{X} |Rm|_{g}^{\frac{m}{2}} d\mu_{g} \leq E$.
 Use the same arguments as in~\cite{BKN} or~\cite{An89},  we know $(X, g)$ is an ALE space.
 \end{proof}

 \begin{remark}
   According to the construction of the convergence, the points
   $\{p^s\}_{s=1}^L$ comes from compactification.  We call such
   points as added points. It is possible that some  added point $p^s$
   is a smooth point though it comes from a  different origin from generic smooth points.
   However, every singular point  must be an added point.
 \label{remark: added}
 \end{remark}

 We have already shown the weak compactness of EV-refined sequence in
 $C^{1,\gamma}$-topology.  It is not enough for our main object.
 What we need is the weak compactness in $C^\infty$ topology.
 In order to do so, we need to control curvature's derivatives by
 curvature. Generally this is impossible.  However, in an EV-refined
 sequence, some geometric constraints force that a very non flat
 part  cannot become flat immediately under the flow.  This allows us to control
 Riemannian curvature of the previous time by the Riemannian
 curvature of a later time.  This kind of backward pseudolocality theorem
 together with Shi's local estimate will give us the control we
 need.   So the crucial step is to prove the backward pseudolocality
 theorem.    Let's first do some preparation for it.

\begin{lemma}[\textbf{Distance Continuity for an EV-refined Sequence}]
 Suppose \linebreak $\{(X_i, g_i(t)), -1 \leq t \leq 1\}$ is an EV-refined
  sequence, $t_i \leq 0$ and $\displaystyle \lim_{i \to \infty} t_i=0$, $x_i, y_i$
  are points in $X_i$.
  Then we have
  \begin{align*}
   \lim_{i \to \infty} d_{g_i(0)}(x_i, y_i)
   = \lim_{i \to \infty}  d_{g_i(t_i)}(x_i, y_i).
  \end{align*}
  \label{lemma: dcontinuity}
  if one of the limits exists.
 \end{lemma}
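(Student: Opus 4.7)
The plan is to compare both $g_i(0)$ and $g_i(t_i)$ to a common Ricci-flat multifold limit obtained from Lemma~\ref{lemma: wwcpt}. First, pass to a subsequence so that both $D_0:=\lim_i d_{g_i(0)}(x_i,y_i)$ and $D_1:=\lim_i d_{g_i(t_i)}(x_i,y_i)$ exist in $[0,\infty]$; by symmetry it suffices to treat the case $D_1<\infty$. Since $t_i \in [-\frac14,0]$ for $i$ large, Lemma~\ref{lemma: wwcpt} applied at time $t_i$ yields, after a further subsequence, $(X_i,x_i,g_i(t_i))\gaconv(\bar X,\bar x,\bar g)$ to a Ricci-flat multifold with at most $N_0$ singular points, and $y_i\to\bar y\in\bar X$ with $d_{\bar g}(\bar x,\bar y)=D_1$.

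The key step is to verify that $(X_i,x_i,g_i(0))\GHconv(\bar X,\bar x,\bar g)$ to the \emph{same} pointed multifold. I would re-run the proof of Lemma~\ref{lemma: wwcpt} starting at time $t_i$: on the complement $X_{i,r}$ of the finitely many bad balls carrying the $\epsilon$-energy concentration, one has $|Rm|_{g_i(t_i)}\le r^{-2}$, so Perelman's pseudolocality (Theorem~\ref{theorem: pseudo}) provides a forward-in-time curvature bound $\digamma(r,\kappa)$ on $[t_i,t_i+\daleth(r,\kappa)]$. For $i$ large, $0$ lies in this interval, so Shi's local estimates deliver smooth subsequential convergence of $g_i(t)$ on $X_{i,r}$ for all such $t$; Ricci-flatness of the limit flow (forced by $R\to 0$ together with the evolution equation for $R$) then makes the limit metric time-independent on the smooth part. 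A diagonal argument in $r\to 0$, combined with the uniform volume ratio upper bound $K$, compactifies this common smooth limit by the same finite set of added points at both times, identifying the two Gromov-Hausdorff limits.

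With the limits identified, pointed Gromov-Hausdorff convergence of proper length spaces gives $d_{g_i(0)}(x_i,y_i)\to d_{\bar g}(\bar x,\bar y)=D_1$, so $D_0=D_1$; the case $D_1=\infty$ falls out by contradiction from the finite-distance case applied with the roles of $0$ and $t_i$ reversed. I anticipate the main obstacle to be this last identification of the compactified multifold at the two times: one must rule out that a minimizing $g_i(0)$-geodesic between $x_i$ and $y_i$ could ``cheat'' by threading through a thin region that collapses to one of the added points and thereby shorten the distance. This is controlled by the uniform volume ratio upper bound $K$, which forces each bad ball to shrink to a genuinely isolated singular point rather than an extended set along which the GH distance functions at the two times could disagree.
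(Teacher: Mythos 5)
Your re-running of Lemma~\ref{lemma: wwcpt} at time $t_i$ and propagating curvature bounds forward to $t=0$ via pseudolocality and Shi's estimates is sound, and it is the same mechanism the paper uses to match the smooth parts of the limits. The difficulty is that the step ``identifying the two Gromov--Hausdorff limits'' followed by ``pointed GH convergence gives $d_{g_i(0)}(x_i,y_i)\to d_{\bar g}(\bar x,\bar y)$'' is circular: to assert that $(X_i,x_i,g_i(0))$ converges in the pointed Gromov--Hausdorff sense to the same metric space, one already needs to know that the ambient distance functions $d_{g_i(0)}$ converge correctly, which is precisely what the lemma asserts. Matching the smooth limit metrics and the finite set of added points says nothing by itself about how distances behave across the shrinking bad balls, and it is there that the compactified metric could a priori differ between the two times.

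You do correctly single out the real obstacle---a $g_i(0)$-geodesic could ``cheat'' through a bad ball---but dispatching it by citing the volume ratio upper bound $K$ alone does not work. An upper volume bound, even together with $\kappa$-noncollapsing, does not directly bound intrinsic diameter: it only shows every interior point of $B_{g_i(t_i)}(p_i,3r)$ lies within $(\Vol/\kappa)^{1/m}$ of the boundary, while the boundary itself could still be spread out. The missing ingredients are that $\partial B_{g_i(t_i)}(p_i,3r)$ has at most $2^mK/\kappa$ connected components, that each component has small intrinsic diameter at time $t_i$ and retains it at time $0$ because the curvature there remains bounded by the pseudolocality estimate, and then the appendix Lemma~\ref{lemma: cvcd}, which converts the component count, their diameters, and the small $\kappa$-noncollapsed volume into the bound $\diam_{g_i(0)} B_{g_i(t_i)}(p_i, 3r) \le C(\kappa,K)\varsigma$. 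This is the substantive estimate your sketch would need to supply, and it is what the paper's proof provides; the paper in fact bypasses any GH-limit identification and argues directly by splitting a minimizing geodesic (at time $0$ for one inequality, at time $t_i$ for the other) into the part outside the bad ball, whose length is nearly time-independent thanks to the curvature bound, and the part inside, which is absorbed by the diameter estimate just described.
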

 \begin{proof}

 Before we get into the details of the proof, let's illustrate the
 basic idea and set up some notations.

 The main idea is to separate the shortest geodesic connecting $x_i$
 and $y_i$  into two parts. One part contributes most to the length
 but changes little under the flow. The other part may change very
 fast, but itself's length is tiny in every time slice. Combining these two estimates, we
 obtain the continuity of distance.

 By Lemma~\ref{lemma: wwcpt}, $(X_i, x_i, g_i(t_i))$ converges to some  Ricci-flat
 multifold in $C^{1,\gamma}$-topology. Denote this limit
 as $(X, x, g)$. The number of singular points
 in $X$ is bounded by $N_0 =\lfloor \frac{E}{\epsilon} \rfloor$.
 For simplicity of our argument, we  assume that $X$ contains only one singularity.  \quad
 Note that $(X, g)$ is an ALE $C^{\infty}$-multifold, so it satisfies the following
 property:

   Fix $\varsigma >0$ arbitrarily small.  Then for every
   point $z \in X$, there exists $\rho < \frac{\varsigma}{10}$
 such that $\partial B(z, \rho)$ has at most $\frac{2^mK}{\kappa}$
 components. Each component is a smooth $(m-1)$-dimensional Riemannian
 manifold and its diameter is not greater than $2\pi \rho$.
 See Figure~\ref{figure: smallball} for intuition.

   \begin{figure}
 \begin{center}
 \psfrag{x}[c][c]{$x$}
 \psfrag{p}[c][c]{$p$}
 \psfrag{Ball}[c][c]{$B(z,  \rho)$}
 \includegraphics[width=0.8 \columnwidth]{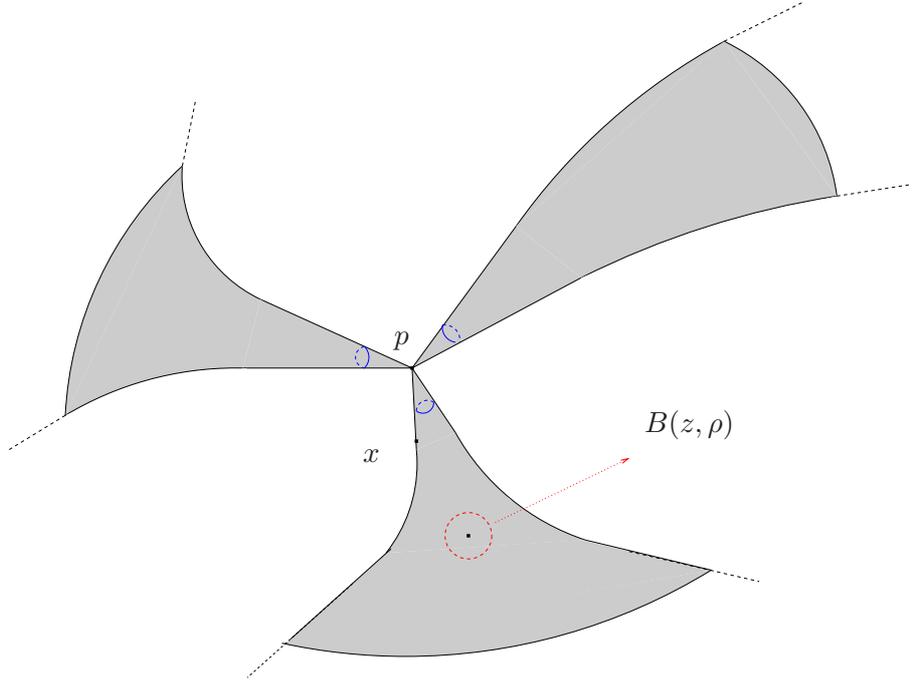}
 \caption{Small geodesic balls on $(X, g)$}
  \label{figure: smallball}
 \end{center}
 \end{figure}

 Fix $\varsigma$ and let $r$ be some number located in
 $(0, \frac{\varsigma}{10})$.

  Since $t_i \in [-\frac12, 0]$, $(X_i, g_i(t_i))$ satisfies energy concentration property:
 \begin{align*}
   \int_{B_{g_i(t_i)} (z, Q^{-\frac12})} |Rm|_{g_i(t_i)}^{\frac{m}{2}} d\mu_{g_i(t_i)} > \epsilon,
   \quad \textrm{whenever} \; Q=|Rm|_{g_i(t_i)}(z) > H.
 \end{align*}
 We have already assumed that the number of singularities is $1$,
 so energy concentration implies
 \begin{align*}
     |Rm|_{g_i(t_i)}(z) < r^{-2}, \quad \forall \;  z \in  X_i \backslash
     B_{g_i(t_i)}(p_i,2 r)
 \end{align*}
 where $p_i$ is a point with maximal $|Rm|_{g_i(t_i)}$.   Applying Perelman's
 pseudolocality Theorem, i.e., Theorem~\ref{theorem: pseudo}, we are able to control the Riemannian
 curvature on a short time period:
 \begin{align*}
      |Rm|_{g_i(t)}(z) \leq \digamma (r, \kappa),  \quad \forall \; (z,t) \in X_i
      \backslash B_{g_i(t_i)}(p_i, 3r) \times [t_i, t_i+ \daleth(r, \kappa)].
 \end{align*}
 Recall $t_i \to 0$, it follows that
 \begin{align}
      |Rm|_{g_i(t)}(z) \leq \digamma (r, \kappa),  \quad \forall \; (z,t) \in X_i
      \backslash B_{g_i(t_i)}(p_i, 3r) \times [t_i, 0]
 \label{eqn: lccd}
 \end{align}
 for large $i$.  This is a very important control in the following
 proof.

   Now we are ready to give a proof by separating geodesics. Without loss of generality, we can assume that
   $\displaystyle \lim_{i \to \infty} d_{g_i(t_i)}(x_i, y_i)$ exists.\\

\textit{Step 1.  \quad $ \displaystyle  \lim_{i \to \infty}
 d_{g_i(t_i)}(x_i, y_i) \leq \lim_{i \to \infty} \displaystyle
 d_{g_i(0)}(x_i,  y_i).$ }\\

 Let $\Gamma_i$ be a shortest geodesic connecting $x_i$ and $y_i$ in
 the Riemannian manifold $(X_i, g_i(0))$. We can separate it into
 two parts:
\begin{align*}
  \Gamma_i^{(a)} &= \Gamma_i \backslash B_{g_i(t_i)} (p_i, 3r);\\
  \Gamma_i^{(b)} &= \Gamma_i \cap B_{g_i(t_i)} (p_i, 3r).
\end{align*}

 \begin{figure}
 \begin{center}
 \psfrag{p}[c][c]{$x_i$}
 \psfrag{q}[c][c]{$y_i$}
 \psfrag{Ball}[c][c]{$B_{g_i(t_i)} (p_i, 3r)$}
 \psfrag{G}[c][c]{$\Gamma_i$}
 \psfrag{U}[c][c]{$\Upsilon_i$}
 \psfrag{t1}[c][c]{$t_i$}
 \psfrag{t2}[c][c]{$0$}
 \psfrag{t}[c][c]{$t$}
 \psfrag{A}[c][c]{$\Gamma_i$ is a geodesic under $g_i(0)$}
 \psfrag{B}[c][c]{$\Upsilon_i$ is a geodesic under $g_i(t_i)$}
 \includegraphics[width=0.8 \columnwidth]{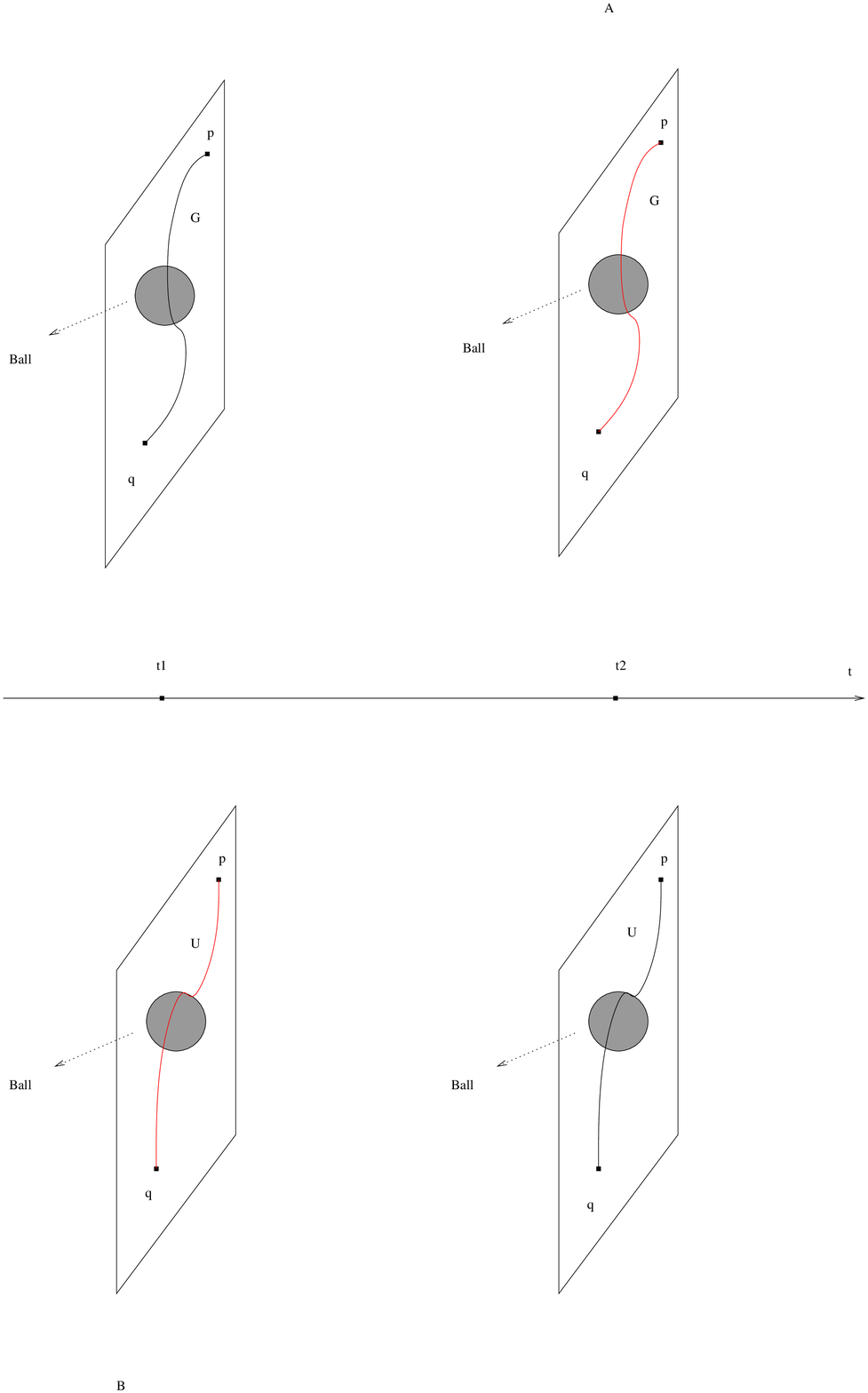}
 \caption{Choose geodesic at different time slices}
  \label{figure: choosegeodesic}
 \end{center}
 \end{figure}

 From inequality (\ref{eqn: lccd}), we see that  the Riemannian
 curvature is uniformly controlled by $\digamma(r, \kappa)$ on
  $\Gamma_i^{(a)} \times [t_i, 0]$.  Recall that $(X_i, g_i(t))$
 satisfies the equation $\displaystyle \D{g}{t}= -Ric +c_ig$. It
 follows that the changing speed of any curve's length is controlled
 by Ricci curvature's norm and $c_i$.  Note that $c_i \to 0$ and $|t_i| \to 0$ ,
 the limit reads
  \begin{align}
    \lim_{i \to \infty} \Length_{g_i(t_i)}(\Gamma_i^{(a)}) &= \lim_{i \to \infty}
    \Length_{g_i(0)}(\Gamma_i^{(a)}) \notag \\
    &\leq  \lim_{i \to \infty} \Length_{g_i(0)}(\Gamma_i)
    =\lim_{i \to \infty} d_{g_i(0)}(x_i, y_i).
 \label{eqn: gpdcontrol}
  \end{align}
 Triangle inequality implies
 \begin{align}
   d_{g_i(t_i)}(x_i, y_i) &\leq d_{g_i(t_i)}(x_i, \partial B_{g_i(t_i)} (p_i,
   3r)) \notag\\
   &\hspace{60pt}  + d_{g_i(t_i)}(\partial B_{g_i(t_i)} (p_i,
   3r), y_i) + \diam_{g_i(t_i)} B_{g_i(t_i)} (p_i, 3r) \notag\\
   &\leq \Length_{g_i(t_i)} \Gamma_i^{(a)} + 6r.
 \label{eqn: rdcontrol}
 \end{align}
 Combining inequality (\ref{eqn: gpdcontrol})
  and (\ref{eqn: rdcontrol}) yields
 \begin{align*}
   &\lim_{i \to \infty} d_{g_i(t_i)}(x_i, y_i)
    \leq \lim_{i \to \infty} d_{g_i(0)}(x_i, y_i) + 6r
    <  \lim_{i \to \infty} d_{g_i(0)}(x_i, y_i) + \varsigma.
 \end{align*}
 As $\varsigma$ can be as small as we want, this implies
  $ \displaystyle  \lim_{i \to \infty} d_{g_i(t_i)}(x_i, y_i) \leq \lim_{i \to \infty}
  d_{g_i(0)}(x_i, y_i)$.\\

\textit{Step 2.  \quad $ \displaystyle \lim_{i \to \infty}
\displaystyle d_{g_i(0)}(x_i,
 y_i) \leq \lim_{i \to \infty}  d_{g_i(t_i)}(x_i, y_i).$  }

 If $\displaystyle \lim_{i \to \infty} d_{g_i(t_i)}(x_i,
y_i)=\infty$, the statement is true automatically.  So we assume
\linebreak $\displaystyle \lim_{i \to \infty} d_{g_i(t_i)}(x_i,
y_i)=D<\infty$.

  Let $\Upsilon_i$ be a
shortest geodesic connecting $x_i$ and $y_i$ under the metric
$g_i(t_i)$. We separate it into two parts:
 \begin{align*}
 \Upsilon_i^{(a)}= \Upsilon_i \backslash B_{g_i(t_i)}(p_i,
 3r),   \quad   \Upsilon_i^{(b)}= \Upsilon_i \cap B_{g_i(t_i)}(p_i,
 3r).
 \end{align*}

 Curvature control inequality (\ref{eqn: lccd}) implies that $\Upsilon_i^{(a)}$
 stay in a part with bounded curvature, so the changing speed of its length is controlled.
 Recall $|t_i| \to 0$ and $c_i \to 0$, we have
 $\displaystyle
  \lim_{i \to \infty} \frac{\Length_{g_i(t_i)} \Upsilon_i^{(a)}}{\Length_{g_i(0)}
  \Upsilon_i^{(a)}}=1$.
 It follows that
 \begin{align}
    \lim_{i \to \infty} \Length_{g_i(0)}(\Upsilon_i^{(a)}) = \lim_{i \to \infty}
    \Length_{g_i(t_i)}(\Upsilon_i^{(a)})
    \leq  \lim_{i \to \infty} \Length_{g_i(t_i)}(\Upsilon_i)
      \leq D.
    \label{eqn: uacontrol}
 \end{align}

  Now let's  study the other part $\Upsilon_i^{(b)} = \Upsilon \cap
 B_{g_i(t_i)}(p_i, 3r)$. In this part, Riemannian curvature's control
 is missing. Moreover, $\Upsilon_i^{(b)}$ may be far away from a
 geodesic at time $t=0$. So we give up the direct control of
 $\Upsilon_i^{(b)}$. Instead, we put it in the  ball
 $B_{g_i(t_i)}(p_i, 3r)$ and control the diameter of this  ball by
 its volume and the diameter of its boundary.

  If $d_{g_i(t_i)}(x_i, p_i) \to \infty$, then
  $\Upsilon_i^{(b)}=\Upsilon_i \cap B_{g_i(t_i)}(p_i, 3r)
  =\emptyset$ and consequently $\Upsilon_i = \Upsilon_i^{(a)}$. Using inequality (\ref{eqn: uacontrol}),
   we have already
  finished the proof.   Therefore we can assume that $p_i$ stay
  within a bounded distance away from $x_i$ and $p_i \to p$. Then we have
  convergence
 \begin{align*}
    & B_{g_i(t_i)} (p_i, 3r)  \stackrel{C^{1,\gamma}}{\longrightarrow}
     B(p, 3r) \subset X, \\
    & \partial B_{g_i(t_i)} (p_i, 3r)  \stackrel{C^{1,\gamma}}{\longrightarrow}
     \partial B(p, 3r).
 \end{align*}
 Note that $p$ is a fixed point now. According to this point,
 we adjust $r \in (0, \frac{\varsigma}{10})$ such that
 \begin{itemize}
 \item   $\partial B_{g}(y, 3r)$  has at most
 $\frac{2^mK}{\kappa}$ components.
 \item    Every component of $\partial B_{g}(y, 3r)$
 is a smooth $(m-1)$-dimensional Riemannian manifold whose diameter is
 less than $4\pi r$.
 \end{itemize}
 Therefore, for every large $i$, $\partial B_{g_i(t_i)} (p_i, 3r)$
 has at most $\frac{2^mK}{\kappa}$ connected  components. For convenience, we
 denote $S_i^{(1)}, \cdots, S_i^{(k)}$ as the connected components
 of $\partial B_{g_i(t_i)} (p_i, 3r)$.  Clearly, for every $l \in \{1, \cdots, k\}$,
 $S_i^{(l)}$ is at least a $C^1$-manifold  and  $\diam_{g_i(t_i)} S_i^{(l)} <6\pi r$.
 Here $\diam_{g_i(t_i)} S_i^{(l)}$ means the intrinsic diameter of $S_i^{(l)}$
 under the metric $g_i(t_i)|_{S_i^{(l)}}$.

 Note that on $\partial B_{g_i(t_i)}(p_i, 3r)$, Riemannian curvature
 is uniformly bounded (inequality (\ref{eqn: lccd})).  Therefore the changing speed of any curve's length is
 uniformly controlled on $\partial B_{g_i(t_i)}(p_i, 3r) \times [t_i,
 0]$. As $|t_i| \to 0$, it yields that
 \begin{align*}
   \lim_{i \to \infty}
   \frac{\diam_{g_i(0)}(S_i^{(l)})}{\diam_{g_i(t_i)}(S_i^{(l)})} =1.
 \end{align*}
 for every $l \in \{1, \cdots, k\}$. Therefore, we have $\displaystyle
     \lim_{i \to \infty} \diam_{g_i(0)}(S_i^{(l)}) < 6\pi r. $

  The upper volume ratio bound reads
  \begin{align}
  \Vol_{g_i(t_i)} (B_{g_i(t_i)} (p_i, 3r)) \leq K \cdot (3r)^m.
  \label{eqn: vp}
  \end{align}
  Since the flow $\{(X_i, g_i(t)), -1 \leq t \leq 1\}$ is a solution
  of $\D{g}{t}= -Ric + c_ig$, the volume changing speed is controlled  by scalar curvature
  and $c_i$. Both of these two terms are tending to zero. Combining this
  estimate with $|t_i| \to 0$, inequality (\ref{eqn: vp}) implies
  \begin{align*}
  \Vol_{g_i(0)} (B_{g_i(t_i)}(p_i, 3r))  <K \cdot (6r)^m.
  \end{align*}
  for large $i$.

  In short, for $i$ large, we have two estimates
 \begin{align}
  \Vol_{g_i(0)} (B_{g_i(t_i)} (p_i, 3r))  < K \cdot (6r)^m,  \quad
    \diam_{g_i(0)} S_i^{(l)} < 6\pi r, \quad \forall l \in \{1, \cdots, k\}.
 \label{eqn: dv}
 \end{align}

 On a $\kappa$-noncollapsed Riemannian manifold,
 we claim that the diameter of every connected set can be controlled by its volume and
 the diameters of components of its boundary.  In order not to diversify our attention,
 we only list a precise statement here and postpone the proof to
 the appendix(Lemma~\ref{lemma: cvcd}).

 \begin{lm}
    $(X^m,g)$ is a Riemannian manifold which is
    $\kappa$-noncollapsed on scale $1$.
   $B \subset X$ is a connected open set and $\Vol(B) < \kappa$.
   $S_1, \cdots,  S_{\Lambda}$ are connected components of $\partial B$. Then
 \begin{align*}
  \diam B \leq \sum_{k=1}^{{\Lambda}} \diam S_k  +  6{\Lambda} ( \frac{Vol(B)}{\kappa})^{\frac{1}{m}}
 \end{align*}
 where $\diam S_k$ means the diameter of the manifold $(S_k, g|_{S_k})$.
 \end{lm}

 In order to apply this Lemma, we let $B=B_{g_i(t_i)}(p_i, 3r)$, $S_l = S_i^{(l)}$.
 Inequality (\ref{eqn: dv}) implies that under metric $g_i(0)$, we have
 $\Lambda \leq \frac{2^mK}{\kappa}$, $\frac{Vol(B)}{\kappa} \leq \frac{K}{\kappa}$, $\diam S_l < 6 \pi r$.

  So  the previous Lemma applies and yields
\begin{align}
    \diam_{g_i(0)} (B_{g_i(t_i)}(p_i, 3r)) < 6 \cdot \frac{2^mK}{\kappa} (\pi  +
    (\frac{K}{\kappa})^{\frac{1}{m}})r
      \triangleq C(\kappa, K)r < C(\kappa, K) \varsigma.
 \label{eqn: bdcontrol}
\end{align}

 By triangle inequality, we have
 \begin{align*}
   d_{g_i(0)}(x_i, y_i) & \leq d_{g_i(0)}(x_i, \partial B_{g_i(t_i)}(p_i,
   3r)) + d_{g_i(0)}(\partial B_{g_i(t_i)}(p_i, 3r), y_i) + \diam_{g_i(0)} B_{g_i(t_i)}(p_i, 3r)\\
   &\leq  \Length_{g_i(0)} \Upsilon_i^{(a)}   +
   \diam_{g_i(0)} B_{g_i(t_i)} (p_i, 3r).
 \end{align*}
 Combining this with inequality (\ref{eqn: uacontrol}) and  (\ref{eqn: bdcontrol}),
 we obtain
 \begin{align*}
    \lim_{i \to \infty} d_{g_i(0)}(x_i, y_i) \leq D +
    C(\kappa, K) \varsigma.
 \end{align*}
 Since $\varsigma$ can be chosen arbitrarily small, it follows that
  \begin{align*}
   \displaystyle \lim_{i \to \infty} d_{g_i(0)}(x_i, y_i) \leq D =
   \displaystyle \lim_{i \to \infty} d_{g_i(t_i)}(x_i, y_i).
  \end{align*}

 \end{proof}

 As a simple application of Lemma~\ref{lemma: dcontinuity}, we have
 the following Corollary.

 \begin{corollary}[\textbf{Coincidence of Limit Spaces}]
 If $\{(X_i, g_i(t)), -1 \leq t \leq 1\}$ is an EV-refined
 sequence, $x_i \in X_i$,  $t_i \leq 0$ and $\displaystyle \lim_{i \to \infty} t_i=0$.   Then
 \begin{align*}
       \lim_{i \to \infty} (X_i, x_i, g_i(t_i)) = \lim_{i \to
       \infty} (X_i, x_i, g_i(0)).
 \end{align*}
 \label{corollary: colimit}
 \end{corollary}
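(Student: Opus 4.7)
The plan is to deduce the coincidence of the two pointed limits directly from the distance continuity provided by Lemma~\ref{lemma: dcontinuity}. By Lemma~\ref{lemma: wwcpt}, after passing to a common subsequence, both $(X_i, x_i, g_i(t_i)) \stackrel{C^{1,\gamma}}{\to} (\hat X, \hat x, \hat g)$ and $(X_i, x_i, g_i(0)) \stackrel{C^{1,\gamma}}{\to} (\tilde X, \tilde x, \tilde g)$ exist as Ricci-flat ALE multifolds in $C^{1,\gamma}$-topology. It suffices to construct a base-point-preserving isometry $\Phi\colon \hat X \to \tilde X$.

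First, I would fix a countable dense family $\{q_k\}_{k \geq 1}$ of smooth points in $\hat X$ and lift each $q_k$ to a sequence $q_{k,i} \in X_i$ realizing the convergence in the $g_i(t_i)$ metric, so that $d_{g_i(t_i)}(q_{k,i}, q_{l,i}) \to d_{\hat g}(q_k, q_l)$ and $d_{g_i(t_i)}(x_i, q_{k,i}) \to d_{\hat g}(\hat x, q_k)$. Since $t_i \to 0$, I can apply Lemma~\ref{lemma: dcontinuity} to each of the pairs $(q_{k,i}, q_{l,i})$ and $(x_i, q_{k,i})$, which asserts that the same numerical limits hold when the distances are computed with respect to $g_i(0)$. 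A diagonal extraction then produces, inside the GH-convergence at time $0$, limit points $q_k' \in \tilde X$ with matching mutual distances $d_{\tilde g}(q_k', q_l') = d_{\hat g}(q_k, q_l)$ and with $d_{\tilde g}(\tilde x, q_k') = d_{\hat g}(\hat x, q_k)$.

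The correspondence $q_k \mapsto q_k'$ is distance preserving on a dense subset of $\hat X$ and therefore extends uniquely, by continuity, to a base-point-preserving isometric embedding $\Phi\colon \hat X \to \tilde X$. Running the symmetric argument starting from a dense subset of $\tilde X$ yields an isometric embedding in the opposite direction, so $\Phi$ must be an isometric bijection.

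The main subtlety I anticipate is the behavior at the added points. Each limit multifold is obtained from its smooth part by compactifying finitely many cone-like ends, and the added points on the two sides arise as limits of sequences $z_i \in X_i$ on which $|Rm|$ concentrates. I need to argue that any sequence whose $g_i(t_i)$-limit is an added point of $\hat X$ also has a $g_i(0)$-limit among the added points of $\tilde X$, and conversely; this follows from the distance matching above (the added points are recognized as the finite set of points at which the isometry $\Phi$ of the smooth parts fails to extend as a smooth point), together with the a priori bound $L \leq N_0$ on the number of such points. Consequently $\Phi$ extends uniquely across the finite added-point set to a pointed isometry of the full multifolds, giving the desired coincidence.
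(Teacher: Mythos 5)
Your proposal is correct and takes essentially the same approach as the paper: the paper's own proof of this corollary is a one-line remark that it is "a simple application of Lemma~\ref{lemma: dcontinuity}," and your argument is precisely the natural unwinding of that remark—lift a countable dense set from the $g_i(t_i)$-limit, transport the distance data across Lemma~\ref{lemma: dcontinuity} to the $g_i(0)$-metric, extract limits at time $0$, extend the resulting distance-preserving correspondence to a pointed isometry by completeness, and close with the symmetric argument. The only thing I would flag is that the treatment of the added points in your last paragraph is more complicated than it needs to be: since $\hat X$ and $\tilde X$ are already the metric completions (the added points are in them from the start) and the dense family $\{q_k\}$ is dense in all of $\hat X$, the isometry extends to them automatically by uniform continuity, with no separate argument required to "recognize" the added points.
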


 \begin{proposition}[\textbf{Point-selecting}]
  Suppose $\{(X_i^m, x_i, g_i(t)), -1 \leq t \leq 1\}$ is an
  EV-refined sequence,  $\displaystyle \lim_{i \to \infty} |Rm|_{g_i(0)}(x_i)= \infty$.
  By taking subsequence if necessary, there exist
  points $(p_i, t_i)$ satisfying the following properties.
  \begin{itemize}
  \item $t_i \leq 0$ and $\displaystyle \lim_{i \to \infty} t_i =0$.
  \item $\displaystyle \lim_{i \to \infty} d_{g_i(t_i)}(p_i, x_i)=0$.
  \item $|Rm|_{g_i(t)}(x) \leq 4 Q_i$ whenever
    $(x, t) \in B_{g_i(t_i)}(p_i, i \rho_b Q_i^{-\frac12}) \times [t_i-\frac{1}{1000m}Q_i^{-1},
    t_i]$. Here $Q_i=|Rm|_{g_i(t_i)}(p_i)$, $\rho_b$ is the base
    radius defined in Definition~\ref{definition: base}.
  \end{itemize}
 \label{proposition: pickpoint}
 \end{proposition}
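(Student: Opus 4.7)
The plan is to adapt Perelman's iterative point-selection procedure, in the spirit of Lemma~\ref{lemma: choosepoints}, by carrying out a backward-in-time search from $(x_i,0)$ in which the curvature at the base point at least quadruples at each iteration. To guarantee that the terminal spatial and temporal displacements shrink to zero, I first pass to a subsequence in which $Q_i^{(0)}:=|Rm|_{g_i(0)}(x_i)$ grows sufficiently fast relative to $i$, say $Q_i^{(0)}\ge i^4$, so that $i\rho_b (Q_i^{(0)})^{-1/2}\to 0$.

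Starting with $(x_i^{(0)},t_i^{(0)}):=(x_i,0)$ and writing $Q_i^{(k)}:=|Rm|_{g_i(t_i^{(k)})}(x_i^{(k)})$, at step $k$ I test whether
\[
|Rm|_{g_i(t)}(x)\le 4Q_i^{(k)}\quad\text{on}\quad B_{g_i(t_i^{(k)})}\bigl(x_i^{(k)},i\rho_b(Q_i^{(k)})^{-1/2}\bigr)\times\bigl[t_i^{(k)}-\tfrac{1}{1000m}(Q_i^{(k)})^{-1},\,t_i^{(k)}\bigr].
\]
If this bound holds, I terminate and set $(p_i,t_i):=(x_i^{(k)},t_i^{(k)})$, $Q_i:=Q_i^{(k)}$. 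Otherwise, I pick a violating $(x,t)$ in this parabolic neighborhood and set $(x_i^{(k+1)},t_i^{(k+1)}):=(x,t)$, so that $Q_i^{(k+1)}>4Q_i^{(k)}$. Because $|Rm|$ is bounded on the compact smooth spacetime $X_i\times[-1,1]$ for each fixed $i$, and because $Q_i^{(k)}\ge 4^kQ_i^{(0)}$, the iteration must terminate at some finite index $k_i$.

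The three conclusions then reduce to geometric-series bounds. For time,
\[
-t_i\le\sum_{k=0}^{k_i-1}\tfrac{1}{1000m}(Q_i^{(k)})^{-1}\le\tfrac{1}{750m}(Q_i^{(0)})^{-1}\longrightarrow 0,
\]
so $t_i\le 0$ and $t_i\to 0$. For distance, telescoping along the chain $x_i=x_i^{(0)}\to x_i^{(1)}\to\cdots\to x_i^{(k_i)}=p_i$ gives
\[
\sum_{k=0}^{k_i-1}d_{g_i(t_i^{(k)})}\bigl(x_i^{(k+1)},x_i^{(k)}\bigr)\le\sum_{k\ge 0}i\rho_b(Q_i^{(k)})^{-1/2}\le 2i\rho_b(Q_i^{(0)})^{-1/2}\longrightarrow 0
\]
by the subsequence choice.

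The main obstacle is converting this chain estimate, whose summands are measured at different time slices $t_i^{(k)}$, into a bound on $d_{g_i(t_i)}(p_i,x_i)$ at the single time $t_i$, without access to a pointwise Ricci curvature bound. The remedy is to invoke the distance-continuity Lemma~\ref{lemma: dcontinuity} available for EV-refined sequences: every intermediate time $t_i^{(k)}$ lies within a window of length $O((Q_i^{(0)})^{-1})$ about $0$, and EV-refined distance continuity asserts that the limits of distances between nearby points at these different time slices coincide. Comparing each segment to the corresponding distance at $t=0$ (or, equivalently, combining with the triangle inequality and passing to the limit), the chain bound carries over to $d_{g_i(t_i)}(p_i,x_i)$ up to an error that vanishes with the subsequence, yielding the claimed $d_{g_i(t_i)}(p_i,x_i)\to 0$.
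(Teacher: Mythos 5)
Your iteration is not Perelman's, and the divergence is precisely at the distance control, which is the delicate part. The paper's one-line proof invokes Lemma~\ref{lemma: choosepoints} (Perelman's point-selecting method, a repackaging of Theorem 10.1 of~\cite{Pe1}) together with a diagonal argument: applying it after a parabolic rescaling that keeps the hypothesis $|Rm|_{g(t_0)}(x_0)>\xi/t_0+\eta^{-2}$ satisfiable while $A=A_i\to\infty$ (hence $\eta_i=\xi/A_i\to 0$) already delivers the distance bound $d_{g(s)}(p,q)<2m\xi$ \emph{at the final time slice $s$}. That bound is not a telescoping sum over intermediate times; it is obtained in Perelman's argument by comparing to the fixed basepoint $p$ via the auxiliary function $d_{g(t)}(p,\cdot)+\text{const}\cdot\sqrt{t}$ and Hamilton--Perelman's distance-distortion estimate, which is exactly the device needed to move across time slices near a region of a priori uncontrolled curvature.

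Your proposal replaces this with a naive backward search that doubles curvature at each step and then sums the spatial displacements $d_{g_i(t_i^{(k)})}(x_i^{(k)},x_i^{(k+1)})\le i\rho_b(Q_i^{(k)})^{-1/2}$. The geometric series controls the total traversal \emph{measured at the respective intermediate times}, but the statement you must prove is about $d_{g_i(t_i)}(p_i,x_i)$ at the single final time $t_i=t_i^{(k_i)}$. There is no way to transfer the bound across these time slices with the tools you invoke. Two obstructions: (i) while selecting $x_i^{(k+1)}$ you have, by construction, \emph{no} curvature bound on the parabolic neighborhood of $(x_i^{(k)},t_i^{(k)})$ (that is why you moved on), so you cannot use a Ricci-curvature or $|Rm|$ bound to control the distortion of $d(x_i^{(k)},x_i^{(k+1)})$ between $t_i^{(k)}$ and $t_i$; and (ii) Lemma~\ref{lemma: dcontinuity} is a limit statement applied to a \emph{single} pair of points for each index $i$, whereas your chain has $k_i$ segments with $k_i$ potentially unbounded in $i$, so there is no uniformity to sum the errors. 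In short, the step ``comparing each segment to the corresponding distance at $t=0$ \ldots the chain bound carries over'' is exactly the gap that Perelman's Lemma 8.3/Theorem 10.1 machinery is designed to close, and which your argument leaves open. If you want a self-contained proof rather than citing Lemma~\ref{lemma: choosepoints}, you must reproduce that machinery (the $d_t(p,\cdot)+c\sqrt{t}$ comparison), not a bare triangle-inequality sum.
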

 \begin{proof}
   Apply Lemma~\ref{lemma: choosepoints} and take diagonal
   sequence.
 \end{proof}

 \begin{lemma}[\textbf{Added Point Doesn't Emerge Suddenly}]
 Suppose $\{(X_i, x_i, g_i(t)), -1 \leq t \leq 1\}$ is an EV-refined
  sequence, $t_i \leq 0$ and $\displaystyle \lim_{i \to \infty}
  t_i=0$,   $(X, x, g)$ is the limit
  space of $(X_i, x_i, g_i(0))$ and $(X_i, x_i, g_i(t_i))$. Then the
  following property holds.

    If $p \in X$ is an added point as $(X_i, x_i, g_i(0))$ converges , then $p$
  is an added point as $(X_i, x_i, g_i(t_i))$ converges.
  \label{lemma: noemerge}
 \end{lemma}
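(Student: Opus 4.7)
The plan is to argue by contradiction. Suppose that $p \in X$ is added in the $g_i(0)$ convergence but not added in the $g_i(t_i)$ convergence. Because the two convergences produce the same limit metric space (Corollary~\ref{corollary: colimit}) and because every singular point of the limit multifold is automatically an added point (Remark~\ref{remark: added}), the assumption forces $p$ to be a smooth point of $(X,g)$.

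Since $p$ is added in the $g_i(0)$ convergence, the construction in Lemma~\ref{lemma: wwcpt} produces a sequence $y_i \in X_i$ with $|Rm|_{g_i(0)}(y_i) \to \infty$ and $y_i \to p$ in the Cheeger--Gromov--Hausdorff approximations for the $g_i(0)$ convergence. I would then use distance continuity (Lemma~\ref{lemma: dcontinuity}) together with the coincidence of limits to verify that the \emph{same} sequence $y_i$ also converges to $p$ under the $g_i(t_i)$ convergence. Since $p$ is not added at time $t_i$, a neighborhood of $p$ lies in the smooth part of the $g_i(t_i)$-limit, so on a fixed ball $B_{g_i(t_i)}(y_i, r_0)$ the convergence is smooth for large $i$. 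In particular there is a uniform bound $|Rm|_{g_i(t_i)} \le C$ on this ball, and because $p$ is a smooth point of the Ricci-flat limit, after possibly shrinking $r_0$ one has $\Vol_{g_i(t_i)} B_{g_i(t_i)}(y_i, r_0) \ge (1-\delta)\omega(m) r_0^m$ for all large $i$, with $\delta$ as in Perelman's pseudolocality theorem.

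Now set $r_0' = \min\{r_0, C^{-1/2}\}$, so that $|Rm|_{g_i(t_i)} \le (r_0')^{-2}$ on $B_{g_i(t_i)}(y_i, r_0')$ and the volume hypothesis still holds. Applying Perelman's improved pseudolocality (Theorem~\ref{theorem: pseudo}) with initial time $t_i$, and using that $-t_i < (\eta r_0')^2$ for all large $i$ because $t_i \to 0$, we conclude $|Rm|_{g_i(0)}(x) \le (\eta r_0')^{-2}$ for every $x \in B_{g_i(0)}(y_i, \eta r_0')$. In particular $|Rm|_{g_i(0)}(y_i)$ is uniformly bounded, contradicting $|Rm|_{g_i(0)}(y_i) \to \infty$.

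The principal delicate point, and what I expect to be the main technical obstacle, is the tracking of the sequence $y_i$ across the two convergences: by construction we only know $y_i \to p$ in the $g_i(0)$ approximations, and we must use Lemma~\ref{lemma: dcontinuity} and Corollary~\ref{corollary: colimit} to upgrade this to convergence to the \emph{same} point $p$ under the $g_i(t_i)$ approximations. Once that identification is in hand, the argument is a clean application of forward pseudolocality over the vanishing time window $[t_i, 0]$, with the smoothness of $p$ in the limit providing the almost-Euclidean volume needed to invoke Theorem~\ref{theorem: pseudo}.
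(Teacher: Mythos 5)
Your proposal is correct and takes essentially the same approach as the paper, whose own proof consists of the single sentence \emph{``Application of Pseudolocality theorem''}; your write-up correctly reconstructs the intended chain of reasoning (smoothness of $p$ from Corollary~\ref{corollary: colimit} together with the singular-implies-added direction of Remark~\ref{remark: added}, a curvature-blowing sequence $y_i \to p$ at time $0$ extracted from the added-point hypothesis, and forward pseudolocality run over the shrinking window $[t_i,0]$ with the smoothness of $p$ in the Ricci-flat limit supplying the almost-Euclidean volume input). The one minor imprecision, easily repaired, is the order of scale choices: after setting $r_0' = \min\{r_0, C^{-1/2}\}$ the almost-Euclidean volume hypothesis must be re-checked at scale $r_0'$ rather than $r_0$, which indeed holds because at a smooth point of the Ricci-flat limit the volume ratio only improves as the radius shrinks, so one should simply fix $r_0$ small enough from the start to satisfy both the curvature bound $|Rm|\leq r_0^{-2}$ and $\Vol \geq (1-\delta)\omega(m)r_0^m$ simultaneously.
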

 \begin{proof}
   Application of Pseudolocality theorem.
 \end{proof}

  \begin{figure}
  \begin{center}
  \psfrag{p}{$p$}
  \psfrag{q}{$q$}
  \includegraphics[width= 0.8 \columnwidth]{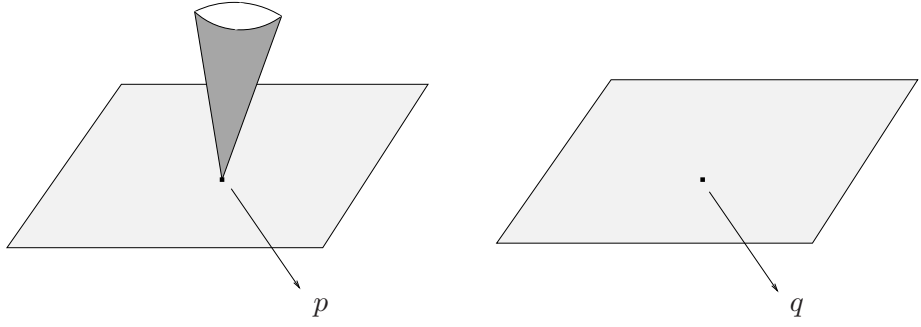}
  \caption{Impossible added points}
  \label{figure: imsingularity}
  \end{center}
  \end{figure}

\begin{lemma}[\textbf{Added Point Cannot Sit in a Smooth Part}]
  $\{(X_i, x_i, g_i(t)), -1 \leq t \leq 1\}$ is an EV-refined
  sequence, $(X, x, g)$ is the limit space
  of $(X_i, x_i, g_i(0))$.

  If $p$ is an added point of $X$, then $p$ cannot stay on smooth part, i.e.,
  for every small $\eta>0$, each component of $B(p, \eta) \backslash \{p\}$ is diffeomorphic
   to a cone over $S^{m-1}/ \Gamma$ for some nontrivial group
   $\Gamma$.

  In particular, every added point is non-smooth.
  \label{lemma: sitsmooth}
 \end{lemma}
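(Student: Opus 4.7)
The plan is to argue by contradiction: assume $p$ is an added point but some component $E$ of $B_g(p,\eta) \setminus \{p\}$ is diffeomorphic to a cone over $S^{m-1}$ with \emph{trivial} orbifold group, i.e., $E$ is a smooth end of $X$ at $p$. The strategy is to manufacture a bubble at $p$ via point-selecting and rescaling, and then to use the bubble's nontrivial ALE end to contradict the smoothness of $E$.

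First, because $p$ is added, the construction in the proof of Lemma~\ref{lemma: wwcpt} furnishes $y_i \in X_i$ converging to $p$ through the component $E$ with $|Rm|_{g_i(0)}(y_i)\to\infty$---otherwise $\kappa$-noncollapsing together with Shi's estimates would extend the $C^{1,\gamma}$-convergence across $p$, making the compactification at $p$ unnecessary. Apply Proposition~\ref{proposition: pickpoint} with $x_i = y_i$ to obtain $(p_i, t_i)$ with $Q_i := |Rm|_{g_i(t_i)}(p_i) \to \infty$, $t_i \to 0$, $d_{g_i(t_i)}(p_i, y_i) \to 0$, and with the parabolic curvature control $|Rm|_{g_i(t)} \leq 4Q_i$ on $B_{g_i(t_i)}(p_i, i\rho_b Q_i^{-1/2}) \times [t_i - (1000m Q_i)^{-1}, t_i]$. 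By Lemma~\ref{lemma: dcontinuity} and Corollary~\ref{corollary: colimit}, the points $p_i$ still approach $p$ through $E$.

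Next, rescale by $Q_i$: the flow $\tilde g_i(t) := Q_i g_i(Q_i^{-1}t + t_i)$ is a refined sequence satisfying the hypotheses of Lemma~\ref{lemma: topology} at $(p_i, 0)$. That lemma produces $r_i \in (0, \rho_b/2)$ for which the annulus $B_{\tilde g_i(0)}(p_i, 2r_i) \setminus B_{\tilde g_i(0)}(p_i, r_i)$ is diffeomorphic to $S^{m-1}/\Gamma' \times [1,2)$ with $\Gamma' \subset SO(m)$ nontrivial (fixed along a subsequence), and Corollary~\ref{corollary: vr} gives the quantitative estimate $\Vol_{\tilde g_i(0)}(B_{\tilde g_i(0)}(p_i, \rho_b/2))/(\rho_b/2)^m < \tfrac{3}{4}\omega(m)$. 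Unrescaling, with $s_i := (\rho_b/2) Q_i^{-1/2} \to 0$,
\begin{align*}
  \frac{\Vol_{g_i(t_i)}(B_{g_i(t_i)}(p_i, s_i))}{s_i^m} < \tfrac{3}{4}\omega(m).
\end{align*}

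The contradiction now comes from comparing volume ratios across scales. Since $E$ is a smooth end, $\Vol_g(B_g(p,\rho) \cap \overline E)/\rho^m \to \omega(m)$ as $\rho \to 0^+$. Using the pointed $C^{1,\gamma}$-convergence $(X_i, p_i, g_i(t_i)) \to (X, p, g)$ on annular regions away from the added point (Lemma~\ref{lemma: wwcpt}, Corollary~\ref{corollary: colimit}), combined with the EV-refined upper bound $K s^m$ on shrinking inner balls, one shows $\Vol_{g_i(t_i)}(B_{g_i(t_i)}(p_i, \rho))/\rho^m \to \Vol_g(B_g(p, \rho) \cap \overline E)/\rho^m$ for each fixed small $\rho>0$. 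Fixing $\rho$ so small that the latter exceeds $\tfrac{7}{8}\omega(m)$ and $i$ so large that $s_i < \rho$, one needs to reconcile the near-Euclidean ratio at scale $\rho$ with the strictly sub-Euclidean ratio at scale $s_i$. The main obstacle is exactly this last step, since no Ricci lower bound is available in $X_i$ and so volume ratios are not monotone in $r$; the intended route is to bypass $X_i$ entirely and work in the Ricci-flat limit $X$, where Bishop--Gromov monotonicity does hold, and to propagate the bubble's asymptotic volume ratio $\omega(m)/|\Gamma'|<\omega(m)$ into the small-scale behavior of $X$ at $p$ via a diagonal subsequence argument matching the rescaled variables $R = \rho Q_i^{1/2}\to\infty$ with the original scale $\rho$ fixed. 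This identification forces the limit $\lim_{\rho\to 0^+}\Vol_g(B_g(p,\rho)\cap \overline E)/\rho^m \leq \omega(m)/|\Gamma'| < \omega(m)$, contradicting the smoothness of $E$.
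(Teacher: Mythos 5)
Your strategy overlaps with the paper's at the top level (contradiction assumption, point-selecting, deepest bubble with nontrivial cone end $\Gamma'$), but the argument has a genuine gap exactly where you flag it, and the flagged step is not a detail—it is the heart of the lemma.

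The issue is that the nontrivial cone of the deepest bubble lives at scales $\sim Q_i^{-1/2}$, while the smooth end $E$ of the limit $X$ is detected at fixed scales $\rho>0$. Between these two regimes sits a neck whose geometry you have no control over at this stage: in $X_i$ there is no Ricci lower bound, hence no Bishop--Gromov monotonicity, and the pointed $C^{1,\gamma}$-convergence to the bubble only equates $B_{\tilde g_i(0)}(p_i, R)$ with a ball in the bubble for \emph{fixed} $R$, not for $R\to\infty$ simultaneously with $i\to\infty$. Your proposed ``diagonal subsequence'' therefore produces a chain of intermediate annuli, not a comparison between the bubble's asymptotic volume ratio and the small-scale volume ratio of $X$; the latter is governed by the outer end of the neck, which could in principle re-accumulate volume on the way out. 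Working ``inside the Ricci-flat limit $X$'' does not help either, since the bubble is a rescaled limit that is \emph{not} a subset of $X$---its nontrivial cone end has no a priori identification with the tangent cone of $X$ at $p$. That identification is precisely the content of Theorem~\ref{theorem: trivialneck}, which is proved \emph{after} and \emph{using} the present lemma.

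The paper gets around exactly this by choosing an intermediate blowup scale $r_i$ defined not by curvature but by a geodesic-sphere \emph{area-ratio} threshold (largest component of $\partial B_{g_i(t_i)}(p_i, r_i)$ reaches ratio $\tfrac78\,m\omega(m)$). This scale necessarily sits strictly between the bubble scale (where the ratio is $\le\tfrac34\,m\omega(m)$ by Corollary~\ref{corollary: vr}) and the fixed scale $\eta$ (where smoothness of $E$ gives ratio $>\tfrac78\,m\omega(m)$), and it goes to zero. Blowing up at $r_i$ produces a new multifold, which may \emph{again} contain added points sitting in a smooth part; the paper then iterates, with termination guaranteed by the energy drop $\ge\epsilon$ per step (Claim~\ref{claim: econtrol}). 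Only at the terminal stage does one capture a Euclidean component and apply the generalized Bishop comparison to contradict the threshold $\tfrac78\,m\omega(m)$. This iteration is not cosmetic: a single blowup as in your proposal does not suffice, because the first intermediate bubble need not already be free of added points on smooth parts. To repair your argument you would need both (i) the area-ratio criterion (or some equivalent scale bridging device) to pin down the neck, and (ii) the energy-controlled iteration; as written, these are missing.
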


  \begin{proof}
     Suppose not. There is a point $p \in X$ which is an added point for sequence
    $\{(X_i, x_i, g_i(0))\}$  and it sits in some smooth part.
    According to Proposition~\ref{proposition: pickpoint}, there
    exists spacetime points $(p_i, t_i)$ satisfying the following
    properties.
    \begin{itemize}
    \item $(p_i, t_i)$ is a parabolic local maximal point, i.e.,
      \begin{align*}
         |Rm|_{g_i(t)}(x) \leq 4 |Rm|_{g_i(t_i)}(p_i) \triangleq
         4Q_i,
      \end{align*}
      whenever
      $(x, t) \in B_{g_i(t_i)}(p_i,  i \rho_b Q_i^{-\frac12}) \times [t_i-\frac{1}{1000m}Q_i^{-1}, t_i]$.
    \item $p_i \to p$ as $(X_i, x_i, g_i(t_i))$ converges to $(X, x,
    g)$.
    \end{itemize}
  Shifting our original sequence in both space and time
  direction, we have
  \begin{align*}
     (X_i, p_i, g_i(t_i)) \stackrel{C^{1, \gamma}}{\rightarrow} (X, p,
     g).
  \end{align*}
 Since $p$ sits in some smooth part of $X$,  there
 exists a small number $\eta$ such that  the area of the largest component of
 $\partial B(p, r)$
 is strictly greater than $\frac78 m\omega(m) r^{m-1}$
 whenever $r \in (0, 2\eta)$.  Furthermore, by the finiteness of added point, we can choose
 $\eta$ small enough such that $p$ is the unique added point in $B(p, 2\eta)$ for the
 sequence $(X_i, x_i, g_i(t_i))$.   It follows that for every fixed number
 $r \in (0, \eta)$ and large $i$,   we have
   \begin{align}
    \frac{\Area_{g_i(t_i)}(S_i(r))}{r^{m-1}}
\frac78 \omega(m)
   \label{eqn: vlow}
   \end{align}
 whenever $r \in (0, \eta)$. Here we use $S_i(r)$ to denote one of
 the largest component of $\partial B_{g_i(t_i)}(p_i, r)$.
  Note that $\partial B_{g_i(t_i)}(q_i, \frac12 \rho_b Q_i^{-\frac12})$
 is connected and
   \begin{align*}
      \lim_{i \to \infty} \frac{\Area_{g_i(t_i)}(\partial B_{g_i(t_i)}
      (q_i, \frac12 \rho_b Q_i^{-\frac12}))}{(\frac12 \rho_b Q_i^{-\frac12})^{m-1}}  \leq \frac34 \omega(m) < \frac78 \omega(m).
   \end{align*}
   We can choose $r_i < \eta$ to be the largest radius such that the
   largest component of $\partial B_{g_i(t_i)}(p_i, r_i)$ has area
   ratio $\frac78 m\omega(m)$, i.e.,  the area of the largest
   component is  $\frac78 m\omega(m) r_i^{m-1}$.    Inequality (\ref{eqn: vlow})
   implies that $r_i \to 0$.
   Blowup the EV-refined sequence $\{(X_i, p_i, g_i(t)), -1 \leq t \leq 1\}$ by scale
   $r_i^{-2}$. In other words, let $g_i^{(1)}(t) = r_i^{-2}g_i(r_i^2 t + t_i)$
   and rename $p_i$ as $x_i^{(1)}$,   we  can extract an
   EV-refined sequence $\{(X_i, x_i^{(1)}, g_i^{(1)}(t)), -1 \leq t \leq  1\}$.
   Take limit for central time slice, we have
   \begin{align*}
     (X_i, x_i^{(1)}, g_i^{(1)}(0)) \stackrel{C^{1,\gamma}}{\rightarrow}
     (X^{(1)}, x^{(1)}, g^{(1)}).
   \end{align*}
  The limit space $(X^{(1)}, x^{(1)}, g^{(1)})$ is a Ricci-flat ALE
   multifold with total energy bounded by $E$.
   The space $X^{(1)}$ can be detached into union of orbifolds.
   The point  $x^{(1)}$ may be decomposed into several points, we
   say $x^{(1)}$ is in an orbifold if one of the decomposed points is in that
   orbifold.

  When $(X_i, p_i, g_i^{(1)}(0))$ converges, away from $x^{(1)}$,  check if there is
  any other added point staying in some smooth part.   If no, we
  stop. Otherwise, we choose such a point and denote it as
  $p^{(1)}$.  By shifting time and space again, we have
  \begin{align*}
     (X_i, p_i^{(1)}, g_i^{(1)}(t_i^{(1)})) \to  (X^{(1)}, p^{(1)},
     g^{(1)}).
  \end{align*}
  where $(p_i^{(1)}, t_i^{(1)})$ are parabolic local maximal points
  and $t_i^{(1)} \to 0^{-}$.

  As $p^{(1)}$ stays on a smooth component. Centered at
  this point, every small geodesic sphere's largest component has
  almost Euclidean area ratio.  As before, we can choose scale $r_i^{(1)}$
  to be locally the largest radius such that the largest component
  of  $\partial B_{g_i^{(1)}(t_i^{(1)})}(p_i^{(1)}, r_i^{(1)})$ has
  area ratio $\frac78 \omega(m)$.  Clearly, $r_i^{(1)} \to 0$. Blow up
  $\{(X_i, p_i^{(1)}, g_i^{(1)}(t)), -1 \leq t \leq 1\}$
  at point $(p_i^{(1)}, t_i^{(1)})$ with scale $(r_i^{(1)})^{-2}$, and rename the base point
  $p_i^{(1)}$ as $x_i^{(2)}$, we obtain a new EV-refined sequence
  $\{(X_i, x_i^{(2)}, g_i^{(2)}(t)), -1 \leq t \leq 1\}$.  Let the central time slices
  tend to $(X^{(2)}, x^{(2)}, g^{(2)})$. Then iterate the previous process.
  Figure~\ref{figure: tree1} shows such a process which stop in
  three steps.

 \setcounter{claim}{0}

 \begin{claim}
    For every $k \geq 1$, $x^{(k)}$ is an added point for
    the sequence $(X_i, x_i^{(k)}, g_i^{(k)}(0))$.
 \label{claim: added}
 \end{claim}
    We only need to show $\displaystyle \limsup_{i \to \infty}
    |Rm|_{g_i^{(k)}(0)}(x_i^{(k)})=\infty$, i.e.,
   $ \displaystyle  \limsup_{i \to \infty} r_i^{(k-1)} Q_i^{\frac12} = \infty$,
 where $Q_i=|Rm|_{g_i^{(k-1)}(t_i^{(k-1)})}(p_i^{(k-1)})$.
 Actually, as $(p_i^{(k-1)}, t_i^{(k-1)})$ are local maximal points,
 the deepest bubble blownup from these points is a non-flat ALE
 manifold which has nontrivial cone end at infinity.
 Therefore, for every fixed big number $\mathcal{R}>0$,  under the metric
  $g_i^{(k-1)}(t_i^{(k-1)})$, we have
 \begin{align*}
    \frac{\Area(\partial B(p_i^{(k-1)}, \mathcal{R} Q_i^{-\frac12}))}{(
    \mathcal{R}Q_i^{-\frac12})^{m-1}} < \frac78 \omega(m)
 \end{align*}
 for large $i$. The definition of $r_i^{(k-1)}$ implies $r_i^{(k)} > \mathcal{R}
 Q_i^{-\frac12}$. It follows that
 \begin{align*}
      \limsup_{i \to \infty} r_i^{(k)} Q_i^{\frac12} > \mathcal{R}
 \end{align*}
 for every $\mathcal{R}$. Therefore, we have
 $\displaystyle \limsup_{i \to \infty} r_i^{(k)}
 Q_i^{\frac12}=\infty$.  This finishes the proof of Claim~\ref{claim: added}.\\

 If $\theta_i$ is a time sequence tending to zero, we know
 \begin{align*}
    \lim_{i \to \infty} (X_i, x_i^{(k)}, g_i^{(k)}(\theta_i))
    = \lim_{i \to \infty} (X_i, x_i^{(k)}, g_i^{(k)}(0))
    =(X^{(k)}, x^{(k)}, g^{(k)}).
 \end{align*}
 We denote the energy of sequence
  $\displaystyle \{(X_i, x_i^{(k)}, g_i^{(k)}(\theta_i))\}_{k=1}^{\infty}$
  as $\mathcal{E}(X^{(k)}, x_i^{(k)}, {g_i^{(k)}(\theta_i)})$. It is defined
  as
  \begin{align*}
    \mathcal{E}(X^{(k)}, x_i^{(k)}, {g_i^{(k)}(\theta_i)})
    \triangleq
   \lim_{\mathcal{R} \to \infty} \limsup_{i \to \infty} \int_{B_{g_i^{(k)}(\theta_i)}(x_i^{(k)}, \mathcal{R})} |Rm|^{\frac{m}{2}}
   d\mu.
  \end{align*}

 \begin{claim}
  For every $k \geq 1$, we have energy control
   \begin{align}
   \mathcal{E}(X^{(k)}, x_i^{(k)}, {g_i^{(k)}(0)})
   \leq E -(k-1)\epsilon.
  \label{eqn: energyinduction}
  \end{align}
 \label{claim: econtrol}
 \end{claim}

 Actually, from the blowup process, we know
  \begin{align*}
   \mathcal{E}(X^{(k)}, x_i^{(k)}, {g_i^{(k)}(0)})
   \leq
   \limsup_{i \to \infty} \int_{B_{g_i^{(k-1)}(t_i^{(k-1)})}(p_i^{(k-1)},
   \delta)}
   |Rm|^{\frac{m}{2}}d\mu
  \end{align*}
  for every small $\delta$.  Under the metric $g_i^{(k-1)}(t_i^{(k-1)})$,
  $B(p_i^{(k-1)}, \delta)$ and $B(x_i^{(k-1)}, \delta)$ are disjoint.
  As $x^{(k-1)}$ is an added point for the convergence
  process
  \begin{align*}
  (X_i, x_i^{(k-1)}, g_i^{(k-1)}(0)) \to (X^{(k-1)}, x^{(k-1)},
  g^{(k-1)})
  \end{align*}
 and $t_i^{(k-1)} \to 0^{-}$, Lemma~\ref{lemma: noemerge} tells us that
  $x^{(k-1)}$ is an
  added point for the convergence process
  \begin{align*}
  (X_i, x_i^{(k-1)}, g_i^{(k-1)}(t_i^{(k-1)})) \to (X^{(k-1)}, x^{(k-1)},
  g^{(k-1)}).
  \end{align*}
  This implies that under the metric $g_i^{(k-1)}(t_i^{(k-1)})$, the energy contained
  in $B(x_i^{(k-1)}, \delta)$ is greater than $\epsilon$.
  Therefore, fix an $\mathcal{R}$ large enough, we have
  \begin{align*}
   \mathcal{E}(X^{(k)}, x_i^{(k)}, g_i^{(k)}(0)) & \leq \int_{B(x_i^{(k-1)},
   \mathcal{R})} |Rm|^{\frac{m}{2}} d\mu -\epsilon\\
   &\leq \mathcal{E}(X^{(k-1)}, x_i^{(k-1)},
   g_i^{(k-1)}(t_i^{(k-1)})) -\epsilon.
  \end{align*}
  Note that $\theta_i^{(k-2)}=(r_i^{(k-1)})^2t_i^{(k-1)} + t_i^{(k-2)} \to
  0$, similar argument shows that
  \begin{align*}
    \mathcal{E}(X^{(k-1)}, x_i^{(k-1)},  g_i^{(k-1)}(t_i^{(k-1)}))
    \leq \mathcal{E}(X^{(k-2)}, x_i^{(k-2)}, g_i^{(k-2)}(\theta_i^{(k-2)})) -\epsilon.
  \end{align*}
  Therefore, induction implies
  \begin{align*}
     \mathcal{E}(X^{(k)}, x_i^{(k)}, g_i^{(k)}(0))
     \leq
     \mathcal{E}(X^{(1)}, x_i^{(1)}, g_i^{(1)}(\theta_i)) - (k-1) \epsilon
     \leq E- (k-1) \epsilon,
  \end{align*}
  where $\displaystyle \theta_i^{(1)} = t_i^{(1)} + \sum_{a=2}^{k-1} t_i^{(a)}(r_i^{(2)} \cdots
  r_i^{(a)})^2  \longrightarrow 0$.
  This finishes the proof of Claim~\ref{claim: econtrol}.

 As a corollary of Claim~\ref{claim: econtrol}, we have the following
 fact:

\textit{The blowup process terminates in finite steps.}\\

  Suppose it stops at step $k$. Denote
  $(\tilde{X}, \tilde{x}, \tilde{g})= (X^{(k)}, x^{(k)},
  g^{(k)})$, then we know every added point away from $\tilde{x}$
  can only stay on singular part.

  \begin{claim}
   $\tilde{X}$ contains at least one Euclidean component.
  \label{claim: euclidean}
  \end{claim}

  As $\tilde{X}$ is a noncompact ALE space. Choose $\mathcal{R}$ big
  enough, we see that each connected component of  $\partial B(\tilde{x}, \mathcal{R})$ is
  diffeomorphic to some topological space form. In particular, we can assume one largest component
  is diffeomorphic to $S^{m-1}/ \Gamma$ and it's area ratio is very close to $\frac{1}{|\Gamma|} \omega(m)$.
  According to our choice of blowup scales $r_i^{(k-1)}$ and the fact $\mathcal{R} \gg 1$,  we
  see that $\frac{1}{|\Gamma|} \omega(m) \geq \frac78 \omega(m)$.
  This forces that $|\Gamma|=1$ and $\Gamma$ is trivial.   This
  means that there is at least one Euclidean end at infinity.
  Detach $\tilde{X}$ into orbifolds.  There is exactly one
  noncompact orbifold corresponding to the Euclidean end we have
  chosen. Denote this orbifold as $M$. So $M$ is a Ricci-flat
  orbifold with Euclidean volume growth rate at infinity.
  Bishop volume comparison theorem implies $M$ must be Euclidean
  space. So Claim~\ref{claim: euclidean} is proved.\\

  By the connectedness of $\tilde{X}$, there must be some added
  point on the Euclidean component $M$. According to our choice, such
  point must be $\tilde{x}$.   Decompose it as
  $\tilde{x}_1, \cdots, \tilde{x}_N \in M$.  So the intersection of the geodesic sphere $\partial B(\tilde{x}, 1)$
  and $M$ is exactly the ``unit geodesic sphere" centered at the set
  $\{\tilde{x}_1, \cdots, \tilde{x}_N\}$.  As $M$ is Euclidean
  space,  by generalized Bishop volume comparison theorem, we see
  that each component of this ``unit geodesic sphere" has an area not
  less than $m\omega(m)$.   It follows that the largest component of
  $\partial B(\tilde{x}, 1)$ must has an area strictly greater than
  $\frac78 m\omega(m)$.    On the other hand, from the generating
  process of $\tilde{X}$, we know the largest component of
  $\partial B(\tilde{x}, 1)$ has exactly area $\frac78\omega(m)$.
  Therefore we have the contradiction to set up the proof!
  \end{proof}

 \begin{figure}
 \begin{center}
 \psfrag{x}[c][c]{$(X_i, x_i, g_i(0))$}
 \psfrag{xi}[c][c]{$(X, x, g)$}
 \psfrag{x1}[c][c]{$(X_i^{(1)}, x_i^{(1)}, g_i^{(1)}(0))$}
 \psfrag{xi1}[c][c]{$(X^{(1)}, x^{(1)}, g^{(1)})$}
 \psfrag{x2}[c][c]{$(X_i^{(2)}, x_i^{(2)}, g_i^{(2)}(0))$}
 \psfrag{xi2}[c][c]{$(X^{(2)}, x^{(2)}, g^{(2)})$}
 \psfrag{x3}[c][c]{$(X_i^{(3)}, x_i^{(3)}, g_i^{(3)}(0))$}
 \psfrag{xi3}[c][c]{$(X^{(3)}, x^{(3)}, g^{(3)})$}
 \psfrag{p}[c][c]{$(X_i, p_i, g_i(t_i))$}
 \psfrag{pi}[c][c]{$(X, p, g)$}
 \psfrag{p1}[c][c]{$(X_i^{(1)}, p_i^{(1)}, g_i^{(1)}(t_i^{(1)}))$}
 \psfrag{pi1}[c][c]{$(X^{(1)}, p^{(1)}, g^{(1)})$}
 \psfrag{p2}[c][c]{$(X_i^{(2)}, p_i^{(2)}, g_i^{(2)}(t_i^{(2)}))$}
 \psfrag{pi2}[c][c]{$(X^{(2)}, p^{(2)}, g^{(2)})$}
 \psfrag{c1g}[f][f]{$C^{1, \gamma}$}
 \psfrag{s}[c][c]{\textcolor{blue}{Shifting}}
 \psfrag{i}[c][c]{\textcolor{blue}{in spacetime}}
 \psfrag{b1}[c][c]{\textcolor{red}{Blowup by $(r_i^{(1)})^{-2}$}}
 \psfrag{b2}[c][c]{\textcolor{red}{Blowup by $(r_i^{(2)})^{-2}$}}
 \psfrag{b3}[c][c]{\textcolor{red}{Blowup by $(r_i^{(3)})^{-2}$}}
 \psfrag{r1}[c][c]{\textcolor{red}{$x_i^{(1)} \triangleq p_i$}}
 \psfrag{r2}[c][c]{\textcolor{red}{$x_i^{(2)} \triangleq p_i^{(1)}$}}
 \psfrag{r3}[c][c]{\textcolor{red}{$x_i^{(3)} \triangleq p_i^{(3)}$}}
 \psfrag{final}[c][c]{Final Bubble}
 \psfrag{define}[c][c]{is defined to be}
 \psfrag{tx}[c][c]{$(\tilde{X}, \tilde{x}, \tilde{g})$}
 \includegraphics[width=0.8 \columnwidth ]{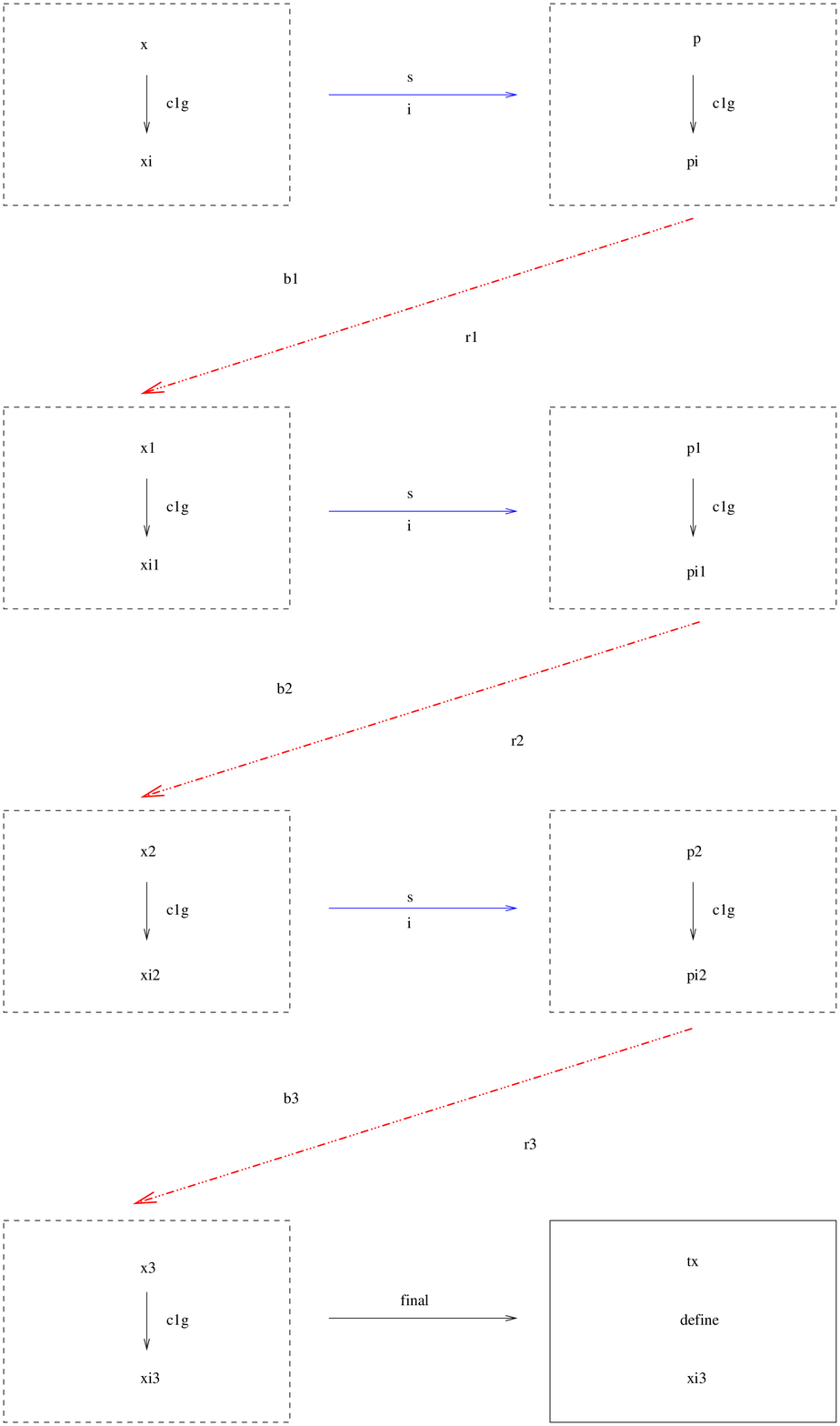}
 \end{center}
 \caption{The blowup process according to area ratio}
 \label{figure: tree1}
 \end{figure}

 As an application of Lemma~\ref{lemma: sitsmooth}, we have the
 following corollary.

 \begin{corollary}[\textbf{Singularities Depend only on the Limit Space}]
  Suppose that $\{(X_i, x_i, g_i(t)), -1 \leq t \leq 1\}$ is an EV-refined
  sequence, $\displaystyle \lim_{i \to \infty}
  t_i=0$,   $(X, x, g)$ is the common limit
  space of $(X_i, x_i, g_i(0))$ and $(X_i, x_i, g_i(t_i))$. Then the
  following property holds.

   $p \in X$ is an added point as $(X_i, x_i, g_i(t_i))$ converges if and only if $p$
  is an added point as $(X_i, x_i, g_i(0))$ converges.
  \label{corollary: cadded}
 \end{corollary}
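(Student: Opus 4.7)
The backward direction, ``added at time $0$ implies added at time $t_i$'', is precisely Lemma~\ref{lemma: noemerge}, so only the forward direction requires a new argument. My plan is to show, for each of the two convergences, that the set of added points coincides with the singular locus of the common limit multifold $X$; since Corollary~\ref{corollary: colimit} furnishes the same limit $(X, x, g)$ regardless of whether one takes central slices at $0$ or at $t_i$, the biconditional will be immediate.

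First I would observe that the time-shifted family $\{(X_i, x_i, g_i(t + t_i))\}$ is, for $i$ large, again an EV-refined sequence. This follows by combining a trivial rescaling $\lambda_i = 2$ with the admissible time shift in Proposition~\ref{proposition: blowup} (since $t_i \in [-\tfrac14, 0]$ and $t_i \to 0$, we eventually land in the allowed window $[-\tfrac18, 0]$). Its central time slice is, up to an irrelevant overall scale that does not affect which points of the limit multifold are singular, precisely the slice $g_i(t_i)$. Consequently Lemma~\ref{lemma: sitsmooth} applies to this shifted sequence and yields that every added point appearing in the convergence of $(X_i, x_i, g_i(t_i))$ is a non-smooth, i.e.\ singular, point of $X$.

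Now suppose $p$ is an added point for the convergence of $(X_i, x_i, g_i(t_i))$. By the previous paragraph, $p$ lies in the singular locus of the multifold $X$. If $p$ were not an added point for the convergence of $(X_i, x_i, g_i(0))$, then by Definition of Cheeger--Gromov convergence (the convergence is smooth in the $C^{1,\gamma}$ sense away from the added points) $p$ would have to lie in the smooth part of $X$, contradicting its singularity. Therefore $p$ is an added point for the time-$0$ convergence as well, finishing the forward direction.

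The only mild obstacle is the formal verification that Lemma~\ref{lemma: sitsmooth} can be transported from the time-$0$ slice to the time-$t_i$ slice, which Proposition~\ref{proposition: blowup} handles once one notices that scaling does not disturb the smooth/singular dichotomy of the limit multifold. The essential point is conceptual: Lemma~\ref{lemma: sitsmooth} together with Remark~\ref{remark: added} identifies ``added point'' with ``singular point of $X$'', and the latter is an intrinsic feature of the common limit and hence independent of which time slice we converge along.
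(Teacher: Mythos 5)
Your proof is correct and tracks the paper's argument exactly: both identify the set of added points with the singular locus of the common limit multifold $X$ via Lemma~\ref{lemma: sitsmooth}, so that being an added point is an intrinsic feature of $X$ and hence independent of which time slice one converges along. The only cosmetic differences are that you invoke Lemma~\ref{lemma: noemerge} for one direction (which is in fact redundant once the added/singular identification is in hand for both slices) and you make explicit, via Proposition~\ref{proposition: blowup}, the transport of Lemma~\ref{lemma: sitsmooth} to the $t_i$ slice, which the paper leaves implicit.
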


 \begin{proof}
  Lemma~\ref{lemma: sitsmooth} implies that every added point in the limit process must be really a singular point
 in the limit space. However, every singular point in the limit space must be an added point in the limit process
 by its construction.  So we have the following equivalent conditions.
 \begin{align*}
 &\quad \; \; p \;\textrm{is an added point as} \; (X_i, x_i, g_i(0))
  \; \textrm{converges to} \; (X, x,
  g).\\
 &\Leftrightarrow p \; \textrm{is a singular point of} \; X. \\
 &\Leftrightarrow  \; p \; \textrm{is an added point as} \; (X_i, x_i, g_i(t_i))
 \; \textrm{converges to} \; (X, x,
 g).
 \end{align*}
 \end{proof}

 \textit{Therefore, whether point $p$ is an added point  in the limit
 process depends only on the failing of smoothness of $p$ on the
 limit space, it has nothing to do with the choice of the limit sequence.}\\

 By virtue of this property, we can reverse Perelman's
 pseudolocality theorem in our special setting.

 \begin{lemma}[\textbf{Backward Pseudolocality for an EV-refined Sequence}]
 If an EV-refined sequence $\{ (X_i, x_i, g_i(t)), -1 \leq t \leq 1\}$
 satisfies $\displaystyle        \sup_{B_{g_i(0)}(x_i, r)}
 |Rm|_{g_i(0)} \leq r^{-2}$ for some constant $r \in (0,1]$,    then
 there exists a small positive constant $\delta$ (depend on this
 sequence) such that
 \begin{align*}
     |Rm|_{g_i(t)}(x) \leq  \delta^{-2}r^{-2}, \quad \textrm{whenever}  \;
     d_{g_i(t)}(x_i,x) \leq
     \delta r, -\delta^2 r^2 \leq t \leq 0
 \end{align*}
 for large $i$.
  \label{lemma: prermcontrolover}
 \end{lemma}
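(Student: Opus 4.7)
The plan is a contradiction argument powered by the coincidence of limit spaces and the rigidity of added points. Suppose the conclusion fails. Then, after passing to a subsequence, we can find $\delta_i\to 0$, times $\tau_i\in[-\delta_i^2 r^2,0]$ and points $y_i$ with $d_{g_i(\tau_i)}(x_i,y_i)\le\delta_i r$ at which $|Rm|_{g_i(\tau_i)}(y_i)>\delta_i^{-2}r^{-2}$. Apply the point-selecting Proposition~\ref{proposition: pickpoint} (or, equivalently, Lemma~\ref{lemma: choosepoints}) to replace $(y_i,\tau_i)$ by parabolic local-max points $(p_i,\tau_i)$ with $\tau_i\to 0^-$, $d_{g_i(\tau_i)}(p_i,x_i)\to 0$ and $Q_i\triangleq |Rm|_{g_i(\tau_i)}(p_i)\to\infty$, with the usual curvature control on a large parabolic neighborhood of size comparable to $Q_i^{-1/2}$.

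Next I would identify the limit spaces. By Lemma~\ref{lemma: wwcpt}, a subsequence of $(X_i,x_i,g_i(\tau_i))$ converges in $C^{1,\gamma}$ to a Ricci-flat ALE multifold $(X,x,g)$, and by Corollary~\ref{corollary: colimit} the same $(X,x,g)$ is the $C^{1,\gamma}$-limit of $(X_i,x_i,g_i(0))$. Because $d_{g_i(\tau_i)}(p_i,x_i)\to 0$, Lemma~\ref{lemma: dcontinuity} gives $d_{g_i(0)}(p_i,x_i)\to 0$ as well, so $p_i\to p$ in both limits with $p=x\in X$. Since $Q_i\to\infty$, the point $p$ cannot be a smooth limit point of the sequence $(X_i,x_i,g_i(\tau_i))$; hence $p$ must be an added point for the $\tau_i$-convergence. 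By Corollary~\ref{corollary: cadded} (singularities depend only on the limit space), $p$ is then an added point for the $0$-convergence as well, and by Lemma~\ref{lemma: sitsmooth} it must be a genuine, non-smooth singularity of the multifold $X$.

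The main obstacle, and the step I would work hardest to make airtight, is producing the contradiction at time $0$. The hypothesis $|Rm|_{g_i(0)}\le r^{-2}$ on $B_{g_i(0)}(x_i,r)$ gives a ball of uniformly bounded curvature around $p_i$ at time $t=0$ (of radius at least $r/2$ for large $i$, since $d_{g_i(0)}(p_i,x_i)\to 0$). Combined with $\kappa$-noncollapsing and the volume ratio upper bound inherent in the EV-refined condition, this ball carries uniform two-sided volume control and uniform curvature bound, so by the standard $C^{1,\gamma}$ Cheeger-Gromov precompactness the convergence $(X_i,p_i,g_i(0))\to(X,p,g)$ is smooth in a neighborhood of $p$; in particular no energy concentration occurs near $p_i$ at time $0$, so $p$ lies in the smooth part of $X$. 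This directly contradicts Lemma~\ref{lemma: sitsmooth}, which forbids added points from sitting on a smooth component. The contradiction forces the existence of the uniform $\delta$ claimed in the lemma.
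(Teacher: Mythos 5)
Your argument is correct and follows the same route as the paper: argue by contradiction, identify the common $C^{1,\gamma}$-limit $(X,x,g)$ of the sequences at times $\tau_i\to 0^-$ and $0$ via Lemma~\ref{lemma: dcontinuity} and Corollary~\ref{corollary: colimit}, observe that the curvature blow-up forces $x=\lim p_i$ to be an added point for the $\tau_i$-convergence, transfer this to the $t=0$ convergence via Corollary~\ref{corollary: cadded}, and contradict the $t=0$ curvature bound.

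Two remarks on inessential detours. First, the invocation of Proposition~\ref{proposition: pickpoint} is both unnecessary and, as stated, misapplied: that proposition assumes $|Rm|_{g_i(0)}(x_i)\to\infty$, whereas here $|Rm|_{g_i(0)}\le r^{-2}$ on $B_{g_i(0)}(x_i,r)$, and the raw counterexample points $(y_i,\tau_i)$ already satisfy $\tau_i\to 0$, $d_{g_i(\tau_i)}(y_i,x_i)\to 0$, and $|Rm|_{g_i(\tau_i)}(y_i)\to\infty$, which is all the subsequent argument uses (the extra parabolic curvature control is never invoked). Second, the closing step does not require Lemma~\ref{lemma: sitsmooth}: once $x$ is known to be an added point for the $t=0$ convergence, the uniform bound $\sup_{B_{g_i(0)}(x_i,r)}|Rm|_{g_i(0)}\le r^{-2}$ is already an immediate contradiction, since by the construction in Lemma~\ref{lemma: wwcpt} added points arise precisely where curvature of the time-$0$ slices blows up. Your detour through Lemma~\ref{lemma: sitsmooth} is valid, just longer than necessary.
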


 \begin{proof}
 Note that rescaling an EV-refined sequence by constant $\frac{1}{r^2}$ yields
  a new EV-refined sequence. Therefore, we can
 assume $r=1$ without loss of generality.

  Suppose this result is wrong, then there is a sequence of points
   $(p_i, t_i)$ satisfying
  \begin{align*}
    t_i \leq 0 \; \textrm{and} \; \lim_{i \to \infty} t_i =0;
    \quad   \lim_{i \to \infty} d_{g_i(t_i)}(p_i, x_i)=0;
    \quad   \lim_{i \to \infty} |Rm|_{g_i(t_i)}(p_i)=\infty.
  \end{align*}

  Let $(X, x, g)$ be the common limit
  space of sequences $(X_i, x_i, g_i(t_i))$ and
   $(X_i, x_i, g_i(0))$.   So $\displaystyle x=\lim_{i \to \infty} x_i=\lim_{i \to \infty} p_i$
   is an added point as  $(X_i, x_i, g_i(t_i))$ converges.  So Corollary~\ref{corollary: cadded}
   implies that $x$ is an added point as $(X_i, x_i, g_i(0))$ converges.
   This is impossible since
    $\displaystyle  \sup_{B_{g_i(0)}(x_i, 1)} |Rm|_{g_i(0)} \leq 1$
   for every $i$!
\end{proof}

\vspace{0.3in}
  Combining Lemma~\ref{lemma: prermcontrolover} with Shi's estimates,
  we see that for every geodesic ball $B_{g_i(0)}(x, r)$ satisfying
  $\displaystyle \sup_{B_{g_i(0)}(x, r)} |Rm|_{g_i(0)} \leq r^{-2}$, we have
  \begin{align*}
      |\nabla^k Rm|_{g_i(0)}(y) \leq C(k, \kappa, r)
  \end{align*}
  whenever $y \in B_{g_i(0)}(x, \frac{r}{2})$.  Using the proof of
  Lemma~\ref{lemma: wwcpt}, we can strengthen the conclusion as
  smooth convergence.  To be precise, we have\\

  \textit{
  If $\{ (X_i, x_i, g_i(t)), -1 \leq t \leq 1 \}$ is an EV-refined
  sequence, then $(X_i, x_i, g_i(0))$ converges in the pointed
  Gromov-Hausdorff topology to a Ricci-flat multifold
  $(X, x, g)$.
  Furthermore, the convergence is in $C^{\infty}$-topology away from
  finite singular points.  The number of singular points is
  not greater than $N_0$.}\\

  By further study, we can show that every singular point is irreducible.
  This means that the limit multifold must be an orbifold.

 \begin{lemma}
 [\textbf{Weak Compactness for Time Slices in an EV-refined Sequence}]
  If  $\{ (X_i, x_i, g_i(t)), -1 \leq t \leq 1
 \}$ is an EV-refined sequence, then we have
 \begin{align*}
    (X_i, x_i, g_i(0)) \stackrel{C^{\infty}}{\to} (X, x, g)
 \end{align*}
 where $(X, g)$ is a $\kappa$-noncollapsed, Ricci-flat ALE orbifold
 with at  most $N_0$ singular points.
 \label{lemma: wcptev}
 \end{lemma}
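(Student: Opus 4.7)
The plan is to upgrade Lemma~\ref{lemma: wwcpt} in two ways: (i) promote $C^{1,\gamma}$-convergence to $C^\infty$-convergence on the smooth part of the limit, and (ii) show that the multifold limit is in fact an orbifold, i.e.\ every added point has exactly one cone-like local end rather than several orbifolds meeting there. Once (ii) is in place, the ALE, Ricci-flat and $\kappa$-noncollapsing properties, as well as the bound $L\leq N_0$ on the number of singular points, transfer immediately from Lemma~\ref{lemma: wwcpt}.

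For (i), fix a compact set $K$ contained in the smooth part of the limit $(X,x,g)$. Since $g$ is smooth and Ricci-flat on $K$, there is an $r>0$ with $\sup_K |Rm|_g \leq r^{-2}$, and by the $C^{1,\gamma}$-convergence supplied by Lemma~\ref{lemma: wwcpt} we also have $\sup_{K'} |Rm|_{g_i(0)} \leq 2r^{-2}$ on a slightly enlarged $K'$ for $i$ large. The backward pseudolocality Lemma~\ref{lemma: prermcontrolover} now propagates this curvature bound to a parabolic cylinder $K\times[-\delta^2 r^2,0]$, and Shi's local estimates then give uniform bounds $|\nabla^k Rm|_{g_i(0)} \leq C_k(K)$ for every $k\geq 0$. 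A standard Arzelà--Ascoli argument upgrades the convergence on $K$ to $C^\infty$, and a diagonal exhaustion of the smooth part finishes (i).

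For (ii), I argue by contradiction: suppose an added point $p$ has $k\geq 2$ distinct local ends in $B(p,\eta)\setminus\{p\}$, each diffeomorphic to a nontrivial cone. Because the manifolds $X_i$ are connected, the corresponding small geodesic spheres in $X_i$ must be joined through a region whose curvature concentrates at $p$. Apply Proposition~\ref{proposition: pickpoint} to select parabolic curvature maxima $(p_i,t_i)$ with $p_i\to p$ and $Q_i=|Rm|_{g_i(t_i)}(p_i)\to\infty$, and rescale by $Q_i$. By Corollary~\ref{corollary: vr} and the bubble construction used in the proof of Lemma~\ref{lemma: sitsmooth}, this yields a nontrivial Ricci-flat ALE deepest bubble whose unbounded end accounts for at most one of the local ends at $p$, so the remaining $k-1$ ends must survive inside the bubble as additional added points. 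Each such added point consumes at least $\epsilon$ of the total energy, so iterating the blowup in the manner of Lemma~\ref{lemma: sitsmooth} terminates in at most $N_0=\lfloor E/\epsilon\rfloor$ steps. The terminal bubble inherits the area-ratio cutoff $\tfrac{7}{8}\omega(m)$ from each blowup scale choice, yet connectedness together with generalized Bishop volume comparison forces it to contain a Euclidean component whose ``unit sphere'' component has area ratio strictly greater than $\tfrac{7}{8}\omega(m)$, reproducing the contradiction of Lemma~\ref{lemma: sitsmooth}. The main obstacle is the bookkeeping in (ii): making precise that the multiplicity of local ends at $p$ genuinely survives each blowup as new added points, and that the area-ratio cutoff is preserved throughout the iteration. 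Once this is established the lemma follows.
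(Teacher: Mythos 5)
Part (i) of your plan is correct and essentially matches the paper: backward pseudolocality (Lemma~\ref{lemma: prermcontrolover}) combined with Shi's estimates propagates a curvature bound to a short parabolic cylinder, giving $C^\infty$ interior estimates and hence $C^\infty$-convergence on compact subsets of the smooth part.

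Part (ii), however, has a genuine gap, and it is not merely bookkeeping. The scale choice driving the iteration in Lemma~\ref{lemma: sitsmooth} --- picking the largest $r_i$ so that the largest component of $\partial B_{g_i(t_i)}(p_i,r_i)$ has area ratio $\tfrac{7}{8}m\omega(m)$ --- is only available because there the contradiction hypothesis puts $p$ in the \emph{smooth} part, so small spheres at $p$ have area ratio close to $m\omega(m)$ and therefore pass through $\tfrac78 m\omega(m)$ on the way down to the $\tfrac34 m\omega(m)$ bound of Corollary~\ref{corollary: vr}. In your situation $p$ is genuinely singular, with every local end a cone over $S^{m-1}/\Gamma$ with $|\Gamma|\geq 2$; small spheres at $p$ then have area ratio at most $\tfrac12 m\omega(m)<\tfrac78 m\omega(m)$, so your cutoff radius $r_i$ need not exist, and the entire induction collapses. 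Independently, the claim that ``the remaining $k-1$ ends must survive inside the bubble as additional added points'' is unjustified: a nonflat Ricci-flat ALE piece has a single end at infinity, and the other local ends at $p$ can be pushed to infinite distance from the curvature maximum after rescaling by $Q_i$, i.e.\ they can simply fail to be visible in the bubble.

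The paper resolves (ii) with a different mechanism that your proposal does not touch. A reducible singular point $p$ forces any shortest geodesic between points in distinct cone ends to pass through $p$. Pick a point $q_i$ on such a geodesic $\gamma_i$ closest to the maximal-curvature locus and blow up at the energy-based scale $r_i$ defined by $\int_{B(q_i,r_i)}|Rm|^{m/2}d\mu=\tfrac{\epsilon}{2}$. In the rescaled limit $(\hat X,\hat q,\hat h)$ the geodesic becomes a complete line $\hat\gamma$ through $\hat q$; energy concentration keeps $\hat q$ away from singularities, and the choice of $q_i$ keeps $\hat\gamma$ away from singularities as well. The Cheeger--Gromoll splitting theorem for Ricci-flat orbifolds, together with ALE decay, forces the orbifold piece containing $\hat\gamma$ to be $\R^m$, and then Lemma~\ref{lemma: sitsmooth} plus connectedness forces $\hat X=\R^m$, contradicting the $\tfrac{\epsilon}{2}$ energy locked in by the scale choice. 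This line-splitting argument is the missing ingredient you would need; without it, or a substitute of comparable strength, the irreducibility step does not go through.
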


 \begin{proof}
 The smooth convergence away from singular points is a direct
  application of Shi's estimates.  We only need to show that every
  singular point is irreducible.

  Suppose this Lemma is wrong, then there is a reducible singular point $p \in X$.
  We can choose a very small number $\eta>0$ such that $\partial B(p, \eta)$
  is disconnected.  Choose $x$ and $y$ in different component of
  $\partial B(p, \eta)$, then the shortest geodesic connecting $x$
  and $y$ must pass through $p$.  By choosing $\eta$ small enough,
  we can assume $\int_{B(p, \eta)} |Rm|^{\frac{m}{2}} d\mu <
  \frac{\epsilon}{4}$.

   Assume $x_i \to x$, $y_i \to y$, $p_i \to p$. Let $\gamma_i$ be a shortest
   geodesic connecting $x_i$ and $y_i$, $q_i$ be a point on $\gamma_i$
   which is closest to the highest curvature points in $B_{g_i(0)}(p_i, \eta)$.
   Clearly, $q_i \to p$.   Under metric $g_i(0)$, define $r_i$ to be the radius
   satisfying
   $\int_{B(q_i, r_i)} |Rm|^{\frac{m}{2}} d\mu=\frac{\epsilon}{2}$.
   Since $p$ is a singularity and $q_i \to p$, we know $r_i$ is well
   defined and $r_i \to 0$.   See Figure~\ref{figure: splitlineirr}
   for intuition.

   Let $h_i(t)= r_i^{-2}g_i(r_i^2t)$, we can extract an
   EV-refined sequence $\{(X_i, q_i, h_i(t)), -1 \leq t \leq 1\}$.
   Take limit for the central time slices, we have
   \begin{align*}
       (X_i, q_i, h_i(0)) \sconv (\hat{X}, \hat{q}, \hat{h}).
   \end{align*}
   The limit space $\hat{X}$ contains a line $\hat{\gamma}$ passing through
   $\hat{q}$.

   Note that $\int_{B_{h_i(0)}(q_i, 1)} |Rm|^{\frac{m}{2}} d\mu
   =\frac{\epsilon}{2}$.  The energy concentration property for
   EV-refined sequence enforces $|Rm|_{h_i(0)}(q_i)$ is bounded.
   Therefore, $\hat{q}$ is not a singular point.   By the choice of
   $q_i$, we see that the line $\hat{\gamma}$ will not pass through
   any singular point.    Detach $\hat{X}$ into union of orbifolds.
   $\hat{\gamma}$ must stay on one of them since it doesn't hit any
   singular point.   Let $M$ be the component containing
   $\hat{\gamma}$. It is a Ricci-flat ALE orbifold containing a
   line.  Splitting theorem for Ricci-flat orbifold implies $M$ is
   the product of a manifold and a line.   The ALE condition forces
   that $M$ must be flat and consequently the Euclidean space.
   According to Lemma~\ref{lemma: sitsmooth}, $M$ doesn't
   contain any singular point.  By the connectedness of $\hat{X}$, we
   have $\hat{X}=M$.   Therefore, $\hat{X}$ is Euclidean space which
   doesn't contain any singular point. In particular,  $|Rm|_{h_i(0)}$ is uniformly
   bounded in $B_{h_i(0)}(q_i, 2)$ when $i$ large.   Control
   convergence theorem implies
   \begin{align*}
   0= \int_{\overline{B(\hat{q}, 1)}} |Rm|^{\frac{m}{2}} d\mu
    = \lim_{i \to \infty} \int_{\overline{B(q_i, 1)}}
    |Rm|_{h_i(0)}^{\frac{m}{2}}d\mu_{h_i(0)} = \lim_{i \to \infty} \int_{\overline{B(q_i,
    r_i)}} |Rm|_{g_i(0)}^{\frac{m}{2}} d\mu_{g_i(0)} = \frac{\epsilon}{2}.
   \end{align*}
   Contradiction!
 \end{proof}

   \begin{figure}
   \begin{center}
   \psfrag{x}{$x$}
   \psfrag{xi}{$x_i$}
   \psfrag{Xi}[c][c]{$B_{g_i(0)}(p_i, \eta) \subset X_i$}
   \psfrag{Xinf}[c][c]{$B(p, \eta) \subset X$}
   \psfrag{yi}{$y_i$}
   \psfrag{y}{$y$}
   \psfrag{p}{$p$}
   \psfrag{pi}{$p_i$}
   \psfrag{Bi}{$B(q_i, r_i)$}
   \psfrag{Con}[c][c]{$C^{\infty}$ convergence}
   \includegraphics[width=1.0 \columnwidth]{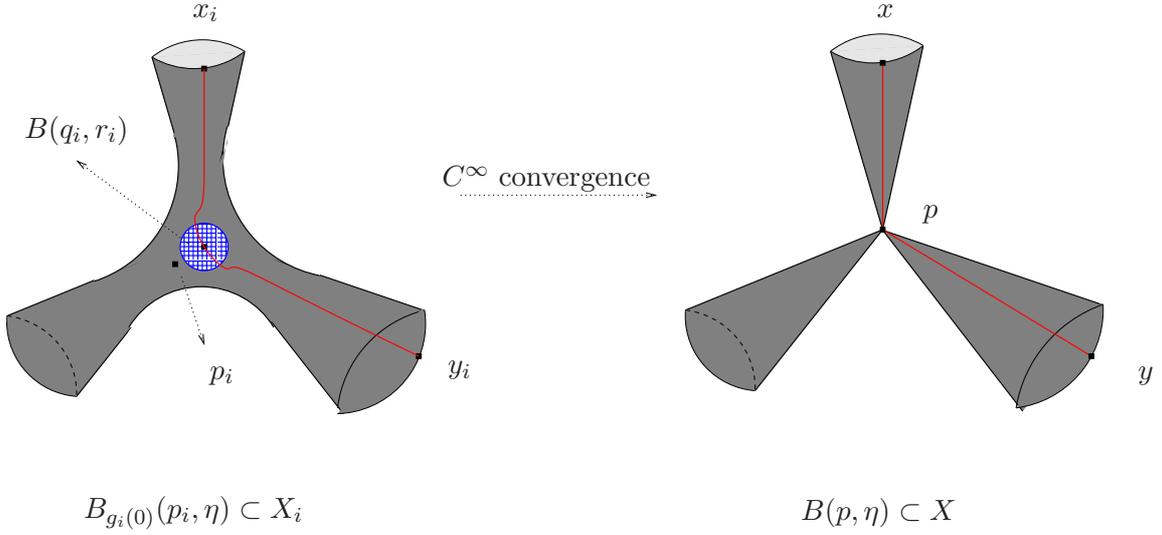}
   \end{center}
   \caption{Blowup at reducible singular point}
   \label{figure: splitlineirr}
   \end{figure}


  The weak compactness Lemma~\ref{lemma: wcptev} directly
  yields the following Corollaries.

 \begin{corollary}[\textbf{Improved Volume Ratio Upper Bound for an EV-Refined Sequence}]
 If  $\{ (X_i, x_i, g_i(t)), -1 \leq t \leq 1
 \}$ is an EV-refined sequence, $r$ is any fixed positive number,
 then we have
 \begin{align*}
    \limsup_{i \to \infty} \frac{\Vol_{g_i(0)}(B_{g_i(0)}(x_i,
    r))}{r^m} \leq \omega(m).
 \end{align*}
 \label{corollary: vupperover}
 \end{corollary}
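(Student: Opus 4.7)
The plan is to combine the just-proved weak compactness Lemma~\ref{lemma: wcptev} with Bishop's volume comparison theorem on the Ricci-flat orbifold limit. By Lemma~\ref{lemma: wcptev}, after passing to a subsequence, $(X_i, x_i, g_i(0)) \sconv (X, x, g)$, where $(X, g)$ is a $\kappa$-noncollapsed, Ricci-flat ALE orbifold with at most $N_0$ singular points $\{p^1, \ldots, p^L\}$.  The desired upper bound then reduces to two statements: (i) $\Vol_{g_i(0)}(B_{g_i(0)}(x_i, r)) \to \Vol_g(B(x,r))$ as $i \to \infty$, and (ii) $\Vol_g(B(x,r)) \leq \omega(m) r^m$.

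For step (ii), I would invoke Bishop's volume comparison on the orbifold $(X, g)$.  At every smooth point the classical Bishop inequality applies directly since $Ric_g \equiv 0$ away from the singular set.  At an orbifold singularity with local group $\Gamma \subset SO(m)$, the metric is locally the quotient of a smooth Ricci-flat $\Gamma$-invariant metric on $B^m$; Bishop applied upstairs gives volume ratio at most $\omega(m)$, and the downstairs volume is $|\Gamma|^{-1}$ times the upstairs volume, so the bound only improves.  Thus $\Vol_g(B(y,\rho))/\rho^m \leq \omega(m)$ for every $y \in X$ and $\rho > 0$.

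For step (i), I would exhaust $B(x,r)$ by smooth regions.  For small $\tau > 0$, set $\Omega_\tau \triangleq B(x,r) \setminus \bigcup_{s=1}^L B(p^s, \tau)$ and let $p_i^s \in X_i$ be the points converging to $p^s$.  Since the convergence is $C^\infty$ away from $\{p^s\}$, for large $i$ the smooth region $B_{g_i(0)}(x_i, r) \setminus \bigcup_s B_{g_i(0)}(p_i^s, \tau)$ is diffeomorphic to $\Omega_\tau$ via the convergence maps, and the pulled-back metrics converge smoothly; hence the volumes of these smooth pieces converge.  The missing contributions from the small balls $B_{g_i(0)}(p_i^s, \tau)$ are controlled by the EV-refined volume ratio upper bound $K$:
\begin{align*}
  \sum_{s=1}^L \Vol_{g_i(0)}(B_{g_i(0)}(p_i^s, \tau)) \leq L K \tau^m \leq N_0 K \tau^m,
\end{align*}
which is uniform in $i$ and tends to $0$ as $\tau \to 0$.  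The same estimate (or Bishop on the orbifold) bounds $\sum_s \Vol_g(B(p^s,\tau))$.  Interchanging the limits $i \to \infty$ and $\tau \to 0$ yields $\Vol_{g_i(0)}(B_{g_i(0)}(x_i, r)) \to \Vol_g(B(x,r))$.  Combining with (ii) gives the corollary.

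The only delicate point is the interchange of limits in step (i); the main obstacle is ensuring that the volume lost near the singularities is genuinely $o(1)$ in $\tau$ uniformly in $i$.  This is precisely what the EV-refined volume ratio upper bound $K$ guarantees on scale $\leq 1$, so no further estimate is required.
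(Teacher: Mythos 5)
Your proof is correct and takes essentially the same approach as the paper: the paper's proof is a one-line invocation of the volume comparison theorem for Ricci-flat orbifolds (citing Borzellino), implicitly relying on the weak compactness Lemma~\ref{lemma: wcptev} to produce the limit; you spell out the same two ingredients (Bishop on the orbifold limit, plus convergence of volumes using the EV upper bound $K$ to control the volume near singular points uniformly in $i$).
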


 \begin{proof}
   Application of volume comparison theorem for Ricci-flat orbifolds (c.f.~\cite{Bor}).
 \end{proof}

 \begin{corollary}[\textbf{Energy Concentration for an EV-Refined Sequence}]
  If an EV-refined sequence
  $\{ (X_i, p_i, g_i(t)), -1 \leq t \leq 1\}$
 satisfies $Q_i=|Rm|_{g_i(0)}(p_i) \geq 1$, then
 energy concentration holds at $(p_i,0)$ for large $i$. In
 other words, we have
 \begin{align*}
    \int_{B_{g_i(0)}(p_i,Q_i^{-\frac12})} |Rm|_{g_i(0)}^{\frac{m}{2}} d\mu_{g_i(0)} > \epsilon.
    \end{align*}
  \label{corollary: evconcentration}
 \end{corollary}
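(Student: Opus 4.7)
The plan is contradiction plus parabolic rescaling, using Lemma~\ref{lemma: wcptev} (weak compactness of EV-refined sequences) and Lemma~\ref{lemma: econ} (energy gap on smooth Ricci-flat balls). Suppose the conclusion fails; after passing to a subsequence, $Q_i := |Rm|_{g_i(0)}(p_i) \geq 1$ while $\int_{B_{g_i(0)}(p_i, Q_i^{-1/2})}|Rm|^{m/2}d\mu \leq \epsilon$ for every $i$. Dropping finitely many indices I may assume $Q_i \geq 2$ (if $Q_i$ stays bounded along a subsequence, the argument below goes through verbatim on the unscaled sequence with $1$ replaced by $Q_{\infty}^{-1/2}$). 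Set $\tilde g_i(t) = Q_i g_i(Q_i^{-1} t)$; Proposition~\ref{proposition: blowup} shows that $\{(X_i, p_i, \tilde g_i(t)), -1 \leq t \leq 1\}$ is again EV-refined, and scale invariance of $|Rm|^{m/2}d\mu$ rewrites the hypothesis as
\begin{align*}
|Rm|_{\tilde g_i(0)}(p_i) = 1, \qquad \int_{B_{\tilde g_i(0)}(p_i, 1)} |Rm|^{m/2}\, d\mu \leq \epsilon.
\end{align*}

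Next I invoke Lemma~\ref{lemma: wcptev}: after a further subsequence $(X_i, p_i, \tilde g_i(0)) \sconv (X, p, g)$, where $(X, g)$ is a $\kappa$-noncollapsed Ricci-flat ALE orbifold with at most $N_0$ singular points and the convergence is smooth off the singular set. Since $|Rm|_g(p) = 1 < \infty$, the point $p$ lies in the smooth locus of $X$. I split into two cases according to whether the singular set of $X$ meets $B_g(p,1)$. If it does not, then $B_g(p,1)$ is a smooth, Ricci-flat, $\kappa$-noncollapsed geodesic ball with nonempty boundary in the noncompact ALE $X$, so Lemma~\ref{lemma: econ} with $\rho = 1$ applies: $|Rm|_g(p) = 1 \geq \rho^{-2}$ gives $\int_{B_g(p, 1/2)} |Rm|^{m/2} d\mu > \epsilon_b \geq \epsilon$. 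Smooth convergence on $\overline{B_g(p, 1/2)}$ then yields $\int_{B_{\tilde g_i(0)}(p_i, 1)} |Rm|^{m/2} d\mu > \epsilon$ for $i$ large, contradicting the rescaled hypothesis.

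If instead some singular point $q$ sits in $B_g(p, 1)$, I use the fact (Lemma~\ref{lemma: sitsmooth} together with the construction in the proof of Lemma~\ref{lemma: wwcpt}) that limit singularities arise precisely from curvature blow-up: there exist $q_i \in X_i$ with $q_i \to q$ and $|Rm|_{\tilde g_i(0)}(q_i) \to \infty$. For large $i$, $|Rm|_{\tilde g_i(0)}(q_i) > H$, so the E-refined property at $(q_i, 0)$ produces
\begin{align*}
\int_{B_{\tilde g_i(0)}(q_i, |Rm|_{\tilde g_i(0)}^{-1/2}(q_i))} |Rm|^{m/2}\, d\mu > \epsilon.
\end{align*}
Since $|Rm|_{\tilde g_i(0)}^{-1/2}(q_i) \to 0$ and $d_{\tilde g_i(0)}(q_i, p_i) \to d_g(q, p) < 1$, this small ball is contained in $B_{\tilde g_i(0)}(p_i, 1)$ for large $i$, contradicting the rescaled hypothesis once more.

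The main obstacle is precisely the second case: the apparent bubble at $p_i$ (where $|Rm|=1$ after rescaling) may actually be masked by a nearby, taller bubble converging to a different singularity $q \neq p$. Lemma~\ref{lemma: econ} alone cannot detect this energy because it only applies at $p$, where the limit is smooth. The EV-refined hypothesis—specifically its E-refined component—is what lets me extract $\epsilon$ of energy from near $q_i$ and transport it back inside the ball around $p_i$, completing the contradiction regardless of the local structure of the limit at $p$.
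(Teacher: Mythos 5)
Your proof is correct and follows essentially the same route as the paper's: contradiction, rescale to $Q_i\equiv 1$, invoke Lemma~\ref{lemma: wcptev} for weak compactness, and then combine Lemma~\ref{lemma: econ} (for the smooth case) with the E-refined concentration inequality (to rule out a singularity inside the unit ball). The only difference is organizational: the paper proves a sequence-level curvature-control ``Claim'' first (which directly shows $B_g(p,1)$ is smooth, making your Case 2 vacuous), whereas you pass to the limit first and then split into cases---but the contradiction in your Case 2 is exactly the same use of the E-refined property that underlies the paper's Claim.
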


 \begin{proof}
 If this result is wrong, by taking subsequence, we can obtain an EV-refined
   sequence $\{(X_i, p_i, g_i(t)), -1 \leq t \leq 1\}$ satisfying
 \begin{align*}
   Q_i=|Rm|_{g_i(0)}(p_i) \geq 1, \quad  \int_{B_{g_i(0)}(p_i,
   Q_i^{-\frac12})} |Rm|_{g_i(0)}^{\frac{m}{2}} d\mu_{g_i(0)} \leq \epsilon.
 \end{align*}
 Since blown up sequence of an EV-refined sequence is a new EV-refined sequence, we can assume
 $Q_i \equiv 1$. For the simplicity of notation, we omit the default
 metric $g_i(0)$  and denote these conditions as
 \begin{align*}
     |Rm|(p_i)=1,  \quad \int_{B(p_i, 1)} |Rm|^{\frac{m}{2}} d\mu  \leq
     \epsilon.
 \end{align*}

 \begin{clm}
   Curvature of any point in the unit ball can be controlled by its distance to the base point. In
   other words, we have
 \begin{itemize}
 \item     $|Rm|(q) < (1-d(p_i, q))^{-2}$ whenever $d(p_i, q) > 1- H^{-\frac12}$.
 \item     $|Rm|(q) \leq H$ whenever $d(p_i, q) \leq 1-H^{-\frac12}$.
 \end{itemize}
 \end{clm}
 Otherwise, $B(q, |Rm|^{-\frac12}(q)) \subset B(p_i, 1)$ and $|Rm|(q) > H$. Since energy concentrate at
 any point satisfying $|Rm|>H$, we have
 \begin{align*}
   \int_{B(p_i, 1)} |Rm|^{\frac{m}{2}} d\mu \geq \int_{B(q, |Rm|^{-\frac12}(q))}
   |Rm|^{\frac{m}{2}} d\mu > \epsilon.
 \end{align*}
 Contradiction! This proves the Claim. \\

  Lemma~\ref{lemma: wcptev} implies
 \begin{align*}
    (X_i, p_i, g_i(0)) \sconv (X, p, g).
 \end{align*}
 Our Claim assures that the geodesic ball $B(p, 1)$ is free
 of singular point.  It's clearly a $\kappa$-noncollapsed, Ricci-flat, smooth
 geodesic ball satisfying $|Rm|(p)=1$.   Therefore,
 Lemma~\ref{lemma: econ} yields
  $\displaystyle   \int_{B(p, \frac12)} |Rm|^{\frac{m}{2}} d\mu >
  \epsilon$.
 The smooth convergence then implies
 \begin{align*}
  \epsilon \geq  \lim_{i \to \infty} \int_{B(p_i, 1)}|Rm|^{\frac{m}{2}} d\mu
   \geq \lim_{i \to \infty} \int_{B(p_i, \frac12)} |Rm|^{\frac{m}{2}} d\mu
    =\int_{B(p, \frac12)} |Rm|^{\frac{m}{2}} d\mu > \epsilon.
 \end{align*}
 Contradiction!

 \end{proof}

   In the definition of an EV-refined
  sequence, we only have energy concentration for points satisfying
  $|Rm| \geq H$.   By Corollary~\ref{corollary: evconcentration}, we see
  energy concentration holds automatically for points with
   $|Rm| \geq 1$.   The gap between $1$ and $H$ can be overcome ``freely".
  Similarly, at the beginning, we only assume volume ratio has an
  upper bound $K$. However, we finally can improve this $K$ to $\omega(m)$
  ``freely".  These improvements  give us the room to show that every refined sequence
  is actually an EV-refined sequence.

\subsection{Every Refined Sequence is an EV-refined Sequence}
  In this subsection, we show that a priori local volume ratio upper
  bound  and energy concentration property  are not independent
  conditions in the definition of an EV-refined sequence. They can
  be deduced from other conditions. In other words, every refined
  sequence is an EV-refined sequence.

 \begin{proposition}
  Every E-refined sequence is an EV-refined sequence.
  \label{proposition: eev}
 \end{proposition}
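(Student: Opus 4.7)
The plan is to argue by contradiction. Suppose the E-refined sequence $\{(X_i, g_i(t)), -1 \leq t \leq 1\}$ fails to be EV-refined. Then, after passing to a subsequence, there exist triples $(x_i, t_i, r_i) \in X_i \times [-\frac{1}{4}, 0] \times (0, 1]$ such that
\begin{align*}
V_i(x_i, r_i, t_i) \,\triangleq\, \frac{\Vol_{g_i(t_i)}(B_{g_i(t_i)}(x_i, r_i))}{r_i^m} \,\longrightarrow\, \infty.
\end{align*}
I would fix a constant $K_0 > \omega(m)$, strictly larger than the Bishop threshold for Ricci-flat metrics, and aim to find a rescaled E-refined sequence whose central unit ball has volume ratio $K_0$.

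The first main step is a Perelman-style point-selection argument, analogous in spirit to Proposition~\ref{proposition: pickpoint}. Since each $(X_i, g_i(t_i))$ is smooth we have $V_i(y, r, t) \to \omega(m)$ as $r \to 0^+$, so by continuity in $r$ there exists, for each $i$, a smallest scale $\rho_i > 0$ and a base point $(y_i, \tau_i) \in X_i \times [-\frac{1}{4}, 0]$ with $V_i(y_i, \rho_i, \tau_i) = K_0$; moreover, by iteratively moving to nearby points of even smaller critical scale, one can arrange that in a parabolic neighborhood of $(y_i, \tau_i)$ of size comparable to $\rho_i$, the volume ratios satisfy $V_i(y, r, t) \leq 2 K_0$ for all $r \leq \rho_i$. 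One also verifies that $\rho_i \to 0$: in the contrary case, $V_i \leq K_0$ would hold uniformly at a fixed positive scale $\rho_0$, and combined with the $\kappa$-noncollapsing and a standard covering/packing argument this would already give a uniform volume-ratio bound at scale $1$, contradicting the assumption.

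Next, I rescale with factor $\rho_i^{-2}$ and shift time by $\tau_i$. By Proposition~\ref{proposition: blowup} the rescaled sequence is again E-refined, and by the selection it is locally EV-refined around the new base point $y_i$: the central unit ball has volume ratio exactly $K_0$, and all smaller balls in a large parabolic neighborhood have volume ratio $\leq 2 K_0$. I would then invoke a localized version of Lemma~\ref{lemma: wcptev}, whose proof depends only on the geometry in a fixed-size neighborhood of the base point, to obtain $C^{\infty}$-convergence of the central time slice (away from at most $N_0$ singular points) to a $\kappa$-noncollapsed Ricci-flat ALE orbifold $(\hat{X}, \hat{y}, \hat{g})$. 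Passing the volume ratio of the unit ball to the limit — valid since the convergence is smooth off a set of measure zero — yields $\Vol_{\hat{g}}(B_{\hat{g}}(\hat{y}, 1)) = K_0$. But Bishop's volume comparison theorem for Ricci-flat orbifolds (\cite{Bor}) forces this volume to be at most $\omega(m) < K_0$, the desired contradiction.

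The main obstacle is executing the point-selection step so that all three properties — $\rho_i \to 0$, the exact volume ratio $K_0$ achieved at scale $\rho_i$, and the parabolic bound by $2K_0$ at smaller scales — hold simultaneously. A subsidiary technical point is the localization of Lemma~\ref{lemma: wcptev}: its proof must be revisited to confirm that the bound on the singular set and the smooth convergence depend only on the local $\kappa$-noncollapsing, local volume ratio bound, and (scale-invariant) energy concentration, all of which remain available in a neighborhood of the base point even though the full EV-hypothesis is not yet assumed.
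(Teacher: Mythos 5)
Your overall strategy — assume the E-refined sequence is not EV-refined, extract spacetime triples with unbounded volume ratio, run a Perelman-style point selection, rescale, and contradict the Bishop bound $\omega(m)$ for the Ricci-flat limit — matches the paper's. But the point selection you describe has a real gap. You restrict the iteration to ``nearby points,'' aiming for a bound $V_i \leq 2K_0$ only in a parabolic neighborhood of $(y_i, \tau_i)$ of size comparable to $\rho_i$. This restriction is unjustified: the next violation of the ratio bound can occur anywhere on $X_i$, not necessarily near the current base point, so after a jump to a far-away location the ``nearby'' parabolic claim simply fails. The paper's selection instead produces a triple whose ratio exceeds $2\omega(m)$ together with a \emph{global} bound $\leq 2\omega(m)$ over \emph{all} $z \in X_i$, all scales $\leq \tfrac{1}{2}\rho_i$, and all times in $[s_i - \tfrac{1}{8}\rho_i^2, s_i]$; after rescaling by $(\rho_i/2)^{-2}$ this is precisely the definition of an EV-refined sequence with $K = 2\omega(m)$, so Corollary~\ref{corollary: vupperover} applies directly with no further work.

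Because your bound is only local, you then propose a ``localized version of Lemma~\ref{lemma: wcptev}'' and call it a subsidiary technical point. It is not subsidiary: Lemma~\ref{lemma: wcptev} rests on Lemma~\ref{lemma: wwcpt}, Lemma~\ref{lemma: dcontinuity}, Lemma~\ref{lemma: noemerge}, Lemma~\ref{lemma: sitsmooth}, and Lemma~\ref{lemma: prermcontrolover}, each of which uses the \emph{global} volume-ratio upper bound built into the EV-refined definition. Localizing that chain would effectively force you to redo the paper's entire EV-refined theory, whereas running the point selection globally (which costs nothing extra, since the process terminates by the smoothness of each $X_i$ as the scales halve) makes the rescaled sequence genuinely EV-refined so the existing machinery applies as is. The ``exact ratio $K_0$'' refinement via the intermediate value theorem is harmless but unnecessary; the paper only needs the ratio to exceed the threshold $2\omega(m)$ at the selected scale, which comes for free from the doubling-type stopping rule.
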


 \begin{proof}
   If this result is wrong, then there exists an E-refined sequence
   $\{(X_i, g_i(t)), -1 \leq t \leq 1\}$
 which is not an EV-refined sequence. In other words, there  are triples
 $(x_i, t_i, r_i) \in X_i \times [-\frac14, 0] \times (0,1]$ such that
 \begin{align*}
   \lim_{i \to \infty} \frac{\Vol_{g_i(t_i)}(B_{g_i(t_i)}(x_i, r_i))}{r_i^{m}} = \infty.
 \end{align*}

 \begin{figure}
 \begin{center}
 \psfrag{t}[c][c]{$t$}
 \psfrag{X}[c][c]{$X_i$}
 \psfrag{b1}[c][c]{\footnotesize $B_{g_i(t_i)}(x_i, r_i)$}
 \psfrag{b2}[c][c]{\footnotesize $B_{g_i(s_i^{(1)})}(y_i^{(1)}, \rho_i^{(1)})$}
 \psfrag{b3}[c][c]{\footnotesize $B_{g_i(s_i^{(2)})}(y_i^{(2)}, \rho_i^{(2)})$}
 \psfrag{s1}[r][r]{\tiny $X_i \times [t_i-\frac18 r_i^2, t_i]$}
 \psfrag{s2}[r][r]{\tiny $X_i \times [s_i^{(1)}-\frac18(\rho_i^{(1)})^2, s_i^{(1)}]$}
 \psfrag{s3}[r][r]{\tiny $X_i \times [s_i^{(2)}-\frac18(\rho_i^{(2)})^2, s_i^{(2)}]$}
 \psfrag{t1}[l][l]{$t=-\frac14$}
 \psfrag{t2}[l][l]{\textcolor{blue}{$t=-\frac12$}}
 \psfrag{A}[c][c]{\small \textcolor{red}{$(y_i, s_i, \rho_i) \triangleq (y_i^{(2)}, s_i^{(2)}, \rho_i^{(2)})$}}
 \includegraphics[width= 0.8 \columnwidth]{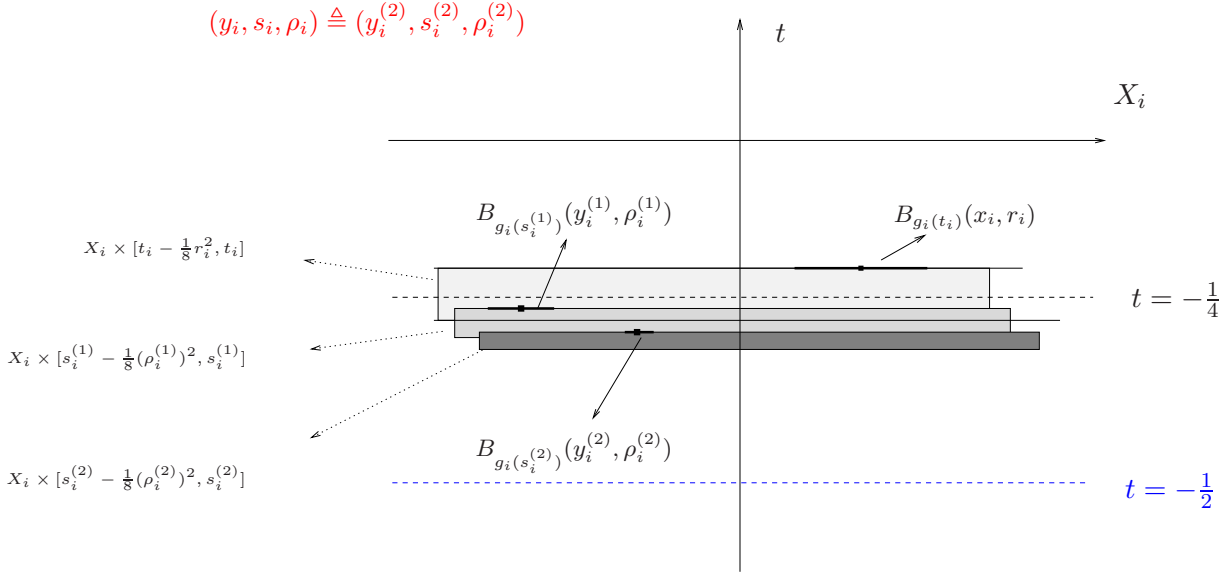}
 \caption{How to select a good scale?}
 \label{figure: selectvballs}
 \end{center}
 \end{figure}

 \begin{clm}
   There exists a triple  $(y_i, s_i, \rho_i) \in X_i \times [-\frac12, 0] \times (0, 1]$ satisfying
   the following properties.
 \begin{enumerate}
 \item $[s_i- \frac14 \rho_i^2, s_i] \subset [-\frac12, 0]$.
 \item Volume ratio upper control does not hold at $(y_i, s_i, \rho_i)$, i.e.,
      \begin{align*}
        \frac{\Vol_{g_i(s_i)}(B_{g_i(s_i)}(y_i, \rho_i))}{\rho_i^m} > 2\omega(m).
      \end{align*}
 \item At every triple
   $(z, \theta, r) \in X_i \times [s_i- \frac18 \rho_i^2] \times (0, \frac12 \rho_i]$,
       volume ratio upper control holds, i.e.,
        $ \frac{\Vol_{g_i(\theta)}(B_{g_i(\theta)}(z, r))}{r^m} \leq  2\omega(m).$
 \end{enumerate}
 \end{clm}
 We start from $(x_i, t_i, r_i)$. Check if it can be chosen as $(y_i, s_i,
 \rho_i)$. If so, we are done.  Otherwise, we can find one triple
 $(z, \theta, r) \in X_i \times [t_i-\frac18 r_i^2, t_i] \times (0, \frac12 r_i]$
 such that volume ratio upper bound fails at $(z, \theta, r)$.
  Denote this triple $(z, \theta, r)$ as $(y_i^{(1)}, s_i^{(1)},
 \rho_i^{(1)})$ and check if it can be chosen as $(y_i, s_i, r_i)$.
 If so, we finish the proof. Otherwise, we can find some triple
 $(y_i^{(2)}, s_i^{(2)}, \rho_i^{(2)}) \in X_i \times [s_i^{(1)}-\frac18  (\rho_i^{(1)})^2, s_i^{(1)}] \times (0, \frac12 \rho_i^{(1)}] $
 such that volume ratio upper bound fails at $(y_i^{(2)}, s_i^{(2)}, \rho_i^{(2)})$.
 Similarly, we can continue to define $(y_i^{(k)}, s_i^{(k)},
 \rho_i^{(k)})$ $\cdots$.

     At triple $(y_i^{(k)}, s_i^{(k)}, \rho_i^{(k)})$, we have
 \begin{align*}
     t_i - \{s_i^{(k)} - \frac14(\rho_i^{(k)})^2\}&= \frac14(\rho_i^{(k)})^2 +\{s_i^{(0)} - s_i^{(k)} \}\\
      &=\frac14(\rho_i^{(k)})^2 + \sum_{l=0}^{k-1} \{s_i^{(l)} -
      s_i^{(l+1)}\}\\
      &\leq \frac14(\rho_i^{(k)})^2 + \frac18
      \sum_{l=0}^{k-1}(\rho_i^{(l)})^2\\
      &=\frac18(\rho_i^{(k)})^2 + \frac18
      \sum_{l=0}^{k}(\rho_i^{(l)})^2.
 \end{align*}
 Note that $\rho_i^{(l)} \leq 2^{-l} r_i$, we have
 \begin{align*}
      t_i - \{s_i^{(k)} - \frac14(\rho_i^{(k)})^2\}
      &\leq  \frac18 (2^{-2k} + \frac43 (1-4^{-k-1})) r_i^2\\
      &\leq \frac{2+4^{-k}}{12} r_i^2 \leq \frac14.
 \end{align*}
 Recall that $t_i \in [-\frac14, 0]$.
 Therefore, $[s_i^{(k)}-\frac14(\rho_i^{(k)})^2, s_i^{(k)}] \subset [-\frac12, 0]$.
 So this process repeats in the compact smooth spacetime
  $X_i \times [-\frac12, 0]$.    So we have
 \begin{align*}
    \lim_{r \to 0} \sup_{(x, t) \in X_i \times [-\frac12, 0]} \frac{\Vol_{g(t)}(B_{g(t)}(x,
    r))}{r^m}= \omega(m) < 2\omega(m).
 \end{align*}
 Since $\displaystyle \lim_{k \to \infty} \rho_i^{(k)}=0$, volume ratio upper bound holds at
 $(y_i^{(k)}, s_i^{(k)}, \rho_i^{(k)})$ for large $k$, we see this process must stop in finite steps.
  So we can find a finite $k$ such that $(y_i^{(k)}, s_i^{(k)}, \rho_i^{(k)})$ can be
  chosen as $(y_i, s_i, \rho_i)$.
 Figure~\ref{figure: selectvballs} illustrates such a process which stops in 2 steps.
 We finish the proof of the Claim.\\

 Let $\tilde{g}_i(t) = (\frac{\rho_i}{2})^{-2} g_i((\frac{\rho_i}{2})^2 t + s_i)$. Clearly,
 $\{(X_i, \tilde{g}_i(t)), -1 \leq t \leq 1\}$ is an E-refined sequence
 satisfying
 \begin{align*}
    \frac{\Vol_{\tilde{g}_i(t)}(B_{\tilde{g}_i(t)}(x, r))}{r^m} \leq  2 \omega(m)
 \end{align*}
 for every $i$ and $(x, t, r) \in X_i \times [-\frac12, 0] \times (0, 1]$.
 Consequently, $\{(X_i, \tilde{g}_i(t)), -1 \leq t \leq 1\}$ is an
 EV-refined sequence.  By Corollary~\ref{corollary: vupperover},
 we have
 \begin{align*}
    \limsup_{i \to \infty} \frac{\Vol_{\tilde{g}_i(0)}(B_{\tilde{g}_i(0)}(y_i, 2))}{2^m} \leq \omega(m).
 \end{align*}
 On the other hand, by our choice of $(y_i, s_i, \rho_i)$, we have
 \begin{align*}
  \frac{\Vol_{\tilde{g}_i(0)}(B_{\tilde{g}_i(0)}(y_i, 2))}{2^m}
 = \frac{\Vol_{g_i(s_i)}(B_{g_i(s_i)}(y_i, \rho_i))}{\rho_i^m} >
 2\omega(m).
 \end{align*}
 Contradiction!
 \end{proof}

 \begin{proposition}
   Every refined sequence is an E-refined sequence.
   \label{proposition: re}
 \end{proposition}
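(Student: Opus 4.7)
The plan is to argue by contradiction, reducing to a setup where Proposition~\ref{proposition: eev} and Corollary~\ref{corollary: evconcentration} jointly produce a contradiction.

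Suppose the refined sequence $\{(X_i, g_i(t)), -1 \leq t \leq 1\}$ is not E-refined. Negating the definition and passing to a subsequence, there exist $(x_i, t_i) \in X_i \times [-\frac12, 0]$ with $Q_i^{(0)} := |Rm|_{g_i(t_i)}(x_i) \to \infty$ satisfying
\begin{align*}
\int_{B_{g_i(t_i)}(x_i, (Q_i^{(0)})^{-\frac12})} |Rm|_{g_i(t_i)}^{\frac{m}{2}}\,d\mu_{g_i(t_i)} \leq \epsilon.
\end{align*}

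First, I will run a Perelman-style point-picking iteration to upgrade $(x_i, t_i)$ to $(y_i, s_i)$ with $\tilde{Q}_i := |Rm|_{g_i(s_i)}(y_i)$, so that energy concentration still fails at $(y_i, s_i)$ itself but \emph{holds} at every spacetime point $(z, \theta) \in X_i \times [s_i - \frac{1}{2\tilde{Q}_i}, s_i]$ with $|Rm|_{g_i(\theta)}(z) > 2\tilde{Q}_i$. Initialize with $(x_i^{(0)}, t_i^{(0)}) = (x_i, t_i)$; at step $k$, if some point in the parabolic cylinder $X_i \times [t_i^{(k)} - \frac{1}{2 Q_i^{(k)}}, t_i^{(k)}]$ has curvature strictly above $2 Q_i^{(k)}$ and fails energy concentration at its natural scale, promote that point to $(x_i^{(k+1)}, t_i^{(k+1)})$ and set $Q_i^{(k+1)} := |Rm|_{g_i(t_i^{(k+1)})}(x_i^{(k+1)})$; otherwise stop. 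Since $Q_i^{(k)} \geq 2^k Q_i^{(0)}$ and $X_i \times [-1, 0]$ is a compact smooth spacetime with bounded curvature, the iteration terminates in finitely many steps. The total time excursion is bounded by $\sum_{j \geq 0} 2^{-(j+1)}/Q_i^{(0)} < 1/Q_i^{(0)} \to 0$, so $s_i$ remains in $[-\frac12 - o(1), 0]$ and every parabolic cylinder used stays inside the original spacetime.

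Next, set $\tilde{g}_i(t) := \tilde{Q}_i g_i(\tilde{Q}_i^{-1} t + s_i)$. Scale invariance of the refined conditions (Proposition~\ref{proposition: blowup}) makes $\{(X_i, y_i, \tilde{g}_i(t)), -1 \leq t \leq 1\}$ a refined sequence. Because the parabolic cylinder in the iteration is precisely the preimage of $X_i \times [-\frac12, 0]$ under the blow-up, the stopping rule translates into: whenever $(x, \tau) \in X_i \times [-\frac12, 0]$ satisfies $|Rm|_{\tilde{g}_i(\tau)}(x) > 2$, energy concentration holds at $(x, \tau)$ in the rescaled metric. Hence the new sequence is an E-refined sequence with constant $H = 2$. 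Proposition~\ref{proposition: eev} then upgrades it to an EV-refined sequence. Since $|Rm|_{\tilde{g}_i(0)}(y_i) = 1 \geq 1$, Corollary~\ref{corollary: evconcentration} forces
\begin{align*}
\int_{B_{\tilde{g}_i(0)}(y_i, 1)} |Rm|_{\tilde{g}_i(0)}^{\frac{m}{2}}\,d\mu_{\tilde{g}_i(0)} > \epsilon
\end{align*}
for large $i$. On the other hand, scale invariance of energy and failure of concentration at $(y_i, s_i)$ yield
\begin{align*}
\int_{B_{\tilde{g}_i(0)}(y_i, 1)} |Rm|_{\tilde{g}_i(0)}^{\frac{m}{2}}\,d\mu_{\tilde{g}_i(0)} = \int_{B_{g_i(s_i)}(y_i, \tilde{Q}_i^{-\frac12})} |Rm|_{g_i(s_i)}^{\frac{m}{2}}\,d\mu_{g_i(s_i)} \leq \epsilon,
\end{align*}
contradicting the previous strict inequality.

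The main obstacle is the precise calibration of Step~1: the parabolic cylinder and the doubling threshold $2$ must be arranged so the rescaled sequence satisfies the E-refined property on the \emph{entire} slab $[-\frac12, 0]$, not merely near $\tau = 0$. Matching the cylinder $[t_i^{(k)} - \frac{1}{2Q_i^{(k)}}, t_i^{(k)}]$ to the preimage of $[-\frac12, 0]$ under blow-up is what makes this work; the remaining verifications (finite termination from compactness, negligible time excursion, and $\tilde{Q}_i \geq Q_i^{(0)} \to \infty$) are routine.
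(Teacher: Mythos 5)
Your proposal is correct and follows essentially the same route as the paper: the same Perelman-style point-picking iteration (doubling the curvature threshold, shrinking the backward time step geometrically, and terminating by compactness of the smooth spacetime with bounded curvature), followed by blow-up at the final point, upgrade to EV-refined via Proposition~\ref{proposition: eev}, and contradiction via Corollary~\ref{corollary: evconcentration}. Your explicit observation that the last parabolic cylinder is exactly the preimage of $X_i \times [-\tfrac12,0]$ under the rescaling matches the paper's choice of the cylinder length $\tfrac12 Q_i^{-1}$, and the geometric-series estimate on the total time excursion is the same computation that the paper summarizes by noting $[s_i - \bar{Q}_i^{-1}, s_i]\subset[-1,0]$.
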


 \begin{proof}
   If this result is wrong, then there is a refined sequence $\{(X_i, g_i(t)), -1 \leq t \leq 1\}$
  and points $(x_i, t_i) \in X_i \times [-\frac12, 0]$ such that
 \begin{align*}
     Q_i=|Rm|_{g_i(t_i)}(x_i) \to \infty,  \quad  \int_{B_{g_i(t_i)}(x_i,
     Q_i^{-\frac12})} |Rm|^{\frac{m}{2}} d\mu \leq \epsilon.
 \end{align*}

    Search points in $X_i \times [t_i-\frac12Q_i^{-1}, t_i]$ to see if
    energy concentration fails at some point with curvature norm
    greater than $2Q_i$. If there exists such a point, we choose one
    of them as a new base point and then do the same work for this
    new base point.   No matter how many steps do we have, we know this
    new base point must locate in
    $X_i \times [t_i-Q_i^{-1}, t_i] \subset X_i \times [-\frac34, t_i]$ for large $i$.
    This is a compact smooth spacetime with bounded Riemannian curvature. After each step,
    the base point's Riemannian curvature norm doubled, so our process must stop in
    finite steps. Fix the last step base point as the new base point
    $(y_i, s_i)$ and define $\bar{Q}_i= |Rm|_{g_i(s_i)}(y_i)$.
    It satisfies the following properties.
  \begin{enumerate}
   \item $\displaystyle \lim_{i \to \infty} \bar{Q}_i=\infty$.
   \item Energy concentration fails at $(y_i, s_i)$.
   \item Energy concentration holds at $(x,t)$ whenever it locates
   in $X_i \times [s_i-\frac12\bar{Q}_i^{-1}, s_i]$ and
   $|Rm|_{g_i(t)}(x) > 2\bar{Q}_i$.
   \item $[s_i-\bar{Q}_i^{-1}, s_i] \subset [-1, 0]$.
  \end{enumerate}

  Let $\tilde{g}_i(t) = \bar{Q}_i g(\bar{Q}_i^{-1}t + s_i)$. Clearly, $\{(X_i, y_i, \tilde{g}_i(t)), -1 \leq t \leq 1\}$
  is an E-refined sequence. Proposition~\ref{proposition: eev}
  implies that it is an EV-refined sequence.    Furthermore,  at the base points $(y_i, 0)$,
  energy concentration fails  and  $|Rm|_{\tilde{g}_i(0)}(y_i)=1$.
  This contradicts to Corollary~\ref{corollary: evconcentration}!
 \end{proof}

 Combining Proposition~\ref{proposition: re} and
 Proposition~\ref{proposition: eev}, we have
 \begin{lemma}
  Every refined sequence is an EV-refined sequence.
 \label{proposition: rev}
 \end{lemma}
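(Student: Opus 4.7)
The plan is essentially to observe that the desired conclusion follows by chaining the two preceding propositions. Given a refined sequence $\{(X_i, g_i(t)), -1 \leq t \leq 1\}$, I would first apply Proposition~\ref{proposition: re} to upgrade it to an E-refined sequence: that is, I assume the uniform constant $H$ exists so that energy concentration holds at every spacetime point $(x,t) \in X_i \times [-\tfrac12, 0]$ with $|Rm|_{g_i(t)}(x) > H$. Once this E-refined structure is in hand, I would feed the sequence into Proposition~\ref{proposition: eev} to further upgrade it to an EV-refined sequence, producing the uniform volume-ratio constant $K$ on $X_i \times [-\tfrac14, 0]$. The conclusion follows without any additional argument.

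Structurally, then, the proposal is a one-line deduction: apply Proposition~\ref{proposition: re}, then apply Proposition~\ref{proposition: eev}, and the composition sends any refined sequence to an EV-refined sequence. Nothing new needs to be proven at this stage; the combinatorial bookkeeping is trivial because the time intervals on which the two enhancements are guaranteed nest correctly ($[-\tfrac14, 0] \subset [-\tfrac12, 0]$), and the constants $H$, $K$ depend only on the sequence.

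The conceptual work has already been done in the two preceding propositions. The main obstacle would have been in Proposition~\ref{proposition: re}, where a priori energy concentration had to be recovered from a contradiction-and-point-picking argument that reduced to the EV case via Proposition~\ref{proposition: eev} and then invoked Corollary~\ref{corollary: evconcentration} at the blowup base point. Since both steps are already established above, the final lemma is just their formal concatenation, so I would simply write: \emph{Combining Proposition~\ref{proposition: re} and Proposition~\ref{proposition: eev}, every refined sequence is first an E-refined sequence and then an EV-refined sequence.}
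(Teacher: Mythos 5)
Your proposal is exactly the paper's proof: the lemma is stated immediately after the line ``Combining Proposition~\ref{proposition: re} and Proposition~\ref{proposition: eev}, we have,'' and no further argument is given. Your one-line concatenation (refined $\Rightarrow$ E-refined by Proposition~\ref{proposition: re}, then E-refined $\Rightarrow$ EV-refined by Proposition~\ref{proposition: eev}) matches it precisely.
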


 Since every refined sequence is an EV-refined sequence, we can
 use directly all the Lemmas of previous section. For convenience,
 we list the important ones among them.

 \begin{theorem}[\textbf{Backward Pseudolocality for a Refined Sequence}]
 If a refined sequence $\{ (X_i, x_i, g_i(t)), -1 \leq t \leq 1\}$
 satisfies $\displaystyle        \sup_{B_{g_i(0)}(x_i, r)}
 |Rm|_{g_i(0)} \leq r^{-2}$ for some constant $r \in (0,1]$,    then
 there exists a small positive constant $\delta$ (depend on this
 sequence) such that
 \begin{align*}
     |Rm|_{g_i(t)}(x) \leq  \delta^{-2}r^{-2}, \quad \textrm{whenever}  \;
     d_{g_i(t)}(x_i,x) \leq
     \delta r, \; -\delta^2 r^2 \leq t \leq 0
 \end{align*}
 for large $i$.
  \label{theorem: prermcontrol}
 \end{theorem}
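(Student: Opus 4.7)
The plan is essentially to observe that this is an immediate corollary: the statement is formally identical to Lemma~\ref{lemma: prermcontrolover} (backward pseudolocality for an EV-refined sequence), with the only change being that the hypothesis ``EV-refined'' has been relaxed to ``refined.'' Since Lemma~\ref{proposition: rev} asserts that every refined sequence is in fact an EV-refined sequence, one simply applies Lemma~\ref{lemma: prermcontrolover} to the given sequence $\{(X_i, x_i, g_i(t)), -1 \leq t \leq 1\}$ and reads off the conclusion.

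More concretely, I would first invoke Lemma~\ref{proposition: rev} to upgrade the given refined sequence to an EV-refined sequence; in particular, it now satisfies both the energy concentration property (with constant $\epsilon$) and the local volume ratio upper bound (with some constant $K$) on the central part of the spacetime. I would then verify that the hypothesis $\sup_{B_{g_i(0)}(x_i, r)} |Rm|_{g_i(0)} \leq r^{-2}$ is precisely the input required for Lemma~\ref{lemma: prermcontrolover}, which produces a constant $\delta > 0$ (depending on the sequence, through the EV-refined data) such that
\begin{align*}
|Rm|_{g_i(t)}(x) \leq \delta^{-2} r^{-2}
\end{align*}
whenever $d_{g_i(t)}(x_i, x) \leq \delta r$ and $-\delta^2 r^2 \leq t \leq 0$, for all large $i$. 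This is exactly the conclusion of the theorem.

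There is essentially no obstacle, since all the analytic work has been absorbed into the proof of Lemma~\ref{lemma: prermcontrolover} (which used contradiction together with Corollary~\ref{corollary: cadded}: an added point in the limit of $(X_i, x_i, g_i(t_i))$ must also be an added point in the limit of $(X_i, x_i, g_i(0))$, which is impossible under a uniform $|Rm|_{g_i(0)}$-bound near $x_i$) and into the proof of Lemma~\ref{proposition: rev} (which reduced the general case to the EV-refined one via a careful selection of blowup scales in space-time). Hence the proof here reduces to a one-line citation of these two results, and I do not anticipate any additional technical difficulty.
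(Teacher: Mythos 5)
Your proposal is correct and is exactly the route the paper takes: it lists Theorem~\ref{theorem: prermcontrol} as one of several immediate consequences of Lemma~\ref{proposition: rev} (every refined sequence is an EV-refined sequence) combined with the already-proved Lemma~\ref{lemma: prermcontrolover}. There is nothing to add.
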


 \begin{theorem}
 [\textbf{Weak Compactness for Time Slices in a Refined Sequence}]
  If  $\{ (X_i, x_i, g_i(t)), -1 \leq t \leq 1
 \}$ is a refined sequence, then we have
 \begin{align*}
    (X_i, x_i, g_i(0)) \stackrel{C^{\infty}}{\to} (X, x, g)
 \end{align*}
 where $(X, g)$ is a $\kappa$-noncollapsed, Ricci-flat ALE orbifold
 with at most $N_0$ singular points.
  \label{theorem: wcpt}
 \end{theorem}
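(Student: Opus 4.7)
The plan is to deduce this theorem essentially for free from the machinery already developed for EV-refined sequences. The punch line is that Lemma~\ref{lemma: wcptev} already delivers precisely the desired conclusion---smooth Cheeger--Gromov convergence to a $\kappa$-noncollapsed Ricci-flat ALE orbifold with at most $N_0$ singular points---for every EV-refined sequence. So the whole task reduces to promoting the given refined sequence to an EV-refined one.

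The first step is therefore to invoke Proposition~\ref{proposition: rev}, which asserts that every refined sequence is automatically an EV-refined sequence; this takes care of both the energy concentration constant $H$ and the volume ratio upper bound $K$ on $X_i\times[-\frac14,0]$ in a uniform fashion. The second step is to apply Lemma~\ref{lemma: wcptev} to the resulting EV-refined sequence, which gives
\begin{align*}
   (X_i, x_i, g_i(0)) \sconv (X, x, g),
\end{align*}
where $(X,g)$ is a $\kappa$-noncollapsed Ricci-flat ALE orbifold with at most $N_0$ singularities. Unpacking, that lemma combines: $C^{1,\gamma}$-weak compactness (Lemma~\ref{lemma: wwcpt}), the backward pseudolocality estimate (Lemma~\ref{lemma: prermcontrolover}) plus Shi's derivative estimates to upgrade the convergence from $C^{1,\gamma}$ to $C^{\infty}$ away from the finitely many singular points, and the splitting/line argument that rules out reducible singularities and hence promotes \emph{multifold} to \emph{orbifold}.

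The real work has all been absorbed into the earlier propositions, so I do not expect a new obstacle here. The only substantive point to double-check is that the constants $H$ and $K$ produced by Proposition~\ref{proposition: rev} are uniform along the sequence and do not depend on a choice of subsequence; this is the content of the contradiction/blowup arguments in Proposition~\ref{proposition: re} and Proposition~\ref{proposition: eev}, where one uses Corollary~\ref{corollary: evconcentration} and Corollary~\ref{corollary: vupperover} to close the loop. Once this is noted, the theorem becomes a direct repackaging of Proposition~\ref{proposition: rev} with Lemma~\ref{lemma: wcptev}, and the proof is essentially one line.
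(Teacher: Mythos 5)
Your proposal matches the paper's approach exactly: the paper also obtains Theorem~\ref{theorem: wcpt} by combining Lemma~\ref{proposition: rev} (every refined sequence is an EV-refined sequence) with Lemma~\ref{lemma: wcptev}, stating explicitly that since every refined sequence is EV-refined, all the EV-refined lemmas apply directly. Your unpacking of how Lemma~\ref{lemma: wcptev} itself uses $C^{1,\gamma}$-compactness, backward pseudolocality plus Shi's estimates, and the splitting argument for irreducibility is accurate and consistent with the paper.
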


 \begin{theorem}[\textbf{Energy Concentration in a Refined Sequence}]
 If $\{(X_i, x_i, g_i(t)), -1 \leq t \leq 1\}$ is a refined
 sequence,  $Q_i=|Rm|_{g_i(0)}(x_i) \geq 1$, then we have
 \begin{align*}
      \int_{B_{g_i(0)}(x_i, Q_i^{-\frac12})} |Rm|_{g_i(0)}^{\frac{m}{2}}
      d\mu_{g_i(0)} > \epsilon
 \end{align*}
 for large $i$.
 \label{theorem: econ}
 \end{theorem}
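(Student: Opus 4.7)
The plan is to derive this theorem as a direct corollary of two previously established results, exactly paralleling how Theorem~\ref{theorem: prermcontrol} (backward pseudolocality) and Theorem~\ref{theorem: wcpt} (weak compactness) are obtained from their EV-refined counterparts. The strategy is: first invoke Lemma~\ref{proposition: rev}, which asserts that every refined sequence is automatically an EV-refined sequence, and then apply Corollary~\ref{corollary: evconcentration}, which is the energy concentration statement in the EV-refined setting under exactly the same hypothesis $Q_i = |Rm|_{g_i(0)}(x_i) \geq 1$.

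Concretely, I would write: given the refined sequence $\{(X_i, x_i, g_i(t)), -1 \leq t \leq 1\}$ with $Q_i = |Rm|_{g_i(0)}(x_i) \geq 1$, Lemma~\ref{proposition: rev} promotes it to an EV-refined sequence without modification of the base points or time parameter. Then Corollary~\ref{corollary: evconcentration} applies verbatim and yields
\begin{align*}
   \int_{B_{g_i(0)}(x_i, Q_i^{-1/2})} |Rm|_{g_i(0)}^{m/2}\, d\mu_{g_i(0)} > \epsilon
\end{align*}
for all large $i$, which is the desired conclusion.

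Since the statement is obtained by literally concatenating two named results, there is essentially no obstacle inside this proof itself. The real work was already carried out upstream: the nontrivial content is (i) Corollary~\ref{corollary: evconcentration}, whose proof relied on the weak compactness Lemma~\ref{lemma: wcptev} plus the Ricci-flat gap Lemma~\ref{lemma: econ} via a blowup argument, and (ii) Proposition~\ref{proposition: re} together with Proposition~\ref{proposition: eev}, which together establish that refined implies EV-refined through a two-stage point-picking argument (first upgrading refined to E-refined by rescaling at a point where energy concentration fails, then upgrading E-refined to EV-refined by rescaling at a point violating the volume ratio bound). Thus the only point to emphasize in writing up this final theorem is that the parameter $\epsilon$ here is the universal constant fixed as $\min\{\epsilon_a, \epsilon_b\}$ before Definition~\ref{definition: refined}, which is consistent with the constant appearing in Corollary~\ref{corollary: evconcentration}, so no new dependencies are introduced.
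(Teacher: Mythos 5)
Your proposal is correct and matches the paper exactly: the paper states Theorem~\ref{theorem: econ} without a separate proof, as one of several results obtained ``for convenience'' by combining Lemma~\ref{proposition: rev} (every refined sequence is EV-refined) with the corresponding EV-refined result, here Corollary~\ref{corollary: evconcentration}. Your identification of the universal constant $\epsilon = \min\{\epsilon_a, \epsilon_b\}$ and of where the real work resides (Propositions~\ref{proposition: re}, \ref{proposition: eev} and Corollary~\ref{corollary: evconcentration}) is also consistent with the paper.
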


  \begin{theorem}[\textbf{Improved Volume Ratio Upper Bound for an Refined Sequence}]
 If  $\{ (X_i, x_i, g_i(t)), -1 \leq t \leq 1
 \}$ is a refined sequence, $r$ is any fixed positive number,
 then we have
 \begin{align*}
    \limsup_{i \to \infty} \frac{\Vol_{g_i(0)}(B_{g_i(0)}(x_i,
    r))}{r^m} \leq \omega(m).
 \end{align*}
 \label{theorem: vupper}
 \end{theorem}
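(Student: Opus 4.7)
The plan is very short: this theorem is the direct transcription of Corollary~\ref{corollary: vupperover} from the EV-refined setting to the refined setting, and the translation is supplied by Lemma~\ref{proposition: rev} (every refined sequence is an EV-refined sequence). So the entire proof is a one-line reduction.

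More precisely, I would proceed as follows. Given a refined sequence $\{(X_i, x_i, g_i(t)),-1 \le t \le 1\}$, apply Lemma~\ref{proposition: rev} to conclude that it is already an EV-refined sequence; in particular there exist constants $H$ (energy-concentration scale) and $K$ (volume-ratio upper bound) making it fit the EV-refined definition. Then I invoke Corollary~\ref{corollary: vupperover} verbatim at the fixed radius $r$ and base point $x_i$, which yields
\begin{align*}
   \limsup_{i \to \infty} \frac{\Vol_{g_i(0)}(B_{g_i(0)}(x_i,r))}{r^m} \leq \omega(m),
\end{align*}
as desired.

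There is essentially no obstacle: all the real work has already been done. The content of the improved upper bound $\omega(m)$ (as opposed to the a priori $K$) comes from the smooth convergence of $(X_i,x_i,g_i(0))$ to a $\kappa$-noncollapsed Ricci-flat ALE orbifold (Lemma~\ref{lemma: wcptev}/Theorem~\ref{theorem: wcpt}) together with the Bishop-type volume comparison theorem for Ricci-flat orbifolds. Neither of those steps needs to be repeated here; the novelty of the present theorem is only that its hypothesis has been weakened from ``EV-refined'' to ``refined,'' and Lemma~\ref{proposition: rev} removes exactly that gap.
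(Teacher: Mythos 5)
Your proof is correct and matches the paper exactly: the paper itself states just before this block of theorems that, because every refined sequence is an EV-refined sequence (Lemma~\ref{proposition: rev}), the EV-refined results apply directly, and Theorem~\ref{theorem: vupper} is simply Corollary~\ref{corollary: vupperover} restated under the weaker hypothesis.
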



\subsection{Neck Structure and Isoperimetric Constant}
 The weak compactness theorem allows us to study the neck structure
 of a refined sequence.  The local orbifold group of a singularity
 in the limit space is the same as the infinity orbifold group of a new limit
 space properly blown up from the original sequence.  The part between these
 two ends are called neck and it is almost a flat cone.  This result
 is the same as the neck theorems proved in~\cite{Ban90}
 and~\cite{AC}.  However, we have to prove it through different ways
 since we lack the estimates in~\cite{Ban90} and~\cite{AC}.

  \textit{For the simplicity of notation,
  we assume $g_i(0)$ as the default metric on manifold $X_i$ and we'll
  not mention this default metric when no confusion will be caused.}

\begin{theorem}[\textbf{Coincidence of Neck Ends}]
 Suppose $\{(X_i, x_i, g_i(t)), -1 \leq t \leq 1\}$ is a refined
  sequence, $(X, x, g)$ is the limit space of $(X_i, x_i, g_i(0))$.
 Suppose $\displaystyle p=\lim_{i \to \infty} p_i$ is  a singular point with local orbifold group $\Gamma$.  Choose $\eta>0$ very small such that the following properties hold.
  \begin{itemize}
  \item $p$ is the unique singular point in $B(p, \eta)$.
  \item $\partial B(p, r)$ is diffeomorphic to $S^{m-1}/ \Gamma$ for
     every  $r \in (0, \eta)$.
  \item $\int_{B(p, \eta)}|Rm|^{\frac{m}{2}} d\mu <\frac{\epsilon}{4}$.
  \end{itemize}
  $r_i$ is the radius satisfying
  $\int_{B(p_i, \eta) \backslash B(p_i, r_i)} |Rm|^{\frac{m}{2}}d\mu=\frac{\epsilon}{2}$.
  Then $r_i \to 0$. Let $\tilde{g}_i(t)= r_i^{-2}g_i(r_i^2 t)$, we have the convergence
  \begin{align*}
   (X_i, p_i, \tilde{g}_i(0)) \sconv (\tilde{X}, \tilde{p}, \tilde{g}),
  \end{align*}
  where $\tilde{X}$ is an ALE orbifold  whose orbifold group at infinity is
  $\tilde{\Gamma}$.

  According to this choice, $\Gamma$ and $\tilde{\Gamma}$ are isomorphic to each
  other.
 \label{theorem: trivialneck}
\end{theorem}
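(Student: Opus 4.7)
The plan is to introduce an intermediate ``neck'' scale $s_i$ between $r_i$ and $\eta$, show via Bando's gap lemma that the resulting blowup limit is a flat cone $\R^m/\Gamma_0$, and then identify $\Gamma_0$ with both $\Gamma$ (by matching the outer end of the neck to the tangent cone of $X$ at $p$) and $\tilde\Gamma$ (by matching the inner end to the asymptotic cone of $\tilde X$).

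First, $r_i\to 0$: by smooth convergence of $(X_i,g_i(0))\to(X,g)$ on compact subsets away from $p$, for any fixed $\delta\in(0,\eta)$ one has $\int_{B(p_i,\eta)\setminus B(p_i,\delta)}|Rm|^{m/2}d\mu \to \int_{B(p,\eta)\setminus B(p,\delta)}|Rm|^{m/2}d\mu \le \epsilon/4$, so to preserve $\epsilon/2$ of annular energy the inner radius $r_i$ must eventually fall below $\delta$. The rescaled sequence $\tilde g_i(t)=r_i^{-2}g_i(r_i^2t)$ is again refined by Proposition~\ref{proposition: blowup}, and Theorem~\ref{theorem: wcpt} produces the claimed Ricci-flat ALE orbifold $(\tilde X,\tilde p,\tilde g)$ with asymptotic group $\tilde\Gamma\subset SO(m)$. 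Now set $s_i=\sqrt{r_i\eta}$, so $s_i\to 0$ and $s_i/r_i\to\infty$; the sequence $\bar g_i(t)=s_i^{-2}g_i(s_i^2t)$ is also refined and Theorem~\ref{theorem: wcpt} yields a limit $(\bar X,\bar p,\bar g)$. Since $p$ is the only singularity of $X$ in $B(p,\eta)$, smooth convergence gives a uniform curvature bound on $(X_i,g_i(0))\setminus B(p_i,\delta)$ for every fixed $\delta>0$; after rescaling by $s_i^{-2}$ with $\delta/s_i\to\infty$, no point other than $\bar p$ can support blowup, so $\bar p$ is the unique singularity of $\bar X$. Moreover, for any $A>1$, the annulus $B_{\bar g_i}(\bar p,A)\setminus B_{\bar g_i}(\bar p,A^{-1})$ corresponds in the $g_i$ scale to $B(p_i,As_i)\setminus B(p_i,s_i/A)\subset B(p_i,\eta)\setminus B(p_i,r_i)$ for large $i$, hence carries at most $\epsilon/2$ of energy; sending $A\to\infty$ and invoking the ALE structure of $\bar X$ shows its total energy is at most $\epsilon/2<\epsilon_a$. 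Bando's gap lemma (Lemma~\ref{lemma: gap}) then forces $\bar X=\R^m/\Gamma_0$ for some finite $\Gamma_0\subset SO(m)$.

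It remains to show $\Gamma_0\cong\Gamma$ and $\Gamma_0\cong\tilde\Gamma$. Since $p$ is a $C^\infty$-orbifold point of $X$ with group $\Gamma$, the tangent cone of $(X,g)$ at $p$ is $\R^m/\Gamma$; a diagonal argument combining $(X_i,g_i(0))\to(X,g)$ with the fact that $(X,p,s^{-2}g)\to\R^m/\Gamma$ as $s\to 0$ shows that $(X_i,p_i,s_i^{-2}g_i)=(X_i,\bar p,\bar g_i)$ converges to $\R^m/\Gamma$, giving $\Gamma_0\cong\Gamma$. Similarly, since $\tilde X$ is ALE with group $\tilde\Gamma$ at infinity, the blowdown $(\tilde X,\tilde p,R^{-2}\tilde g)$ tends to $\R^m/\tilde\Gamma$ as $R\to\infty$; because $\bar g_i=(r_i/s_i)^2\tilde g_i$ with $r_i/s_i\to 0$, the sequence $\bar g_i$ is exactly a blowdown of $\tilde g_i$, and a parallel diagonal argument identifies $\bar X$ with the asymptotic cone of $\tilde X$, giving $\Gamma_0\cong\tilde\Gamma$. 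Combining the two identifications yields $\Gamma\cong\tilde\Gamma$. The principal obstacle is the two diagonal-sequence arguments: in each case one side of the match uses the fixed limit $(\bar X,\bar g)$ at scale $1$ while the other involves a second limit taken at a scale degenerating as $i\to\infty$, so one must select the auxiliary rates carefully in relation to $s_i/r_i$ and justify the matching via $C^{1,\gamma}$-convergence on appropriately chosen annuli rather than via a single compactness application.
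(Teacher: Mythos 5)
Your setup (showing $r_i\to0$, passing to the intermediate scale $s_i=\sqrt{r_i\eta}$, checking the annular energy is $\le\epsilon/2$, and invoking Bando's gap lemma to get a flat cone $\R^m/\Gamma_0$) is correct and agrees with the spirit of the paper's argument. The gap is in the last two identifications, and you in fact flag it yourself; the problem is that it is not a technicality one can patch by ``choosing the auxiliary rates carefully,'' because the scale $s_i=\sqrt{r_i\eta}$ is already fixed by the problem and cannot be adjusted.

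The diagonal argument you invoke to conclude $(X_i,p_i,s_i^{-2}g_i)\to\R^m/\Gamma$ would proceed by: for each $j$, pick a fixed small $s^{(j)}$ so that $(X,p,(s^{(j)})^{-2}g)$ is $1/j$-close to $\R^m/\Gamma$, then pick $i_j$ so that $(X_{i_j},p_{i_j},(s^{(j)})^{-2}g_{i_j})$ is $1/j$-close to $(X,p,(s^{(j)})^{-2}g)$. This only produces \emph{some} scale sequence $\sigma_i\to0$ along a subsequence for which the rescaled sequence converges to the tangent cone, and that $\sigma_i$ is controlled by the (unknown) convergence rate of $(X_i,g_i(0))\to(X,g)$ near $p$. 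It need not agree with, or even be comparable to, $s_i=\sqrt{r_i\eta}$. The identification with $\tilde\Gamma$ has exactly the dual difficulty: in the $\tilde g_i$-scale the annuli used to read off $\Gamma_0$ live near radius $s_i/r_i\to\infty$, which escapes every compact set on which $(X_i,\tilde g_i(0))\to(\tilde X,\tilde g)$ is known to be close; you would need $s_i/r_i$ to diverge slowly relative to that convergence rate, which again is not in your control. Both constraints pull in opposite directions, and without a priori rate control there is no reason a single choice of $s_i$ satisfies both — note that if the conclusion $\Gamma\cong\tilde\Gamma$ were false, then by construction the intermediate neck would have to contain a scale where the sphere topology transitions, and the limit at a blind scale $s_i$ could be $\R^m/\Gamma_0$ with $\Gamma_0$ equal to neither group, so the claim you are trying to prove is circularly what would make the diagonal argument work.

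The paper avoids this entirely by running a contradiction argument with an \emph{informed} choice of scale. Assuming $\Gamma\not\cong\tilde\Gamma$, it sets $\rho_i=\sup\{r \ge Ar_i : \partial B_{g_i(0)}(p_i,r)\not\cong S^{m-1}/\Gamma\}$, the largest transition radius. One checks $r_i/\rho_i\to0$ and $\rho_i/\eta\to0$, blows up by $\rho_i^{-2}$, shows the total energy of the limit is $\le\epsilon/2<\epsilon_a$ (the same energy estimate you prove), and applies the gap lemma to get a flat cone $\hat X$. Because $2\rho_i>\rho_i$, the sphere $\partial B_{g_i(0)}(p_i,2\rho_i)$ is already $\cong S^{m-1}/\Gamma$ by definition of the sup, so the cone link is $S^{m-1}/\Gamma$; but in a flat cone all geodesic spheres are diffeomorphic to the link, so $\partial B_{g_i(0)}(p_i,a\rho_i)\cong S^{m-1}/\Gamma$ also for $a\in[\frac12,1]$, contradicting that $\rho_i$ is the supremum of radii where the diffeomorphism fails. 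The crucial feature of this choice is that the flat-cone rigidity directly collides with the definition of $\rho_i$, so no limit-interchange is needed. You should replace your two diagonal identifications with the paper's transition-scale contradiction, or equivalently prove directly that the sphere topology is constant across the neck region using the flat-cone rigidity at each scale.
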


 \begin{proof}
    Suppose $\Gamma$ and $\tilde{\Gamma}$ are not isomorphic.

    Choose $A$ to be a big constant such that $\partial B(\tilde{p}, A)$
    is diffeomorphic to $S^{m-1} / \tilde{\Gamma}$. So we know $\partial B_{\tilde{g}_i(0)}(p_i, A)$
    is diffeomorphic to $S^{m-1} / \tilde{\Gamma}$ for large $i$.
    In other words, $\partial B_{g_i(0)}(p_i, Ar_i)$ is
    diffeomorphic to $S^{m-1} / \tilde{\Gamma}$.
    However, the convergence before blowup implies that
    \begin{align*}
     \partial B_{g_i(0)}(p_i, A^{-1}\eta) \sim S^{m-1} / \Gamma,
    \end{align*}
    where we use $\sim$ to denote  diffeomorphism.  As $Ar_i \ll A^{-1}\eta$ and $\Gamma \neq \tilde{\Gamma}$, we can
    define
    \begin{align*}
     \rho_i \triangleq \sup \{r | r \geq Ar_i, \; \partial B_{g_i(0)}(p_i, r) \; \textrm{is not diffeomorphic to } S^3 /
     \Gamma\}.
    \end{align*}
    Clearly,  $\frac{Ar_i}{\rho_i} \to 0$ and $\frac{\rho_i}{A^{-1}\eta} \to 0$. Let $\hat{g}_i(t) = \rho_i^{-2}g_i(\rho_i^2
    t)$, then we have convergence
    \begin{align*}
       (X_i, p_i, \hat{g}_i(0))  \sconv (\hat{X}, \hat{p}, \hat{g}).
    \end{align*}
    Fix every large constant $\mathcal{R}$,  we have $\mathcal{R} \rho_i <\eta$,
    $\mathcal{R}^{-1} \rho_i> r_i$ for large $i$.
    It follows that
     \begin{align*}
         \int_{B_{\hat{g}_i(0)}(p_i, \mathcal{R}) \backslash B_{\hat{g}_i(0)}(p_i,\mathcal{R}^{-1})}
           |Rm|_{\hat{g}_i(0)}^{\frac{m}{2}} d\mu_{\hat{g}_i(0)}
         & \leq
         \int_{B_{g_i(0)}(p_i,\eta) \backslash  B_{g_i(0)}(p_i,
           r_i)} |Rm|_{g_i(0)}^{\frac{m}{2}} d\mu_{g_i(0)} \leq \frac{\epsilon}{2}< \epsilon.
     \end{align*}
     Energy concentration property for refined sequence implies that
     curvature is uniformly bounded on
     $B_{\hat{g}_i(0)}(p_i, \frac12 \mathcal{R}) \backslash B_{\hat{g}_i(0)}(p_i,
     2\mathcal{R}^{-1})$.
     So $B(\hat{p}, \frac12 \mathcal{R}) \backslash B(\hat{p}, 2\mathcal{R}^{-1})$
     contains no singularity. As $\mathcal{R}$ can be chosen
     arbitrarily big, we conclude that $\hat{p}$ is the unique
     singularity in $\hat{X}$. Moreover, we have energy control
     $\displaystyle \int_{\hat{X}} |Rm|^{\frac{m}{2}} d\mu < \epsilon.$
     Therefore, gap lemma (Lemma~\ref{lemma: gap}) implies that $\hat{X}$ is a
     flat cone.  Note that $\partial B(\hat{p}, 2) \sim \partial B_{\hat{g}_i(0)}(p_i, 2\rho_i)$ for large $i$.
     By the definition of $\rho_i$, we know $\partial B(\hat{p}, 2)$ is
     diffeomorphic to $S^{m-1} / \Gamma$.  However, for large $i$,
     we also have
     \begin{align*}
     \partial B_{g_i(0)}(p_i, a \rho_i) \sim \partial B_{\hat{g}_i(0)}(p_i, a) \sim \partial B(\hat{p}, a)
     \sim S^{m-1} / \Gamma
     \end{align*}
     for every $a \in [\frac12, 1]$. This means
     \begin{align*}
       \sup \{r | r \geq Ar_i, \; \partial B_{g_i(0)}(p_i, r) \; \textrm{is not diffeomorphic to } S^3 /
     \Gamma\} \leq \frac12 \rho_i.
     \end{align*}
     This contradicts to the definition of $\rho_i$!
    \end{proof}

 \begin{lemma}[\textbf{$C^0$-Structure of Necks}]
    Same condition as in Theorem~\ref{theorem: trivialneck}.

   There is a constant $\mathcal{R}$ depending on this sequence such that
    $B(p_i, \mathcal{R}^{-1} \eta) \backslash B(p_i, \mathcal{R} r_i)$
    is $10^{-m}$-close to $C_{\mathcal{R}^{-1}\eta, \mathcal{R}r_i}(S^{m-1}/\Gamma)$ in
    $C^0$-topology,
    where $C_{\mathcal{R}^{-1}\eta, \mathcal{R}r_i}(S^{m-1}/\Gamma)$ is the corresponding
    annulus in the flat cone over $S^{m-1} / \Gamma$.
    In other words, there exists a diffeomorphism
    \begin{align*}
       \Psi: C_{\mathcal{R}^{-1}\eta, \mathcal{R}r_i}(S^{m-1}/\Gamma)
       \mapsto B(p_i, \mathcal{R}^{-1} \eta) \backslash B(p_i, \mathcal{R}r_i)
    \end{align*}
    such that
    \begin{align}
       \snorm{ \Psi^*(g_i(0)) - g_{\E}}{g_{\E}} (x) < 10^{-m} |x|^2
    \label{eqn: c0neck}
    \end{align}
    for every $x \in C_{\mathcal{R}^{-1}\eta, \mathcal{R}r_i}(S^{m-1}/\Gamma)$.

    In particular, the following properties hold when $i$ large.
 \begin{itemize}
 \item  Every neck has isoperimetric constant control
 \begin{align}
    \mathbf{I}(B(p_i, \mathcal{R}^{-1} \eta) \backslash B(p_i, \mathcal{R} r_i))
\frac12 \mathbf{I}(\R^m).
 \label{eqn: necksob}
 \end{align}
 \item  For every domain
 $\Omega \subset B(p_i, \mathcal{R}^{-1}\eta) \backslash B(p_i, \mathcal{R}r_i)$,  we have
 \begin{align}
    \frac{\Area(\partial \Omega \cap
     \{\overline{B(p_i, \mathcal{R}^{-1}\eta)} \backslash \overline{B(p_i, \mathcal{R}r_i)}\})}
     {\Area(\partial \Omega \cap \partial B(p_i,
     \mathcal{R}r_i))} > \frac12.
 \label{eqn: neckar}
 \end{align}
 \item For every domain
  $\Omega \subset B(p_i, 2\mathcal{R}^{-1} \eta) \backslash B(p_i,
  \mathcal{R}^{-1}\eta)$ satisfying $\Vol(\Omega) < \frac12  c(m)\mathcal{R}^{-m}\eta^m$, we have
 \begin{align}
    \frac{\Area(\partial \Omega \cap
     \{B(p_i, 2\mathcal{R}^{-1} \eta) \backslash \overline{B(p_i, \mathcal{R}^{-1}\eta)}\})}
     {\Area(\partial \Omega \cap \partial B(p_i, \mathcal{R}^{-1}\eta))} > \frac12
     c(m),
 \label{eqn: neckaro}
 \end{align}
 where $c(m)$ is the constant defined in Lemma~\ref{lemma: esob}.
 \end{itemize}
 \label{lemma: c0neck}
 \end{lemma}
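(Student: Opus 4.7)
The plan is to establish the $C^0$-closeness (\ref{eqn: c0neck}) by a blowup-contradiction argument combining the Gap Lemma (Lemma~\ref{lemma: gap}) with Theorem~\ref{theorem: trivialneck}, and then to derive the isoperimetric and area-ratio estimates (\ref{eqn: necksob})--(\ref{eqn: neckaro}) as direct geometric consequences of closeness to the flat cone.

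To prove (\ref{eqn: c0neck}), I argue by contradiction. Suppose it fails; then one can find $\mathcal{R}_k \to \infty$, indices $i_k$, and points $q_k \in B(p_{i_k}, \mathcal{R}_k^{-1}\eta) \setminus B(p_{i_k}, \mathcal{R}_k r_{i_k})$ at which no diffeomorphism $\Psi$ realises the $10^{-m}$-closeness at scale $s_k = d(p_{i_k}, q_k)$. Blow up by $s_k^{-2}$: set $\hat{g}_k = s_k^{-2} g_{i_k}(0)$, so $d_{\hat{g}_k}(p_{i_k}, q_k) = 1$. By Proposition~\ref{proposition: blowup}, $\{(X_{i_k}, p_{i_k}, \hat{g}_k)\}$ is a new refined sequence, and Theorem~\ref{theorem: wcpt} yields a $\kappa$-noncollapsed Ricci-flat ALE orbifold limit $(\hat{X}, \hat{p}, \hat{g})$ with $C^{\infty}$-convergence away from added points. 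Since $r_{i_k}/s_k \to 0$ and $\eta/s_k \to \infty$, the rescaled neck exhausts $\hat{X} \setminus \{\hat{p}\}$.

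The crucial quantitative input is that the original neck carries energy exactly $\epsilon/2 < \epsilon_a$; by scale-invariance of energy this gives $\int_{\hat{X}} |Rm|^{m/2} d\mu \leq \epsilon/2$. Any added point of $\hat{X}$ distinct from $\hat{p}$ would absorb at least $\epsilon$ units of energy by Theorem~\ref{theorem: econ} combined with Lemma~\ref{lemma: sitsmooth}, so $\hat{p}$ is in fact the only singular point of $\hat{X}$. Lemma~\ref{lemma: gap} then forces $\hat{X}$ to be a flat cone over $S^{m-1}/\Gamma_0$ for some finite $\Gamma_0 \subset SO(m)$. Matching the orbifold group at infinity via Theorem~\ref{theorem: trivialneck} (applied on the inner side to the $r_i^{-2}$-blowup, whose infinity cone group is $\Gamma$) identifies $\Gamma_0 = \Gamma$, so $\hat{X}$ is isometric to the flat cone over $S^{m-1}/\Gamma$. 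The smooth convergence at $\hat{q} = \lim q_k$, which sits at unit distance from $\hat{p}$ and hence well inside the smooth locus, then furnishes a diffeomorphism realising the $10^{-m}$-closeness of $\hat{g}_k$ to the Euclidean cone metric near $\hat{q}_k$, contradicting the failure at $q_k$.

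Once (\ref{eqn: c0neck}) holds, the conclusions (\ref{eqn: necksob})--(\ref{eqn: neckaro}) follow by transferring flat-cone estimates through $\Psi$. The cone $C(S^{m-1}/\Gamma)$ is a locally isometric quotient of $\R^m$, hence inherits the Euclidean isoperimetric constant $\mathbf{I}(\R^m)$; the $10^{-m}$ $C^0$-distortion perturbs areas and volumes by a factor close to $1$, which is comfortably absorbed into the factor $\frac{1}{2}$ in (\ref{eqn: necksob}). The area-ratio bounds (\ref{eqn: neckar}) and (\ref{eqn: neckaro}) reduce to elementary Euclidean computations in a long, thin cone annulus, showing that at most a fixed fraction of the boundary area of any subdomain can lie on either concentric spherical cap; the volume threshold in (\ref{eqn: neckaro}) places us in the Euclidean regime governed by Lemma~\ref{lemma: esob}. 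The main obstacle is the third paragraph above: the whole argument rests on the sharp accounting $\epsilon/2 < \epsilon_a$ together with correct identification of the cone group $\Gamma_0$ as $\Gamma$, and both require careful compatibility between the three scales $r_{i_k}$, $s_k$, and $\eta$, which is precisely what Theorem~\ref{theorem: trivialneck} was designed to supply.
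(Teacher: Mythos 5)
Your core idea---blowup at intermediate scales, total energy in the neck below the gap threshold, Gap Lemma forcing a flat cone, matching of the cone group $\Gamma$---is the same machinery that underlies the paper's reference to ``the proof of Theorem~\ref{theorem: trivialneck}.'' So the scale-by-scale convergence you derive is essentially what the paper takes as input. There is, however, a genuine gap in how you go from that input to the statement of the Lemma.

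The Lemma requires a \emph{single global} diffeomorphism $\Psi$ from the full cone annulus $C_{\mathcal{R}^{-1}\eta,\,\mathcal{R}r_i}(S^{m-1}/\Gamma)$ onto $B(p_i, \mathcal{R}^{-1}\eta)\setminus B(p_i,\mathcal{R}r_i)$, and this annulus spans an unbounded number of dyadic scales as $i\to\infty$. Your contradiction is set up scale by scale: you pick a single point $q_k$ and scale $s_k=d(p_{i_k},q_k)$ where ``no $\Psi$ realises the $10^{-m}$-closeness at scale $s_k$,'' and the blowup at $s_k$ furnishes a diffeomorphism ``near $\hat q_k$.'' But that is not the negation of the existence of a global $\Psi$: it is entirely possible for each dyadic sub-annulus to admit an excellent local chart to the flat cone while no single global $\Psi$ exists, because the local charts at adjacent scales may differ on their overlap by a rotation in $N(\Gamma)/\Gamma$, and the concatenation of these discrepancies along the neck has to be controlled. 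The paper handles precisely this by the gluing step, citing the Anderson--Cheeger neck theorem (Theorem~1.18 of~\cite{AC}), where the local diffeomorphisms $\Psi_r$ are patched with cutoffs and the $C^{\lfloor\xi^{-1}\rfloor}$-closeness at each scale (for $\xi\ll 10^{-m}$) is used to absorb the interpolation error. Your proposal neither states nor performs this gluing, and your contradiction statement implicitly assumes it for free; that is the missing step.

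Two smaller points. The energy bound you quote for the blowup limit should come from $\int_{B(p,\eta)}|Rm|^{m/2}\,d\mu<\epsilon/4$ (the hypothesis in Theorem~\ref{theorem: trivialneck}), since for fixed $\mathcal{S}$ and large $k$ the ball $B_{g_{i_k}(0)}(p_{i_k},\mathcal{S}s_k)\subset B_{g_{i_k}(0)}(p_{i_k},\eta)$; your $\epsilon/2$ (the annulus energy) also lands below $\epsilon_a$, so the conclusion survives, but the accounting is imprecise. And the remark that the flat cone $C(S^{m-1}/\Gamma)$ ``inherits the Euclidean isoperimetric constant $\mathbf{I}(\R^m)$'' is not literally true when $|\Gamma|>1$---only domains small enough to lift isometrically to $\R^m$ enjoy the Euclidean constant, and the displayed inequality (\ref{eqn: necksob}) has to be argued with that caveat in mind; the area-ratio bounds (\ref{eqn: neckar}), (\ref{eqn: neckaro}) should be reduced to Lemma~\ref{lemma: esob}/Lemma~\ref{lemma: ecsob} rather than to elementary Euclidean reasoning, since it is the covering trick in Lemma~\ref{lemma: ecsob} that handles the nontrivial $\Gamma$.
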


 \begin{proof}
  We only need to prove equation (\ref{eqn: c0neck}).
  Actually, by the proof of Theorem~\ref{theorem: trivialneck}, we
  see that for every small number $\xi$, there is a constant
  $\mathcal{R}$ such that $B(p_i, 2r) \backslash B(p_i, r)$
  is $\xi$-close to the standard neck $C_{2r, r}(S^{m-1} / \Gamma)$ in the $C^{\lfloor \xi^{-1} \rfloor}$-topology.
  In other words, there is a
  diffeomorphism
  \begin{align*}
  \Psi_r: C_{2r, r}(S^{m-1} / \Gamma) \to B(p_i, 2r) \backslash B(p_i, r)
  \end{align*}
  such that
  \begin{align*}
    \snorm{\nabla^k(\Psi_r^* g_i(0) - g_{\E})}{g_{\E}} < r^{2+k}
    \xi, \quad \forall \; 0 \leq k \leq  \lfloor \xi^{-1} \rfloor
  \end{align*}
  whenever $ \mathcal{R}r_i < r < \mathcal{R}^{-1}\eta$.
 Fix $\xi << 10^{-m}$, and fix $\mathcal{R}$ according to this
 $\xi$.  By gluing all $\Psi_r$, we obtain a diffeomorphism $\Psi$
 \begin{align*}
       \Psi: C_{\mathcal{R}^{-1}\eta, \mathcal{R}r_i}(S^{m-1}/\Gamma)
       \mapsto B(p_i, \mathcal{R}^{-1} \eta) \backslash B(p_i, \mathcal{R}r_i)
 \end{align*}
 and $\Psi$ satisfies equation (\ref{eqn: c0neck}). This gluing is
 the same as the gluing Cheeger and Anderson did in their neck theorem (c.f. Theorem1.18 of~\cite{AC}).
 \end{proof}

 After we understand the neck structure, we're going to control the isoperimetric constant
  of refined sequence locally.  We first set up some lemmas to study how the isoperimetric
  constants
  change among the
 ``infinity" and ``infinitesimal" orbifold singularities.

 \begin{lemma}[\textbf{Isoperimetric Constants' Control among Bubbles of Different Depth}]
  Suppose $\{(X_i, x_i, g_i(t)), -1 \leq t \leq 1\}$ is a refined
  sequence, $(X, x, g)$ is the limit space of $(X_i, x_i, g_i(0))$.
  $\rho$ is a positive number such that $\partial B(x, \rho)$
  is free of singularities.

  If $\displaystyle \lim_{i \to \infty} \mathbf{I}(B_{g_i(0)}(x_i, \rho))=0$, then
  $B(x, \rho)$ must contain singularities.
  Suppose $\{p_{\alpha}\}_{\alpha=1}^N$ are all singularities in
  $B(x, \rho)$, $\eta$ is so small such that $\overline{B(p_{\alpha}, \eta)}$
  are disjoint to each other and $\displaystyle \cup_{\alpha=1}^N B(p_{\alpha}, \eta) \subset B(x,
  \rho)$. Then we can choose $p_{\alpha, i}$ as the points with largest
  curvature norm in $B_{g_i(0)}(p_{\alpha, i}, \eta)$ and $p_{\alpha, i} \to p_{\alpha}$.
  Accordingly, choose $r_{\alpha, i}$ and $\mathcal{R}$ such that Lemma~\ref{lemma: c0neck}
  holds on these scales around every singularity.

  Then we have
  \begin{align*}
    \lim_{i \to \infty} \mathbf{I}(B_{g_i(0)}(p_{\alpha, i},
    \mathcal{R} r_{\alpha, i}))=0,
  \end{align*}
  for some $\alpha \in \{1, \cdots, N\}$.
 \label{lemma: samebubblesob}
 \end{lemma}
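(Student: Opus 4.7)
The plan is to argue by contradiction: suppose there exists $\iota>0$ such that, after passing to a subsequence, $\mathbf{I}(B_{g_i(0)}(p_{\alpha,i},\mathcal{R}r_{\alpha,i}))\geq\iota$ for every $\alpha=1,\ldots,N$ and every $i$. I will produce a uniform positive lower bound on $\mathbf{I}(B_{g_i(0)}(x_i,\rho))$, contradicting the hypothesis $\mathbf{I}(B_{g_i(0)}(x_i,\rho))\to 0$. The first assertion, that $B(x,\rho)$ must contain a singularity, is handled along the way: if it did not, Theorem~\ref{theorem: wcpt} would yield smooth convergence $(B_{g_i(0)}(x_i,\rho), x_i, g_i(0))\sconv (B(x,\rho),x,g)$ up to the boundary (recall $\partial B(x,\rho)$ is singularity-free by hypothesis), the limit would be a compact smooth Riemannian domain with a positive isoperimetric constant, and smooth convergence would keep the pre-limit constants bounded below, a contradiction.

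Next I would partition $B_{g_i(0)}(x_i,\rho)$ into three types of regions: the bubble cores $D_{\alpha,i}=B_{g_i(0)}(p_{\alpha,i},\mathcal{R}r_{\alpha,i})$, the neck annuli $N_{\alpha,i}=B_{g_i(0)}(p_{\alpha,i},\mathcal{R}^{-1}\eta)\backslash \overline{D_{\alpha,i}}$, and the outer bulk $U_i=B_{g_i(0)}(x_i,\rho)\backslash \bigcup_\alpha B_{g_i(0)}(p_{\alpha,i},\mathcal{R}^{-1}\eta)$. Each piece has its isoperimetric constant bounded below uniformly in $i$ by a common constant $\iota_0>0$: on $D_{\alpha,i}$ by the contradiction hypothesis, on $N_{\alpha,i}$ directly by inequality (\ref{eqn: necksob}) of Lemma~\ref{lemma: c0neck}, and on $U_i$ via the $C^{\infty}$-convergence supplied by Theorem~\ref{theorem: wcpt} to the compact smooth region $B(x,\rho)\backslash \bigcup_\alpha B(p_\alpha,\mathcal{R}^{-1}\eta)$, whose isoperimetric constant is positive.

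Given any competitor $\Omega\subset B_{g_i(0)}(x_i,\rho)$ with rectifiable boundary in the interior, decompose $\Omega$ as the disjoint union of $\Omega^U=\Omega\cap U_i$, $\Omega^{N,\alpha}=\Omega\cap N_{\alpha,i}$, $\Omega^{D,\alpha}=\Omega\cap D_{\alpha,i}$. One of these sub-pieces, call it $\Omega^\star$, carries at least $(2N+1)^{-1}\Vol(\Omega)$ of the total volume, and the local isoperimetric inequality in its ambient piece gives $\Area(\partial\Omega^\star)\geq \iota_0\,\Vol(\Omega^\star)^{(m-1)/m}$. The boundary $\partial\Omega^\star$ splits into a portion lying in the interior of the ambient piece (which is contained in $\partial\Omega$) plus a portion lying on one of the interface spheres $\partial D_{\alpha,i}$ or $\partial B_{g_i(0)}(p_{\alpha,i},\mathcal{R}^{-1}\eta)$. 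Inequalities (\ref{eqn: neckar}) and (\ref{eqn: neckaro}), applied to the adjacent sub-piece of $\Omega$, bound each interface contribution by a fixed multiple of the area of $\partial\Omega$ on that adjacent sub-piece. Absorbing all such interface terms yields $\Area(\partial\Omega)\geq c\,\iota_0\,\Vol(\Omega)^{(m-1)/m}$ for a fixed constant $c=c(N,m,K,\kappa)$, the desired uniform lower bound.

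I expect the main obstacle to be the bookkeeping in this absorption step. The estimate (\ref{eqn: neckaro}) requires the adjacent outer-region intersection $\Omega\cap(B(p_{\alpha,i},2\mathcal{R}^{-1}\eta)\backslash B(p_{\alpha,i},\mathcal{R}^{-1}\eta))$ to have small volume, which may fail when $\Omega^\star$ is the bulk piece $\Omega^U$ itself; this should be handled by replacing $\Omega$ by its complement on the appropriate sub-annulus, or by shrinking $\eta$ \emph{a priori} so that $\sum_\alpha\Vol(B(p_\alpha,2\mathcal{R}^{-1}\eta))$ is a small fraction of $\Vol(B(x,\rho))$. A secondary technicality is that $\Omega^\star$ may split across several necks or bubbles, but the isoperimetric inequality is additive over components and the summation goes through cleanly.
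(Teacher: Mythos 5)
Your proposal runs in the same direction as the paper: argue by contraposition, decompose the ball into bubble cores, neck annuli, and a smooth bulk, control each piece's isoperimetric constant, and absorb the interface spheres via Lemma~\ref{lemma: c0neck}. The flagged obstacle, however, is a genuine gap that your proposed fixes do not close. Inequality (\ref{eqn: neckaro}) is a \emph{conditional} estimate: it only holds when the competing domain occupies a small \emph{fraction} of the outer annulus $B(p_{\alpha,i}, 2\mathcal{R}^{-1}\eta)\backslash B(p_{\alpha,i},\mathcal{R}^{-1}\eta)$, namely $\Vol(\Omega\cap(\cdots)) < \tfrac12 c(m)\mathcal{R}^{-m}\eta^m$. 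Shrinking $\eta$ a priori does not help, since the threshold on the right also shrinks like $\eta^m$ while the annular volume available to $\Omega$ shrinks at the same rate; the requirement is a fixed fractional occupancy bound, insensitive to $\eta$. Replacing $\Omega$ by its complement on a sub-annulus is also not legitimate here, because the isoperimetric constant of a ball (a manifold with boundary) has no ``$\min$ with the complement'' in its definition, and swapping part of $\Omega$ for its complement on an annulus introduces a new interface whose area is not controlled by $\Area(\partial\Omega)$.

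The paper sidesteps this by not attempting a pointwise inequality valid for every competitor. Instead it takes a sequence of competitors $\Omega_i$ with $F(\Omega_i)=\Area(\partial\Omega_i)/\Vol(\Omega_i)^{(m-1)/m}\to 0$, and \emph{first} proves $\Vol(\Omega_i)\to 0$ (Claim~1 in Step~2 of the proof) using the upper volume ratio bound together with $\mathbf{I}(X)>0$ for the Ricci--flat ALE limit. Once $\Vol(\Omega_i)\to 0$, the smallness hypothesis of (\ref{eqn: neckaro}) is automatic for large $i$, and the absorption of the interface terms proceeds exactly as you envision. A secondary, organizational difference: the paper descends in two stages --- first showing $\mathbf{I}(B(p_{\alpha,i},\mathcal{R}^{-1}\eta))\to 0$ for some $\alpha$ (Step~2, two--piece decomposition into bulk and neighborhoods), and then passing from $\mathcal{R}^{-1}\eta$ to $\mathcal{R}r_{\alpha,i}$ (Step~3, again two pieces: core and neck) --- which keeps each stage to a single interface sphere and avoids the more delicate three--piece bookkeeping you set up. That part of your sketch would go through, but only after the missing volume--smallness claim is supplied.
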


\begin{proof}
 We'll finish the proof in three steps.

 \textit{Step1. \quad  $B(x, \rho)$ must contain at least one singularity.}

  Otherwise, as $\partial B(x, \rho)$ doesn't contain any
  singularity, $B(x_i, \rho)$ converges to $B(x, \rho)$
  smoothly. So we have
  \begin{align}
   0=\lim_{i \to \infty} \mathbf{I}(B(x_i, \rho)) = \mathbf{I}(B(x, \rho))
    \geq
    \mathbf{I} (X).
   \label{eqn: izero}
  \end{align}
  However, as $X$ is a $\kappa$-noncollapsed, Ricci-flat orbifold, we have
  (c.f. Theorem A of~\cite{NA})
  \begin{align*}
  \mathbf{I}(X)>C(\kappa)>0.
  \end{align*}
  This contradicts to inequality (\ref{eqn: izero})! \\

  So we can assume $B(x, \rho)$ contains singularities.
 Suppose that $\displaystyle \{p_{\alpha} = \lim_{i \to \infty} p_{\alpha, i} \}_{\alpha=1}^N$
 are all the singularities.\\

 \textit{Step2. \quad
   $\displaystyle\lim_{i \to \infty} \mathbf{I}(B(p_{\alpha, i}, \mathcal{R}^{-1}\eta))=0$
    for some $\alpha \in \{1, \cdots, N\}$.}

 Because $\displaystyle \lim_{i \to \infty} \mathbf{I}(B(x_i, \rho))=0$,
 we can choose a sequence of domains $\Omega_i \subset B(x_i, \rho)$ such that
 \begin{align*}
      F(\Omega_i) \triangleq \frac{\Area(\partial
      \Omega_i)}{\Vol(\Omega_i)^{\frac{m-1}{m}}} \longrightarrow 0.
 \end{align*}

 \setcounter{claim}{0}
 \begin{clm}
  $\displaystyle \lim_{i \to \infty} \Vol(\Omega_i)=0$.
 \end{clm}
 Otherwise, by passing to subsequence, we can assume
  $\displaystyle \lim_{i \to \infty} \Vol(\Omega_i) = \lambda>0$.
  As volume ratio has upper bound, we can choose $\delta$ very small
  such that $B(p_{\alpha, i}, \delta)$ are disjoint and
  \begin{align*}
      \sum_{\alpha=1}^N \Vol(B(p_{\alpha, i}, \delta)) < \frac{\lambda}{2}.
  \end{align*}
  So for large $i$, we know
  \begin{align*}
  \Vol( \Omega_i \backslash \cup_{\alpha=1}^N B(p_{\alpha, i}, \delta))
\frac12\Vol(\Omega_i), \quad
  \Vol( \Omega_i \backslash \cup_{\alpha=1}^N B(p_{\alpha, i}, \delta))
\frac12 \lambda.
  \end{align*}
  It follows that
  \begin{align*}
    \frac{\Area (\partial\{ \Omega_i \backslash  \cup_{\alpha=1}^N B(p_{\alpha, i}, \delta)\})}
    {\Vol(\Omega_i \backslash  \cup_{\alpha=1}^N B(p_{\alpha, i}, \delta))^{\frac{m-1}{m}}}
    &\leq
    \frac{\Area(\partial \Omega_i) + \sum_{\alpha=1}^N \Area(\partial B(p_{\alpha, i}, \delta))}
    {\Vol(\Omega_i \backslash  \cup_{\alpha=1}^N B(p_{\alpha, i}, \delta))^{\frac{m-1}{m}}}\\
    &< 2^{\frac{m-1}{m}} \frac{\Area(\partial \Omega_i)}{\Vol(\Omega_i)^{\frac{m-1}{m}}}
        + 2^{\frac{m-1}{m}} \frac{2Nm \omega(m)
        \delta^{m-1}}{\lambda^{\frac{m-1}{m}}}.
  \end{align*}
  This yields that
   \begin{align*}
   \mathbf{I}(X) \leq \mathbf{I}(X \backslash \cup_{\alpha=1}^{N} B(p_{\alpha}, \delta))
     \leq \liminf_{i \to \infty}
    \frac{\Area (\partial\{ \Omega_i \backslash  \cup_{\alpha=1}^N B(p_{\alpha, i}, \delta)\})}
    {\Vol(\Omega_i \backslash  \cup_{\alpha=1}^N B(p_{\alpha, i}, \delta))^{\frac{m-1}{m}}}
   \leq C(m) \lambda^{\frac{1-m}{m}} \delta^{m-1}
  \end{align*}
  for every small $\delta$.  It implies that $\mathbf{I}(X)=0$ which is
  impossible for a $\kappa$-noncollapsed, Ricci-flat orbifold $X$!
  This contradiction
  shows that we must have $\displaystyle \lim_{i \to \infty}
  \Vol(\Omega_i)=0$ and we finish the proof of the Claim.\\

  Then we continue to show that
  $\displaystyle \lim_{i \to \infty} \mathbf{I}(B(p_{\alpha, i}, \mathcal{R}^{-1}\eta))=0$
  for some $\alpha \in \{1, \cdots, N\}$.
  Suppose this statement is wrong, we'll have a small constant $\iota$ such that
  \begin{align}
    \mathbf{I}(X)>\iota; \quad
    \mathbf{I}(B(p_{\alpha, i}, \mathcal{R}^{-1}\eta) >
    \iota, \quad \forall \; i, \alpha.
  \label{eqn: inoutiso}
  \end{align}
 As $F(\Omega_i) \to 0$, this forces that $\Omega_i$ intersects both
 $B(p_{\alpha,i}, \mathcal{R}^{-1}\eta)$ and
  $X \backslash B(p_{\alpha,i}, \mathcal{R}^{-1}\eta)$ for some
  $\alpha$.
 For simplicity of notation, let $\delta=\mathcal{R}^{-1}\eta$ and define
 \begin{align*}
  &A_{\alpha, i}^{in}= \Area(\partial \Omega_i \cap B(p_{\alpha, i}, \delta)),
   &A_{i}^{out}= \Area(\partial \Omega_i \backslash \cup_{\alpha=1}^N B(p_{\alpha, i}, \delta));\\
  &V_{\alpha, i}^{in}= \Vol(\Omega_i \cap B(p_{\alpha, i}, \delta)),
  &{V_i^{out}}= \Vol(\Omega_i \backslash \cup_{\alpha=1}^N \backslash B(p_{\alpha, i}, \delta));\\
  &A_{\alpha, i}^{center}= \Area(\partial B(p_{\alpha, i}, \delta) \cap \Omega_i).
 \end{align*}

 According to our choice of $\eta$ and $\mathcal{R}$,
 every $B(p_{\alpha, i}, 2\delta) \backslash B(p_{\alpha, i}, \delta)$ is very close to
 an annulus in the flat cone over $S^{m-1} / \Gamma$.
 So Lemma~\ref{lemma: c0neck} applies and we have
 \begin{align}
    A_{i}^{out} &\geq \sum_{\alpha=1}^N \Area(\partial \Omega_i \cap \{B(p_{\alpha, i}, 2\delta) \backslash B(p_{\alpha, i}, \delta)\})
     \notag\\
      &> \frac12 c(m)  \sum_{\alpha=1}^N \Area(\partial \Omega_i \cap \partial B(p_{\alpha, i}, \delta))
     \notag\\
      &=\frac12 c(m) \sum_{\alpha=1}^N A_{\alpha,i}^{center}
     \label{eqn: incontrolout}
 \end{align}
 The condition $\displaystyle \lim_{i \to \infty} F(\Omega_i)=0$
 implies that
 \begin{align*}
   \lim_{i \to \infty} \frac{A_{i}^{out}+ \sum_{\alpha=1}^N A_{\alpha, i}^{in}}{({V_i^{out}}+ \sum_{\alpha=1}^N
   V_{\alpha, i}^{in})^{\frac{m-1}{m}}}=0.
 \end{align*}
 This implies that
 \begin{align}
   \lim_{i \to \infty} \frac{A_{i}^{out}+ \sum_{\alpha=1}^N A_{\alpha, i}^{in}}
   {({V_i^{out}})^{\frac{m-1}{m}}+ \sum_{\alpha=1}^N  (V_{\alpha, i}^{in})^{\frac{m-1}{m}}}=0.
 \label{eqn: sobzero}
 \end{align}

 It follows from inequality (\ref{eqn: inoutiso}) that
 \begin{align*}
   \frac{A_{\alpha, i}^{in} + A_{\alpha, i}^{center}}{(V_{\alpha, i}^{in})^{\frac{m-1}{m}}}
   \geq \iota,  \quad \forall \alpha \in \{1, 2, \cdots, N\}; \quad
  \frac{A_{i}^{out} + \sum_{\alpha=1}^N A_{\alpha, i}^{center}}{(V_i^{out})^{\frac{m-1}{m}}}
  \geq \iota.
 \end{align*}
 This yields that
 \begin{align*}
    2\sum_{\alpha=1}^N A_{\alpha, i}^{center}
    \geq \iota ((V_i^{out})^{\frac{m-1}{m}} + \sum_{\alpha=1}^N
    (V_{\alpha, i}^{in})^{\frac{m-1}{m}}) - \sum_{\alpha=1}^N A_{\alpha, i}^{in} - A_{i}^{out}.
 \end{align*}
 Consequently, we have
 \begin{align*}
   \lim_{i \to \infty} \frac{\sum_{\alpha=1}^N A_{\alpha, i}^{center}}{(V_i^{out})^{\frac{m-1}{m}} + \sum_{\alpha=1}^N
    (V_{\alpha, i}^{in})^{\frac{m-1}{m}}}
    \geq \frac{\iota}{2}
   - \frac12 \cdot \lim_{i \to \infty} \frac{A_{i}^{out}+
   \sum_{\alpha=1}^N A_{\alpha, i}^{in}}{({V_i^{out}})^{\frac{m-1}{m}}+
   \sum_{\alpha=1}^N  (V_{\alpha, i}^{in})^{\frac{m-1}{m}}}=\frac{\iota}{2}.
 \end{align*}
 Combining this with inequality (\ref{eqn: incontrolout}), we have
 \begin{align*}
  &\quad \lim_{i \to \infty} \frac{A_{i}^{out}+ \sum_{\alpha=1}^N A_{\alpha, i}^{in}}{({V_i^{out}})^{\frac{m-1}{m}}+
   \sum_{\alpha=1}^N  (V_{\alpha, i}^{in})^{\frac{m-1}{m}}}\\
  &\geq
  \lim_{i \to \infty} \frac{A_{i}^{out}}{({V_i^{out}})^{\frac{m-1}{m}}+
  \sum_{\alpha=1}^N  (V_{\alpha, i}^{in})^{\frac{m-1}{m}}}\\
  &\geq \frac12 \cdot c(m) \cdot
  \lim_{i \to \infty} \frac{\sum_{\alpha=1}^N A_{\alpha, i}^{center}}{(V_i^{out})^{\frac{m-1}{m}} + \sum_{\alpha=1}^N
    (V_{\alpha, i}^{in})^{\frac{m-1}{m}}}\\
  &\geq \frac{\iota c(m)}{4}>0.
 \end{align*}
 This contradicts to equation (\ref{eqn: sobzero})! So our
 assumption is wrong, by passing to a subsequence if necessary,
 there must exist an $\alpha \in \{1, \cdots, N\}$ such that
 \begin{align*}
    \lim_{i \to \infty} \mathbf{I}(B(p_{\alpha, i}, \mathcal{R}^{-1}\eta))=0.
 \end{align*}
 This finishes the proof of step 2. \\

 \textit{Step3.  $\displaystyle \lim_{i \to \infty} \mathbf{I}(B(p_{\alpha, i}, \mathcal{R} r_{\alpha, i}))=0$.}

 For simplicity, we denote $p_{i}, r_i$ as $p_{\alpha, i}, r_{\alpha,
 i}$. The following argument is similar to the argument in step 2,
 and it is simpler.

    Suppose this statement is wrong, then $\mathbf{I}(B(p_i, \mathcal{R} r_i))>\lambda$ for
    some fixed positive constant $\lambda$.   As
    $\displaystyle \lim_{i \to \infty} \mathbf{I}(B(p_i,
    \mathcal{R}^{-1}\eta))=0$, we can choose domains $\Omega_i \subset B(p_i,
    \mathcal{R}^{-1}\eta)$ such that
   \begin{align*}
      F(\Omega_i) \triangleq \frac{\Area(\partial
      \Omega_i)}{\Vol(\Omega_i)^{\frac{m-1}{m}}} \longrightarrow 0.
   \end{align*}
 Note that $\Omega_i$ intersects both $\overline{B(p_i, \mathcal{R}r_i)}$
 and $\overline{B(p_i, \mathcal{R}^{-1}\eta) \backslash B(p_i,\mathcal{R}r_i)}$. Otherwise, we have
    \begin{align*}
      F(\Omega_i)=\frac{\Area(\partial \Omega_i)}{\Vol(\Omega_i)^{\frac{m-1}{m}}} \geq
      \min\{\mathbf{I}(B(p_i, \mathcal{R}^{-1}\eta) \backslash B(p_i, \mathcal{R}r_i)),
        \mathbf{I}(B(p_i, \mathcal{R} r_i))\}>\min\{\frac12 \mathbf{I}(\R^m), \lambda\}.
    \end{align*}
 This contradicts to our assumption $F(\Omega_i) \to 0$!  Therefore,
 we can define
    \begin{align*}
      &{V_i^{in}}= \Vol(\Omega_i \cap B(p_i, \mathcal{R}r_i)),   \quad
       &{V_i^{out}}= \Vol(\Omega_i \backslash B(p_i, \mathcal{R}r_i)); \\
      &{A_i^{in}}=\Area(\partial \Omega_i \cap B(p_i, \mathcal{R}r_i)), \quad
       &{A_i^{out}}=\Area(\partial \Omega_i \backslash B(p_i, \mathcal{R}r_i)); \\
      &{A_i^{center}}=\Area(\partial \Omega_i \cap \partial B(p_i, \mathcal{R}r_i)).
    \end{align*}
 Since $\displaystyle \lim_{i\to \infty}F(\Omega_i)=
  \lim_{i \to \infty} \frac{{A_i^{in}} + {A_i^{out}}}{({V_i^{in}} + {V_i^{out}})^{\frac{m-1}{m}}}=0$,
 we have
 \begin{align}
   \lim_{i \to \infty}
   \frac{{A_i^{in}} + {A_i^{out}}}{({V_i^{in}})^{\frac{m-1}{m}} +
   ({V_i^{out}})^{\frac{m-1}{m}}}=0.
 \label{eqn: avzero}
 \end{align}
 Since both $B(p_i, \mathcal{R}r_i)$ and $B(p_i, \mathcal{R}^{-1}\eta) \backslash B(p_i, \mathcal{R}r_i)$
 have bounded isoperimetric constants, we can find a positive constant
 $\iota$ such that
 \begin{align*}
  \mathbf{I}(B(p_i, \mathcal{R}r_i)) > \iota, \quad
  \mathbf{I}(B(p_i, \mathcal{R}^{-1}\eta) \backslash B(p_i,
  \mathcal{R}r_i)) > \iota.
 \end{align*}
 It follows that
 \begin{align*}
    \frac{{A_i^{in}}+{A_i^{center}}}{(V_i^{in})^{\frac{m-1}{m}}} > \iota, \quad
     \frac{{A_i^{out}}+{A_i^{center}}}{(V_i^{out})^{\frac{m-1}{m}}} > \iota.
 \end{align*}
 Consequently we have
 \begin{align*}
   A_i^{center} > \frac12\iota \{(V_i^{in})^{\frac{m-1}{m}} + (V_i^{out})^{\frac{m-1}{m}}\}
             -\frac12\{A_i^{in} + A_i^{out}\}.
 \end{align*}
 This yields that
 \begin{align*}
   \lim_{i \to \infty} \frac{A_i^{center}}{(V_i^{in})^{\frac{m-1}{m}} + (V_i^{out})^{\frac{m-1}{m}}}
    \geq \frac12 \iota
   -\frac12 \lim_{i \to \infty}
    \frac{A_i^{in} + A_i^{out}}{(V_i^{in})^{\frac{m-1}{m}} + (V_i^{out})^{\frac{m-1}{m}}}
   =\frac12 \iota.
 \end{align*}

 According to our choice of $\mathcal{R}$ and $\eta$, $r_i$,
 the neck part $B(p_i, \mathcal{R}^{-1}\eta \backslash \mathcal{R}r_i)$
 is very close to the annulus in the flat cone,
 Lemma~\ref{lemma: c0neck} applies and we have
 \begin{align*}
   A_i^{out}> \frac12 A_i^{center}.
 \end{align*}
 It follows that
 \begin{align*}
       \liminf_{i \to \infty}  \frac{A_i^{in} + A_i^{out}}{(V_i^{in})^{\frac{m-1}{m}} + (V_i^{out})^{\frac{m-1}{m}}}
       \geq
       \frac12 \lim_{i \to \infty} \frac{A_i^{center}}{(V_i^{in})^{\frac{m-1}{m}} + (V_i^{out})^{\frac{m-1}{m}}}
       \geq \frac14 \iota>0.
 \end{align*}
 This contradicts to inequality (\ref{eqn: avzero})! So we finish
 the proof of step 3.
 \end{proof}

 \begin{theorem}[\textbf{Isoperimetric Constants' Bounds}]
    Suppose $\{(X_i, g_i(t)), -1 \leq t \leq 1\}$ is a refined
  sequence. Then for every radius $r$, there is a small constant $c_r$ such that
  $\mathbf{I}(B_{g_i(0)}(x, r)) > c_r$ for every $i \in \N$ and $x \in X_i$.
 \label{theorem: refsob}
 \end{theorem}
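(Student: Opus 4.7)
The plan is to argue by contradiction and descend the bubble tree by iterating Lemma~\ref{lemma: samebubblesob}. Suppose the theorem fails: after passing to a subsequence there exist a radius $r_0>0$ and base points $x_i\in X_i$ with $\mathbf{I}(B_{g_i(0)}(x_i,r_0))\to 0$. By Theorem~\ref{theorem: wcpt}, a further subsequence yields $(X_i,x_i,g_i(0))\sconv (X^{(0)},x^{(0)},g^{(0)})$, a $\kappa$-noncollapsed Ricci-flat ALE orbifold with at most $N_0$ singular points. Since any test domain $\Omega_i\subset B(x_i,r_0)$ also lies in $B(x_i,\rho)$ for every $\rho\ge r_0$, the assumption is preserved under a small enlargement of $r_0$, and we may slightly adjust $r_0$ so that $\partial B(x^{(0)},r_0)$ avoids the finitely many singular points.

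Now apply Lemma~\ref{lemma: samebubblesob} to this ball. If $B(x^{(0)},r_0)$ contains no singularity, the first step of that lemma (invoking Theorem~A of~\cite{NA}, which supplies a uniform positive isoperimetric constant for $\kappa$-noncollapsed Ricci-flat orbifolds) already contradicts $\mathbf{I}(B(x_i,r_0))\to 0$. Otherwise the lemma produces a singular point $p_{\alpha_0}\in B(x^{(0)},r_0)$, lifts $p_{\alpha_0,i}\to p_{\alpha_0}$, and a neck scale $\mathcal{R}r_{\alpha_0,i}\to 0$ (coming from Theorem~\ref{theorem: trivialneck}) with $\mathbf{I}(B_{g_i(0)}(p_{\alpha_0,i},\mathcal{R}r_{\alpha_0,i}))\to 0$. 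Rescale by $h_i^{(1)}(t):=(\mathcal{R}r_{\alpha_0,i})^{-2}g_i((\mathcal{R}r_{\alpha_0,i})^2 t)$. Proposition~\ref{proposition: blowup} combined with Lemma~\ref{proposition: rev} shows that $\{(X_i,p_{\alpha_0,i},h_i^{(1)}(t))\}$ is again a refined sequence, and the isoperimetric constant of the unit ball about the new base point still tends to zero. Applying Theorem~\ref{theorem: wcpt} to extract a weak limit $X^{(1)}$ and then invoking Lemma~\ref{lemma: samebubblesob} at radius $\rho_1\approx 1$, we iterate to produce successive deeper limits $X^{(1)},X^{(2)},\ldots$ with nested blow-up scales.

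The iteration must terminate in finitely many steps. By the construction of $r_{\alpha_k,i}$ in Theorem~\ref{theorem: trivialneck}, the $k$th neck annulus $B(p_{\alpha_k,i},\eta_k)\setminus B(p_{\alpha_k,i},r_{\alpha_k,i})$, measured in the level-$k$ metric $h_i^{(k)}$, carries exactly $\epsilon/2$ of $|Rm|^{m/2}$-energy. Since each $r_{\alpha_k,i}$ is infinitesimal relative to the previous scale, one may choose the auxiliary parameters $\eta_k$ small enough so that, when pulled back to $g_i(0)$ for $i$ large, the neck annuli at distinct levels are pairwise disjoint subsets of $X_i$. Their combined $|Rm|^{m/2}$-mass is then at least $K\cdot\epsilon/2$ and at most the global bound $E$, forcing $K\le 2N_0$. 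At the terminal step $K$, the limit $X^{(K)}$ has no singularity in the unit ball about its base point, and the first part of Lemma~\ref{lemma: samebubblesob} supplies the contradiction. The main obstacle in carrying out this plan is precisely this termination step: verifying that the nested necks remain genuinely disjoint in the original metric $g_i(0)$, which rests on the telescoping nature of the blow-up scales in Theorem~\ref{theorem: trivialneck} together with the freedom to shrink each $\eta_k$ at its respective level.
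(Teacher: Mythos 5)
Your overall strategy is the same as the paper's: argue by contradiction, pass to the orbifold limit, iterate Lemma~\ref{lemma: samebubblesob} down the bubble tree, and bound the number of iterations by an energy count that allots at least $\epsilon/2$ to each level. You also correctly identify the termination step as the crux. However, that step, as you have written it, has a genuine gap.

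The disjointness of the level-$k$ neck annuli (pulled back to $g_i(0)$) does not follow from shrinking the $\eta_k$'s. The obstruction is not the outer radius $\eta_{k+1}$ of the next annulus but the \emph{offset} of its center. In the level-$(k+1)$ rescaled metric, the base point is $x^{(k+1)}=\lim p_{\alpha_k,i}$, and Lemma~\ref{lemma: samebubblesob} only hands you \emph{some} singular point $p^{(k+1)}$ of $X^{(k+1)}$, which a priori may sit at distance $\approx 1$ from $x^{(k+1)}$ (the paper notes that singularities of $X^{(k+1)}$ live in $\overline{B(x^{(k+1)},1)}$, and the boundary case can occur). Translating back to the level-$k$ metric, $p_{\alpha_{k+1},i}$ is then displaced from $p_{\alpha_k,i}$ by roughly $r_{\alpha_k,i}$, which is exactly the inner radius of the level-$k$ annulus. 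Hence the level-$(k+1)$ annulus need not be contained in the ``hole'' $B_{g_i^{(k)}}(p_{\alpha_k,i},r_{\alpha_k,i})$; it can straddle the inner boundary of the level-$k$ annulus, and shrinking $\eta_{k+1}$ only shrinks the new annulus about the \emph{wrong} center, so the overlap persists. Consequently the simple bound $K\cdot\epsilon/2\le E$ is not justified.

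The paper's proof avoids this by never asserting geometric disjointness; instead it tracks the energy at infinity $\mathcal{E}(X^{(k)})$ of the successive limit bubbles, proving the quantitative monotonicity $\mathcal{E}(X^{(k)})\le\mathcal{E}(X^{(k-1)})-\epsilon/2$. The offset problem is precisely what forces the case split there: if $\partial B_{g^{(k)}}(x^{(k)},1)$ carries a singularity then $x^{(k)}$ is itself singular, so $X^{(k)}$ has at least two singular points and the energy drop is $\epsilon$ rather than $\epsilon/2$; otherwise the selected singularity lies strictly inside and the $\epsilon/2$ drop holds directly. To repair your argument you would need to replace the disjoint-annuli claim by this (or an equivalent) accounting of $\mathcal{E}(X^{(k)})$, rather than trying to make the pullback annuli literally disjoint in $X_i$.
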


  \begin{proof}

   Suppose this statement is wrong, then there exists a radius
   $\rho'>0$ and points $x_i \in X_i$ such that
   $ \displaystyle
      \lim_{i \to \infty} \mathbf{I}(B(x_i, \rho'))=0.
   $
   Let $(X, x, g)$ be the limit space of $(X_i, x_i, g_i(0))$.
   It has finite singularities, so we can find $\rho = \rho' + \delta$ for
   a small constant $\delta$ such that $\partial B(x, \rho)$ is free
   of singular points.  Note that
   $\mathbf{I}(B(x_i, \rho)) \leq \mathbf{I}(B(x_i, \rho'))$, so we have
   \begin{align*}
      \lim_{i \to \infty} \mathbf{I}(B(x_i, \rho))=0.
   \end{align*}
   As $\partial B(x, \rho)$ is free of singularities, Lemma~\ref{lemma: samebubblesob}
   applies. So there exists a singular point $\displaystyle p$
   in $B(x, \rho)$. Furthermore, choose $p_i$, $r_i$, $\eta$ and
   $\mathcal{R}$ as in Lemma~\ref{lemma: samebubblesob}, we have
  \begin{align}
   \lim_{i \to \infty} \mathbf{I}(B(p_i, \mathcal{R}r_i))=0.
  \label{eqn: sob00}
  \end{align}

  Define $g_i^{(1)}(t)= r_i^{-2} g_i(r_i^2t), x_i^{(1)}=p_i$, we have weak compactness
  \begin{align*}
      (X_i, x_i^{(1)}, g_i^{(1)}(0)) \sconv (X^{(1)}, x^{(1)}, g^{(1)}).
  \end{align*}
  Recall that the energy of $X$ is defined to be
  \begin{align*}
   \mathcal{E}(X)= \lim_{\mathcal{S} \to \infty} \{\limsup_{i \to
     \infty} \int_{B_{g_i(0)}(x_i,
     \mathcal{S})}|Rm|^{\frac{m}{2}}d\mu\}.
  \end{align*}
  Similarly, we have
  \begin{align*}
     \mathcal{E}(X^{(1)})&= \lim_{\mathcal{S} \to \infty} \{\limsup_{i \to
     \infty} \int_{B_{g_i^{(1)}(0)}(x_i^{(1)},
     \mathcal{S})}|Rm|^{\frac{m}{2}}d\mu\}\\
      &= \lim_{\mathcal{S} \to \infty} \{\limsup_{i \to
     \infty} \int_{B_{g_i(0)}(p_i,
     \mathcal{S}r_i^2)}|Rm|^{\frac{m}{2}}d\mu\}.
  \end{align*}
  Clearly, we have
  \begin{align}
 \left\{
   \begin{array}{ll}
    &\mathcal{E}(X^{(1)}) \leq \mathcal{E}(X),\\
    &\displaystyle \lim_{i \to \infty} \mathbf{I}(B_{g_i^{(1)}(0)}(x_i^{(1)},
    \mathcal{R}))=0,
   \end{array}
 \right.
 \label{eqn: ksob0}
 \end{align}
 where the second equation follows directly from equation (\ref{eqn: sob00}).
 Note that $x^{(1)}$ must be a singularity if $X^{(1)}$ contains
 singularity.

  Note that every singularity of $X^{(1)}$ is contained in $\overline{B(x^{(1)},
  1)}$. Clearly, $\partial B(x^{(1)}, \mathcal{R})$ is free of
  singularities.  Combining this fact with the second equation of (\ref{eqn: ksob0}),
  Lemma~\ref{lemma: samebubblesob} applies.
  There must be a singular point $p^{(1)}$
  inside $B(x^{(1)}, \mathcal{R})$ satisfying
  \begin{align}
     \lim_{i \to \infty} \mathbf{I}(B_{g_i^{(1)}(0)}(p_i^{(1)}, \mathcal{R}^{(1)} r_i^{(1)})) =0.
 \label{eqn: sob01}
 \end{align}
 where $p_i^{(1)},  r_i^{(1)}, \eta^{(1)}, \mathcal{R}^{(1)}$ are  chosen as in Lemma~\ref{lemma: samebubblesob}.

 Then we define $g_i^{(2)}(t)= (r_i^{(1)})^{-2} g_i((r_i^{(1)})^2t), \; x_i^{(2)}=p_i^{(1)}$
  and have weak compactness
  \begin{align*}
      (X_i, x_i^{(2)}, g_i^{(2)}(0)) \sconv (X^{(2)}, x^{(2)}, g^{(2)}).
  \end{align*}

 If $X^{(1)}$ doesn't have any singularity on $\partial B_{g^{(1)}}(x^{(1)}, 1)$,
 then the choice of $r_i^{(1)}$ assures that
 \begin{align*}
   \mathcal{E}(X^{(2)}) \leq \mathcal{E}(X^{(1)}) -\frac{\epsilon}{2}.
 \end{align*}
 However,  if there exist a singularity on $\partial B_{g^{(1)}}(x^{(1)},
 1)$, then $x^{(1)}$ must be a singularity.  So $X^{(1)}$ must contain at least two
 separate singularities. It follows that
 \begin{align*}
    \mathcal{E}(X^{(2)}) \leq \mathcal{E}(X^{(1)}) -\epsilon.
 \end{align*}
 Therefore, on the bubble $X^{(2)}$, we have
 \begin{align*}
 \left\{
   \begin{array}{ll}
    &\mathcal{E}(X^{(2)}) \leq \mathcal{E}(X^{(1)})
    -\frac{\epsilon}{2},\\
    &\displaystyle \lim_{i \to \infty} I(B_{g_i^{(2)}(0)}(x_i^{(2)}, \mathcal{R}^{(1)}))=0.
   \end{array}
 \right.
 \end{align*}

  Inductively, for every $k$, we can define $x_i^{(k)}, p_i^{(k)}, \eta^{(k)}, \mathcal{R}^{(k)}$ and
  bubble $X^{(k)}$.  The same argument yields that
 \begin{align}
 \left\{
   \begin{array}{ll}
    &\mathcal{E}(X^{(k)}) \leq \mathcal{E}(X^{(k-1)})
    -\frac{\epsilon}{2} \leq \cdots  \leq \mathcal{E}(X) - \frac{(k-1)\epsilon}{2},\\
    &\displaystyle \lim_{i \to \infty} \mathbf{I}(B_{g_i^{(k)}(0)}(x_i^{(k)}, \mathcal{R}^{(k-1)}))=0.
   \end{array}
 \right.
 \label{eqn: ksob}
 \end{align}
 However, this two equations can not hold simultaneously for large $k$.
 Because for a large $k$,
 we must have $\mathcal{E}(X^{(k)})<\frac{\epsilon}{2}$. So $X^{(k)}$ is free of singularities.
 It follows that
  \begin{align*}
    \lim_{i \to \infty} \mathbf{I}(B_{g_i^{(k)}(0)}(x_i^{(k)}, \mathcal{R}^{(k-1)}))
     = \mathbf{I}(B_{g^{(k)}}(x^{(k)}, \mathcal{R}^{(k-1)})) \geq \mathbf{I}(X^{(k)})>0.
  \end{align*}
  This contradicts to the second equation  of (\ref{eqn: ksob})!
  \end{proof}

\section{Space of Ricci Flows}

 In this section, we always assume $\{(X^m, g(t)), -1 \leq t \leq 1\}$
to be a Ricci flow solution on a closed manifold $X^m$. Moreover, it
satisfies the following properties.
\begin{enumerate}
 \item $\D{}{t}g(t) = -Ric_{g(t)} + c_0 g(t)$ where $c_0 $ is a
 constant satisfying $0 \leq c_0 \leq c$.
 \item $\displaystyle \sup_{X \times [-1, 1]}|R|_{g(t)} \leq \sigma$.
 \item  $\displaystyle \frac{\Vol_{g(t)}(B_{g(t)}(x, r))}{r^m} \geq \kappa$
    for all $x \in X, t\in [-1, 1], r \in (0, 1]$.
 \item  $\int_X |Rm|_{g(t)}^{\frac{m}{2}} d\mu_{g(t)} \leq E$ for all $t \in [-1,
 1]$.
 \end{enumerate}
 We denote $\mathscr{M}(m, c, \sigma, \kappa, E)$ as the moduli
 space of all such Ricci flow solutions.

 \subsection{Basic Properties of the Moduli Spaces}

 \begin{lemma}[\textbf{Energy Concentration}]
  There is a constant $H=H(m, c, \sigma, \kappa, E)$ such that the
  following property holds.

  If $\{(X, g(t)), -1 \leq t \leq 1\} \in \mathscr{M}(m, c, \sigma, \kappa, E)$,
  under the metric $g(0)$, we have
   \begin{align*}
      \int_{B(x, |Rm|^{-\frac12}(x))} |Rm|^2   d\mu > \epsilon
   \end{align*}
   whenever $|Rm|(x)>H$.
 \label{lemma: econcenter}
 \end{lemma}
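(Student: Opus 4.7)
The plan is to prove this by contradiction, reducing it to Theorem~\ref{theorem: econ} (the energy concentration for refined sequences) via a rescaling argument. Assume the statement fails. Then for every $H_i \to \infty$, there exists a flow $\{(X_i, g_i(t))\}_{i} \in \mathscr{M}(m, c, \sigma, \kappa, E)$ and a point $x_i \in X_i$ with $Q_i \triangleq |Rm|_{g_i(0)}(x_i) \to \infty$ but
\begin{align*}
  \int_{B_{g_i(0)}(x_i, Q_i^{-1/2})} |Rm|_{g_i(0)}^{\frac{m}{2}}\, d\mu_{g_i(0)} \leq \epsilon.
\end{align*}

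Next, I would perform a parabolic rescaling: define $\tilde g_i(t) \triangleq Q_i\, g_i(Q_i^{-1}t)$ on the time interval $t\in[-1,1]$ (which lies inside $[-Q_i, Q_i]$ after rescaling). Under this rescaling, distances scale by $\sqrt{Q_i}$, so $B_{\tilde g_i(0)}(x_i,1)$ equals $B_{g_i(0)}(x_i, Q_i^{-1/2})$; the $L^{m/2}$-energy of curvature is scale-invariant; the scalar curvature bound becomes $\sigma/Q_i \to 0$; the constant in the normalization becomes $c_0/Q_i \to 0$; and $\kappa$-noncollapsing on scale $1$ translates to $\kappa$-noncollapsing on scale $\sqrt{Q_i}\to\infty$. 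Thus $\{(X_i, x_i, \tilde g_i(t)), -1\le t\le 1\}$ satisfies all four conditions of Definition~\ref{definition: refined}, so it is a refined sequence.

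At this point I would apply Theorem~\ref{theorem: econ} directly: since $|Rm|_{\tilde g_i(0)}(x_i) = Q_i^{-1}\cdot Q_i = 1 \geq 1$, energy concentration forces
\begin{align*}
   \int_{B_{\tilde g_i(0)}(x_i, 1)} |Rm|_{\tilde g_i(0)}^{\frac{m}{2}}\, d\mu_{\tilde g_i(0)} > \epsilon
\end{align*}
for all large $i$. Translating back via the scale-invariance of the $L^{m/2}$-energy, the left side equals $\int_{B_{g_i(0)}(x_i, Q_i^{-1/2})} |Rm|^{\frac{m}{2}}\, d\mu_{g_i(0)}$, which contradicts the assumed bound $\le \epsilon$. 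This contradiction yields the desired uniform constant $H = H(m, c, \sigma, \kappa, E)$.

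There is essentially no obstacle in this proof once the refined-sequence machinery is in place: the entire content is packaged inside Theorem~\ref{theorem: econ}, and the only thing to check is that the blown-up sequence is genuinely refined (i.e., the four bullets of Definition~\ref{definition: refined} are preserved or improved by the parabolic rescaling with factor $Q_i\to\infty$). The only subtle point worth verifying carefully is that the scalar curvature and the normalization constant $c_0$ both pick up a factor $Q_i^{-1}$ and hence tend to zero, which is what puts the sequence into the ``refined'' class rather than merely the moduli space $\mathscr{M}$.
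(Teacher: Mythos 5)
Your proof is correct and follows essentially the same strategy as the paper: argue by contradiction, parabolically rescale by $Q_i$ at the bad points, observe that the resulting sequence is refined, and invoke Theorem~\ref{theorem: econ}. The only difference is that you spell out explicitly why the four conditions of Definition~\ref{definition: refined} hold after rescaling, whereas the paper simply asserts that one ``can extract a refined sequence.''
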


 \begin{proof}
   Suppose not, then there is a sequence of Ricci flow solutions $\{(X_i^m, g_i(t)), -1 \leq t \leq 1\}$
 and scales $H_i \to \infty$ violating the result. In other words,
 there are points $x_i \in X_i$ such that
 \begin{align*}
  Q_i=|Rm|_{g_i(0)}(x_i)>H_i,  \quad \int_{B_{g_i(0)}(x_i,
  Q_i^{-\frac12})} |Rm|_{g_i(0)}^{\frac{m}{2}} d\mu_{g_i(0)} \leq \epsilon.
 \end{align*}
 Let $\tilde{g}_i(t) \triangleq Q_ig_i(Q_i^{-1}t)$, we can extract a
 refined sequence $\{(X_i, x_i, \tilde{g}_i(t)), -1 \leq t \leq 1\}$
 satisfying
 \begin{align*}
   |Rm|_{\tilde{g}_i(0)}(x_i)=1, \quad  \int_{B_{\tilde{g}_i(0)}(x_i,
   1)} |Rm|_{\tilde{g}_i(0)}^{\frac{m}{2}} d\mu_{\tilde{g}_i(0)} \leq \epsilon.
 \end{align*}
 This contradicts to Theorem~\ref{theorem: econ}!

 \end{proof}

\begin{lemma}[\textbf{Backward Pseudolocality}]
  There is a small constant $\delta=\delta(m, c,\sigma, \kappa, E)$ such that
 the following property holds.

  If
  $\{(X, g(t)), -1 \leq t \leq 1\} \in \mathscr{M}(m, c, \sigma, \kappa, E)$,
  $\rho \leq 1$, then
 \begin{align*}
  |Rm|_{g(t)}(y) < \delta^{-2} \rho^{-2}, \quad \forall \; y
  \in B_{g(t)}(x, \delta \rho),  t \in (-\delta^2 \rho^2, 0],
 \end{align*}
 whenever $|Rm|_{g(0)} \leq \rho^{-2}$ in $B_{g(0)}(x, \rho)$.
 \label{lemma: bpcenter}
 \end{lemma}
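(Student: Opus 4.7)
The plan is to argue by contradiction and reduce to the refined-sequence version already proved as Theorem~\ref{theorem: prermcontrol}. Suppose no uniform $\delta$ works: then there exist $\delta_i \downarrow 0$, flows $\{(X_i, g_i(t)),\, -1 \leq t \leq 1\}\in \mathscr{M}(m, c, \sigma, \kappa, E)$, radii $\rho_i \in (0,1]$, base points $x_i \in X_i$, and failure points $(y_i, t_i)$ with $t_i \in (-\delta_i^2 \rho_i^2, 0]$, $d_{g_i(t_i)}(x_i, y_i) \leq \delta_i\rho_i$, and $|Rm|_{g_i(t_i)}(y_i) \geq \delta_i^{-2}\rho_i^{-2}$, while $|Rm|_{g_i(0)} \leq \rho_i^{-2}$ on $B_{g_i(0)}(x_i, \rho_i)$. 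Since hypothesis and conclusion are both invariant under parabolic rescaling and $\rho_i \leq 1$ keeps us inside $\mathscr{M}(m,c,\sigma,\kappa,E)$, a first rescaling by $\rho_i^{-2}$ reduces to the case $\rho_i = 1$.

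Next I would perform a second rescaling designed to turn the sequence into an honest refined sequence while concentrating the failure into a fixed parabolic neighborhood of $(x_i, 0)$. Choose $\mu_i \to \infty$ with $\mu_i \delta_i^2 \to 0$, for concreteness $\mu_i = \delta_i^{-1}$, and set $h_i(t) := \mu_i g_i(\mu_i^{-1} t)$. Because $\sigma/\mu_i$, $c_0/\mu_i$ and the collapsing scale $1/\sqrt{\mu_i}$ all vanish while the $L^{m/2}$-energy is scale invariant, Definition~\ref{definition: refined} is satisfied and $\{(X_i, x_i, h_i(t)),\, -1 \leq t \leq 1\}$ is a refined sequence. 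The contradiction data rescales to: $|Rm|_{h_i(0)} \leq \mu_i^{-1} \leq 1$ on $B_{h_i(0)}(x_i, \sqrt{\mu_i})$, which contains $B_{h_i(0)}(x_i, 1)$ for large $i$, together with $t_i^{\ast} := \mu_i t_i \in (-\mu_i \delta_i^2, 0]$, $d_{h_i(t_i^{\ast})}(x_i, y_i) \leq \sqrt{\mu_i}\,\delta_i$, and $|Rm|_{h_i(t_i^{\ast})}(y_i) \geq \delta_i^{-2}/\mu_i$.

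Finally, Theorem~\ref{theorem: prermcontrol} applied to $\{(X_i, x_i, h_i(t))\}$ with $r = 1$ delivers a positive constant $\delta_\ast$ with $|Rm|_{h_i(t)}(x) \leq \delta_\ast^{-2}$ whenever $d_{h_i(t)}(x_i, x) \leq \delta_\ast$ and $-\delta_\ast^2 \leq t \leq 0$, for all large $i$. With $\mu_i = \delta_i^{-1}$ we get $\mu_i \delta_i^2 = \delta_i \to 0$ and $\sqrt{\mu_i}\,\delta_i = \sqrt{\delta_i} \to 0$, so for $i$ large the point $(y_i, t_i^{\ast})$ sits inside this parabolic neighborhood and must satisfy $|Rm|_{h_i(t_i^{\ast})}(y_i) \leq \delta_\ast^{-2}$. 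But simultaneously $|Rm|_{h_i(t_i^{\ast})}(y_i) \geq \delta_i^{-1} \to \infty$, the desired contradiction.

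The main subtlety lies in calibrating $\mu_i$: it must grow fast enough that the ambient scalar curvature, the normalization constant, and the non-collapsing scale all shrink to zero, so that $\{h_i\}$ is genuinely a refined sequence and Theorem~\ref{theorem: prermcontrol} engages; yet slowly enough relative to $\delta_i^{-2}$ that the translated failure point $(y_i, t_i^{\ast})$ remains inside the fixed parabolic $\delta_\ast$-neighborhood while the curvature at it still diverges. Any $\mu_i$ with $\mu_i \to \infty$ and $\mu_i \delta_i^2 \to 0$ strikes this required balance.
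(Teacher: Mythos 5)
Your proof is correct and is essentially the paper's argument: the paper performs the single parabolic rescaling by $\rho_i^{-2}\delta_i^{-1}$, which is exactly your two-step rescaling (first by $\rho_i^{-2}$ to normalize $\rho_i=1$, then by $\mu_i=\delta_i^{-1}$) composed; both produce the same refined sequence and invoke Theorem~\ref{theorem: prermcontrol} with $r=1$ for the contradiction. Your added remark that any $\mu_i\to\infty$ with $\mu_i\delta_i^2\to 0$ works is a correct and slightly more general observation, but the proof is not a different route.
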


\begin{proof}
  Suppose not, then there is a sequence of $\delta_i \to 0$ and a
  sequence of such Ricci flow solutions $\{(X_i, g_i(t)), -1 \leq t \leq 1\}$
  violating this result. So there exists some small geodesic balls
  $B_{g_i(0)}(x_i, \rho_i)$ and points $(y_i,t_i)$ satisfying the
  following properties.
  \begin{align*}
     &\rho_i < 1,\\
     &|Rm|_{g_i(0)}< \rho_i^{-2} \; \textit{in} \;
     B_{g_i(0)}(x_i, \rho_i),\\
     &|Rm|_{g_i(t_i)} > \delta_i^{-2}\rho_i^{-2}, \quad
      y_i \in B_{g_i(t_i)}(x_i, \delta_i\rho_i), \quad t_i \in (-\delta_i^2\rho_i^2,
      0].
  \end{align*}
  Let $\tilde{g}(t) \triangleq \rho_i^{-2} \delta_i^{-1}g_i(\rho_i^2 \delta_i
  t)$.  The spacetime
  $\{(X_i, \tilde{g}_i(t)), -\rho_i^{-2} \delta_i^{-1} \leq t \leq \rho_i^{-2} \delta_i^{-1}\}$
  satisfies
  \begin{align*}
     & |Rm|_{\tilde{g}_i(0)}(x) < \delta_i \; \textit{in} \;
      B_{\tilde{g}_i(0)}(x_i, \delta_i^{-\frac12}),\\
     &  |Rm|_{\tilde{g}_i(t_i)}(y_i) > \delta_i^{-1}, \; y_i \in
     B_{\tilde{g}_i(t_i)}(x_i, \delta_i^{\frac12}), \; t_i \in (-\delta_i, 0].
  \end{align*}
  In particular, $\{(X_i, x_i, \tilde{g}_i(t)), -1 \leq t \leq 1\}$ is a
  refined sequence satisfying
   \begin{align*}
     & |Rm|_{\tilde{g}_i(0)}(x) < 1 \; \textit{in} \;
      B_{\tilde{g}_i(0)}(x_i, 1),\\
     &  |Rm|_{\tilde{g}_i(t_i)}(y_i) > \delta_i^{-1}, \; y_i \in
     B_{\tilde{g}_i(t_i)}(x_i, \delta_i^{\frac12}), \; t_i \in (-\delta_i, 0].
  \end{align*}
  for large $i$.
  This contradicts to Theorem~\ref{theorem: prermcontrol}!
\end{proof}

 \begin{lemma}[\textbf{Volume Ratio Bounds}]
  There is a constant $\eta= \eta(m, c, \sigma, \kappa, E)$ such
  that the following property holds.

  If $\{(X, g(t)), -1 \leq t \leq 1\} \in \mathscr{M}(m, c, \sigma, \kappa, E)$,
  then
  \begin{align*}
    \frac{\Vol(B_{g(0)}(x, r))}{r^m} < 2 \omega(m), \quad \forall \;
    x \in X,
  \end{align*}
  whenever $0 < r \leq \eta$.
 \label{lemma: vuppercenter}
 \end{lemma}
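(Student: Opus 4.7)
The plan is to mimic the contradiction/blowup scheme used in the preceding two lemmas (Lemma~\ref{lemma: econcenter} and Lemma~\ref{lemma: bpcenter}) and reduce the statement to the Improved Volume Ratio Upper Bound (Theorem~\ref{theorem: vupper}) already established for refined sequences.

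Suppose the conclusion fails. Then there exist a sequence of flows $\{(X_i, g_i(t)), -1 \leq t \leq 1\} \in \mathscr{M}(m, c, \sigma, \kappa, E)$, points $x_i \in X_i$, and radii $r_i \to 0$ such that
\begin{align*}
 \frac{\Vol_{g_i(0)}(B_{g_i(0)}(x_i, r_i))}{r_i^m} \geq 2\omega(m).
\end{align*}
Define the parabolic rescaling $\tilde{g}_i(t) \triangleq r_i^{-2} g_i(r_i^2 t)$ on the time interval $[-r_i^{-2}, r_i^{-2}] \supset [-1,1]$ for large $i$.

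The key step is to verify that $\{(X_i, x_i, \tilde{g}_i(t)), -1 \leq t \leq 1\}$ is a refined sequence in the sense of Definition~\ref{definition: refined}. The evolution equation becomes $\D{}{t}\tilde{g}_i = -Ric_{\tilde{g}_i} + (r_i^2 c_0)\tilde{g}_i$, and the coefficient $r_i^2 c_0 \to 0$; scalar curvature satisfies $\sup |R|_{\tilde{g}_i(t)} \leq r_i^2 \sigma \to 0$; the $\kappa$-noncollapsing on scale $1$ for $g_i$ upgrades to $\kappa$-noncollapsing on scale $r_i^{-1} \to \infty$ for $\tilde{g}_i$, which exceeds any fixed $r$ eventually; and the energy $\int |Rm|^{m/2}\,d\mu$ is scale-invariant so it remains bounded by $E$. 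All four conditions of a refined sequence therefore hold.

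Now Theorem~\ref{theorem: vupper} applies to this refined sequence at radius $r=1$, giving
\begin{align*}
\limsup_{i \to \infty} \frac{\Vol_{\tilde{g}_i(0)}(B_{\tilde{g}_i(0)}(x_i, 1))}{1^m} \leq \omega(m).
\end{align*}
But by construction $\frac{\Vol_{\tilde{g}_i(0)}(B_{\tilde{g}_i(0)}(x_i, 1))}{1} = \frac{\Vol_{g_i(0)}(B_{g_i(0)}(x_i, r_i))}{r_i^m} \geq 2\omega(m)$, contradicting the previous inequality. There is no real obstacle beyond the bookkeeping of the scaling; all difficulty has already been absorbed in establishing Theorem~\ref{theorem: vupper} (i.e.\ Proposition~\ref{proposition: rev} plus Corollary~\ref{corollary: vupperover}), which in turn relies on the deep EV-refined machinery developed in Section 3.
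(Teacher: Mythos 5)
Your proof is correct and follows essentially the same contradiction/blowup argument as the paper: rescale by $r_i^{-2}$, observe that the rescaled flows form a refined sequence, and invoke Theorem~\ref{theorem: vupper} to contradict the assumed volume ratio lower bound. You simply spell out the verification that the rescaled sequence satisfies Definition~\ref{definition: refined}, which the paper leaves implicit.
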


  \begin{proof}
  Suppose this result is wrong, then there is a sequence of $r_i \to 0$
  and a sequence of Ricci flow solutions $\{(X_i, g_i(t)), -1 \leq t \leq 1\}$
  violating this statement, i.e., there exists $x_i \in X_i$ such that
  \begin{align*}
     \frac{\Vol(B_{g_i(0)}(x_i, r_i))}{r_i^m} \geq 2 \omega(m).
  \end{align*}
  Let $\tilde{g}_i(t)= r_i^{-2}g_i(r_i^2 t)$, then $\{(X_i, \tilde{g}_i(t)), -1 \leq t \leq 1\}$
  is a refined sequence satisfying
  \begin{align*}
     \lim_{i \to \infty} \Vol_{\tilde{g}_i(0)}(B_{\tilde{g}_i(0)}(x_i, 1)) \geq 2\omega(m).
  \end{align*}
 This contradicts to Theorem~\ref{theorem: vupper}!
 \end{proof}

  Combining Lemma~\ref{lemma: econcenter},~\ref{lemma: bpcenter} and
  Lemma~\ref{lemma: vuppercenter}, we can follow the argument of
  Lemma~\ref{lemma: wcptev} to prove the following theorem.

 \begin{theorem}[\textbf{Weak Compactness}]
    If $\{ (X_i, x_i, g_i(t)) , -1 \leq t \leq 1\} \in \mathscr{M}(m, c, \sigma, \kappa, E)$
 for every $i$, by passing to subsequence, we have
 \begin{align*}
    (X_i, x_i, g_i(0)) \sconv (\hat{X}, \hat{x}, \hat{g})
 \end{align*}
 for some $C^0$-orbifold $\hat{X}$ in Cheeger-Gromov sense.
 If $m$ is odd, then $\hat{X}$ is a smooth manifold.
 \label{theorem: centerwcpt}
 \end{theorem}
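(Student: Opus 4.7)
The plan is to mimic the proof of Lemma~\ref{lemma: wcptev} (weak compactness for time slices of EV-refined sequences), using the three basic estimates Lemma~\ref{lemma: econcenter} (energy concentration), Lemma~\ref{lemma: bpcenter} (backward pseudolocality), and Lemma~\ref{lemma: vuppercenter} (volume ratio upper bound) that have just been transplanted from refined sequences to the whole moduli space $\mathscr{M}(m,c,\sigma,\kappa,E)$. The main difference is that now the limit is not Ricci-flat, but the local structure at the possible singularities will still be controlled via a final blowup that \emph{does} produce a refined sequence.

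First I would locate the concentration points. Since $\int_{X_i}|Rm|_{g_i(0)}^{m/2}\,d\mu \leq E$ and Lemma~\ref{lemma: econcenter} forces at least $\epsilon$ energy into the ball $B_{g_i(0)}(y,|Rm|^{-1/2}(y))$ around every point $y$ with $|Rm|_{g_i(0)}(y)>H$, a standard covering argument produces $L\leq N_0=\lfloor E/\epsilon\rfloor$ points $y_i^{(1)},\dots,y_i^{(L)}\in X_i$ such that $|Rm|_{g_i(0)}\leq r^{-2}$ on $X_i\setminus\bigcup_{v}B_{g_i(0)}(y_i^{(v)},3r)$ for any chosen small $r>0$. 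On this complement, backward pseudolocality (Lemma~\ref{lemma: bpcenter}) upgrades the curvature bound to a spacetime bound $|Rm|_{g_i(t)}\leq \digamma(r)$ for $t\in[-\daleth(r),0]$ (after shrinking the region slightly to account for distortion of distances), and Shi's local estimates then give uniform bounds on $|\nabla^k Rm|$ at time $0$. Combined with the $\kappa$-noncollapsing hypothesis and the $C^{1,\gamma}$-compactness theorem of Cheeger--Gromov, a diagonal subsequence yields a smooth pointed limit on the complement of the bubble points, and the local volume ratio upper bound of Lemma~\ref{lemma: vuppercenter} allows the compactification to be taken by adding at most $N_0$ singular points $\hat p^{\,1},\dots,\hat p^{\,L}$, producing a complete pointed length space $(\hat X,\hat x,\hat g)$ with $(X_i,x_i,g_i(0))\to(\hat X,\hat x,\hat g)$ in Gromov--Hausdorff topology and $C^{\infty}$ away from the $\hat p^{\,s}$.

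Next I would establish the $C^0$-orbifold structure near each $\hat p^{\,s}$. Select spacetime points $(p_{i,s},\tau_{i,s})$ realizing the local maximum of $|Rm|$ in a shrinking neighborhood of $\hat p^{\,s}$ à la Perelman's point-selecting method (Lemma~\ref{lemma: choosepoints}), set $Q_{i,s}=|Rm|_{g_i(\tau_{i,s})}(p_{i,s})\to\infty$, and consider the parabolic rescaling $\tilde g_{i,s}(t)=Q_{i,s}\, g_i(Q_{i,s}^{-1}t+\tau_{i,s})$. Because the constant $c_0$ and the scalar curvature satisfy $|c_0|\leq c$, $|R|\leq\sigma$, rescaling drives them to zero while preserving $\kappa$-noncollapsing and the energy bound. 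Hence $\{(X_i,p_{i,s},\tilde g_{i,s}(t))\}$ is a refined sequence, and Theorem~\ref{theorem: wcpt} yields a Ricci-flat $\kappa$-noncollapsed ALE orbifold as blowup limit. The cone at infinity of this ALE limit gives a flat cone $\R^m/\Gamma_s$ as the tangent cone of $\hat X$ at $\hat p^{\,s}$. Running the same iterated blowup/bubble-tree construction used in Lemma~\ref{lemma: sitsmooth} and in the neck analysis of Theorem~\ref{theorem: trivialneck} and Lemma~\ref{lemma: c0neck} shows that on a small punctured neighborhood of $\hat p^{\,s}$ the metric $\hat g$ is $C^0$-close to the flat cone metric, which is precisely the definition of a $C^0$-orbifold point.

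Finally, when $m$ is odd, Theorem~\ref{theorem: oddrefined} applied to the refined sequence $\{(X_i,p_{i,s},\tilde g_{i,s}(t))\}$ forces $|Rm|_{\tilde g_{i,s}(0)}(p_{i,s})\to 0$, contradicting $|Rm|_{\tilde g_{i,s}(0)}(p_{i,s})=1$. Thus no bubble points can arise, $L=0$, and $\hat X$ is a smooth Riemannian manifold. The main obstacle I anticipate is the $C^0$ control of the metric across the neck connecting the tangent cone to the deepest bubble: one does not a priori know the exact rate of curvature decay at the singularity, so the scale selection for the neck has to be made carefully via the gap lemma (Lemma~\ref{lemma: gap}) and the area-ratio-based iterative blowup of Lemma~\ref{lemma: sitsmooth}, exactly as already done in the refined-sequence setting; once this bubble tree has finite depth (guaranteed by the $\epsilon$-drop in energy at each stage, as in Claim~\ref{claim: econtrol}), the $C^0$-orbifold structure at $\hat p^{\,s}$ follows.
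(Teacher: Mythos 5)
Your proposal is correct and follows essentially the same approach as the paper: transplant energy concentration, backward pseudolocality, and the volume ratio upper bound to $\mathscr{M}(m,c,\sigma,\kappa,E)$, then run the argument of Lemma~\ref{lemma: wcptev} for the $C^0$-orbifold structure, using a refined sequence produced by a point-selected blowup to analyze the singular points, and invoke Theorem~\ref{theorem: oddrefined} for the odd case. The paper's own write-up is more terse (it merely asserts the argument of Lemma~\ref{lemma: wcptev} carries over, and only spells out the odd case via a direct contradiction with a maximal-curvature blowup), but the estimates and the overall strategy are the same as yours.
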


 \begin{proof}
  We only need to prove the case when $m$ is odd.  It suffices to show
  an a priori Riemannian curvature bound on $X_i \times [-\frac18, 0]$.
  If there is no such a bound, by passing to subsequence if
  necessary, we can take a sequence of points $(x_i, t_i) \in X_i \times [-\frac18, 0]$
  such that $Q_i = |Rm|_{g_i(t_i)}(x_i) \to \infty$.  Let
  $\tilde{g}_i(t)= Q_i g_i(Q_i^{-1}t + t_i)$, we obtain an odd
  dimensional refined sequence $\{(X_i, x_i, \tilde{g}_i(t)), -1 \leq t \leq 1\}$
  satisfying  $|Rm|_{\tilde{g}_i(0)}(x_i)=1$. This contradicts to
  Theorem~\ref{theorem: oddrefined}!
 \end{proof}

  Every $C^0$-orbifold is smooth away from singularities, and it is
  very close to a Euclidean cone in $C^0$-topology around
  singularities. It's not hard to see that for every $C^0$-orbifold
  $\hat{X}$, there is a $C^{\infty}$-orbifold $\tilde{X}$ which is
  very close to $\hat{X}$ in $C^0$-topology. It follows that their
  isoperimetric constants are comparable. So every $C^0$-orbifold
  has a bounded isoperimetric constant. Then the same argument as in
  Theorem~\ref{theorem: refsob} yields the following result.

 \begin{theorem}[\textbf{Isoperimetric Constants}]
  There is a constant $\iota=\iota(m, c, \sigma, \kappa, E, D)$ such
  that the following property holds.

   If $\{(X, g(t)), -1 \leq t \leq 1\} \in \mathscr{M}(m, c, \sigma, \kappa, E)$
 and $\diam_{g(0)}(X)<D$, then
 \begin{align*}
   \mathbf{I}(X, g(0)) > \iota.
 \end{align*}
 \label{theorem: centersob}
 \end{theorem}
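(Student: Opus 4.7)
The plan is to argue by contradiction, mimicking the proof of Theorem~\ref{theorem: refsob} but starting from the moduli space $\mathscr{M}(m,c,\sigma,\kappa,E)$ rather than from a refined sequence. Suppose the statement fails; then there is a sequence $\{(X_i,g_i(t)),-1\leq t\leq 1\}\in\mathscr{M}(m,c,\sigma,\kappa,E)$ with $\diam_{g_i(0)}(X_i)<D$ yet $\mathbf{I}(X_i,g_i(0))\to 0$. Pick base points $x_i\in X_i$ and invoke Theorem~\ref{theorem: centerwcpt} to pass to a subsequence with $(X_i,x_i,g_i(0))\sconv(\hat{X},\hat{x},\hat{g})$ for some $C^0$-orbifold $\hat{X}$. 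The uniform diameter bound forces $\hat{X}$ to be compact.

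The next step is to observe that a compact $C^0$-orbifold has a strictly positive isoperimetric constant. Indeed, away from its finitely many singular points $\hat{X}$ is smooth, and near each singular point the metric is $C^0$-close to a flat cone $\R^m/\Gamma$. Gluing a smooth model metric on small cone neighborhoods of the singularities to the given metric on the smooth complement produces a $C^\infty$-orbifold $\tilde{X}$ that is $C^0$-close to $\hat{X}$ globally. Since perimeter and volume functionals are continuous under $C^0$-perturbations of the metric, $\mathbf{I}(\hat{X})$ and $\mathbf{I}(\tilde{X})$ are comparable up to a universal factor, and the latter is positive by the standard Ricci-flat orbifold isoperimetric estimate (c.f.~\cite{NA}) after cutting off singular cones. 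Hence $\mathbf{I}(\hat{X},\hat{g})\geq c_0>0$.

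The rest of the argument is an energy-drop iteration copied almost verbatim from the proof of Theorem~\ref{theorem: refsob}, with Lemma~\ref{lemma: samebubblesob} as the key technical input. If $\hat{X}$ were smooth at every point relevant for a contradicting sequence of domains $\Omega_i\subset X_i$, then the smooth convergence would give $\mathbf{I}(X_i)\to\mathbf{I}(\hat{X})>0$, contradicting $\mathbf{I}(X_i)\to 0$. Otherwise the failure must localize around a singular point $p\in\hat{X}$: by the splitting of $\partial\Omega_i$ into the inside/outside parts of a fixed small geodesic ball $B(p_i,\mathcal{R}^{-1}\eta)$ and the neck area/isoperimetric inequalities (\ref{eqn: necksob})--(\ref{eqn: neckaro}) from Lemma~\ref{lemma: c0neck}, one concludes as in Step 2--3 of Lemma~\ref{lemma: samebubblesob} that $\mathbf{I}(B(p_i,\mathcal{R}r_i))\to 0$ at the deepest bubble scale $r_i$ associated to $p$. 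Rescaling by $r_i^{-2}$ produces a refined sequence (because the energy concentrated around $p_i$ is at least $\epsilon/2$ and scalar curvature, $c_0$, and Ricci bounds scale away), and its $C^\infty$ limit bubble has strictly less energy than $\hat{X}$ by at least $\epsilon/2$.

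Iterating this procedure generates a sequence of limit bubbles $(X^{(k)},x^{(k)},g^{(k)})$ with total energy decreasing by at least $\epsilon/2$ at each step; after at most $k_0=\lfloor 2E/\epsilon\rfloor+1$ steps the bubble has energy below $\epsilon_a$, so by the gap Lemma~\ref{lemma: gap} it is a flat cone and, being $\kappa$-noncollapsed and smooth after one more suitably chosen blowup, Euclidean space. But then its isoperimetric constant is $\mathbf{I}(\R^m)>0$, contradicting the inherited property $\lim_i\mathbf{I}(B_{g_i^{(k_0)}(0)}(x_i^{(k_0)},\mathcal{R}^{(k_0-1)}))=0$. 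I expect the main obstacle to be the bookkeeping of Step 2 of Lemma~\ref{lemma: samebubblesob}: one must show that the defect $F(\Omega_i)\to 0$ cannot be absorbed either by the neck (ruled out by (\ref{eqn: necksob})) or by the smooth complement of the singular points (ruled out by the $C^0$-orbifold isoperimetric bound established in paragraph two), so it necessarily concentrates at a single singularity at the correct bubble scale. Once this localization is secured, the rest is a finite induction driven by the quantized energy drop $\epsilon/2$.
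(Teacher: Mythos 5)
Your proposal follows the same route as the paper: pass to a $C^0$-orbifold limit via Theorem~\ref{theorem: centerwcpt}, observe that every compact $C^0$-orbifold has a positive isoperimetric constant (because it admits a $C^\infty$-orbifold model nearby in $C^0$-topology), and then run the energy-drop iteration of Theorem~\ref{theorem: refsob} driven by Lemma~\ref{lemma: samebubblesob}. The only cosmetic point is that the intermediate smooth model $\tilde{X}$ you construct need not be Ricci-flat, so the reference to~\cite{NA} is misplaced; but positivity of $\mathbf{I}(\tilde{X})$ is immediate for any compact $C^\infty$-orbifold and the argument is otherwise the one the paper intends.
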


 \subsection{Weak Compactness of \KRF on Fano Surfaces}
  Along \KRf on Fano surfaces, diameter, scalar curvature and energy
  are all uniformly bounded by Perelman's estimates (c.f.~\cite{SeT}).
  Combining this with Perelman's no local collapsing theorem,
  we can find fixed numbers $E, \sigma, \kappa, D$ such that
  \begin{align*}
  \{(M, g(t-s)),  -1 \leq t-s \leq 1\} \in \mathscr{M}(4, 1, \sigma, \kappa,
  E),
  \quad
  \diam_{g(t)}(M) < D.
  \end{align*}
  Theorem~\ref{theorem: centersob} implies
  \begin{theorem}[\textbf{Isoperimetric Constants' Control of 2-dimensional KRF}]
   On every \KRf solution $\{(M^2, g(t)), 0 \leq t < \infty \}$,
   there is a constant $\iota>0$ such that
   \begin{align*}
      \mathbf{I}((M, g(t)))> \iota, \quad \forall \; t \geq 0.
   \end{align*}
   In particular, we have
   \begin{align*}
     C_S((M, g(t))) < C, \quad \forall \; t\geq 0
   \end{align*}
   for some constant $C>0$.
  \end{theorem}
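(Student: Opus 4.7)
The plan is to reduce the statement to Theorem~\ref{theorem: centersob} by verifying that, after a time shift, every snapshot $(M, g(t))$ of the normalized K\"ahler-Ricci flow lies inside the moduli space $\mathscr{M}(4, 1, \sigma, \kappa, E)$ with uniformly bounded diameter, and then to pass from the isoperimetric lower bound to the Sobolev constant upper bound by the classical Federer-Fleming equivalence.

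First I would collect Perelman's a priori estimates along the \KRf on a Fano surface (c.f.~\cite{SeT}): the scalar curvature is uniformly bounded, $|R|_{g(t)} \leq \sigma$; the diameter is uniformly bounded, $\diam_{g(t)}(M) \leq D$; and the flow is $\kappa$-noncollapsed on all scales $\leq 1$. The normalized equation $\D{}{t}g = -Ric_{g(t)} + g$ gives $c_0 = 1$. The energy bound is the real-dimension-$4$ fact already emphasized in the introduction: $\int_M (|Rm|^2 - R^2)\,\omega_t^2$ is a topological invariant (c.f.~\cite{Ca82}), and since $\int_M R^2\,\omega_t^2 \leq \sigma^2 \Vol_{g(t)}(M)$ with the volume fixed by the K\"ahler class $[\omega_t] = [\omega_0]$, we obtain $\int_M |Rm|_{g(t)}^2 \,\omega_t^2 \leq E$ uniformly in $t$.

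Next, for each $t_0 \geq 1$, I would consider the time-translated flow $\tilde g_{t_0}(s) \triangleq g(t_0 + s)$ for $s \in [-1,1]$. The four bulleted conditions defining $\mathscr{M}(4, 1, \sigma, \kappa, E)$ are immediate from the preceding paragraph, and $\diam_{\tilde g_{t_0}(0)}(M) \leq D$. Hence Theorem~\ref{theorem: centersob} applies and yields $\mathbf{I}((M, g(t_0))) > \iota$ with a constant $\iota = \iota(\sigma, \kappa, E, D)$ independent of $t_0$. For $t_0 \in [0, 1]$, where one cannot translate backwards past the initial time, the flow is smooth on the compact interval $[0,1]$ starting from the fixed initial metric; hence the isoperimetric constant depends continuously on $t_0$ and admits a positive lower bound on this interval. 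Combining the two ranges produces $\mathbf{I}((M, g(t))) > \iota$ for all $t \geq 0$.

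For the Sobolev constant, I would invoke the Federer-Fleming equivalence between the isoperimetric profile and the $L^1 \to L^{m/(m-1)}$ Sobolev inequality on a compact Riemannian manifold; combining such an inequality with the uniform upper bound on $\Vol(M,g(t))$ (fixed by the K\"ahler class) and the uniform scalar curvature bound yields the desired uniform upper bound on $C_S((M,g(t)))$ by a standard H\"older/iteration argument. I do not anticipate a genuine obstacle in this proof: the main content has been packaged into Theorem~\ref{theorem: centersob}, and the only mildly delicate point is the handling of the short-time regime $t \in [0,1]$, which the continuity argument above dispatches. If any hidden difficulty arises, it would be in verifying that the energy bound $E$ obtained from $\int_M R^2 \omega_t^2$ is genuinely uniform across all time slices without using Ricci curvature control, but this is exactly why working with the topological identity $\int_M(|Rm|^2 - R^2)\omega_t^2 = \text{const}$ together with Perelman's scalar estimate is crucial here.
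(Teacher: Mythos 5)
Your argument coincides with the paper's proof: both verify that Perelman's scalar curvature, diameter, and no-local-collapsing estimates (together with the topological bound on $\int_M |Rm|^2\,\omega_t^2$ on a surface) place the time-shifted flow in $\mathscr{M}(4,1,\sigma,\kappa,E)$ with bounded diameter, and then invoke Theorem~\ref{theorem: centersob}. Your treatment of the short-time regime $t\in[0,1]$ and the Federer--Fleming passage to the Sobolev constant are standard details the paper leaves implicit, so there is no substantive difference.
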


  \begin{remark}
    In~\cite{Zhq} and~\cite{Ye}, Zhang and Ye proved that the
    Sobolev constants are uniformly bounded along K\"ahler Ricci flow.
    Here we improved their theorem by showing that
    isoperimetric constants
    are uniformly bounded in the special case $n=2$.
  \end{remark}

  As a corollary of Theorem~\ref{theorem: centerwcpt}, we have

  \begin{theorem}[\textbf{Weak Compactness of 2-dimensional KRF}]
    Suppose $\{(M^2, g(t)), 0 \leq t < \infty\}$ is a \KRf on Fano
    surface $M$. For every sequence $t_i \to \infty$, by passing to subsequence if necessary,
    we have
    \begin{align*}
       (M, g_i(t))   \sconv (\hat{M}, \hat{g})
    \end{align*}
    where $(\hat{M}, \hat{g})$ is a
    K\"ahler Ricci soliton $C^{\infty}$-orbifold.
  \end{theorem}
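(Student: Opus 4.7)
The plan is to reduce the statement to an application of Theorem~\ref{theorem: centerwcpt} applied to a shifted subsequence of the flow, followed by two promotions of the limit: first from a $C^{0}$-orbifold to a K\"ahler Ricci soliton on the smooth part, and second from a $C^{0}$-orbifold to a $C^{\infty}$-orbifold at the finitely many singular points.

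First I would check that the hypotheses of Theorem~\ref{theorem: centerwcpt} are satisfied for every shift. By Perelman's fundamental estimates along \KRf on a Fano manifold (c.f.~\cite{SeT}), the normalization $\D{}{t}g=-Ric+g$ has uniformly bounded scalar curvature and diameter, and the flow is $\kappa$-noncollapsed on scale $1$. For the energy, the real dimension is $m=4$, and by the Chern-Gauss-Bonnet identity (c.f.~\cite{Ca82}), $\int_M(|Rm|^2-R^2)\,\omega_t^2$ is a topological invariant; since $R$ is uniformly bounded, so is $\int_M |Rm|^2\,\omega_t^2$. Therefore, for any sequence $t_i\to\infty$ the time-shifted flows $\{(M,g_i(t)):=(M,g(t+t_i)), -1\leq t\leq 1\}$ all lie in the moduli space $\mathscr{M}(4,1,\sigma,\kappa,E)$ for appropriate uniform constants. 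Theorem~\ref{theorem: centerwcpt} then yields, after passing to a subsequence,
\begin{align*}
(M,g_i(0))\sconv (\hat{M},\hat{g})
\end{align*}
in the Cheeger-Gromov sense, where $\hat{M}$ is a $C^0$-orbifold and the convergence is smooth away from the (at most $N_0$) orbifold singular points.

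Second I would identify $\hat{g}$ as a K\"ahler Ricci soliton on the smooth locus of $\hat{M}$. This is Sesum's argument~\cite{Se1}: Perelman's $\mu$-functional is monotone non-decreasing along the flow and uniformly bounded above, hence $\frac{d}{dt}\mu(g(t),\tau(t))\to 0$ along $t_i$; since the convergence $(M,g_i(0))\sconv(\hat{M},\hat{g})$ is smooth on compact subsets of the smooth part, and the minimizer $f_i$ of $\mathcal{W}(g_i(0),\cdot,\tau)$ converges smoothly to a limit potential $\hat{f}$ (the relevant elliptic equation is uniformly elliptic on compacts avoiding the singular set), the trace of Perelman's identity $Ric+\nabla^2 f-g=0$ passes to the limit on the smooth part of $\hat{M}$. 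The limit is automatically K\"ahler since the convergence is $C^{\infty}$ on the smooth locus and each $g_i(0)$ is K\"ahler.

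Finally I would promote the $C^0$ regularity at each orbifold singularity $p$ to $C^{\infty}$. On a small orbifold neighbourhood $B^4/\Gamma$ of $p$, lift to the $\Gamma$-invariant local model $B^4\setminus\{0\}$ with a $C^0$ K\"ahler metric satisfying the soliton equation. Choose harmonic coordinates on small balls avoiding $0$ (the existence of uniform such coordinates uses the local Sobolev/isoperimetric constant control, which follows from Theorem~\ref{theorem: centersob} applied to the approximating metrics $g_i(0)$, c.f.~\cite{Tian90}); in these coordinates the soliton equation becomes a quasilinear elliptic system in $(g_{\alpha\bar\beta},f)$ after fixing the gauge. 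Combined with the energy bound $\int|Rm|^2<E$ and a cutoff/Moser iteration on the curvature operator (the improved Kato inequality of~\cite{BKN} in dimension $4$ is available), Uhlenbeck's removable singularity technique applies and gives a $C^{\infty}$ extension across $0$. The main obstacle is controlling the behaviour of the potential $f$ near the singularity: one needs $\hat{f}$ to stay bounded and to satisfy its elliptic equation in a weak sense across the singular point, which again follows from the smooth convergence of the $f_i$ on shrinking annuli together with the energy-concentration gap that forces the singular structure to be modelled on a Ricci-flat ALE orbifold (Lemma~\ref{lemma: wcptev}).
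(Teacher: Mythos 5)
Your reduction is essentially the paper's own: check that the time-shifted flows lie in $\mathscr{M}(4,1,\sigma,\kappa,E)$ (Perelman's estimates plus Gauss--Bonnet), apply Theorem~\ref{theorem: centerwcpt}, identify the soliton equation on the smooth locus via monotonicity of the $\mu$-functional and Sesum's argument (which goes through once the Sobolev constant bound is secured by Theorem~\ref{theorem: centersob}), and finish by Uhlenbeck's removable singularity technique at the finitely many orbifold points.

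The one genuine gap is in your control of the potential $\hat f$ at a singular point. You claim boundedness of $\hat f$ and validity of its equation across the singularity ``follows from the smooth convergence of the $f_i$ on shrinking annuli together with the energy-concentration gap,'' but smooth convergence on compact subsets avoiding the singular locus does not preclude $|f_i|$ or $|\nabla f_i|$ from blowing up as one approaches the singular point, and the ALE structure of the bubble constrains the metric, not $f$. The paper closes this differently and more directly: $-\hat f$ is the normalized Ricci potential of $(\hat M,\hat g)$, it is the limit of the normalized Ricci potentials $u_i$ of $(M,g(t_i))$, and by Perelman's estimate (c.f.~\cite{SeT}) the $u_i$ have a uniform $C^1$-bound; hence $|\hat f|+|\nabla \hat f|$ is bounded up to the singular point, and Uhlenbeck's technique then applies as in~\cite{CS}. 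You should replace your annulus-convergence argument with this a priori $C^1$-bound to make the final step rigorous.
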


  \begin{proof}
   The weak compactness is automatic now.  So we only need to show
   that $(\hat{M}, \hat{g})$ satisfies K\"ahler Ricci soliton
   equation on smooth part and $\hat{M}$ is a $C^{\infty}$-orbifold.

  By the monotonicity of Perelman's $\mu$-functional, $(\hat{M}, \hat{g})$
  must satisfy K\"ahler Ricci soliton equation on the smooth part of $\hat{M}$.
  This has been proved by Natasa Sesum in~\cite{Se1} under the condition Ricci
  curvature uniformly bounded. However, the bound of Ricci curvature is only
  used to obtain the uniform Sobolev constant there. As the Sobolev
  constant bound is obtained now, that proof applies directly.

  Now we show that $\hat{M}$ is a $C^{\infty}$-orbifold.  Note that
  the $\mu$-functional minimizer $f$ on $\hat{M}$ satisfies
  \begin{align*}
      Ric_{\hat{g}} + \nabla_{\hat{g}}^2 f - \hat{g}=0,   \quad
      (2\pi)^{-2} \int_{\hat{M}} e^{-f} dv=1.
  \end{align*}
  It follows that $-f$ is the normalized Ricci potential of $(\hat{M},
  \hat{g})$.  It is the limit of the normalized Ricci potentials  $u_i$ of
  $(M, g(t_i))$. However, $u_i$'s $C^1$-norm is uniformly bounded by
  Perelman's estimate. Therefore, $f$ has a uniform $C^1$-norm. In
  particular, $|\nabla f|$ is uniformly bounded. Then we can use Uhlenbeck's
  removing singularity technique to show that
  every singularity is a $C^{\infty}$-orbifold point. This part was
  proved in~\cite{CS}.
  \end{proof}

 \subsection{Weak Compactness of Gradient Shrinking Solitons}

  Theorem~\ref{theorem: centerwcpt} can be used to study the moduli
  space of gradient shrinking Ricci solitons.  Define moduli spaces
  \begin{align*}
    &\mathcal{OS}(m, \sigma, \kappa, E, V)=\{(X^m, g)| (X^m, g) \; \textrm{is a compact orbifold satisfing condition} \;
    (*)\},\\
    &\mathcal{MS}(m, \sigma, \kappa, E, V)=\{(X^m, g)| (X^m, g) \; \textrm{is a closed manifold satisfing condition} \;
    (*)\}.
  \end{align*}
  The condition (*) is listed as follows.
  \begin{itemize}
  \item $Ric + \nabla^2 f - g=0$ for some $f \in C^{\infty}(X)$.
  \item $R \leq \sigma$.
  \item $\frac{\Vol(B(x, r))}{r^m} \geq \kappa$ for every $x \in X$ and
  $r \in (0,1)$.
  \item $\int_X |Rm|^{\frac{m}{2}}d\mu \leq E$.
  \item $\Vol(X) \leq V$.
  \end{itemize}

  As an application of Theorem~\ref{theorem: centerwcpt} and
  Theorem~\ref{theorem: centersob}, we obtain a weak compactness theorem of gradient
  shinking solitons.

  \begin{theorem}[\textbf{Weak Compactness of Gradient Shrinking Solitons}]
  Under the Cheeger-Gromov topology, we have
  \begin{enumerate}
  \item $\overline{\mathcal{MS}(m, \sigma, \kappa, E, V)} \subset\mathcal{OS}(m, \sigma, \kappa, E, V)$.
  \item $\mathcal{MS}(m, \sigma, \kappa, E, V)$ is a compact space
  when $m$ is odd.
  \end{enumerate}
  \label{theorem: solitonwcpt}
  \end{theorem}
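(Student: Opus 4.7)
The plan is to reduce the theorem to Theorem~\ref{theorem: centerwcpt} by realizing each shrinking soliton as the central time slice of a self-similar Ricci flow lying in $\mathscr{M}(m,1,\sigma,\kappa,E)$.  For $(X_i, g_i) \in \mathcal{MS}$ with potential $f_i$, I let $\phi_{i,t}$ be the one-parameter family of diffeomorphisms of $X_i$ generated by $\tfrac{1}{2}\nabla^{g_i} f_i$ and define $g_i(t) = \phi_{i,t}^{\ast} g_i$ for $t \in [-1,1]$.  Since $\tfrac{1}{2}\mathcal{L}_{\nabla f_i} g_i = \nabla^2 f_i = g_i - Ric_{g_i}$ by the soliton equation,
\begin{align*}
\D{}{t} g_i(t) = \phi_{i,t}^{\ast}(g_i - Ric_{g_i}) = g_i(t) - Ric_{g_i(t)},
\end{align*}
so this is a normalized Ricci flow with $c_0 = 1$.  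Because each $\phi_{i,t}$ is an isometry from $(X_i, g_i)$ to $(X_i, g_i(t))$, the scalar curvature bound $|R| \leq \sigma$, the $\kappa$-noncollapsing on scale $1$, and the $L^{m/2}$ curvature bound $E$ all transfer automatically to every time slice, placing the flow in $\mathscr{M}(m,1,\sigma,\kappa,E)$.

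Next, the volume upper bound $\Vol(X_i) \leq V$ combined with $\kappa$-noncollapsing gives a uniform diameter estimate: sampling a length-realizing geodesic at every distance $2$ produces disjoint unit balls of volume $\geq \kappa$, so $\diam(X_i) \leq 2V/\kappa + 2$.  Fixing base points $x_i \in X_i$ arbitrarily and applying Theorem~\ref{theorem: centerwcpt} to $\{(X_i, x_i, g_i(t))\}$, I extract a subsequential Cheeger-Gromov limit $(X_i, x_i, g_i(0)) \sconv (\hat{X}, \hat{x}, \hat{g})$ with $\hat{X}$ a $C^0$-orbifold; the diameter bound forces $\hat{X}$ to be compact, and when $m$ is odd $\hat{X}$ is already a smooth manifold.

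To transport the soliton structure to the limit, I use Hamilton's identity $R_{g_i} + |\nabla f_i|^2 - 2 f_i \equiv \lambda_i$ together with $R_{g_i} \geq 0$ (B.-L. Chen's lower bound for shrinking solitons).  Normalizing $\min_{X_i} f_i = 0$, one has $\lambda_i \in [0,m]$ and $|\nabla f_i| \leq \sqrt{2 f_i + \lambda_i}$; integrating this inequality along minimizing geodesics and using the diameter bound produces uniform $C^1$-bounds on $f_i$.  On every compact subset of the smooth part of $\hat{X}$, Theorem~\ref{theorem: centerwcpt} provides smooth curvature convergence, hence the identity $\nabla^2 f_i = g_i - Ric_{g_i}$ and standard bootstrap yield uniform local $C^\infty$-estimates for $f_i$.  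A diagonal argument then produces $\hat{f}$ with $f_i \to \hat{f}$ smoothly on the smooth part of $\hat{X}$, and the limit satisfies $Ric_{\hat{g}} + \nabla^2 \hat{f} - \hat{g} = 0$.  The remaining constraints pass to the limit: $R_{\hat{g}} \leq \sigma$ by smooth convergence, $\kappa$-noncollapsing on scale $1$ by Cheeger-Gromov convergence, the energy bound by Fatou's lemma, and $\Vol(\hat{X}) \leq V$ by smooth convergence on the smooth part together with the finiteness of the singular set.

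The main obstacle will be upgrading the $C^0$-orbifold structure to $C^\infty$ at the singular points.  The plan is to lift to the local branched cover at each singularity and work in a harmonic gauge, in which the soliton equation together with the contracted Bianchi identity becomes an elliptic system for the pair $(\hat{g}, \hat{f})$.  The energy concentration property (Theorem~\ref{theorem: econ}) guarantees that on sufficiently small scales around a singularity the integral $\int |Rm|^{m/2}$ can be made arbitrarily small, supplying the smallness hypothesis for Uhlenbeck's removable singularity theorem in the form used by Tian~\cite{Tian90} and Cao-Sesum~\cite{CS}, which promotes both the metric and the potential to $C^\infty$ across the singularity.  This establishes part~(1).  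Part~(2) is then immediate: when $m$ is odd, $\hat{X}$ is already smooth and $\hat{f}$ extends smoothly, so the limit lies in $\mathcal{MS}$ itself, showing that $\mathcal{MS}$ is closed and hence compact.
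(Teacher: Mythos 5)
Your proposal is correct and reaches the same conclusion, but it takes a noticeably different route to the crucial a priori $C^1$-bound on the soliton potentials $f_i$. The paper first invokes Theorem~\ref{theorem: centersob} to get a uniform isoperimetric (hence Sobolev) constant on $\mathcal{MS}(m,\sigma,\kappa,E,V)$, then bounds Perelman's $\mu$-functional from above and below (the lower bound via an observation of Ecker in~\cite{CS}), and finally uses the Euler--Lagrange equation for the $\mu$-minimizer with the normalization $\int_X e^{-f}\,dv = (2\pi)^{m/2}$ to deduce $C^0$ and then $C^1$ control of $f$. You instead use the elementary pointwise identity $R + |\nabla f|^2 - 2f \equiv \lambda$, the nonnegativity of $R$ on compact shrinking solitons, the normalization $\min_X f = 0$ (which pins $\lambda = R(p_{\min}) \in [0,m]$ via the traced soliton equation at the minimum of $f$), and integration of $|\nabla \sqrt{2f+\lambda}| \leq 1$ along geodesics together with the diameter bound. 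This bypasses Theorem~\ref{theorem: centersob}, the Sobolev constant, and the $\mu$-functional entirely and is arguably cleaner and more self-contained; the paper's route has the advantage of reusing machinery already established (and used elsewhere in the paper) and of producing the Sobolev constant bound as a byproduct, which is the form of the estimate needed to cite Sesum's and Cao--Sesum's results directly for the removable-singularity step. Everything else in your proposal --- the self-similar flow construction, the diameter bound from $\Vol \leq V$ plus $\kappa$-noncollapsing, the application of Theorem~\ref{theorem: centerwcpt}, passing the soliton equation and the side constraints to the limit, and the Uhlenbeck removable-singularity upgrade to a $C^\infty$-orbifold structure --- matches the paper's argument in substance.
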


  \begin{proof}
   The second part is trivial. So we only prove the first part.
   Since every $g$ can be looked as $g(0)$ of a Ricci flow solution
   \begin{align*}
    \{(X, g(t)), -1 \leq t \leq 1\},
    \quad \D{g}{t}= -Ric_{g(t)} + g(t),
   \end{align*}
   and scalar curvature of a gradient shrinking soliton is always
   positive, so Theorem~\ref{theorem: centerwcpt} applies and we know that $\hat{X}$
   is at worst a $C^0$-orbifold if $ \hat{X} \in \overline{\mathcal{MS}(m, \sigma, \kappa, E, V)}$.
   Now we only need to show that
   each $\hat{X}$ satisfies a gradient shrinking soliton equation and it is
   actually a $C^{\infty}$-orbifold.  Similar to Theorem~\ref{theoremin: krfcpt},
   it suffices to develop an a
   priori $C^1$-norm bound of all soliton potential functions $f$ on
   solitons in $\mathcal{MS}(m, \sigma, \kappa, E, V)$. We'll find
   this estimate in fours steps.\\

 \textit{Step1. \; Sobolev constants are uniformly bounded on $\mathcal{MS}(m, \sigma, \kappa, E, V)$}

     Since $g(t)$ is $\kappa$-noncollapsed on scale $1$,
   $\Vol_{g(t)}(X) \leq V$, it's easy to see that diameters
   must be uniformly bounded from above.  Then
   Theorem~\ref{theorem: centersob}
   applies and isoperimetric constants are uniformly bounded.
   In particular, there is a uniform Sobolev constant $C_S$ for
   every soliton in $\mathcal{MS}(m, \sigma, \kappa, E, V)$.\\

 \textit{Step2. \;
 Perelman's $\mu$-functionals are uniformly bounded on
  $\mathcal{MS}(m, \sigma, \kappa, E, V)$.}

    Consider Perelman's $\mu$-functional on $(X, g)$:
    \begin{align*}
      \mu \triangleq \mu(g, \frac12)=
      \inf_{\int_X e^{-h}dv =(2\pi)^{\frac{m}{2}}}
       \int_X \{\frac12 (R+|\nabla h|^2) + h-m\} e^{-h} (2\pi)^{-\frac{m}{2}} dv.
    \end{align*}

    Let $h$ be a constant such that
    $\int_X e^{-h}dv=(2\pi)^{\frac{m}{2}}$, i.e., $h \equiv -\frac{m}{2} \log (2\pi) + \log
    \Vol(X)$, we have
    \begin{align*}
       \mu &\leq \frac12 \sup_X R -m (1 + \frac12 \log(2\pi))+ \log \Vol(X)\\
       &\leq \frac12 \sigma -m (1 + \frac12 \log(2\pi))+ \log V.
    \end{align*}
    So $\mu$ is bounded from above.  According to an observation of Ecker (c.f. Lemma 8 of~\cite{CS}),
   we have
   \begin{align*}
      \mu &\geq -C(m)(1+ \log C_S(X,g) + \log \frac12)
      + \frac12 \inf_X R\\
   &\geq-C(m)(\log C_S + 1 -\log 2).
   \end{align*}
   Therefore, $\mu$ is uniformly bounded from below.  It follows
   that $\mu$ is a bounded function on
   $\mathcal{MS}(m, \sigma, \kappa, E, V)$.\\

 \textit{Step3. \; $\snorm{f}{C^0(X)}$ are uniformly bounded on
      $\mathcal{MS}(m, \sigma, \kappa, E, V)$.}

    As $(X, g)$ is a gradient shrinking soliton, we know the
    minimizer of $\mu$ is nothing but the soliton potential function
    $f$. So $f$ satisfies the Euler-Lagrange equation
    \begin{align}
       \frac12 (R+ 2 \triangle f - |\nabla f|^2) + f- m = \mu.
    \label{eqn: el}
    \end{align}
    This together with $R+ \triangle f -m=0$ implies
    \begin{align}
       \frac12 (R + |\nabla f|^2) - f = -\mu.
    \label{eqn: muexpress}
    \end{align}
    This tells us $|\nabla f|$ can be controlled by $f$'s value.
    So for any two points $p, q \in X$, we have control (c.f. Proposition 2.4 of~\cite{We}):
    \begin{align*}
         f(q) \leq 2 f(p) - \mu  + dist(p, q)^2 \leq 2 f(p) -\mu +
         \diam(X)^2.
    \end{align*}
    Therefore, we have
    \begin{align}
       \sup_X f \leq 2 \inf_X f -\mu + \diam(X)^2.
    \label{eqn: infsup}
    \end{align}

    Since $\int_X e^{-f} dv= (2 \pi)^{\frac{m}{2}}$
    and $\log \Vol(X)$ is uniformly bounded, we see that $\displaystyle inf_X f$
    is uniformly bounded from above for all
     $X \in \mathcal{MS}(m, \sigma, \kappa, E, V)$. By virtue of
     equation (\ref{eqn: infsup}) and the fact that $\diam(X)$ is uniformly bounded,
     we see that $\sup_X f$ is uniformly bounded from above.
     Thanks to equation (\ref{eqn: muexpress}), $\inf_X f$ is
     uniformly bounded from below. Therefore we see that $\snorm{f}{C^0(X)}$
     is uniformly bounded for all
     $X \in \mathcal{MS}(m, \sigma, \kappa, E, V)$.\\

 \textit{Step4.\; $\snorm{f}{C^1(X)}$ are uniformly bounded on
      $\mathcal{MS}(m, \sigma, \kappa, E, V)$.}

    By equation (\ref{eqn: muexpress}) and the fact $\mu$, $R$ and
    $f$ are all uniformly bounded, we see that $\snorm{\nabla f}{}$
    are uniformly bounded.  It follows that $\snorm{f}{C^1(X)}$ are uniformly bounded on
      $\mathcal{MS}(m, \sigma, \kappa, E, V)$.\\

   Therefore, as $(X_i, g_i)$ converges to $(\hat{X}, \hat{g})$,
   $f_i$ converges to $\hat{f}$. Moreover, at the smooth part of
   $\hat{X}$, we have
   \begin{align*}
       Ric_{\hat{g}} + \nabla^2 \hat{f} - \hat{g}=0.
   \end{align*}
   Moreover, $\snorm{\hat{f}}{C^1(\hat{X})}$ is bounded.
   Then using Ulenbeck's trick around the singularities of
   $\hat{X}$, we see that $(\hat{X}, \hat{g})$ is a
   $C^{\infty}$-orbifold satisfying gradient shrinking soliton equation.
  \end{proof}

 \begin{remark}
  If $m=4$, the soliton equation together with Gauss-Bonnett formula
  will imply (c.f. Proposition 2.3  of~\cite{We})
  \begin{align*}
    \int_X |Rm|^2 dv = 8\pi^2 \chi(X) + 2 \Vol(X) + \frac38 \int_X
    (R-\overline{R})^2 dv.
  \end{align*}
  where $\overline{R}$ is the average of scalar curvature $R$.
  Therefore, the condition of energy bound can be replaced by a
  condition of Euler characteristic number bound in
  Theorem~\ref{theorem: solitonwcpt}.
 \end{remark}

 \appendixpage
 \addappheadtotoc
 \appendix

 \section{Estimates of Diameters by Volumes}

 \begin{lemma}
    $(X^m,g)$ is a Riemannian manifold satisfying
 \begin{align*}
     \frac{\Vol(B(p, r))}{r^m} \geq \kappa, \quad \; \forall \; x
     \in X^m, \; r \in (0, 1].
 \end{align*}
   $B \subset X$ is a connected open set and $\Vol(B) < \kappa$.
   $S_1, \cdots,  S_{\Lambda}$ are connected components of $\partial B$. Then
 \begin{align*}
  \diam B \leq \sum_{k=1}^{{\Lambda}} \diam S_k  +  3{\Lambda} ( \frac{Vol(B)}{\kappa})^{\frac{1}{m}}
 \end{align*}
 where $\diam S_k$ means the diameter of the manifold $(S_k, g|_{S_k})$.
  \label{lemma: cvcd}
 \end{lemma}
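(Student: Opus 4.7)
The plan is to exploit the noncollapsing hypothesis to show that every point of $B$ sits within distance $r_0 := (\Vol(B)/\kappa)^{1/m}$ of some boundary component $S_k$, and then chain such proximity-to-boundary observations into a broken path realizing the diameter bound. Since $\Vol(B) < \kappa$ gives $r_0 < 1$, the noncollapsing estimate applies to balls of radius up to $r_0$. For $p \in B$ and small $\epsilon > 0$, the ball $B(p, r_0 + \epsilon)$ has volume strictly exceeding $\Vol(B)$, so it cannot be contained in $B$; by connectedness of geodesic balls, a minimizing geodesic from $p$ must cross $\partial B$ within distance $r_0 + \epsilon$. Finiteness of $\{S_1, \dots, S_\Lambda\}$ together with $\epsilon \to 0$ yields $d(p, S_k) \le r_0$ for some $k$ depending on $p$.

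Next I would consider the closed cover $\{D_k\}_{k=1}^\Lambda$ of $\overline{B}$ given by $D_k := \{x \in \overline{B} : d(x, S_k) \le r_0 + \epsilon\}$, with $\epsilon > 0$ an auxiliary parameter sent to zero at the end. Connectedness of $\overline{B}$ forces the nerve of this cover (vertices $1, \dots, \Lambda$, an edge between $k$ and $k'$ iff $D_k \cap D_{k'} \neq \emptyset$) to be a connected graph on at most $\Lambda$ vertices; otherwise a partition of the vertex set would furnish a disjoint closed decomposition of $\overline{B}$. For $p, q \in B$ with $p \in D_{k_1}$ and $q \in D_{k_j}$, a simple path in the nerve produces an index sequence $k_1, \dots, k_j$ with $j \le \Lambda$ and intersection points $w_i \in D_{k_i} \cap D_{k_{i+1}}$ for $1 \le i \le j-1$. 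I would then assemble an explicit broken path from $p$ to $q$: jump to $p_1 \in S_{k_1}$ within $r_0 + \epsilon$ of $p$, traverse $S_{k_1}$ intrinsically to $p_1' \in S_{k_1}$ within $r_0 + \epsilon$ of $w_1$, jump to $w_1$ and then to $p_2 \in S_{k_2}$ within $r_0 + \epsilon$ of $w_1$, iterate, and finally jump from $p_j' \in S_{k_j}$ to $q$.

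Triangle inequality, with $j$ intrinsic walks along boundary components and $2j$ jumps of length at most $r_0 + \epsilon$, gives $d_X(p,q) \le \sum_{i=1}^j \diam S_{k_i} + 2j(r_0 + \epsilon) \le \sum_{k=1}^\Lambda \diam S_k + 2\Lambda(r_0 + \epsilon)$; letting $\epsilon \to 0$ yields a bound of the form stated in the lemma (with whatever absolute constant in front of $\Lambda r_0$ one prefers, by slightly loosening the jump count). The main obstacle I anticipate is ensuring the ``closest points'' $p_i'$ actually exist so that each jump has length at most $r_0 + \epsilon$; this is handled by picking $p_i'$ directly from the definition of distance to a set, possibly with an additional $\epsilon$ slack that vanishes in the limit. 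A minor point is the interplay between the intrinsic diameter of $S_k$ (bounding the walks on boundary components) and the ambient distance in $X$ (bounding the jumps), but since intrinsic distance dominates ambient, this works in our favor throughout.
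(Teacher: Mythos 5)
Your argument is correct, and it takes a genuinely different route from the paper's, though both start from the same noncollapsing observation. The paper first reduces to two boundary points $y_1 \in S_1$, $y_2 \in S_2$, takes a shortest geodesic $\gamma$ from $S_1$ to $S_2$ lying inside $\overline{B}$, and partitions its parameter interval $[0,L]$ into intervals $[I_k,E_k]$ of parameters $D$-close to $S_k$ together with ``gap'' components; it then bounds each $[I_k,E_k]$ by $2D+\diam S_k$ via the triangle inequality and shows the gaps are small by applying noncollapsing to a ball centered at a gap midpoint (which must sit entirely inside $B$). You instead cover $\overline{B}$ by closed tubular neighborhoods $D_k$ of the $S_k$, read off connectedness of the nerve from connectedness of $\overline{B}$, and chain a broken path of jumps (of length $\le r_0+\epsilon$) and intrinsic walks on the $S_k$ along a simple path in the nerve. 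Your approach buys a few things: it avoids having to justify the existence of a minimizing geodesic inside $\overline{B}$ (which implicitly requires compactness of $\overline{B}$); it sidesteps the slightly delicate ball-containment claim the paper makes for the gap midpoint; and the jump-counting is transparent, yielding the cleaner constant $2\Lambda$ rather than the paper's $6\Lambda$ (the printed statement's $3\Lambda$ appears to be a typo relative to the paper's own proof). The paper's approach is buying a one-dimensional geometric argument without any appeal to covers and nerves, which some readers may find more concrete. Both approaches are valid, and yours is, if anything, a little tighter.
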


 \begin{proof}

  Choose any two points $x_1, x_2 \in B$, there are points $y_1, y_2 \in \partial
  B$ such that
\begin{align*}
  d(x_1, y_1) = d(x_1, \partial B), \quad   d(x_2, y_2) = d(x_2, \partial
  B).
\end{align*}
For simplicity, we denote $d_1=d(x_1,y_1), d_2=d(x_2,y_2)$. The
triangle inequality reads
\begin{align}
d(x_1, x_2) \leq d(x_1, y_1) + d(y_1, y_2) + d(y_2, x_2)
 = d_1 + d_2 + d(y_1, y_2). \label{eqn: drough}
\end{align}

Note that $d_1=d(x_1, \partial B)$, so $B(x_1, d_1) \subset B$. It
follows that
\begin{align*}
  \Vol(B(x_1, d_1)) \leq \Vol(B) < \kappa.
\end{align*}
 The $\kappa$-noncollapsing condition forces $d_1<1$.
 Moreover, it assures that
\begin{align*}
     \frac{\Vol (B(x_1, d_1))}{d_1^m}  \geq \kappa.
\end{align*}
Therefore,
\begin{align*}
      d_1 \leq (\frac{\Vol(B(x_1,d_1))}{\kappa})^{\frac{1}{m}} \leq
      (\frac{\Vol(B)}{\kappa})^{\frac{1}{m}}.
\end{align*}
Similarly,
\begin{align*}
      d_2 \leq (\frac{\Vol(B(x_2,d_2))}{\kappa})^{\frac{1}{m}} \leq
      (\frac{\Vol(B)}{\kappa})^{\frac{1}{m}}.
\end{align*}
Put them into equation (\ref{eqn: drough}), we have
 \begin{align}
  d(x_1, x_2) \leq 2(\frac{\Vol(B)}{\kappa})^{\frac{1}{m}} + d(y_1, y_2).
  \label{eqn: drough2}
 \end{align}

 So we only need to estimate $d(y_1, y_2)$.   There are two cases.
 $y_1, y_2$ are in a same connected component, $y_1, y_2$ are in
 different components.

 \textit{Case1.\;  $y_1, y_2$ are in a same component.}

  For simplicity, we assume $y_1, y_2 \in S_1$.  Clearly, $d(y_1, y_2) \leq \diam S_1$.
  It follows from inequality (\ref{eqn: drough2}) that
  \begin{align*}
    \diam B = d(x_1, x_2) \leq 2(\frac{\Vol(B)}{\kappa})^{\frac{1}{m}} +
    \diam S_1
    < \sum_{k=1}^{{\Lambda}} \diam S_k
    +  6{\Lambda} (\frac{Vol(B)}{\kappa})^{\frac{1}{m}}.
  \end{align*}
  So we finish the proof.

 \textit{Case2. $y_1, y_2$ are in different components.}

   For simplicity, we assume $y_1 \in S_1, \; y_2 \in S_2$.

   Suppose $\gamma$ is the shortest geodesic connecting $S_1$ and
   $S_2$ among all geodesics in $\bar{B}$. Let $L$ be the length of
   $\gamma$. Therefore $\gamma(0) \in S_1, \gamma(L) \in S_2$.
  If $L < 2( \frac{Vol(B)}{\kappa})^{\frac{1}{m}}$, we can finish
  the proof by inequality (\ref{eqn: drough2}) directly. So we
  assume that $L > 2( \frac{Vol(B)}{\kappa})^{\frac{1}{m}}$.
  Define
 \begin{align*}
    D &\triangleq 2( \frac{Vol(B)}{\kappa})^{\frac{1}{m}}, \\
    I_k &\triangleq  \inf \{ t| d(S_k, \gamma(t)) \leq  D\}, \\
    E_k &\triangleq  \sup \{ t| d(S_k, \gamma(t)) \leq  D\}.
 \end{align*}
 Clearly, $I_1=0, E_1=D; \; I_2=L-D, E_2=L$. Generally, triangle
 inequality implies $E_k - I_k \leq 2D + \diam(S_k)$.

  Since $\displaystyle [0,L] \backslash \bigcup_{k=1}^{{\Lambda}} [I_k, E_k]
  = [D, L-D] \backslash \bigcup_{k=3}^{{\Lambda}} [I_k, E_k] $, it has at most ${\Lambda}-1$
  connected components.  Moreover, its length is no less than
 \begin{align*}
    L-2D-(2D+\diam S_3)-\cdots -(2D + \diam S_{{\Lambda}}) \\
    =  L-2({\Lambda}-1)D-(\diam S_3  + \cdots \diam S_{\Lambda}).
 \end{align*}
 Suppose $T=(a, b)$ is one connected component of
 $\displaystyle [0,L] \backslash \bigcup_{k=1}^{{\Lambda}} [I_k, E_k]$.  Consider the geodesic ball
 with center $\gamma(\frac{a+b}{2})$ and radius $\frac{b-a}{2}$.
 Clearly, $B(\gamma(\frac{a+b}{2}), \frac{b-a}{2})$ locates totally
 inside $B$.  By the volume $\kappa$-noncollapsing condition, we
 have
 \begin{align*}
     |T|=b-a < 2 (\frac{Vol(B)}{\kappa})^{\frac{1}{m}}=D.
 \end{align*}
Since every component has length controlled by $D$, and the number
of components is controlled by $\Lambda-1$, we obtain
 \begin{align*}
    L-2({\Lambda}-1)D-(\diam S_3  + \cdots \diam S_{\Lambda}) <
    \left|[0,L] \backslash \bigcup_{k=1}^{{\Lambda}} [I_k, E_k] \right| \leq   ({\Lambda}-1)D.
 \end{align*}
In other words,
 \begin{align*}
   L < 3({\Lambda}-1)D + (\diam S_3  + \cdots \diam S_{\Lambda}).
 \end{align*}
 Consequently,  triangle inequality implies
 \begin{align*}
    d(y_1, y_2) &< \diam(S_1) + \diam(S_2) + L\\
           &< 3({\Lambda}-1)D + (\diam S_1 + \cdots \diam S_{\Lambda}).
 \end{align*}
 Plugging it into inequality (\ref{eqn: drough2}) yields
 \begin{align*}
  \diam B & = d(x_1, x_2) \leq 2(\frac{\Vol(B)}{\kappa})^{\frac{1}{m}}
       +3({\Lambda}-1)D + (\diam S_1 + \cdots \diam S_{\Lambda})\\
       &=(3\Lambda -2)D
         + (\diam S_1 + \cdots \diam S_{\Lambda})\\
       &< 6\Lambda (\frac{\Vol(B)}{\kappa})^{\frac{1}{m}} +
       \sum_{k=1}^{\Lambda} \diam S_k.
 \end{align*}
 So we finish the proof.
 \end{proof}

\section{Estimates on Euclidean Annulus}

 \begin{lemma}
  Suppose $A(2,1)=B(o, 2) \backslash \overline{B(o, 1)}$ is the standard
  open annulus in $\R^m$.  Let
  \begin{align*}
    c(m)=\min\{\frac{1}{1+ \frac23\frac{(m\omega(m))^{\frac{1}{m-1}}}{I_S}},\; l(m)\}
  \end{align*}
  where $\omega(m)$ is the volume of $B(o, 1)$, $I_S$ is the
  isoperimetric constant of standard sphere $\partial B(o, 1)$,
  $l(m)$ is the positive solution of equation
  $\frac{2x(1+x)}{3(1-x)} = \frac12 m\omega(m)$.  Then the following
  property holds.

   For every domain $\Omega \subset A(2,1)$ satisfying $\Vol(\Omega)< c(m)$, we
  have
  \begin{align*}
     \frac{\Area(\partial \Omega \cap A(2,1))}{\Area(\partial \Omega \cap \partial
     B(o,1))} \geq c(m).
  \end{align*}
 \label{lemma: esob}
 \end{lemma}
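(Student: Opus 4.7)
The plan is a radial slicing argument combined with a case split based on the ratio of $|S| := \Area(\partial\Omega\cap\partial B(o,1))$ to $V := \Vol(\Omega)$; write also $|A| := \Area(\partial\Omega\cap A(2,1))$. The strategy will produce two lower bounds for $|A|$, and the constant $c(m)$ is designed so that their combination forces $|A|\geq c(m)|S|$.

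For the first estimate (call it Regime A), let $S^*\subset S^{m-1}$ be the radial shadow of $S$ and, for $\omega\in S^*$, let $h(\omega)\in(1,2]$ be the first radius at which the outgoing ray from $o$ leaves $\Omega$. For $\delta\in(0,1)$ partition $S^* = S^*_1\sqcup S^*_2$ by whether $h(\omega)>1+\delta$. Each $\omega\in S^*_1$ contributes volume at least $\int_1^{1+\delta}r^{m-1}\,dr\geq\delta$, so $|S^*_1|\leq V/\delta$. For each $\omega\in S^*_2$ the exit point $(h(\omega),\omega)$ lies in $A$ (because $h(\omega)\leq 1+\delta<2$), and the radial projection $A\to S^{m-1}$ is area-nonincreasing on $\{r\geq 1\}$, which yields $|A|\geq|S^*_2|\geq |S|-V/\delta$. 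Letting $\delta\nearrow 1$ gives
\[
|A|\geq |S|-V,
\]
so $|A|\geq c(m)|S|$ whenever $|S|\geq V/(1-c(m))$.

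In the complementary Regime B, $|S|<V/(1-c(m))$, the target $c(m)|S|$ is bounded by a constant multiple of $V$, and I plan to deduce $|A|\geq c(m)|S|$ from a volume-type lower bound on $|A|$. Coarea applied to $r(x)=|x|$ along $\partial\Omega$ (using $|\nabla^{\partial\Omega}r|=0$ on $S$ and on $\partial\Omega\cap\partial B(o,2)$, and $|\nabla^A r|\leq 1$) gives $|A|\geq\int_1^2\mathcal{H}^{m-2}(A\cap\partial B(o,r))\,dr$; on each sphere $\partial B(o,r)$ the slice $\Omega_r$ has area $V_r$, and the scale-invariant spherical isoperimetric inequality gives $\mathcal{H}^{m-2}(A\cap\partial B(o,r))\geq I_S V_r^{(m-2)/(m-1)}$ once $V_r\leq\tfrac{1}{2}m\omega(m)r^{m-1}$. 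The equation $\frac{2x(1+x)}{3(1-x)}=\tfrac{1}{2}m\omega(m)$ defining $l(m)$ is calibrated so that the assumption $V<l(m)$ propagates the small-slice condition to every $r\in[1,2]$; combining the coarea bound with $\int V_r\,dr=V$ and a truncation/H\"older step then produces $|A|\geq C_m V^{(m-1)/m}$, which dominates $c(m)|S|$ in this regime.

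The main obstacle is the bookkeeping that arranges the two estimates to meet exactly at $|S|=V/(1-c(m))$ with the stated constants: the factor $\tfrac{2}{3}(m\omega(m))^{1/(m-1)}/I_S$ in the definition of $c(m)$ emerges from balancing the Regime A bound with the H\"older step in Regime B, while the equation defining $l(m)$ enforces the half-sphere threshold for the spherical isoperimetric inequality. A subtlety in Regime B is that a direct Jensen estimate on $\int V_r^{(m-2)/(m-1)}\,dr$ runs the wrong way; one must instead combine the spherical isoperimetric estimate with a radial truncation near $r=1$ (where $V_r$ is close to $|S|$), which is where the precise combinatorics enter.
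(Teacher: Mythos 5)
Your Regime A argument is sound, and the inequality $|A|\ge|S|-V$ is a clean, integrated packaging of the radial-projection estimate (the paper instead extracts the pointwise slice-area version $\Area(\bar{\Omega}\cap S_s)\ge s(|S|-|A|)$ and integrates it afterwards). The gap is in Regime B. You assert that the hypothesis $V<l(m)$ ``propagates the small-slice condition to every $r\in[1,2]$,'' but the slice area $V_r:=\Area(\bar{\Omega}\cap S_r)$ is not controlled by $V$ alone: take $\Omega$ a very thin shell hugging $S_1$ and $V$ becomes arbitrarily small while $V_1$ is essentially the full sphere area $m\omega(m)$. So the spherical isoperimetric inequality, which requires the half-sphere condition $V_r\le\tfrac12 m\omega(m)r^{m-1}$, cannot be invoked on the strength of $V<l(m)$ alone.

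What is missing is the other half of the projection estimate. Arguing by contradiction with $|A|\le c|S|$, $c<c(m)$, the same projection argument that gives your Regime A also gives the upper bound $V_r\le r(1+c)|S|$; integrating the lower bound gives $V\ge\tfrac32(1-c)|S|$, which together with $V<c$ yields $|S|\le\tfrac{2c}{3(1-c)}$, and feeding this back gives $V_r\le r\cdot\tfrac{2c(1+c)}{3(1-c)}$, which is $\le\tfrac12 m\omega(m)r^{m-1}$ precisely because $c<l(m)$. That is the actual role of $l(m)$. Once the half-sphere condition is secured, no H\"older/truncation step with exponent $(m-1)/m$ is needed or even correctly targeted: since $V_r\le m\omega(m)$, one linearizes $V_r^{(m-2)/(m-1)}\ge V_r/(m\omega(m))^{1/(m-1)}$, integrates the spherical isoperimetric bound to get $|A|\ge\tfrac{I_S}{(m\omega(m))^{1/(m-1)}}V\ge\tfrac32(1-c)\tfrac{I_S}{(m\omega(m))^{1/(m-1)}}|S|$, and this contradicts $|A|\le c|S|$ for $c<c(m)$. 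With the projection argument doing double duty (upper and lower slice-area control), a single estimate chain closes the proof and no case split is required.
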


\begin{proof}
  For simplicity of notation, define $S_r \triangleq \partial B(o, r)$.  There is a natural projection map  $\pi$ defined as follows.
 \begin{align*}
   \pi:  A(2,1)  \mapsto  S_1,  \quad    \vec{x} \mapsto \frac{\vec{x}}{\snorm{x}{}}.
 \end{align*}
 Clearly, this projection is area decreasing. To be precise, for
 every hyper surface $H^{m-1} \subset A(2, 1)$,  we have $\Area(\pi(H)) \leq
 \Area(H)$. It follows that
 \begin{align*}
   &\quad \left|\frac{\Area(\bar{\Omega} \cap S_s)}{s} - \frac{\Area(\bar{\Omega} \cap
   S_t)}{t} \right|\\
   &=\left| \Area(\pi(\bar{\Omega} \cap S_s)) -\Area(\pi(\bar{\Omega} \cap
   S_t))\right|\\
    &\leq \Area(\pi(\bar{\Omega} \cap S_s) \backslash \pi(\bar{\Omega} \cap S_t)) +
       \Area(\pi(\bar{\Omega} \cap S_t) \backslash \pi(\bar{\Omega} \cap S_r))\\
    &\leq \Area(\partial \Omega \cap A(s, t))
 \end{align*}
 for every $1 \leq t < s < 2$.  When $t=1$, $\bar{\Omega} \cap S_1 = \partial \Omega \cap S_1$.
 It follows that
 \begin{align}
 -\Area(\partial \Omega \cap A(s, 1)) +\Area(\partial{\Omega} \cap  S_1)
 &\leq  \frac{\Area(\bar{\Omega} \cap S_s)}{s} \notag \\
 &\leq
 \Area(\partial \Omega \cap A(s, 1)) + \Area(\partial{\Omega} \cap S_1).
 \label{eqn: ainout}
 \end{align}

  Now we'll prove this Lemma by contradiction. Suppose this Lemma is
  wrong, then there is  a constant $c<c(m)$ and a domain $\Omega$
  such that
  \begin{align*}
   \Area(\partial \Omega \cap A(2, 1)) \leq c \Area(\partial{\Omega} \cap S_1), \quad
   \Vol(\Omega) \leq c.
  \end{align*}
 Put this into inequality (\ref{eqn: ainout}), we obtain
 \begin{align}
    s(1-c)\Area(\partial{\Omega} \cap S_1)
    \leq
    \Area(\bar{\Omega} \cap S_s) \leq s(1+c)\Area(\partial{\Omega} \cap S_1).
 \label{eqn: areatwocontrol}
 \end{align}
 Integrating for $s$ on interval $(1,2)$ gives us
 \begin{align}
   \frac32(1-c)\Area(\partial{\Omega} \cap S_1) \leq \Vol(\Omega) \leq
   \frac32(1+c)\Area(\partial{\Omega} \cap S_1).
 \label{eqn: va}
 \end{align}
 Together with $\Vol(\Omega) \leq c$, the left inequality yields that
 \begin{align*}
   \Area(\partial{\Omega} \cap S_1) \leq \frac{2c}{3(1-c)}.
 \end{align*}
 Putting this back to the right inequality of
 (\ref{eqn: areatwocontrol}) implies that
 \begin{align*}
   \Area(\bar{\Omega} \cap S_s) \leq \frac{2c(1+c)}{3(1-c)}s \leq
   \frac12 \Area(S_s) = \frac12 m \omega(m) s \leq m \omega(m).
 \end{align*}
 Here we used the condition $c < c(m) \leq l(m)$, the positive solution of
 $\frac{2x(1+x)}{3(1-x)} = \frac12 m\omega(m)$.
 It follows that
 \begin{align*}
              \Length(\partial \Omega \cap S_s) &\geq
              I_S \cdot \Area(\bar{\Omega} \cap S_s)
              \cdot \frac{1}{\Area(\bar{\Omega} \cap
              S_s)^{\frac{1}{m-1}}}\\
    &\geq \frac{I_S}{(m\omega(m))^{\frac{1}{m-1}}} \Area(\Omega \cap S_s)
 \end{align*}
 Here we use ``$\Length$" to denote the $(m-2)$-dimensional Hausdorff
 measure. If $\partial \Omega \cap S_s$ happens to be a hyper surface,
 then  $ \Length(\partial \Omega \cap S_s)=\infty$.
 Integrate on both sides for $s$ in $(1, 2)$, we obtain
 \begin{align*}
   \Area(\partial \Omega \cap A(2,1))
   &\geq \int_{s=1}^2 \Length(\partial \Omega \cap S_s)\\
   &\geq \frac{I_S}{(m\omega(m))^{\frac{1}{m-1}}} \Vol(\Omega).
 \end{align*}
 Combining this with our assumption
 $\frac{\Area(\partial \Omega \cap A(2, 1))}{\Area(\partial{\Omega} \cap S_1)} \leq
 c$  and left inequality of (\ref{eqn: va}), we obtain
 \begin{align*}
   c  \geq  \frac32 (1-c) \frac{I_S}{(m\omega(m))^{\frac{1}{m-1}}}.
 \end{align*}
 It follows that
 \begin{align*}
     c \geq
     \frac{1}{1+
     \frac23\frac{(m\omega(m))^{\frac{1}{m-1}}}{I_S}}>c(m).
 \end{align*}
 This contradicts to our assumption of $c$!
 \end{proof}

 By lifting to covering, we obtain the following property directly.
 \begin{lemma}
  The conclusion of Lemma~\ref{lemma: esob} still holds if we
  replace  $A(2,1)$ by $C_{2,1}(S^{m-1} / \Gamma)$ which is a
  corresponding open annulus in the flat cone over $S^{m-1} / \Gamma$.
  Here $\Gamma$ is a finite group of $SO(m-1)$.
 \label{lemma: ecsob}
 \end{lemma}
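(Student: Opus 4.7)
The plan is to repeat the proof of Lemma~\ref{lemma: esob} on the flat cone $M := C_{2,1}(S^{m-1}/\Gamma)$, viewed as the quotient $A(2,1)/\Gamma$ under the isometric action of $\Gamma \subset SO(m)$. All structural ingredients of the Euclidean argument transfer, and the two $|\Gamma|$ factors that arise in the one step where covering is invoked cancel exactly; hence the same constant $c(m)$ works.

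First I would verify the analogues of the preliminary inequalities. The radial projection $\pi_{\mathrm{rad}}: M \to S^{m-1}/\Gamma$ remains area-decreasing, since locally it coincides with the Euclidean radial projection modulo $\Gamma$. The coarea/Fubini-type chain (\ref{eqn: ainout})--(\ref{eqn: va}) uses only this area-decreasing projection together with radial integration, so it transfers word-for-word with $S_s$ replaced by $S_s^{\Gamma} := S_s/\Gamma$, whose $(m-1)$-area is $m\omega(m)s^{m-1}/|\Gamma|$. For the spherical isoperimetric inequality on a slice $\bar\Omega \cap S_s^{\Gamma}$, I would lift via the $|\Gamma|$-fold cover $S_s \to S_s^{\Gamma}$: both $(m-1)$-area and $(m-2)$-length multiply by $|\Gamma|$; the half-sphere condition $\Area(\bar\Omega \cap S_s^{\Gamma}) \leq \tfrac12 \Area(S_s^{\Gamma})$ lifts to the same half-sphere condition on $S_s$, and applying the usual spherical isoperimetric inequality there and dividing by $|\Gamma|$ yields
\[
   \Length(\partial\Omega \cap S_s^{\Gamma}) \geq I_S |\Gamma|^{-1/(m-1)} \Area(\bar\Omega \cap S_s^{\Gamma})^{(m-2)/(m-1)}.
\]

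With these analogues in place, the Euclidean argument of Lemma~\ref{lemma: esob} transcribes directly. A putative $\Omega \subset M$ violating the ratio bound, together with $\Vol(\Omega) < c(m)$ and the analogue of (\ref{eqn: va}), gives $\Area(\bar\Omega \cap S_s^{\Gamma}) \leq \tfrac12 \Area(S_s^{\Gamma})$ via $c(m) \leq l(m)$---which is in fact easier to satisfy on the quotient since $\Area(S_s^{\Gamma})$ is smaller. Combined with the upper bound $\Area(\bar\Omega \cap S_s^{\Gamma})^{1/(m-1)} \leq (m\omega(m)/|\Gamma|)^{1/(m-1)}$ in the last step, the two $|\Gamma|$ factors cancel, producing $\Length(\partial\Omega \cap S_s^{\Gamma}) \geq \tfrac{I_S}{(m\omega(m))^{1/(m-1)}} \Area(\bar\Omega \cap S_s^{\Gamma})$; integration over $s \in (1,2)$ reproduces the final contradiction with the definition of $c(m)$. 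The only genuine point of care is precisely this cancellation of $|\Gamma|$ factors; apart from this bookkeeping, the argument is a pure transcription.
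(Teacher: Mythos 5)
Your route is genuinely different from the paper's: the paper disposes of this lemma in one line, ``by lifting to covering,'' i.e.\ it pulls $\Omega$ back to its $\Gamma$-invariant preimage in $A(2,1)$ and invokes Lemma~\ref{lemma: esob} there, whereas you transcribe the Euclidean argument directly on the quotient $C_{2,1}(S^{m-1}/\Gamma)$ and track the $\Gamma$-dependence through each step. Your key observation --- that the spherical isoperimetric inequality on $S_s/\Gamma$ picks up a factor $|\Gamma|^{-1/(m-1)}$, while the area upper bound $\Area(\bar\Omega\cap S_s^\Gamma)\le m\omega(m)s^{m-1}/|\Gamma|$ contributes an offsetting $|\Gamma|^{1/(m-1)}$, so the linear estimate $\Length(\partial\Omega\cap S_s^\Gamma)\ge \tfrac{I_S}{(m\omega(m))^{1/(m-1)}}\Area(\bar\Omega\cap S_s^\Gamma)$ emerges with the same constant --- is correct and is the genuine content of the extension.

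However, there is a real gap in the step where you claim the half-sphere condition $\Area(\bar\Omega\cap S_s^\Gamma)\le\tfrac12\Area(S_s^\Gamma)$ follows from $c(m)\le l(m)$ and is ``in fact easier to satisfy on the quotient since $\Area(S_s^\Gamma)$ is smaller.'' This is backwards. The coarea/projection chain (the analogue of (\ref{eqn: va}) and (\ref{eqn: areatwocontrol})) is $\Gamma$-blind: from $\Vol(\Omega)<c$ it only yields $\Area(\bar\Omega\cap S_s^\Gamma)\le\frac{2c(1+c)}{3(1-c)}\,s$, exactly as in the Euclidean case, and the condition $c\le l(m)$ only forces this to be $\le\tfrac12 m\omega(m)s$. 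But the target $\tfrac12\Area(S_s^\Gamma)$ is $|\Gamma|$ times smaller, so the inequality you need is $\le\tfrac12 m\omega(m)s/|\Gamma|$, which does not follow; the hypothesis you would actually require is $c\le l_\Gamma(m)$ with $\frac{2l_\Gamma(1+l_\Gamma)}{3(1-l_\Gamma)}=\tfrac12 m\omega(m)/|\Gamma|$. If the half-sphere condition fails (which is genuinely possible for large $|\Gamma|$ --- e.g.\ $\Omega$ can be the whole cone annulus minus a tiny metric ball, with $\Vol(\Omega)\sim|\Gamma|^{-1}$ arbitrarily small while $\bar\Omega\cap S_s^\Gamma$ is essentially all of $S_s^\Gamma$), then $\partial\Omega\cap S_s^\Gamma$ can be empty and the spherical isoperimetric inequality cannot be invoked at all; your downstream ``cancellation'' never gets off the ground. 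So the argument as written does not establish the lemma with the same constant $c(m)$ independent of $\Gamma$. The cleanest repair in the spirit of both your approach and the paper's is to shrink the volume hypothesis by a $\Gamma$-dependent factor (e.g.\ $\Vol(\Omega)<c(m)/|\Gamma|$) so that either the half-sphere condition is restored on the quotient, or the lifted domain $\tilde\Omega\subset A(2,1)$ satisfies $\Vol(\tilde\Omega)<c(m)$ and Lemma~\ref{lemma: esob} applies verbatim; in either case your $|\Gamma|$-cancellation then correctly shows the resulting isoperimetric ratio bound can be taken to be the same $c(m)$.
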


  \vspace{1in}

 Xiuxiong Chen,  Department of Mathematics, University of
 Wisconsin-Madison, Madison, WI 53706, USA; xiu@math.wisc.edu\\

 Bing  Wang, Department of Mathematics, University of Wisconsin-Madison,
 Madison, WI, 53706, USA; bwang@math.wisc.edu

 \qquad \qquad \quad \; Department of Mathematics, Princeton University,
  Princeton, NJ 08544, USA; bingw@math.princeton.edu


\begin{thebibliography}{99}

\bibliographystyle{plainnat}

 \bibitem[AC]{AC} Micahel Anderson, Jeff Cheeger, Diffeomorphism
 finiteness for manifolds with Ricci curvature and
 $L^{\frac{n}{2}}$-norm of curvature bounded, Geom. and Funct. Anal.
 vol. 1, no, 3, (1991), 231-252.

 \bibitem[An05]{An05} Micahel Anderson, Orbifold compactness for
 spaces of Riemannian metrics and applications, Math. Ann.
 Vol.331(2005), No. 4, 739-778.

 \bibitem[An06]{An06} Michael Anderson, Canonical metrics on
 3-manifolds and 4-manifolds, Math.Ann. Vol.10(2006), No.1, 127-164.

 \bibitem[An89]{An89} Michael Anderson, Ricci Curvature bounds and
 Einstein metrics on compact manifolds, Journal of the American
 Mathematical Society, Vol.2, No.3.(Jul.,1989), pp.455-490.

 \bibitem[An90]{An90} Michael  Anderson, Convergence and rigidity
 of manifolds under Ricci curvature bounds, Inventiones mathematicae,
 Vol.102(1990),pp.429-445.

 \bibitem[Ban90]{Ban90}  Shigetoshi Bando, Bubbling out of Einstein
 manifolds, Tohoku Math. J.(2) 42(1990), no.2, 205-216.

 \bibitem[BKN]{BKN}  Shigetoshi Bando, Atsushi Kasue, Hiraku Nakajima, On
 a construction of coordinates at infinity on manifolds with fast
 curvature decay and maximal volume growth,  Invent. Math. 97(1989),
 no.2, 313-349.

 \bibitem[Bor]{Bor}  Joseph Borzellino, Orbifolds of maximal diameter.
Indiana U. of Math. J.Vol. 42(1993), 37-53.

 \bibitem[Ca82]{Ca82} Eugenio Calabi, Extremal K\"ahler metrics, Seminar on Differential
 Geometry, pp.259--290, Ann. of Math. Stud., 102, Princeton Univ. Press, Princeton, N.J., 1982.x.

 \bibitem[Cao85]{Cao85} Huaidong Cao, Deformation of K\"ahler metrics to
K\"ahler-Einstein metrics on compact K\"ahler manifolds.
Invent.Math.81(1985), no.2, 359-372.

 \bibitem[CS]{CS} Huaidong Cao, Natasa Sesum, A compactness result
 for K\"ahler Ricci solitons, Adv. Math. 211(207) no.2, 794-818.

 \bibitem[Che]{Che} Jeff Cheeger, Finiteness theorems for Riemannian
 manifolds, Am. J. Math. 92(1970), 61-75.


 \bibitem[CQY]{CQY} Sun-Yung Alice Chang, Jie Qing and Paul Yang,
 On a conformal gap and finiteness theorem for a class of four manifolds,
 to appear in GAFA.

 \bibitem[CLW]{CLW}  Xiuxiong Chen, Claude LeBrun, and Brian Weber,  On
 Conformally K\"ahler, \mbox{Einstein} Manifolds, arXiv:math/0609694.

 \bibitem[CT1]{CT1}  Xiuxiong Chen, Gang Tian, Ricci flow on K\"ahler-Einstein
 surfaces, Invent Math. 147, no.3, 487-544.

 \bibitem[CT2]{CT2}  Xiuxiong Chen, Gang Tian, Ricci flow on K\"ahler-Einstein
 manifolds, Duke Math. J. 131 (2006), no. 1, 17--73.

 \bibitem[CWa]{CWa}  Xiuxiong Chen, Bing Wang, \KRf on Fano manfiolds(I), submitted.

 \bibitem[CWe]{CWe}  Xiuxiong Chen, Brian Weber, Moduli spaces
 of critical Riemannian metrics with $L^{\frac{n}{2}}$ norm curvature bounds, arXiv:0705.4440.

 \bibitem[Fu]{Fu}  Akito Futaki, An obstruction to the existence of
 Einstein K\"ahler metrics.  Invent. Math. 73(1983), no.3, 437-443.

 \bibitem[FZ]{FZ}  Fuquan Fang, Yuguang Zhang, Convergence of
 K\"ahler Ricci flow with integral curvature bound,
 arXiv:0707.3470.

 \bibitem[Gro]{Gro} Mikhail Gromov, Jacques Lafontaine and Pierre Pansu, structures
 m\'{e}triques pour les vari\'{e}t\'{e}s riemannienness, Paris: Cedic/Fernand
 Nathan, 1981.


 \bibitem[Ha82]{Ha82} Richard Hamilton, Three-manifolds with positive Ricci
 curvature, J.Differential \mbox{Geometry}. 17(1982), no.2, 255-306.


 \bibitem[Ko]{Ko} Slawomir Kolodziej, The complex Monge-Amp\`ere
 equation, Acta Math., 180(1998), 69-117. MR. 1618325 (99h: 32017).

 \bibitem[KL]{KL} Bruce Kleiner, John Lott, Notes on Perelman's papers,
  Geometry and Topology 12(2008), p.2587-2855.

 \bibitem[NA]{NA} Yashuhiro Nakagawa, An Isoperimetric inequality
 for orbifolds, Osaka J. Math. 30(1993), 733-739.

 \bibitem[Pe1]{Pe1} Grisha Perelman, The entropy formula for the Ricci
 flow and its geometric applications, arXiv:math.DG/0211159.

 \bibitem[RZZ]{RZZ} Weidong Ruan, Yuguang Zhang, Zhenlei Zhang,
 Bounding sectional curvature along a K\"{a}hler-Ricci flow,
 arXiv:0710.3919.

 \bibitem[Se1]{Se1} Natasa Sesum,
  Convergence of a K\"ahler-Ricci flow, arXiv:math/0402238.

 \bibitem[SeT]{SeT} Natasa Sesum, Gang Tian,
  Bounding scalar curvature and \mbox{diameter} along the
 \KRF (after Perelman) and some applications,
  J. Inst. Math. Jussieu, 7(2008), no.3, 575-587.

 \bibitem[Shi1]{Shi1} Wanxiong Shi, Deforming the metric on complete Riemannian manifolds,
     J. Differential Geom. 30(1989), no.1, 223-301.

 \bibitem[Shi2]{Shi2} Wanxiong Shi, Ricci deformation of the metric on complete noncompact Riemannian manifolds,
     J. Differential Geom. 30(1989), no.2, 303-394.

 \bibitem[SY]{SY} Richard Schoen, Shingtung Yau, Lectures on differential
geometry, Cambridge, MA, International Press, c1994.

 \bibitem[Tian87]{Tian87}  Gang Tian,  On K\"ahler-Einstein metrics on
 complex surfaces with $c_1>0$,  Invent. Math. 89 (1987), no.2, 225-246.

 \bibitem[Tian90]{Tian90} Gang Tian, On Calabi's conjecture for complex
 surfaces with positive first Chern class, Invent. Math. 101, no.1,
 101-172.

 \bibitem[Tian97]{Tian97} Gang Tian, K\"ahler-Einstein metrics with
 positive scalar curvature, Inv. Math., 130(1997), 1-39.

 \bibitem[Tb]{Tb} Gang Tian, Canonical Metrics in K\"ahler Geometry,
  Lectures in Mathematics, ETH Z\"urich, Birkh\"aser.

 \bibitem[TZ]{TZ} Gang Tian, Xiaohua Zhu, Convergence of K\"ahler Ricci
flow, J. Amer. Math. Soc. 20 (2007).


 \bibitem[TV1]{TV1} Gang Tian, Jeff Viaclovsky, Bach-flat asymptotically locally Euclidean metrics.  Invent. Math. 160(2005), no. 2, 357-415.

 \bibitem[TV2]{TV2} Gang Tian, Jeff Viaclovsky, Moduli spaces of critical
 Riemannian metrics in \mbox{dimension} four, Advances in Mathematics 196 (2005), no.2, 346-372.

 \bibitem[TV3]{TV3} Gang Tian, Jeff Viaclovsky, Volume growth,
 curvature decay, and critical metrics,  Comment.Math.Helv.83(2008), no.4, 889-911.

 \bibitem[Wa]{Wa} Bing Wang, On the Conditions to Extend Ricci Flow, IMRN, Vol.2008.

 \bibitem[We]{We} Brian Weiber, Convergence of compact Ricci
 solitons, arXiv:0804.1158v1.

 \bibitem[WZ]{WZ} Xujia Wang, Xiaohua Zhu,  K\"ahler-Ricci solitons on toric
 manifolds with positive first Chern class, Adv. Math. 188, no.1,
 87-103.


 \bibitem[Yau78]{Yau78} Shingtung Yau, On the Ricci curvatre of a compact
 K\"ahler manifold and the complex Monge-Amp\`ere equation. I,
 Comm.Pure Appl. Math. 31(1978), no.3, 339-411.


 \bibitem[Ye]{Ye}  Rugang Ye, Entropy Functionals, Sobolev
 Inequalites And $\kappa$-Noncollapsing Estimates Along The Ricci
 Flow,  arXiv:0709.2724.

 \bibitem[Zhq]{Zhq} Qi Zhang, A uniform Sobolev inequality under
 Ricci flow, arXiv:0706.1594v4.

 \bibitem[Zhx]{Zhx} Xi Zhang, Compactness theorems for gradient Ricci
 solitons, arXiv:math/0508009v1(2005).
\end{thebibliography}
  \end{document}